\newcommand{\R}{\mathbb{R}}
 \newcommand{\GG}{\mathcal{G}}
\newcommand{\eps}{\varepsilon} 
\newcommand{\loc}{{\text{\rm loc}}}
\newcommand{\spt}{\operatorname{spt}}
\renewcommand{\d}{\delta}
 \newcommand{\supp}{\text{\rm supp}\,}
 \renewcommand{\supp}{\text{\rm supp}\,}
 \newcommand{\Lip}{\text{\rm Lip}}
\newcommand{\ban}[1]{\left\langle  #1 \right\rangle}
 \newcommand{\F}{\mathcal{ F}}
\newtheorem{theorem}{Theorem}[section]
\newtheorem{lemma}[theorem]{Lemma}
\newtheorem{definition}[theorem]{Definition}
\newtheorem{corollary}[theorem]{Corollary}
\theoremstyle{definition}
\newtheorem{remark}[theorem]{Remark}
\newtheorem{example}[theorem]{Example}
\numberwithin{equation}{section}
\numberwithin{figure}{section}
\newcommand{\abs}[1]{\left\vert{#1}\right\vert}
\newcommand{\norm}[1]{\left\lVert #1\right\rVert}
\newcommand{\FF}{{\boldsymbol F}}
\renewcommand{\GG}{{\boldsymbol G}}
\newcommand{\DM}{\mathcal D\mathcal M} %Divergence-measure space
\renewcommand{\div}{{\rm div}} %divergence
\newcommand{\dist}{\mathrm{dist}} %distance
\newcommand{\weakstarto}{\xrightharpoonup{\,\, * \,\,}}
\newcommand{\res}{\mathop{\hbox{\vrule height 7pt width .5pt depth 0pt
\vrule height .5pt width 6pt depth 0pt}}\nolimits}
\renewcommand{\div}{\operatorname{div}}
\newcommand{\dr}{{\rm d}}
\def\rightangle{\vcenter{\hsize5.5pt
    \hbox to5.5pt{\vrule height7pt\hfill}
    \hrule}}
\def\rtangle{\mathrel{\rightangle}}
\def\intave#1{\int_{#1}\hbox{\llap{$\raise2.3pt\hbox{\vrule
height.9pt width7pt}\phantom{\scriptstyle{#1}}\mkern-2mu$}}}
\renewcommand{\d}{\mathrm{d}}
\newcommand{\threepartdef}[6] { \left\{ \begin{array}{ll} #1 & #2 \\ #3 & #4 \\ #5 & #6 \end{array} \right.}
\DeclareMathOperator{\ext}{ext}
\newcommand{\BV}{BV}
\newcommand{\Sob}{{W}}
\newcommand{\Leb}{L}
\DeclareMathOperator{\interior}{int}
\newcommand{\bfu}{{\bf u}}
\newcommand{\bff}{{\bf f}}
\newcommand{\bfq}{{\bf q}}
\def\@citestyle{\m@th\upshape\mdseries}
\let\citeform\@firstofone
\def\@cite#1#2{{%
  \@citestyle[\citeform{#1}\if@tempswa, #2\fi]}}
\begin{document}
\title[Extended Divergence-Measure Fields and Cauchy Fluxes]{Extended Divergence-Measure Fields,
\\ the Gauss-Green Formula\\ and Cauchy Fluxes}

\author{Gui-Qiang G. Chen}
\address{Gui-Qiang G. Chen, Mathematical Institute, University of Oxford,
Oxford, OX2 6GG, UK}
\email{chengq@maths.ox.ac.uk}

\author{Christopher Irving}
\address{Christopher Irving,
Faculty of Mathematics, Technical University Dortmund,
Vogelpothsweg 87,
44227, Dortmund, Germany}
\email{christopher.irving@tu-dortmund.de}

\author{Monica Torres}
\address{Monica Torres,
Department of Mathematics, Purdue University,
 West Lafayette, IN 47907-2067, USA}
 \email{torres@math.purdue.edu}

\keywords{Divergence-measure field, Gauss-Green formula, rough open set,
normal trace, rough domain, extension domain, one-side approximation of sets,
finite perimeter, product rule, divergence equation, balance law, Cauchy flux, nonlinear PDEs, conservation laws}
\subjclass[2010]{Primary: 28C05, 26B20, 28A05, 26B12, 35L65, 35L67;
Secondary: 28A75, 28A25, 26B05, 26B30, 26B40, 35D30}

\begin{abstract}
We establish the Gauss-Green formula for extended divergence-measure
fields ({\it i.e.}, vector-valued measures whose distributional divergences are Radon measures) over open sets.
We prove that, for {\it almost every open set}, the normal trace is a measure supported on the boundary of the set.
Moreover, for any open set, we provide a representation of the normal trace of the field over the boundary of the open set as the limit of measure-valued
normal traces over the boundaries of approximating sets.
Furthermore, using this theory, we extend the balance law from classical continuum physics
to
a general framework
in which the production on any open set is measured with a Radon measure
and the associated Cauchy flux is bounded by a Radon measure concentrated on the boundary of the set.
We prove that there exists an extended divergence-measure field such that the Cauchy flux can be recovered
through the field, {\it locally on almost every open set} and {\it globally on every open set}.
Our results generalize the classical Cauchy's Theorem (that is only valid for continuous vector fields)
and extend the previous formulations of the Cauchy flux (that generate vector fields within $L^{p}$).
Thereby, we establish the equivalence between entropy solutions of
the multidimensional nonlinear partial differential equations of divergence form
and of the mathematical formulation of physical balance laws via the Cauchy flux through the constitutive relations
in the axiomatic foundation of Continuum Physics.
\end{abstract}
\maketitle

%\vspace{-6mm}
%\begingroup\hypersetup{hidelinks}
%\tableofcontents
%\endgroup
\bigskip
\bigskip
\section{Introduction}
Divergence-measure fields are defined as vector-valued fields $\FF=(F_1,F_2,\cdots, F_n)$
whose distributional divergences are represented by (signed) Radon measures.
An underlying connection between divergence-measure fields and hyperbolic conservation laws was first observed in \cite{CF1},
and such vector fields over domains with Lipschitz
boundary were analyzed in \cite{CF1,CF2}.
Since then, the analysis of divergence-measure fields has depended essentially on the regularity of $\FF$.
For example, the divergence-measure fields were extensively analyzed first in $\Leb^\infty$ in \cite{ctz}
and then in $\Leb^p$ in \cite{ChenComiTorres}.
See also \cite{ComiTorres, anzellotti1983traces,ACM,ChenLiTorres,ChenTorres,comiextended,comimagnani,comi2017locally,crasta2017anzellotti,crastadecicco2,SchonherrSchuricht,Sch,Silhavy1,S4,Silhavy2}
and the references therein for further developments for the theory of divergence-measure fields.
In this paper, we focus on the case when $\FF$
is only a vector-valued Radon measure.
More precisely,
we analyze {\it extended} divergence-measure fields which are defined as vector-valued Radon measures
whose distributional divergences
are Radon measures.

Our approach in this paper is motivated by
the previous results in the $\Leb^p$ setting in
\cite{ChenComiTorres,ctz}.
However, the case of extended divergence-measure fields is more delicate,
since $\FF$ may concentrate on lower dimensional sets (for instance, rectifiable curves).
We prove that, for {\it almost every open set}, the normal trace of an extended divergence-measure field
is a Radon measure supported on the boundary of the set.
Moreover, for {\it every open set}, the normal trace distribution can be computed as the limit of measure-valued normal
traces over the boundaries of approximating sets.
Equipped with these results,
we further develop a theory of Cauchy fluxes, starting from the balance law and establishing a one-to-one correspondence:
\begin{equation*}
\left\{\text{Cauchy fluxes } \mathcal F \text{ in } \Omega \right\}\,
\longleftrightarrow\, \left\{\text{Extended divergence-measure fields } \FF
\text{ in } \Omega\right\}
\end{equation*}
via the normal trace. The precise statement is given in \S \ref{sec:flux_mainresults}.

In the development of a theory of divergence-measure fields, one of the fundamental issues is whether
a Gauss-Green formula involving these weakly differentiable vector fields can still be provided.
We refer the reader to \cite{ChenTorresNotices} for a detailed exposition on the development
of this fundamental formula, starting from Lagrange (1762) and culminating with the classical formula
\begin{equation}
\label{comienzo}
\int_{U} \phi \, \div \FF \,\d x + \int_{U} \nabla \phi \cdot \FF \,\d x = - \int_{\partial U} \phi\,\FF \cdot \nu \, \d \mathcal{H}^{n-1},
\end{equation}
valid for any smooth vector field $\FF$, smooth test function $\phi$, and open set $U$ with smooth boundary
and interior unit normal $\nu$.
A first extension of this formula was achieved
by Federer and De-Giorgi \cite{degiorgi1961complementi, degiorgi1961frontiere, Federer1, Federer2}
for the cases of Lipschitz vector fields and sets with irregular boundaries (sets of finite perimeter)
by using tools of geometric-measure theory.

Further extensions of \eqref{comienzo} to divergence-measure fields require a notion of normal trace
on the boundary $\partial U$ of any open $U$.
For the case of bounded vector fields and sets of finite perimeter,
the approach in  \cite{ctz} consisted in constructing essentially
{\it interior} and {\it exterior}
approximations of the sets of finite perimeter with smooth sets and then obtaining the normal traces
as the limits of classical normal traces on the smooth approximating sets,
which leads to
the existence of interior and exterior traces for {\it every } set of finite perimeter (see also \cite{ComiTorres}).
This approach is consistent with applications to hyperbolic conservation laws since solutions
to these equations have jumps across the shock waves ({\it cf}. \cite{Chen,CF1,dafermos2010hyperbolic,Lax1973}).
For a divergence-measure field whose underlying field is bounded, given any set of finite perimeter, it was shown in \cite{ctz}
that the normal trace is a bounded function supported on the reduced boundary of the set.
For the case of an unbounded vector field, the normal trace is classical for {\it almost every open set},
and it was shown in \cite{ChenComiTorres} that the normal trace distribution on {\it every open set}
can be computed as the limit of the classical normal traces on the boundaries of approximating sets.
For unbounded vector fields, several counterexamples show that the normal trace distribution
can not be represented in general as a measure supported on the boundary of
the set (see for instance
\cite{ChenComiTorres,ChenTorresNotices,Silhavy2}).
The technique that we use in this paper in order to generalize \eqref{comienzo} to the
case of extended-divergence fields is the {\it disintegration of measures}.

As indicated earlier, divergence-measure fields arise naturally in the field of nonlinear hyperbolic conservation laws:
\begin{equation}
\partial_t{\bf u}+ \sum_{j=1}^m \partial_{x_j} {\bf f}_j({\bf u})=0 \qquad \mbox{for $(t,x) \in \mathbb R_+ \times \mathbb{R}^m$},
\end{equation}
or in short form
 \begin{equation}\label{codo2}
\partial_t{\bf u}+ \div_x \, {\bf f}({\bf u})=0 \qquad \mbox{for $(t,x) \in \mathbb R_+ \times \mathbb{R}^m$},
\end{equation}
where $n = m+1$,
taking the row-wise divergence of ${\bf f}({\bf u}) = (\bff_1(\bfu),\bff_2(\bfu),\cdots,\bff_m(\bfu))$,
where
where ${\bf f}_i^{\intercal}:\R^N \to \R^m$
for $i=1, \cdots, m$ and ${\bf u} = (u_1,u_2,\cdots,u_N)^{\intercal}$.
One of the main features of \eqref{codo2} is that, no matter how smooth the initial data start with, the solution
may develop singularities to become discontinuous or singular (unbounded or measure-valued).
Physical relevant solutions, so-called {\it entropy solutions}, of \eqref{codo2} are required to be
characterized by the entropy inequality:
\begin{equation}
\label{entropyinequality}
\partial_t\eta(\mathbf u) + \div_x \,
{\bf q}({\bf u}) \leq 0
\end{equation}
that holds in the distributional sense for any entropy-entropy flux
pair $(\eta, {\bf q})=(\eta, q_1,\cdots, q_m)$ ({\it i.e.}, $\nabla q_i({\bf u})=\nabla\eta({\bf u}){\bf f}_i({\bf u}), i=1,\cdots, m$) which is \emph{convex} in that $\nabla^2\eta({\bf u})\ge 0$,
for which the field $(\eta({\bf u}(t,x)), {\bf q}({\bf u}(t,x)))$ is defined.
From \eqref{entropyinequality}, it follows that there is a non-negative measure $\sigma_{\eta}$ such that
\begin{equation}
\label{aquitrabajo}
- \div_{t,x} (\eta({\bf u}(t,x)), {\bf q}({\bf u}(t,x)))= \sigma_{\eta}.
\end{equation}
Therefore, $(\eta({\bf u}(t,x)), {\bf q}({\bf u}(t,x)))$ is an extended divergence-measure field.
To study the jumps of entropy solutions across shock waves,
we want to obtain the interior and exterior normal traces of entropy-entropy flux fields
of the solutions
on the shock waves by approximating them with smooth surfaces.

The theory of divergence-measure fields in $L^\infty$ has been applied to the analysis of properties of entropy solutions
of nonlinear hyperbolic conservation laws \eqref{codo2};
see for instance \cite{CF1,ChenTorresStructure,ChenTorresNotices}
and the references cited therein, as well as \S \ref{sec:conversionlaws}
below for the details.
Divergence-measure fields also appear in many other areas of analysis, including the study of prescribed mean curvature equations,
the $1$-Laplacian, the continuity equation, and related topics.
We refer to \cite{LeoSaracco1, LeoSaracco2, LeoComi,crippa2024,scheven2016bv,scheven2017anzellotti,scheven2016dual,kawohl2007dirichlet}
and the references therein.
The theory of divergence-measure fields (such as normal traces, Gauss-Green formulas, and product rules,
among others)
provide a mathematical foundation for developing new techniques and tools for entropy methods, measure-theoretic analysis,
partial differential equations, free boundary problems, and related areas.

Furthermore, there are underlying intrinsic connections between divergence-measure fields
and the Cauchy fluxes for the physical balance laws in Continuum Physics.
In this paper, we further analyze such connections and present how the Cauchy fluxes can be
represented by extended divergence-measure fields.
The origin of the study
of Cauchy fluxes dates back to the fundamental paper by Cauchy \cite{Cauchy1}
who considered the balance law in Classical Physics:
\begin{equation}\label{CauchyBalanceLawint}
\int_{U} p(x) \, \d x = \int_{\partial U} f(x, \nu(x)) \,\d \mathcal{H}^{n-1}(x) \qquad
\mbox{for any $U \Subset \Omega$},
\end{equation}
where $\Omega$ is a bounded open set, $\nu$ is the interior unit normal,
the production $p(x)$ is a bounded function in $x \in \Omega$,
and the {\it density function} $f(x,\nu)$ is continuous in $x \in \Omega$.
The balance law postulates that the production of a quantity in any bounded open set $U \Subset \Omega$
is balanced by the flux of this quantity through
$\partial U$.
It was shown
in \cite{Cauchy1} that there exists a continuous vector field $\FF$ such that
\begin{equation} \label{elpuntoint}
  f(x, \nu)= \FF (x) \cdot \nu.
\end{equation}
From classical continuum physics,
it follows that the object of study should be the total flux across a surface $S$ contained in $\partial U$,
that is,
\begin{equation}
\label{objectint}
  \mathcal{F}(S)= \int_{S} f(x, \nu(x)) \,\d \mathcal{H}^{n-1}(x).
\end{equation}
In this paper, we formulate the conditions on the Cauchy flux $\mathcal{F}$
which guarantee the existence of an extended divergence-measure field $\FF$
with the property that $\mathcal{F}(S)$ can be recovered through the normal trace
of $\FF$ on open sets.
This recovery is global for {\it every} open set, and local for {\it almost every open set}.
Since the fundamental work of Cauchy \cite{Cauchy1},
some important
developments
have been made
on the problem of removing the continuity assumption;
see \cite{ChenComiTorres,ctz,degiovanni1999cauchy,gm,Noll2,Sch,S2,S3,Silhavy1,S4,Ziemer1} and
the references cited therein.
We refer the reader to \S \ref{sec:flux_history} for a more detailed description of the history
of Cauchy fluxes and contributions of the aforementioned references.

Classically, the derivation of nonlinear system \eqref{codo2} of conservation laws
is carried out directly from the balance law \eqref{CauchyBalanceLawint} under the assumption
that the vector field is classically differentiable.
This procedure is not rigorous, since the solutions to \eqref{codo2} may be discontinuous, or even measure-valued, in general.
We seek to rigorously derive \eqref{elpuntoint}
under a general framework with
much weaker assumptions.
We consider the generalized balance law:
\begin{equation}
\label{balancelaw3int}
\sigma (U) = \mathcal{F}(\partial U) \qquad\,\, \text{ for any $ U \Subset \Omega$},
 \end{equation}
 where the production $\sigma$ is a Radon measure and the Cauchy Flux $\mathcal{F}$
 is bounded by a Radon measure concentrated on the boundary $\partial U$
 of $U$ (see Definition \ref{DefinitionCauchyFlux}).
 Moreover, we seek to \emph{recover the flux locally} as
\begin{equation}\label{eq:want_localrecovery}
    \mathcal F_U(S) = ( \FF \cdot \nu)_{\partial U}(S),
\end{equation}
valid for \emph{almost all} open $U \Subset \Omega$ and all Borel $S \subset \partial U$,
where the right-hand side is the normal trace of $\FF$ to be made precise
in Definition \ref{defn:normal_trace}.
An extensive analysis has been made when the underlying field is bounded or lies in $\Leb^p$, however a complete treatment in the measure-valued case has remained open, which is one of our motivations.

In this paper, we propose a general formulation for the Cauchy flux in Definition \ref{DefinitionCauchyFlux},
which encapsulates the case of measure-valued fields.
One of our main results, Theorem \ref{maintheorem}, states that the balance law, together with the conditions
imposed on the Cauchy flux, implies the existence of an extended divergence-measure field $\FF$
such that the Cauchy flux can be locally recovered through the normal trace of $\FF$
on the boundary of almost every open set in the sense of \eqref{eq:want_localrecovery}.

The outline of this paper is as follows:
In \S \ref{sec:definitions}, we introduce the distributional normal trace and a product rule
for extended divergence-measure fields.
We show in  \S \ref{sec:disintegration}, more precisely in Theorem \ref{principal},
that the disintegration of measures yields the Gauss-Green formulas for extended divergence-measure fields.
Further properties of this disintegration are studied in \S \ref{sec:prop_disintegration}.
In particular, Theorem \ref{thm:reconstruct_normaltrace} is a coarea-type formula for divergence-measure fields.
In \S \ref{sec:localisation}, the localization properties of the normal trace are developed,
which further motivates our notion of the Cauchy flux in the sequel.
In \S \ref{sec:flux_definition}--\S \ref{sec:flux_localrecovery},
we are devoted to the analysis of the Cauchy flux and the proof of Theorem \ref{maintheorem}.
The extensions of the aforementioned results to general open sets $U \subset \Omega$ are
analyzed in \S \ref{sec:extended_normaltrace}.
In \S \ref{sec:alt_existence}, we discuss the solvability of the equation $-\div \FF = \sigma$.
Finally, in \S \ref{sec:conversionlaws},
we apply the theory of Cauchy fluxes to study the equivalence between entropy solutions
of the multidimensional nonlinear partial differential equations
of balance laws and of the mathematical formulation of physical balance laws
through the constitutive relations in the axiomatic foundation of Continuum Physics.

\section{Extended Divergence-Measure Fields and Distributional Normal Traces}\label{sec:definitions}
We start this section by introducing some basic notation and recalling some properties of Radon measures,
and then introduce divergence-measure fields and the normal traces.

\subsection{Preliminary notions}
Throughout this paper, we work with open subsets $\Omega\subset \mathbb R^n$,
with the standing assumption
that $n \geq 2$ unless otherwise specified.
We use $B_r(x)$ to denote an open ball of radius $r>0$ centered at $x \in \mathbb R^n$; more generally,
we write $B_r(A) = \bigcup_{a \in A} B_r(a)$ for a set $A \subset \mathbb R^n$.
For any set $A \subset \mathbb R^n$, denote the characteristic function of $A$ by $\mathbbm{1}_A$,
and use $A^{\rm c} = \mathbb R^n \setminus A$ to denote the complement
of $A$.
We also write $A \Subset \Omega$ if $\overline{A}$ is compact and $\overline A \subset \Omega$.

For any space $X(\Omega,\mathbb R^n)$ of vector function fields on $\Omega \subset \mathbb R^n$,
denote by $X_{\loc}(\Omega,\mathbb R^n)$ the space of vector function fields $f$ for
which $f \rvert_{\Omega'} \in X(\Omega', \mathbb{R}^{n})$ for
all $\Omega' \Subset \Omega$, and by $X_{\mathrm{c}}(\Omega,\mathbb R^n)$ the space of vector function fields
$f \in X(\Omega,\mathbb R^n)$ which are compactly supported in $\Omega$.
We write $X(\Omega)$ for the space of scalar functions
and define $X_{\loc}(\Omega)$ and $X_{\mathrm{c}}(\Omega)$ analogously.

Denote by $\Lip_{\rm b}(\Omega)$ the space of Lipschitz-continuous functions that are bounded in the sense that
\begin{equation}
    \lVert \phi \rVert_{\Lip_{\rm b}(\Omega)} := \lVert \phi \rVert_{\Leb^{\infty}(\Omega)} + \Lip(\phi) < \infty.
\end{equation}
Rademacher's Theorem implies
that $\Lip_{\rm b}(\Omega) \subset \Sob^{1,\infty}(\Omega)$; however, this inclusion may be strict
for a general open set $\Omega$.

For any open set $U \subset \Omega$ and $\eps>0$, throughout the paper, we use the notation:
\begin{equation}
  U^{\eps} := \{ x \in U\,:\,\dist(x,\partial U) > \eps \}.
\end{equation}
We also set $U^0 = U$ and
\begin{equation}
U^{-\eps} := \{ x \in \mathbb R^n \,:\,\dist(x, U) < \eps \}.
\end{equation}
We see that $\partial U^{\eps}$ are the \emph{interior approximations} of $\partial U$,
while
$\partial U^{-\eps}$ give \emph{exterior approximations}.
We often abbreviate the distance function $d(x) = d_U(x) = \dist(x,\partial U)$
if the underlying set is clear from the context.

By a \emph{standard mollifier} we mean a non-negative function $\rho \in C^{\infty}_{\mathrm{c}}(\mathbb R^n)$,
supported in the unit ball, such that $\int_{\mathbb R^n} \rho(x) \,\d x = 1$.
For $\delta>0$, we set $\rho_{\delta}(x) = \delta^{-n} \rho(\frac{x}{\delta})$
and, for $f \in \Leb^1_{\loc}(\Omega)$, we often write $f_{\delta} = f \ast \rho_{\delta}$
for the mollification defined in $\Omega^{\delta}$.

For any open set $\Omega \subset \mathbb{R}^{n}$,
denote  $\mathcal{M} (\Omega, \mathbb{R}^{n})$ as the space of all finite vector-valued Radon measures on $\Omega$
and $\mathcal M(\Omega)$ as the space of finite signed Radon measures.
If $\mu_k$ is a sequence of vector-valued Radon measures in $\mathcal{M} (\Omega, \mathbb{R}^{n})$,  we use the notation:
\begin{equation}
\mu_k \xrightharpoonup{\,\, * \,\,}
\mu
\end{equation}
to denote that the sequence converges to $\mu$ in the weak*--topology.
If $\mu$ is a Radon measure, denote its total variation as $\lvert\mu\rvert$.
We say that $\mu$ is concentrated in a set $E \subset \mathbb{R}^{n}$ if $\abs{\mu} (\mathbb{R}^{n} \setminus E)=0$.
The support of $\mu$, denoted as $\spt(\mu)$, is the intersection of all closed sets $E$ such that $\mu$ is concentrated on $E$.
In particular,
\begin{equation}
\mathbb{R}^{n} \setminus \spt( \mu)= \{ x \in \mathbb{R}^n\,:\,\abs{\mu} (B_r(x))=0\, \text{ for some } r>0 \}.
\end{equation}

Let $\mu$ and $\lambda$ be non-negative Radon measures on $\mathcal{M}_{\loc}(\Omega)$.
Define $D_{\mu}^+ \lambda: \spt(\mu) \to [0, \infty]$ and $D_{\mu}^-\lambda: \spt(\mu) \to [0, \infty]$ as
$$
D_{\mu}^+ \lambda (x) := \limsup_{r \to 0} \frac{\lambda (B_r(x))}{\mu (B_r(x))},
\quad D_{\mu}^- \lambda(x) := \liminf_{r \to 0} \frac{\lambda (B_r(x))}{\mu (B_r(x))}
\qquad\,\,\,\mbox{for $x \in \spt(\mu)$}.
$$
If $D_{\mu}^+ \lambda (x)=D_{\mu}^- \lambda (x)$,
we denote this value as $D_{\mu} \lambda (x)$.
The function $D_{\mu} \lambda(x)$ is called the $\mu$-density of $\lambda$ at $x$.
We now recall the well-known Lebesgue-Besicovitch Differentiation Theorem
(see for instance \cite[{\S}2.4]{afp} and \cite[{\S}1.6]{eg}) which states
that
\begin{equation*}
\mu(\{ x: D_{\mu}^- \lambda (x) < D_{\mu}^+ \lambda (x) \}) =0,
\qquad
\mu(\{ x: D_{\mu}^{+} \lambda (x) = \infty \}) =0,
\end{equation*}
that is, $D_{\mu} \lambda$ is defined and finite $\mu$--\textit{a.e.}\,\,on $\mathbb{R}^{n}$.

Moreover, $D_{\mu} \lambda$ is Borel measurable and locally $\mu$-integrable, and satisfies
\begin{equation}
\label{Lebesgue-Besicovitch}
\lambda = (D_{\mu} \lambda)\, \mu + \lambda_{\text{sing}}.
\end{equation}
Here the Radon measure $\lambda_{\text{sing}}$ is defined as
\begin{equation*}
\lambda_{\text{sing}}= \lambda \rtangle Y
\end{equation*}
with
\begin{equation*}
Y=  \spt(\mu)^\mathrm{c}\,\mathsmaller{\bigcup}\, \{ x \in \spt(\mu) : D_{\mu}^{+} \lambda (x) = \infty \}
\end{equation*}
that satisfies $\mu(Y) = 0$.
We will make frequent use of the following property:

\begin{lemma}\label{elchiste}
Let $\mu$ and $\lambda$ be non-negative Radon measures on $\Omega$. Then
$D_{\mu} \lambda_{\textnormal{sing}} (x)=0$ for $\mu$--\textit{a.e.}\,\,$\,x \in \Omega$.
\end{lemma}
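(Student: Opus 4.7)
The plan is to apply the Lebesgue-Besicovitch Differentiation Theorem \eqref{Lebesgue-Besicovitch} a second time, now with the measure $\lambda$ replaced by its singular part $\nu := \lambda_{\textnormal{sing}}$, and then to invoke the uniqueness of the Lebesgue decomposition to identify the resulting $\mu$-density with zero.

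The structural point that drives the argument is already built into the construction of $\lambda_{\textnormal{sing}}$ just above the statement of the lemma: $\nu$ is concentrated on the set $Y$, and $\mu(Y)=0$, so $\nu$ and $\mu$ are mutually singular. Applying the Lebesgue-Besicovitch Theorem to the pair $(\mu,\nu)$ produces a density $D_{\mu}\nu$, defined and finite $\mu$-\emph{a.e.}, together with the decomposition
\begin{equation*}
\nu \;=\; (D_{\mu}\nu)\,\mu \;+\; \nu_{\textnormal{sing}},
\end{equation*}
in which the first summand is absolutely continuous with respect to $\mu$ and the second is concentrated on a $\mu$-null set. On the other hand, the trivial identity $\nu = 0\cdot\mu + \nu$ already exhibits $\nu$ in precisely this form, since $\nu$ itself is concentrated on the $\mu$-null set $Y$. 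By uniqueness of the Lebesgue decomposition of $\nu$ with respect to $\mu$, the absolutely continuous parts of these two representations must coincide, forcing the measure $(D_{\mu}\nu)\,\mu$ to vanish identically; since $D_{\mu}\nu \geq 0$ wherever defined, this is equivalent to $D_{\mu}\nu(x)=0$ for $\mu$-\emph{a.e.}\ $x\in\Omega$, as claimed.

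I do not anticipate a genuine obstacle: the lemma is essentially a direct corollary of the preceding theorem once one recognizes that $\lambda_{\textnormal{sing}}$ itself is eligible to play the role of the measure being differentiated. The only delicate point to verify is that the pointwise limit giving $D_{\mu}\nu$ really does agree $\mu$-a.e.\ with the Radon-Nikodym derivative of the absolutely continuous part of $\nu$---but this is exactly the content of the statement surrounding \eqref{Lebesgue-Besicovitch}, applied to $(\mu,\nu)$ in place of $(\mu,\lambda)$. If one prefers to bypass the uniqueness step entirely, the same conclusion can instead be reached by a standard Vitali-covering estimate showing that, for each $t>0$, the superlevel set $\{x\in\spt(\mu) : D_{\mu}^{+}\nu(x) > t\}$ can be covered by countably many disjoint balls $B_{r_i}(x_i)$ with $\nu(B_{r_i}(x_i)) > t\,\mu(B_{r_i}(x_i))$ chosen inside an open neighborhood of $Y$ of arbitrarily small $\mu$-measure, yielding $\mu(\{D_{\mu}^{+}\nu > t\}) = 0$. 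Either route delivers the lemma in a few lines.
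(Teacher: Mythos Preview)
Your proof is correct and follows essentially the same approach as the paper: both apply the Lebesgue--Besicovitch decomposition a second time to $\nu=\lambda_{\mathrm{sing}}$ and then identify the absolutely continuous part $(D_{\mu}\nu)\,\mu$ as zero. The only cosmetic difference is in that final identification---the paper evaluates the decomposition on $\mathbb{R}^n\setminus Y$ and uses non-negativity directly, whereas you invoke uniqueness of the Lebesgue decomposition; both conclude in one line.
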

\begin{proof}
Let $\lambda_{\mathrm{sing}} = \lambda \res Y$ as above. Consider the decomposition:
  \begin{equation}
      \lambda_{\mathrm{sing}} = (D_{\mu}\lambda_{\mathrm{sing}})\, \mu + (\lambda_{\mathrm{sing}})_{\mathrm{sing}}.
  \end{equation}
  Since all the measures are non-negative and
  $\lambda_{\mathrm{sing}}(\mathbb R^n \setminus Y) = (\lambda_{\mathrm{sing}})_{\mathrm{sing}}(\mathbb R^n \setminus Y)=0$,
  it follows that
  \begin{equation}
      \int_{\mathbb R^n \setminus Y} (D_{\mu}\lambda_{\mathrm{sing}})(x) \,\d \mu (x) = 0,
  \end{equation}
so that $D_{\mu}\lambda_{\mathrm{sing}}(x) = 0$ for $\mu$--\textit{a.e.}\,\,$x \in \mathbb R^n \setminus Y$.
Since $Y$ is $\mu$-null, then the conclusion follows.
\end{proof}

The above extends to the case when $\lambda$ is a signed Radon measure by considering
the Jordan decomposition $\lambda = \lambda^+ - \lambda^-$ with $\lambda^+$ and $\lambda^-$ non-negative Radon measures (see \cite[{\S}1.1]{afp}).
Working componentwise, we can also allow for vector-valued measures.

\begin{lemma}\label{lem:measure_coincidence}
Let $\mu$ and $\lambda$ be Radon measures on an open set $\Omega \subset \mathbb R^n$.
Suppose that $\mathcal A$ is a family of Borel subsets which is closed under intersections and generates
the Borel $\sigma$-algebra.
If $\mu(A) = \lambda(A)$ for all $A \in \mathcal A$, then $\mu=\lambda$ as measures.
\end{lemma}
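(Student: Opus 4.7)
My plan is to invoke Dynkin's $\pi$-$\lambda$ theorem. The closure of $\mathcal A$ under (finite) intersections says precisely that $\mathcal A$ is a $\pi$-system, and by hypothesis $\sigma(\mathcal A) = \mathcal B(\Omega)$; it therefore suffices to exhibit a $\lambda$-system (Dynkin class) containing $\mathcal A$ on which $\mu$ and $\lambda$ agree, and then let the theorem do the rest.

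In the finite-measure case I would set $\mathcal D := \{B \in \mathcal B(\Omega) : \mu(B) = \lambda(B)\}$ and verify that $\mathcal D$ is a $\lambda$-system: closure under proper differences via $\mu(B_2 \setminus B_1) = \mu(B_2) - \mu(B_1)$ uses finiteness essentially, while closure under monotone countable unions is continuity from below. Since $\mathcal D \supseteq \mathcal A$ by hypothesis, Dynkin's theorem yields $\mathcal D \supseteq \sigma(\mathcal A) = \mathcal B(\Omega)$, so $\mu = \lambda$. For genuinely locally finite Radon measures one first localizes: for each $A_0 \in \mathcal A$ with $\mu(A_0) + \lambda(A_0) < \infty$, the family $\mathcal D_{A_0} := \{B \in \mathcal B(\Omega) : \mu(A_0 \cap B) = \lambda(A_0 \cap B)\}$ is a $\lambda$-system by the same verification, and the $\pi$-system property of $\mathcal A$ ensures $A_0 \cap A \in \mathcal A$ for every $A \in \mathcal A$, whence $\mathcal A \subseteq \mathcal D_{A_0}$. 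Dynkin then yields $\mu(A_0 \cap B) = \lambda(A_0 \cap B)$ for every Borel $B$; letting $A_0$ range through an increasing countable exhaustion $\Omega = \bigcup_k A_k$ by such finite-measure sets and passing to the limit by continuity from below gives $\mu = \lambda$ on all of $\mathcal B(\Omega)$.

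The main obstacle I anticipate is the availability of such a countable exhaustion of $\Omega$ by finite-measure elements of $\mathcal A$, which is the only ingredient beyond the formal Dynkin machinery that needs to be supplied. It is implicit in the Radon/local-finiteness hypothesis together with the $\sigma$-compactness of $\Omega \subset \mathbb R^n$, since one can intersect a compact exhaustion of $\Omega$ with generating sets; without some such $\sigma$-finiteness compatible with $\mathcal A$ the argument only recovers agreement piece by piece, not globally. All remaining steps -- the verification of the $\lambda$-system axioms and the invocation of Dynkin's theorem -- are entirely routine.
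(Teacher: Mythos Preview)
Your proposal is correct and takes essentially the same approach as the paper: the paper does not give a self-contained proof but simply cites \cite[Proposition~1.8]{afp}, whose argument is precisely the Dynkin $\pi$--$\lambda$ method you outline, together with the remark that Radon measures on $\Omega\subset\mathbb R^n$ are $\sigma$-finite. You have in fact supplied more detail than the paper does, including correctly flagging the one genuine ingredient (a $\sigma$-finite exhaustion by elements of $\mathcal A$), which in every application in the paper is satisfied because $\mathcal A$ consists of open cubes or similar sets.
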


This is shown in \cite[Proposition 1.8]{afp} by noting that any Radon measure is $\sigma$-finite.
While the aforementioned proposition is only stated for non-negative measures,
an inspection of the proof reveals that the result also holds for signed and vector-valued measures.

\subsection{Divergence-measure fields and normal traces}

We are now ready to introduce our central notions of interest:

\begin{definition}
If  $\FF \in \mathcal{M}(\Omega, \mathbb{R}^n)$, define
\begin{equation}
\label{allthetime}
 \lvert\div \FF\rvert (\Omega):= \sup \Big\{ \int_{\Omega}  \nabla \varphi  \cdot \, \dr \FF \,:\, \varphi \in C_{\mathrm{c}}^1(\Omega),\, \lvert\varphi\rvert\le 1\Big\}.
\end{equation}
We say that $\FF$ is an extended divergence-measure field over $\Omega$
if $\FF \in \mathcal{M}(\Omega, \mathbb{R}^n)$ and $\lvert\div \FF\rvert (\Omega) < \infty$.
In this case, the Riesz Representation Theorem implies that $\div \FF \in \mathcal {M} (\Omega)$
and the total variation measure $\abs{\div \FF}$ is computed as
$$
\lvert\div \FF\rvert (A)
= \sup \Big \{ \int_{\Omega}  \nabla \varphi  \cdot \, \dr \FF \,: \, \varphi \in C_{\mathrm{c}}^1(A), \,
\lvert\varphi\rvert\le 1 \Big \} \qquad\,\,\mbox{for every open set $A \subset \Omega$},
$$
and $\abs{\div \FF} (E)= \inf \{ \abs{\div \FF} (A):  A \textnormal{ open set}, \, A \supset E\}$ for arbitrary measurable sets $E$.

If $\FF \in \Leb^{p}(\Omega, \mathbb{R}^n)$ for $1 \leq p \leq \infty$,
then we may view $\FF$ as a measure in the sense{\rm:}
\begin{equation}\label{eq:lp_product_represenation}
\int_{\Omega}  \nabla \varphi  \cdot \, \dr \FF = \int_{\Omega} \nabla \varphi\cdot \FF \,\dr x.
\end{equation}
\end{definition}

The spaces of all $\Leb^{p}$-divergence-measure fields and extended divergence-measure fields
are denoted as $\mathcal{DM}^p(\Omega)$ and $\mathcal{DM}^{\textnormal{ext}}(\Omega)$, respectively.
They are Banach spaces with norms given respectively by
$$
\norm{\FF}_{\mathcal{DM}^p(\Omega)}= \norm{\FF}_{\Leb^p(\Omega,\mathbb{R}^n)} + \lvert\div \FF\rvert(\Omega)
$$
and
$$
\norm{\FF}_{\mathcal{DM}^{\textnormal{ext}}(\Omega)}= \lvert\FF\rvert(\Omega) + \lvert\div \FF\rvert(\Omega).
$$
Since the distributional divergences $\ban{\div \FF,\,\cdot\,}$ of these vector fields are measures,
we see that, for any $\varphi \in C_{\mathrm{c}}^{\infty}(\Omega)$,
\begin{equation}
 \ban{\div \FF,\varphi}= \int_{\Omega} \varphi \, \dr(\div \FF)
 =- \int_{\Omega}  \nabla \varphi\cdot \FF \, \dr x
   \qquad  \mbox{ for $\FF \in \Leb^p(\Omega, \mathbb{R}^n)$},
\end{equation}
and
\begin{equation}
 \ban{\div \FF,\varphi}= \int_{\Omega} \varphi \, \dr(\div \FF) =- \int_{\Omega} \nabla \varphi \cdot \,\dr  \FF
 \qquad  \mbox{ for $\FF \in \mathcal{M}(\Omega, \mathbb{R}^n)$}.
\end{equation}
In what follows, we drop the term ``extended'' and refer to fields $\FF \in \mathcal{DM}^{\ext}(\Omega)$
simply as divergence-measure fields.
For such fields, we can make sense of its normal trace in a distributional sense.

\begin{definition}\label{defn:normal_trace}
Let $\FF \in \mathcal{DM}^{\textnormal{ext}}(\Omega)$, and let $E \Subset \Omega$ be a Borel set.
The \emph{normal trace} of $\FF$ on the boundary of $E$ is defined as
\begin{equation}\label{normal_trace}
    \langle \FF\cdot \nu, \,\phi \rangle_{\partial E} := -\int_{E} \nabla \phi \cdot \d \FF - \int_{E} \phi \,\d(\div \FF)
    \qquad\,\, \mbox{for $\phi \in C_{\mathrm{c}}^1({\Omega})$}.
\end{equation}
Equivalently, we can write \eqref{normal_trace} as an equality of distributions{\rm :}
  \begin{equation}\label{eq:normaltrace_eq}
    \langle \FF \cdot \nu, \,\cdot\,\rangle_{\partial E} =  \div(\mathbbm{1}_E \FF)-\mathbbm{1}_E \div \FF \qquad \text{in } \mathcal{D}'(\Omega).
  \end{equation}
If the distribution \eqref{eq:normaltrace_eq} can be identified as a measure in $\Omega$, then we
denote this measure by $(\FF \cdot \nu)_{\partial E}$.
\end{definition}

Note that the integrals in \eqref{normal_trace} are well-defined for Borel sets $E$, since $\FF$ and $\div \FF$ are Radon measures on $\Omega$.
In the case of a field $\FF \in \mathcal{DM}^p(\Omega)$,  the normal trace is defined by identifying $\FF$ with the vector-valued Radon measure $\FF \mathcal L^n$ as in \eqref{eq:lp_product_represenation}.

\begin{remark}
We have opted to define the normal trace as a distribution in $\Omega$;
while one can more generally consider the normal trace as a distribution on $\mathbb R^n$, we will postpone this discussion to \S \ref{sec:extended_normaltrace}.

Moreover, compared to the prior works such as \cite{ChenComiTorres,CF2,Sch,Silhavy2},
our definition differs by a minus sign, which corresponds to the use of the \emph{interior} unit normal $\nu$ in the classical case.
\end{remark}
\begin{lemma}
For $\FF \in \mathcal{DM}^{\ext}(\Omega)$ and any Borel set $E \Subset \Omega$,
the normal trace $\langle \FF \cdot \nu, \,\cdot \,\rangle_{\partial E}$ is represented
by a measure on $\Omega$ if and only if
    \begin{equation}
        \mathbbm{1}_E \FF \in \mathcal{DM}^{\ext}(\Omega).
    \end{equation}
\end{lemma}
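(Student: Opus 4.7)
The plan is direct: one unpacks what it means for the distribution in \eqref{eq:normaltrace_eq} to be represented by a measure, and observes that one of the two terms on the right-hand side is automatically a Radon measure, so the question reduces entirely to the other.

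First, since $E \Subset \Omega$ is Borel and $\FF \in \mathcal{M}(\Omega,\mathbb R^n)$ is finite, the restriction $\mathbbm{1}_E \FF = \FF \res E$ is itself a finite vector-valued Radon measure on $\Omega$, with $\lvert \mathbbm{1}_E \FF\rvert(\Omega) \leq \lvert\FF\rvert(\Omega)$. Consequently, the membership $\mathbbm{1}_E \FF \in \mathcal{DM}^{\ext}(\Omega)$ is equivalent to its distributional divergence having finite total variation on $\Omega$; that is, to $\div(\mathbbm{1}_E \FF)$ being a finite signed Radon measure in $\mathcal{M}(\Omega)$. In the same way, $\mathbbm{1}_E \div \FF = (\div \FF) \res E$ is always a finite signed Radon measure on $\Omega$, since $\div \FF \in \mathcal{M}(\Omega)$ by the assumption $\FF \in \mathcal{DM}^{\ext}(\Omega)$.

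Both implications then follow at once from the distributional identity \eqref{eq:normaltrace_eq}. If the normal trace is represented by a measure on $\Omega$, then
\begin{equation*}
\div(\mathbbm{1}_E \FF) = \langle \FF \cdot \nu, \,\cdot\, \rangle_{\partial E} + \mathbbm{1}_E \div \FF
\end{equation*}
is a sum of two elements of $\mathcal{M}(\Omega)$, hence itself lies in $\mathcal{M}(\Omega)$, giving $\mathbbm{1}_E \FF \in \mathcal{DM}^{\ext}(\Omega)$. Conversely, if $\mathbbm{1}_E \FF \in \mathcal{DM}^{\ext}(\Omega)$, then both $\div(\mathbbm{1}_E \FF)$ and $\mathbbm{1}_E \div \FF$ are Radon measures on $\Omega$, and their difference represents $\langle \FF \cdot \nu, \,\cdot\, \rangle_{\partial E}$ as a signed Radon measure.

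There is no serious obstacle here: the lemma is essentially a rewriting of the definition of $\mathcal{DM}^{\ext}$, combined with the elementary fact that the restriction of a Radon measure to a Borel set remains a Radon measure. The only mild point worth checking is that the distribution $\mathbbm{1}_E \div \FF$, interpreted as the restricted Radon measure, agrees with the corresponding term appearing in the pairing \eqref{normal_trace}; this is immediate upon testing against $\phi \in C_{\mathrm c}^\infty(\Omega)$ and comparing with \eqref{eq:normaltrace_eq}.
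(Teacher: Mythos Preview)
Your proof is correct and follows essentially the same approach as the paper: both use the distributional identity \eqref{eq:normaltrace_eq}, observe that $\mathbbm{1}_E\,\div\FF$ is always a Radon measure, and conclude that the normal trace is a measure if and only if $\div(\mathbbm{1}_E\FF)$ is, which is exactly the condition $\mathbbm{1}_E\FF \in \mathcal{DM}^{\ext}(\Omega)$.
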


\begin{proof}
By definition of the normal trace, we have
    \begin{equation}
        \langle \FF \cdot \nu,\,\cdot \,\rangle_{\partial E} = \div(\mathbbm{1}_E\FF) -\mathbbm{1}_E\,\div\FF
    \end{equation}
as distributions in $\Omega$.
Since $\mathbbm{1}_E\,\div\FF$ is a Radon measure on $\Omega$,
we see that the normal trace is a measure if and only if $\div(\mathbbm{1}_E\FF)$ is a measure.
This precisely corresponds to imposing that $\mathbbm{1}_E \FF \in \mathcal{DM}^{\ext}(\Omega)$.
\end{proof}

While we can view the normal trace as a distribution in $\Omega$,
analogously to the classical case,
one may wonder if it can be represented by a measure on $\partial E$,
or equivalently as a distribution of order zero.
While counterexamples show this
need not hold
in general (see \cite[Example 2.5]{Silhavy2}),
one can expand the allowed test functions $\phi$ in the definition of normal trace.
This will follow from the following product rule for $\mathcal{DM}^{\ext}$--fields
due to \v{S}ilhav\'y \cite{Silhavy2} based on the notion of \emph{pairings} introduced by Anzelotti \cite{Anzellotti_1983}
(also see \cite{CF1,CF2}).
Far-reaching generalizations of this pairing have recently been obtained by Comi, Cicco \& Scilla \cite{comiextended}.

\begin{theorem}
\label{productrule}
  If $\FF \in \mathcal{DM}^{\textnormal{ext}}(\Omega)$ and $\phi \in \Sob^{1,\infty}(\Omega)$,
  then $\phi \FF \in \mathcal{DM}^{\textnormal{ext}}(\Omega)$ and
  \begin{equation}
   \div(\phi \FF) = \phi \,\div \FF + \overline{\nabla\phi\cdot \FF}, \label{productrule1}
  \end{equation}
  where $\overline{\nabla\phi \cdot \FF}$ is a signed measure on $\Omega$ characterized by
  \begin{equation}\label{eq:pairing_limit}
   \int_{\Omega} \psi\, \d\overline{\nabla\phi \cdot \FF}
   = \lim_{\delta \to 0} \int_{\Omega} \psi \nabla\phi_{\delta} \cdot \d \FF \qquad
   \text{for any }\psi \in C_{\mathrm{c}}(\Omega) ,
  \end{equation}
  and satisfies the estimate{\rm :}
  \begin{equation}\label{eq:product_measure_bound}
      \lvert \overline{\nabla\phi \cdot \FF} \rvert \leq \lVert\nabla\phi\rVert_{\Leb^{\infty}(\Omega)} \lvert \FF \rvert
  \end{equation}
  as measures in $\Omega$.
\end{theorem}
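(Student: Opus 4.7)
The plan is to approximate $\phi$ by mollifications $\phi_\delta = \phi \ast \rho_\delta$, apply the classical product rule for smooth functions to $\phi_\delta$, and pass to the limit $\delta \to 0$ to identify $\overline{\nabla\phi\cdot\FF}$. First I would fix a locally Lipschitz representative of $\phi$: since each ball $B \Subset \Omega$ is a Lipschitz domain, $\Sob^{1,\infty}(B) = \Lip(B)$, so $\phi$ may be taken continuous and locally Lipschitz on $\Omega$. As a bounded Borel function this $\phi$ yields a well-defined vector-valued Radon measure $\phi\FF$ on $\Omega$ with $\abs{\phi\FF} \leq \norm{\phi}_{\Leb^{\infty}(\Omega)}\abs{\FF}$. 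The mollifications lie in $C^{\infty}(\Omega^\delta)$, satisfy $\norm{\nabla\phi_\delta}_{\Leb^{\infty}(\Omega^\delta)} \leq \norm{\nabla\phi}_{\Leb^{\infty}(\Omega)}$, and converge to $\phi$ uniformly on compact subsets of $\Omega$. For $\psi \in C_{\mathrm{c}}^{\infty}(\Omega)$ with $\supp\psi \subset \Omega^\delta$, combining the smooth product rule $\nabla(\psi\phi_\delta) = \phi_\delta\nabla\psi + \psi\nabla\phi_\delta$ with the distributional definition of $\div\FF$ applied to $\psi\phi_\delta$ rearranges to
\begin{equation*}
    -\int_\Omega \phi_\delta \nabla\psi \cdot \d\FF = \int_\Omega \psi\phi_\delta \,\d(\div\FF) + \int_\Omega \psi \nabla\phi_\delta \cdot \d\FF.
\end{equation*}

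Next, I would pass to the limit $\delta \to 0$. Using uniform convergence of $\phi_\delta$ on the compact set $\supp\psi$, together with finiteness of $\abs{\FF}$ and $\abs{\div\FF}$ on compact subsets, the left-hand side converges to $\ban{\div(\phi\FF),\psi}$ by the definition of $\phi\FF$, while the first integral on the right converges to $\int_\Omega \psi\phi\,\d(\div\FF)$. Thus the remaining integral $\int_\Omega \psi\nabla\phi_\delta \cdot \d\FF$ must converge, defining a linear functional $\Lambda$ on $C_{\mathrm{c}}^{\infty}(\Omega)$. The uniform estimate
\begin{equation*}
    \Big\lvert \int_\Omega \psi \nabla\phi_\delta \cdot \d\FF \Big\rvert \leq \norm{\nabla\phi}_{\Leb^{\infty}(\Omega)} \int_\Omega \abs{\psi}\,\d\abs{\FF}
\end{equation*}
passes to the limit, and by density $\Lambda$ extends to a bounded linear functional on $C_{\mathrm{c}}(\Omega)$. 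The Riesz representation theorem then produces a signed Radon measure $\overline{\nabla\phi\cdot\FF}$ representing $\Lambda$ and satisfying the bound \eqref{eq:product_measure_bound}, hence also the characterization \eqref{eq:pairing_limit}. Substituting back into the rearranged identity gives the product rule \eqref{productrule1} as measures on $\Omega$; in particular, $\div(\phi\FF)$ is a Radon measure and $\phi\FF \in \mathcal{DM}^{\ext}(\Omega)$.

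The main obstacle is establishing the existence of the limit in \eqref{eq:pairing_limit}: mollification provides only an $\Leb^{\infty}$-bound on $\nabla\phi_\delta$, with no pointwise or strong $\Leb^1$-convergence available against the measure $\FF$, which may concentrate on lower-dimensional sets. The key trick is to reroute through the smooth product rule identity, where the other two terms converge by uniform convergence of $\phi_\delta$, so that convergence of the dangerous integral $\int_\Omega \psi\nabla\phi_\delta \cdot \d\FF$ is forced as a byproduct. The uniform $\Leb^{\infty}$-bound on $\nabla\phi_\delta$ then lets one upgrade this limit functional to a Radon measure via Riesz representation, with total variation controlled by $\norm{\nabla\phi}_{\Leb^{\infty}(\Omega)}\abs{\FF}$.
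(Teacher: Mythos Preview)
Your proof is correct and follows essentially the same strategy as the paper: mollify $\phi$, use the product rule at the approximate level, and infer convergence of the pairing term from convergence of the other two terms in the identity together with the uniform bound $\lVert\nabla\phi_\delta\rVert_{\Leb^{\infty}} \leq \lVert\nabla\phi\rVert_{\Leb^{\infty}}$. Your version is slightly more direct in that you test $\div\FF$ against $\psi\phi_\delta \in C_{\mathrm{c}}^{\infty}(\Omega)$ straightaway, whereas the paper first mollifies $\FF$ as well to establish $\div(\phi_\delta\FF) = \phi_\delta\,\div\FF + \nabla\phi_\delta\cdot\FF$ as an equality of measures before sending $\delta\to 0$ via weak${}^*$ compactness and distributional uniqueness; the two arguments are equivalent.
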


\begin{proof}
Let $\rho_{\delta}$ be a standard mollifer, and set $\phi_{\delta} = \phi \ast \rho_{\delta}$.
We first claim that
\begin{equation}\label{segundo}
    \div(\phi_{\delta} \FF) = \phi_{\delta} \, \div \FF + \nabla \phi_{\delta} \cdot \FF
    \qquad \mbox{in $\Omega^{\delta}$},
\end{equation}
and notice that the right-hand side is well-defined since both terms are a product of
a Borel measure with a continuous function.
To see this, we mollify the field $\FF$ by $\FF_{\eps} = \FF \ast \rho_{\eps}$,
so that the classical product rule
\begin{equation}\label{eq:classical_productrule}
\div(\phi_{\delta} \FF_{\varepsilon})= \phi_{\delta}\, \div \FF_{\varepsilon} + \nabla \phi_{\delta} \cdot \FF_{\varepsilon}
\end{equation}
holds in $\Omega^{\delta}$ for $0<\eps<\delta$.
Let $\psi \in C_{\mathrm{c}}(\Omega)$ be supported in $\Omega^{\delta}$.
Since $\div \FF_{\eps} = (\div \FF) \ast \rho_{\eps}$, $(\psi \phi_{\delta}) \in C_{\mathrm{c}}(\Omega)$,
and the mollifications converge weakly${}^{\ast}$, we have
\begin{equation}
    \lim_{\eps \to 0} \int_{\Omega} \psi \phi_{\delta} \, \div \FF_{\eps} \,\d x = \int_{\Omega} \psi \phi_{\delta} \,\d(\div \FF).
\end{equation}
In particular, $\phi_{\delta}\, \div \FF_{\eps} \weakstarto\phi_{\delta}\, \div \FF$ as measures in $\Omega^{\delta}$.
Similarly, $\nabla \phi_{\delta} \cdot \FF_{\eps} \weakstarto \nabla \phi_{\delta} \cdot \FF$ in $\Omega^{\delta}$.
Thus, since $\div(\phi_{\delta}\FF_{\eps}) \xrightharpoonup{\mathcal{D}'} \div(\phi_{\delta} \FF)$
as distributions in $\Omega^{\delta}$ and this limit is unique, sending $\eps \to 0$ in \eqref{eq:classical_productrule} gives
that $\div(\phi_{\delta}\FF)$ is a measure on $\Omega^{\delta}$ given by \eqref{segundo} as claimed.

We now send $\delta \to 0$. By continuity of $\phi$,
we see that $\phi_{\delta} \to \phi$ uniformly in $\Omega^{\delta_0}$ for each $\delta_0>0$,
which implies that
$\phi_{\delta}\, \div \FF \weakstarto \phi\,\div \FF$ in each $\Omega^{\delta_0}$.
Also, since $\phi$ is Lipschitz, we now show that the second term in \eqref{segundo} is uniformly bounded in $\delta < \delta_0$:
\begin{equation}\label{eq:product_approx_bound}
    \int_{\Omega^{\delta_0}}\lvert\nabla \phi_{\delta}\rvert \, \d \lvert\FF\rvert
        \leq \norm{\nabla \phi_{\delta}}_{\Leb^{\infty}(\Omega^{\delta_0})}\lvert \FF \rvert(\Omega^{\delta_0}) \leq \norm{\nabla\phi}_{\Leb^{\infty}(\Omega)} \lvert \FF \rvert(\Omega) < \infty.
\end{equation}
Thus, for any $\delta_k \to 0$, we can find a further subsequence $\delta_{k_m} \to 0$
for which $\nabla\phi_{\delta_{k_m}}\!\!\cdot \FF$ converges weakly${}^\ast$ in $\Omega^{\delta_0}$
to some limiting measure as $m \to \infty$.
Moreover, for each $\delta_0>0$, we have
\begin{equation}\label{eq:product_distributional_conv}
\nabla\phi_{\delta} \cdot \FF
= \div(\phi_{\delta}\FF) - \phi_{\delta} \, \div \FF \xrightharpoonup{\,\,\mathcal{D}'\,\,} \div(\phi\FF) - \phi \, \div \FF
\end{equation}
as distributions in $\Omega^{\delta_0}$,
so the limiting measure is uniquely determined in $\mathcal D'(\Omega^{\delta_0})$ for each $\delta_0>0$.
Thus, we deduce the existence of a Radon measure in $\Omega$, denoted as $\overline{\nabla \phi \cdot \FF}$, such that
\begin{equation}
\nabla \phi_{\delta}\cdot \FF \xrightharpoonup{\,\, * \,\,}
\overline{ \nabla \phi \cdot \FF} \qquad\text{as } \delta\to0 \text{ in } \Omega^{\delta_0}
\end{equation}
for each $\delta_0>0$.
Then we can upgrade the convergence in \eqref{eq:product_distributional_conv} to hold weakly${}^{\ast}$ as measures, and conclude that $\div (\phi \FF)$ is a measure in $\Omega$ given by
\begin{equation}
\div (\phi \FF)= \phi \,\div \FF + \overline{\nabla \phi \cdot \FF}.
\end{equation}
Finally, to show \eqref{eq:product_measure_bound}, we use \eqref{eq:product_approx_bound} to estimate
\begin{equation}
    \left\lvert \int_{\Omega^{\delta_0}} \psi\, \nabla\phi_{\delta} \cdot \d\FF \right\rvert \leq \norm{\nabla\phi}_{\Leb^{\infty}(\Omega)} \int_{\Omega^{\delta_0}} \psi \,\d\lvert\FF\rvert
\qquad\,\,\mbox{for any $\psi \in C_{\mathrm{c}}(\Omega^{\delta_0})$ and $0<\delta<\delta_0$.}
\end{equation}
Sending $\delta \to 0$ and noting that $\psi$ is arbitrary, we deduce that
\begin{equation}
    \lvert \overline{\nabla\phi\cdot\FF} \lvert \,\res \Omega^{\delta_0} \leq \norm{\nabla\phi}_{\Leb^{\infty}(\Omega)}\lvert \FF \rvert
\end{equation}
as measures. Since this estimate is uniform in $\delta_0$, it holds in $\Omega$.
\end{proof}

\begin{remark}
As the proof illustrates, if $\phi \in C^{1}(\overline\Omega)$, the product rule reduces to
\begin{equation}
\label{productrulesmooth}
\div (\phi \FF)= \phi \, \div \FF + \nabla \phi \cdot \FF,
\end{equation}
where the right-hand side can be understood classically via multiplying a Radon measure by a continuous function.
However, if $\phi$ is merely Lipschitz, then $\nabla \phi$ is only defined almost everywhere.
Since the measure $\FF$ may concentrate on the points of non-differentiablity, it is necessary to understand
the product via a suitable pairing.
Moreover, if $\phi \in C^1_{\rm c}(\Omega)$, we apply \eqref{productrulesmooth} to write the normal trace as
\begin{equation}
    \langle \FF \cdot \nu, \,\phi\rangle_{\partial E} = - \div(\phi \FF)(E) \qquad\text{for any } E \Subset \Omega \text{ Borel}.
\end{equation}
We observe that, owing to Theorem \ref{productrule},
the right-hand side is well defined even when $\phi \in \Sob^{1,\infty}(\Omega)$.
This leads to the following corollary.
\end{remark}

\begin{corollary}\label{cor:normaltrace_lipschitz_extension}
Let $\FF \in \mathcal{DM}^{\ext}(\Omega)$, and let $E \Subset \Omega$ be a Borel set.
Then the normal trace extends to a bounded linear functional on $\Sob^{1,\infty}(\Omega)$ by setting
\begin{equation}
\langle \FF \cdot \nu, \,\phi \rangle_{\partial E}
= -\div(\phi\FF)(E) = -\int_{E} \phi \,\d(\div \FF) - \int_{E} \d \overline{\nabla \phi \cdot \FF}
\end{equation}
for any $\phi \in \Sob^{1,\infty}(\Omega)$.
\end{corollary}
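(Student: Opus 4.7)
The plan is to obtain this corollary as a direct consequence of the product rule (Theorem \ref{productrule}): once $\phi\FF$ is known to lie in $\mathcal{DM}^{\ext}(\Omega)$ with an explicit decomposition of $\div(\phi\FF)$, evaluating that measure on $E$ gives a well-defined quantity that depends continuously on $\phi$. First, I would observe that Theorem \ref{productrule} yields the decomposition $\div(\phi\FF) = \phi\,\div\FF + \overline{\nabla\phi\cdot\FF}$ as signed measures on $\Omega$; evaluating both sides on the Borel set $E \Subset \Omega$ immediately produces the equality of the two right-hand expressions in the corollary.

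Second, I would verify consistency with Definition \ref{defn:normal_trace}. For $\phi \in C^{1}_{\mathrm c}(\Omega)$, the remark following Theorem \ref{productrule} shows that the pairing measure $\overline{\nabla\phi\cdot\FF}$ collapses to the classical product $\nabla\phi\cdot\FF$ (a continuous function times a Radon measure), so that the proposed formula reduces to
\[
  -\div(\phi\FF)(E) = -\int_E \phi\,\d(\div\FF) - \int_E \nabla\phi\cdot\d\FF,
\]
which is exactly the original distributional definition \eqref{normal_trace}. Linearity in $\phi$ is manifest from the integral representation.

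Third, for boundedness, I would estimate the two integrals separately: since $E \Subset \Omega$, one has the trivial bound $\lvert\int_E \phi\,\d(\div\FF)\rvert \le \lVert\phi\rVert_{L^{\infty}(\Omega)}\lvert\div\FF\rvert(E)$, while the pairing estimate \eqref{eq:product_measure_bound} gives $\lvert\overline{\nabla\phi\cdot\FF}\rvert(E) \le \lVert\nabla\phi\rVert_{L^{\infty}(\Omega)}\lvert\FF\rvert(E)$. Combining these estimates yields
\[
  \lvert\langle\FF\cdot\nu,\phi\rangle_{\partial E}\rvert
  \le \lVert\phi\rVert_{W^{1,\infty}(\Omega)}\bigl(\lvert\FF\rvert(E)+\lvert\div\FF\rvert(E)\bigr)
  \le \lVert\phi\rVert_{W^{1,\infty}(\Omega)}\lVert\FF\rVert_{\mathcal{DM}^{\ext}(\Omega)},
\]
which establishes continuity on $W^{1,\infty}(\Omega)$.

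There is no substantive obstacle, as the content is entirely carried by Theorem \ref{productrule}. The only mild caution is that $W^{1,\infty}(\Omega)$ functions need not be compactly supported in $\Omega$, but since $E \Subset \Omega$ we are integrating bounded quantities against finite Radon measures on a compactly contained set, so every integral on the right-hand side is automatically finite and the functional is unambiguously defined.
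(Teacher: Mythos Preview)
Your proposal is correct and follows essentially the same route as the paper: invoke the product rule (Theorem~\ref{productrule}) to make sense of $\div(\phi\FF)(E)$, verify agreement with Definition~\ref{defn:normal_trace} on $C^1_{\mathrm c}(\Omega)$ via the classical product formula, and establish boundedness from the pairing estimate~\eqref{eq:product_measure_bound}. The only cosmetic difference is that you bound the integrals over $E$ whereas the paper bounds them over $\Omega$; both yield the same conclusion.
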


\begin{proof}
By the product rule (Theorem \ref{productrule}), the extension is well-defined
and agrees with Definition \ref{defn:normal_trace} when $\phi \in C^1_{\rm c}(\Omega)$.
Moreover, by linearity of the distributional divergence,
we see that $\langle \FF \cdot\nu, \,\cdot\,\rangle_{\partial E}$ is linear.
Its boundedness follows from the estimate:
    \begin{equation}
    \begin{split}
      \lvert\langle\FF\cdot\nu, \,\phi\rangle_{\partial E}\rvert
      &\leq \int_{\Omega} \lvert\phi\rvert\,\d\lvert\div\FF\rvert + \lvert\overline{\nabla \phi\cdot\FF}\rvert(\Omega) \\
      &\leq \lVert\phi\rVert_{\Leb^{\infty}(\Omega)} \lvert\div \FF\rvert(\Omega) + \lVert \nabla\phi\rVert_{\Leb^{\infty}(\Omega)} \lvert\FF\rvert(\Omega)\\
      &\leq \norm{\phi}_{\Sob^{1,\infty}(\Omega)} \norm{\FF}_{\mathcal{DM}^{\ext}(\Omega)},
    \end{split}
    \end{equation}
    valid for any $\phi \in \Sob^{1,\infty}(\Omega)$, where we have used \eqref{eq:product_measure_bound}.
\end{proof}

In what follows, we
always take this extension when we test the normal trace against a function in $\Sob^{1,\infty}(\Omega)$.
Note in particular that
\begin{equation}\label{eq:trace_balance_law}
    \langle \FF \cdot \nu, \,\mathbbm{1}_{\Omega}\rangle_{\partial U} = -(\div \FF)(U)
    \qquad\text{for any } U \Subset \Omega,
\end{equation}
since $\mathbbm{1}_{\Omega} \in \Sob^{1,\infty}(\Omega)$ is a valid test function and $\overline{\nabla\mathbbm{1}_{\Omega} \cdot \FF} = 0 $.

\begin{lemma}
\label{muyutil}
Let $\FF \in \mathcal{DM}^{\textnormal{ext}}(\Omega)$ with $\spt (\abs{\FF}) \subset \Omega$. Then
\begin{equation}
\div \FF (\Omega) =0.
\end{equation}
\end{lemma}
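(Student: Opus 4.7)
The plan is to test the distributional divergence against a smooth cutoff equal to one on a neighborhood of $\spt(\lvert\FF\rvert)$, which by hypothesis is a closed subset of $\mathbb R^n$ compactly contained in the open set $\Omega$.

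The key preliminary step is the inclusion $\spt(\div\FF) \subset \spt(\lvert\FF\rvert)$. Indeed, if $B \subset \Omega \setminus \spt(\lvert\FF\rvert)$ is any open ball, then $\FF\res B = 0$, so, for any $\psi \in C^\infty_{\mathrm c}(B)$, the definition gives $\langle \div\FF,\psi\rangle = -\int \nabla\psi \cdot \dr \FF = 0$; hence $\div\FF$ vanishes on $B$, and the inclusion follows.

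With this in hand, I would select an open set $V$ with $\spt(\lvert\FF\rvert) \subset V \Subset \Omega$, together with a cutoff $\varphi \in C^\infty_{\mathrm c}(\Omega)$ satisfying $0 \le \varphi \le 1$ and $\varphi \equiv 1$ on $V$. Applying the definition of the divergence to $\varphi$ yields
\begin{equation}
\int_\Omega \varphi \,\dr(\div\FF) = -\int_\Omega \nabla\varphi \cdot \dr \FF.
\end{equation}
The right-hand side vanishes since $\nabla\varphi \equiv 0$ on $V$, which contains the set $\spt(\lvert\FF\rvert)$ on which $\FF$ is concentrated. The left-hand side equals $(\div\FF)(\Omega)$, since $\varphi \equiv 1$ on $V \supset \spt(\div\FF)$ and so $\int_\Omega \varphi \,\dr(\div\FF) = \int_\Omega \dr(\div\FF)$. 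Combining these gives $(\div\FF)(\Omega) = 0$.

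The only mildly non-routine step is the preliminary inclusion $\spt(\div\FF) \subset \spt(\lvert\FF\rvert)$; once established, the conclusion follows from a direct cutoff computation with no further obstacle. A close variant would be to invoke \eqref{eq:normaltrace_eq} with any $U$ satisfying $\spt(\lvert\FF\rvert) \subset U \Subset \Omega$: the identities $\mathbbm{1}_U\FF = \FF$ and $\mathbbm{1}_U\div\FF = \div\FF$ force the normal trace distribution on $\partial U$ to be identically zero, and then \eqref{eq:trace_balance_law} gives $(\div\FF)(U) = (\div\FF)(\Omega) = 0$.
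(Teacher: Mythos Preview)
Your argument is correct when $\spt(\lvert\FF\rvert)$ is compact, but the hypothesis $\spt(\lvert\FF\rvert) \subset \Omega$ does not guarantee this: the paper defines $\spt(\mu)$ as a closed subset of $\mathbb{R}^n$, so the hypothesis says only that this closed set lies inside the open set $\Omega$. When $\Omega$ is unbounded (for instance $\Omega = \mathbb{R}^n$, where every $\FF \in \mathcal{DM}^{\ext}(\mathbb{R}^n)$ satisfies the hypothesis trivially), the support need not be compact and your choice of $V$ with $\spt(\lvert\FF\rvert) \subset V \Subset \Omega$ may fail to exist. The variant via \eqref{eq:normaltrace_eq} and \eqref{eq:trace_balance_law} has the same defect, since it also requires such a $U \Subset \Omega$.

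The paper's proof handles the general case in two steps. First, it extends $\FF$ by zero to $\tilde\FF$ on $\mathbb{R}^n$ and shows $\tilde\FF \in \mathcal{DM}^{\ext}(\mathbb{R}^n)$ with $\div\tilde\FF$ equal to the zero-extension of $\div\FF$; this step uses the hypothesis in the same spirit as your preliminary inclusion (for any $\phi \in C^\infty_{\mathrm c}(\mathbb{R}^n)$ the set $\spt(\phi) \cap \spt(\lvert\FF\rvert)$ is compact and contained in $\Omega$, so one can insert a cutoff $\chi \in C^\infty_{\mathrm c}(\Omega)$ equal to $1$ there). Second, it tests $\div\tilde\FF$ against global cutoffs $\chi_k$ equal to $1$ on $B_k(0)$, supported in $B_{k+1}(0)$, with $\lVert\nabla\chi_k\rVert_{\Leb^\infty}$ uniformly bounded, and sends $k \to \infty$: the left side converges to $\div\FF(\Omega)$ by dominated convergence, while $\int \nabla\chi_k \cdot \dr\tilde\FF \to 0$ since $\lvert\tilde\FF\rvert$ is finite and $\nabla\chi_k \to 0$ pointwise.

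Your single-cutoff argument is cleaner and suffices for every application of the lemma in the paper (where the field in question is always compactly supported), but to prove the lemma as stated you must replace $\varphi$ by an exhausting sequence and pass to the limit---which is essentially the paper's second step.
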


\begin{proof}
We extend
$\FF$ to a measure
$\tilde{\FF}$ in $\mathbb{R}^{n}$ by setting $\lvert\tilde{\FF}\rvert (\mathbb{\R}^{n} \setminus \Omega)=0$.
We claim that $\tilde{\FF} \in \mathcal{DM}^{\ext}(\mathbb R^n)$
and that $\div \tilde{\FF}$ is the zero-extension of $\div \FF$ to $\mathbb R^n$.
Indeed, for $\phi \in C^{\infty}_{\rm c}(\mathbb R^n)$,
$\spt(\phi) \cap \spt(\lvert \FF \rvert) \subset \Omega$ is compact,
so that we can find $\chi \in C_{\mathrm{c}}^{\infty}(\Omega)$ such that $\chi = 1$
in a neighborhood of $\spt(\phi) \cap \spt(\lvert \FF \rvert)$.
Then $\chi \phi \in C^{\infty}_{\rm c}(\Omega)$ and
\begin{equation}
 \int_{\mathbb R^n} \nabla \phi \cdot \, \d \tilde\FF
 = \int_{\Omega} \nabla(\chi\phi) \cdot \,\d\FF = - \int_{\Omega} \chi\phi \,\d(\div \FF) = - \int_{\Omega} \phi \,\d(\div \FF)
\end{equation}
by our choice of $\chi$.
Since $\phi$ was arbitrary, it follows that $\tilde \FF \in \mathcal{DM}^{\ext}(\Omega)$ with the claimed divergence.

Now, let $\chi_k \in C^{\infty}_{\rm c}(\Omega)$ such that $\mathbbm{1}_{B_k(0)} \leq \chi \leq \mathbbm{1}_{B_{k+1}(0)}$
in $\mathbb R^n$ and $\lVert \chi_k \rVert_{\Leb^{\infty}(\Omega)} \leq 2$.
By definition of the distributional derivative,
\begin{equation}
    \int_{\Omega} \chi_k\, \d(\div \FF) = \int_{\mathbb R^n} \chi_k \,\d(\div \tilde \FF) = - \int_{\mathbb R^n} \nabla \chi_k \cdot \d\tilde \FF.
\end{equation}
Since $\chi_k$ is uniformly bounded in $\Leb^{\infty}$ and converges pointwise to zero as $k \to \infty$,
the Dominated Convergence Theorem leads to
\begin{equation}
    \lim_{k \to \infty} \int_{\mathbb R^n} \nabla \chi_k \cdot \d\tilde \FF = 0.
\end{equation}
Similarly, since $\chi_k$ converges pointwise to $\mathbbm{1}_{\mathbb R^n}$ and $\div \FF$ is a finite measure,
we have
\begin{equation}
    \div \FF(\Omega) = \lim_{k \to \infty} \int_{\Omega} \chi_k \,\d(\div \FF) = \lim_{k \to \infty} \int_{\mathbb R^n} \nabla \chi_k \cdot \d\tilde \FF = 0,
\end{equation}
as required.
\end{proof}

\begin{corollary}\label{cor:normaltrace_closure}
Let $\FF \in \mathcal{DM}^{\ext}(\Omega)$. Then, for any open set $U \Subset \Omega$,
\begin{equation}
 \langle \FF \cdot \nu, \,\phi \rangle_{\partial\overline U}
 = \int_{\Omega \setminus \overline U} \d \overline{\nabla\phi\cdot \FF}
 + \int_{\Omega \setminus \overline U} \phi \,\d(\div \FF) \qquad\text{for any } \phi \in \Sob_{\mathrm{c}}^{1,\infty}(\Omega).
\end{equation}
\end{corollary}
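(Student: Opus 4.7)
The plan is to combine the extended formula from Corollary \ref{cor:normaltrace_lipschitz_extension} with the vanishing-divergence result of Lemma \ref{muyutil}, applied to the compactly supported field $\phi\FF$.

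First, I would apply Corollary \ref{cor:normaltrace_lipschitz_extension} with the Borel set $E = \overline U \Subset \Omega$ and the test function $\phi \in \Sob^{1,\infty}_{\mathrm c}(\Omega) \subset \Sob^{1,\infty}(\Omega)$ to obtain
\begin{equation}
 \langle \FF \cdot \nu, \,\phi \rangle_{\partial \overline U}
 = -\int_{\overline U} \phi \,\d(\div \FF) - \int_{\overline U} \d\overline{\nabla\phi\cdot\FF}.
\end{equation}
So the task reduces to showing that the right-hand side equals the integral of the same integrands over $\Omega \setminus \overline U$ (with opposite sign), i.e.\ that the total integrals over $\Omega$ of both $\phi\,\div\FF$ and $\overline{\nabla\phi\cdot\FF}$ sum to zero.

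Next, I would invoke the product rule (Theorem \ref{productrule}), which gives $\phi\FF \in \mathcal{DM}^{\ext}(\Omega)$ together with the identity
\begin{equation}
 \div(\phi\FF) = \phi \,\div \FF + \overline{\nabla\phi\cdot\FF}.
\end{equation}
Because $\phi$ is compactly supported in $\Omega$, the bound $|\phi\FF| \leq \lVert\phi\rVert_{\Leb^\infty}|\FF|\res\spt(\phi)$ ensures $\spt(|\phi\FF|) \subset \spt(\phi) \Subset \Omega$. Thus Lemma \ref{muyutil} applies to $\phi\FF$, yielding
\begin{equation}
 \int_\Omega \phi \,\d(\div\FF) + \int_\Omega \d\overline{\nabla\phi\cdot\FF} = \div(\phi\FF)(\Omega) = 0.
\end{equation}

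Finally, splitting each integral over $\Omega$ as the sum over $\overline U$ and $\Omega \setminus \overline U$ and rearranging gives
\begin{equation}
 -\int_{\overline U} \phi \,\d(\div \FF) - \int_{\overline U} \d\overline{\nabla\phi\cdot\FF}
 = \int_{\Omega \setminus \overline U} \phi\,\d(\div\FF) + \int_{\Omega \setminus \overline U} \d\overline{\nabla\phi\cdot\FF},
\end{equation}
which together with the first display yields the claim. The proof is essentially a bookkeeping exercise, and I do not anticipate a genuine obstacle; the only subtle point worth verifying is that $\phi\FF$ has compact support inside $\Omega$ (so that Lemma \ref{muyutil} can be invoked), which is immediate from $\phi \in \Sob^{1,\infty}_{\mathrm c}(\Omega)$.
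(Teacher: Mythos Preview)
Your proof is correct and follows essentially the same approach as the paper's: both apply the product rule to see that $\phi\FF \in \mathcal{DM}^{\ext}(\Omega)$ is compactly supported, invoke Lemma~\ref{muyutil} to obtain $\div(\phi\FF)(\Omega)=0$, and then split the resulting integral over $\overline U$ and $\Omega\setminus\overline U$. The only cosmetic difference is the order of presentation---you begin with Corollary~\ref{cor:normaltrace_lipschitz_extension} and then verify the vanishing total, whereas the paper first establishes the vanishing total and then identifies the $\overline U$-piece as $-\langle\FF\cdot\nu,\phi\rangle_{\partial\overline U}$.
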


\begin{proof}
 By Lemma \ref{muyutil} and the product rule, for $\phi \in \Sob^{1,\infty}_{\mathrm{c}}(\Omega)$,
 we see that $\phi \FF \in \mathcal{DM}^{\ext}(\Omega)$ is compactly supported, so that
    \begin{equation}
        0 = \int_{\Omega} \d(\div(\phi \FF))
        = \int_{\Omega} \d\overline{\nabla\phi\cdot \FF} + \int_{\Omega} \phi \,\d(\div \FF).
    \end{equation}
By splitting the latter integrals to integrate over $\overline U$ and $\Omega \setminus \overline U$ respectively,
we deduce
 \begin{equation}
 0 = - \langle \FF \cdot \nu, \,\phi \rangle_{\partial\overline U}
   + \int_{\Omega \setminus \overline U} \d\overline{\nabla\phi\cdot \FF}
     + \int_{\Omega \setminus \overline U} \phi \,\d(\div \FF),
    \end{equation}
    from which the result follows.
\end{proof}

\begin{remark}\label{rem:inner_outer_trace}
Using Corollary \ref{cor:normaltrace_closure},
we can interpret $-\langle \FF \cdot \nu, \,\cdot\,\rangle_{\partial\overline U}$
as the \emph{exterior normal trace} of $\FF$ on $\partial U$.
Since
\begin{equation}\label{eq:trace_jump}
    \langle \FF \cdot \nu, \,\phi \rangle_{\partial U} - \langle \FF \cdot \nu, \,\phi \rangle_{\partial \overline U}
    =  \int_{\partial U} \d \overline{\nabla\phi\cdot \FF} + \int_{\partial U} \phi\, \d(\div \FF),
\end{equation}
the interior and exterior traces agree $($up to a sign$)$, provided that
$\lvert \FF \rvert(\partial U) = \lvert \div \FF \rvert(\partial U) = 0$,
by using the fact that $\lvert \overline{\nabla\phi\cdot \FF}\rvert \ll \lvert \FF \rvert$ by \eqref{eq:product_measure_bound}.
In particular, this holds for $\partial U^{\eps}$ for all but countably many $\eps>0$.
In general, this need not hold, for which we say that there is a \emph{jump} across the boundary $\partial U$
if the two traces do not coincide.
We emphasize that this does not occur if $\abs{\div \FF} \ll \mathcal{L}^{n}$ and $\abs{\FF} \ll \mathcal{L}^{n}$,
provided that $\mathcal L^n(\partial U) = 0$,
so it is necessary to consider measure-valued fields to model such phenomena.
\end{remark}

\begin{example}\label{eq:boundary_jump}
We now show, by means of an example, that the jump in \eqref{eq:trace_jump} can occur
due to the concentration of field $\FF$ itself.
For this, consider $\Omega = \mathbb R^2$ and
\begin{equation}
 \FF = e_1 \, \mathcal H^1 \res \{ x \in \mathbb R^2 : x_2 = 0 \},
\end{equation}
where $e_1 = (1,0)$.
Since $\FF$ is a Radon measure on $\mathbb R^2$ and
\begin{equation}
\int_{\mathbb R^2} \nabla \phi \cdot \d\FF = \int_{\mathbb R} \partial_{x_1}\phi(x_1,0) \,\d x_1 = 0
\qquad \mbox{ for any $\phi \in C^{1}_{\mathrm{c}}(\mathbb R^2)$},
\end{equation}
it follows that $\FF \in \mathcal{DM}^{\ext}_{\loc}(\Omega)$.
Then, considering $Q = (0,1)^2 \Subset \mathbb R^2$,  we have
\begin{align}
\langle \FF \cdot \nu, \,\phi \rangle_{\partial Q} = 0 \qquad \mbox{ for $\phi \in C^{1}_{\mathrm{c}}(\mathbb R^2)$},
\end{align}
since $\supp(\FF) \cap Q = \varnothing.$
On the other hand, we have
\begin{equation}
\langle \FF \cdot \nu, \,\phi \rangle_{\partial \overline Q}
= \int_0^1 \partial_{x_1} \phi(x_1,0) \,\d x_1 = \phi(1,0) - \phi(0,0).
 \end{equation}
This implies that the normal traces on $Q$ and $\overline Q$ are represented by measures such that
\begin{equation}
(\FF \cdot \nu)_{\partial \overline Q} = \delta_{(1,0)}-\delta_{(0,0)} \neq 0 = (\FF \cdot \nu)_{\partial Q},
\end{equation}
exhibiting a jump across the boundary $\partial Q$,
despite the fact that $\div \FF = 0$ in $\mathbb R^2$.
This is in contrast to the $\mathcal{DM}^p$ setting for which,
for sufficiently regular domains ({\it i.e.}, when $\mathcal L^n(\partial U) = 0$),
there is a jump across the boundary if and only if $\lvert\div \FF\rvert(\partial U) \neq 0$.
\end{example}

We can also infer from Lemma \ref{muyutil} that the normal trace is supported on $\partial E$.
We state this result by using the language of distributions.

\begin{lemma}\label{soporte}
Let $\FF \in \mathcal{DM}^{\textnormal{ext}}(\Omega)$, and let $E \Subset \Omega$ a Borel set.
Then the normal trace $\langle \FF\cdot \nu, \,\phi \rangle_{\partial E}$ is a distribution of order $1$
supported on $\partial E$.
\end{lemma}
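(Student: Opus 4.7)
The plan is to verify the two claims of the lemma separately, both directly from Definition \ref{defn:normal_trace}.

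\textbf{Order-one estimate.} For any test function $\phi \in C^1_{\mathrm c}(\Omega)$, I would start from the very definition
\begin{equation}
\langle \FF\cdot\nu,\phi\rangle_{\partial E} = -\int_E \nabla\phi\cdot\d\FF - \int_E \phi\,\d(\div\FF),
\end{equation}
and estimate crudely
\begin{equation}
|\langle \FF\cdot\nu,\phi\rangle_{\partial E}| \le \|\nabla\phi\|_{\Leb^\infty(\Omega)}\,|\FF|(E) + \|\phi\|_{\Leb^\infty(\Omega)}\,|\div\FF|(E) \le \|\phi\|_{C^1(\Omega)}\|\FF\|_{\mathcal{DM}^{\ext}(\Omega)}.
\end{equation}
Because $E\Subset\Omega$, the right-hand side is finite, which gives a distribution of order at most $1$.

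\textbf{Support in $\partial E$.} I would show that $\langle\FF\cdot\nu,\phi\rangle_{\partial E}=0$ whenever $\phi\in C^1_{\mathrm c}(\Omega)$ satisfies $\spt\phi\cap\partial E=\varnothing$. The key observation is the set-theoretic identity
\begin{equation}
\Omega\setminus\partial E = \interior(E)\,\cup\,(\Omega\setminus\overline{E}),
\end{equation}
a disjoint union of two open sets. Since $\spt\phi$ is compact and contained in this union, its intersections with the two open pieces yield disjoint compact sets $K_1\subset\interior(E)$ and $K_2\subset\Omega\setminus\overline{E}$. A standard partition-of-unity argument then produces $\chi\in C^\infty_{\mathrm c}(\mathbb R^n)$ with $\chi\equiv 1$ near $K_1$ and $\chi\equiv 0$ near $K_2$, and I would set $\phi_1=\chi\phi\in C^1_{\mathrm c}(\interior(E))$ and $\phi_2=(1-\chi)\phi\in C^1_{\mathrm c}(\Omega\setminus\overline{E})$, so that $\phi=\phi_1+\phi_2$.

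\textbf{Vanishing on each piece.} For $\phi_2$, both integrals in \eqref{normal_trace} trivially vanish since $\spt\phi_2\cap E=\varnothing$. For $\phi_1$, since $\spt\phi_1\subset\interior(E)\subset E$, one may extend the domain of integration from $E$ to all of $\Omega$ without changing the integrals, and the definition of distributional divergence then gives
\begin{equation}
\int_E \nabla\phi_1\cdot\d\FF = \int_\Omega \nabla\phi_1\cdot\d\FF = -\int_\Omega \phi_1\,\d(\div\FF) = -\int_E \phi_1\,\d(\div\FF),
\end{equation}
so that $\langle\FF\cdot\nu,\phi_1\rangle_{\partial E}=0$. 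Combining both contributions yields $\langle\FF\cdot\nu,\phi\rangle_{\partial E}=0$, as required.

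I do not anticipate a genuine obstacle here: the only mildly technical step is the splitting $\phi=\phi_1+\phi_2$, which reduces to a routine separation of disjoint compact sets by a smooth cutoff. Neither the product rule (Theorem \ref{productrule}) nor Lemma \ref{muyutil} is strictly needed for this short argument; the identity $\int_\Omega\nabla\phi_1\cdot\d\FF=-\int_\Omega\phi_1\,\d(\div\FF)$ used above is simply the defining property of $\div\FF$ as a distributional divergence.
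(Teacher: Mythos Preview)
Your proof is correct and takes a genuinely different, more elementary route than the paper. The paper first invokes the product rule (Theorem~\ref{productrule}) to rewrite $\langle\FF\cdot\nu,\phi\rangle_{\partial E}=-\div(\phi\FF)(E)$, then localizes to $V:=\spt\phi\cap E\Subset E$, introduces a cutoff $\chi$ so that $\chi\phi\FF$ is compactly supported in $\Omega$, and appeals to Lemma~\ref{muyutil} to conclude $\div(\chi\phi\FF)(\Omega)=0$. You instead split $\phi=\phi_1+\phi_2$ by a partition of unity subordinate to the disjoint open cover $\interior(E)\cup(\Omega\setminus\overline E)$ of $\spt\phi$, and handle each piece directly from the definition of the distributional divergence; neither the product rule nor Lemma~\ref{muyutil} is needed. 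Your argument is shorter and self-contained, while the paper's approach has the advantage of establishing and exercising the identity $\langle\FF\cdot\nu,\phi\rangle_{\partial E}=-\div(\phi\FF)(E)$, which is reused repeatedly in what follows (Corollary~\ref{cor:normaltrace_lipschitz_extension}, Theorem~\ref{prop:normaltrace_support}, etc.). One small point worth making explicit in your write-up: the sets $K_1=\spt\phi\cap\interior(E)$ and $K_2=\spt\phi\cap(\Omega\setminus\overline E)$ are compact because they can be rewritten as $\spt\phi\cap\overline E$ and $\spt\phi\setminus\interior(E)$ respectively (using $\spt\phi\cap\partial E=\varnothing$), i.e.\ as intersections of a compact set with closed sets.
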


\begin{proof}
For $\phi \in C^1_{\mathrm{c}}(\Omega)$, we can estimate
\begin{equation}
\begin{split}
    \left\lvert\langle \FF \cdot \nu, \,\phi \rangle_{\partial E}\right\rvert
    &\leq \int_{\Omega} \lvert \phi \rvert \,\d\lvert\div \FF\rvert + \int_{\Omega} \lvert\nabla \phi\rvert \, \d \lvert\FF\rvert\\
    &\leq \lVert \phi \rVert_{\Leb^{\infty}(\Omega)} \lvert\div\FF\rvert(\Omega) + \lVert\nabla\phi\rVert_{\Leb^{\infty}(\Omega)} \lvert\FF \rvert(\Omega),
\end{split}
\end{equation}
from which it follows that the normal trace is a distribution of order $1$ in $\Omega$.

For the second part, let $\phi \in C^1_{\mathrm{c}}(\Omega)$ be supported in $\Omega \setminus \partial E$.
Since $\partial E$ is compact, $\dist(\spt(\phi),\partial E)>0$ so that
$V := \spt(\phi) \cap E \Subset E$.
Then, by the product rule, we have
\begin{equation}\label{eq:trace_div_V}
\langle \FF \cdot \nu, \,\phi \rangle_{\partial E} = -\div(\phi \FF)(E) = -\div(\phi \FF)(V),
\end{equation}
since $\phi$ vanishes in the relatively open set $E \setminus V$.
Now, let $\chi \in C^1_{\mathrm{c}}(\Omega)$ be a cutoff such that $\chi \equiv 1$ on $V$ and vanishes on $\Omega \setminus \overline{E}$.
Then $\chi \phi \FF \in \mathcal{DM}^{\ext}(\Omega)$ is compactly supported,
agrees with $\phi \FF$ in a neighborhood of $E$, and vanishes outside $V$.
Applying Lemma \ref{muyutil} leads to
\begin{equation}
    \div(\phi \FF)(V) = \div(\chi\phi\FF)(V) = \div(\chi\phi \FF)(\Omega) = 0.
\end{equation}
We combine this  with \eqref{eq:trace_div_V} to conclude the result.
\end{proof}

We can further refine Lemma \ref{soporte} to allow for test functions that vanish merely on $\partial E$.
The following is due to
\cite{Silhavy2} for the case of open sets;
we record the proof for completeness and observe that the result applies to more general cases.

\begin{theorem}\label{prop:normaltrace_support}
Let $\FF \in \mathcal{DM}^{\ext}(\Omega)$, and let $E \Subset \Omega$ be either open or closed,
or Borel satisfying $\lvert \FF \rvert(\partial E) = 0$.
Then, if $\phi \in \Sob^{1,\infty}(\Omega)$ vanishes on $\partial E$,
    \begin{equation}
        \langle \FF \cdot \nu, \,\phi \rangle_{\partial E}  = 0.
    \end{equation}
\end{theorem}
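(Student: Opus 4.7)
My plan is to prove the equivalent statement $\div(\phi \FF)(E) = 0$, via the identity $\langle \FF \cdot \nu, \phi\rangle_{\partial E} = -\div(\phi \FF)(E)$ from Corollary \ref{cor:normaltrace_lipschitz_extension}. The strategy is to approximate $\phi \FF$ by \emph{compactly supported} fields in $\mathcal{DM}^{\ext}(\Omega)$ and apply Lemma \ref{muyutil}. The essential input is the pointwise Lipschitz bound $\lvert \phi(x) \rvert \leq L \,\dist(x, \partial E)$, with $L = \Lip(\phi)$, which holds because $\phi \in \Sob^{1,\infty}(\Omega)$ is Lipschitz and vanishes on $\partial E$.

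For the \emph{open case}, set $E^\eta := \{x \in E : \dist(x, \partial E) > \eta\}$ and choose a Lipschitz cutoff $\chi_\eta$ with $\chi_\eta \equiv 1$ on $E^\eta$, $\spt\chi_\eta \Subset E$, and $\lVert \nabla\chi_\eta \rVert_{L^\infty} \leq 2/\eta$. Then $\chi_\eta\phi\,\FF \in \mathcal{DM}^{\ext}(\Omega)$ is compactly supported, so Lemma \ref{muyutil} gives $\div(\chi_\eta\phi\,\FF)(\Omega) = 0$. Since $\chi_\eta\phi = \phi$ on $E^\eta$, the divergences agree as measures there, so
\begin{equation*}
\div(\phi\,\FF)(E^\eta) = -\div(\chi_\eta\phi\,\FF)(\Omega \setminus E^\eta).
\end{equation*}
The right-hand side is supported on the annulus $A_\eta := \{\dist(\cdot, \partial E) \leq \eta\} \cap E$, where the Lipschitz bound gives $\lvert \phi \rvert \leq L\eta$ while keeping $\lVert \nabla(\chi_\eta\phi) \rVert_{L^\infty}$ uniformly bounded in $\eta$. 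The product rule \eqref{productrule1} together with the estimate \eqref{eq:product_measure_bound}, combined with $\lvert \FF \rvert(A_\eta) \to 0$ by continuity of the measure $\lvert\FF\rvert$, then give that the right-hand side vanishes as $\eta \to 0$. Since $E^\eta \uparrow E$, continuity from below of the finite signed measure $\div(\phi\,\FF)$ yields $\div(\phi\,\FF)(E) = 0$.

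The \emph{closed case} is reversed: approximate $E$ from outside via $E^{-\eta} := \{\dist(\cdot, E) < \eta\}$ with cutoffs $\chi_\eta \equiv 1$ on $E^{-\eta}$ and $\spt \chi_\eta \subset \overline{E^{-2\eta}} \Subset \Omega$. The key geometric fact is that for $x \notin E$ with $E$ closed, $\dist(x, \partial E) = \dist(x, E)$ (any nearest point in the closed set $E$ from $x$ must lie on $\partial E$), so $\lvert \phi(x) \rvert \leq 2L\eta$ throughout the transition annulus $\overline{E^{-2\eta}} \setminus E^{-\eta}$. Since $\chi_\eta\phi\,\FF = \phi\,\FF$ on $E^{-\eta} \supset E$ and $\div(\chi_\eta \phi\,\FF)(\Omega) = 0$, one gets $\div(\phi\,\FF)(E) = -\div(\chi_\eta\phi\,\FF)(\Omega \setminus E)$, and the same annular estimates force $\div(\phi\,\FF)(E) = 0$. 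For the \emph{case $\lvert\FF\rvert(\partial E) = 0$}, I reduce to the open case applied to $E^\circ$: since $\partial E^\circ \subset \partial E$, $\phi$ vanishes on $\partial E^\circ$, giving $\div(\phi\,\FF)(E^\circ) = 0$; on $\partial E$ itself, $\phi\,\div\FF$ contributes zero because $\phi|_{\partial E} = 0$, and $\lvert \overline{\nabla\phi \cdot \FF} \rvert(\partial E) \leq \lVert \nabla\phi \rVert_{L^\infty}\lvert\FF\rvert(\partial E) = 0$, so the decomposition $E = E^\circ \sqcup (E \cap \partial E)$ closes the argument.

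The principal technical point is the annular estimate: $\lvert \nabla\chi_\eta \rvert$ blows up like $1/\eta$, and only the Lipschitz decay $\lvert\phi\rvert \leq L\eta$ in the same region compensates it, so that $\lVert \nabla(\chi_\eta\phi) \rVert_{L^\infty}$ remains uniformly bounded in $\eta$. This balance is the heart of the argument and relies crucially on $\phi$ being Lipschitz rather than merely continuous, which also explains why the hypothesis $\phi \in \Sob^{1,\infty}(\Omega)$ cannot be weakened without an extra assumption on $\lvert\FF\rvert$.
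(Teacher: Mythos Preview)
Your proof is correct and follows essentially the same route as the paper: both rely on Lipschitz cutoffs near $\partial E$, the annular balance $\lvert\phi\rvert \leq L\eta$ against $\lvert\nabla\chi_\eta\rvert \leq C/\eta$, and Lemma~\ref{muyutil} to kill the total divergence. The organization differs slightly (the paper uses a single cutoff $d_\delta$ based on $\dist(\cdot,\partial E)$ and branches into the three cases only after passing to the limit, whereas you tailor separate interior/exterior cutoffs to the open/closed cases), but the substance is the same. One small correction: you write $L = \Lip(\phi)$, but $\phi \in \Sob^{1,\infty}(\Omega)$ need not be globally Lipschitz for irregular $\Omega$; take $L = \lVert\nabla\phi\rVert_{\Leb^\infty(\Omega)}$ and note that the bound $\lvert\phi(x)\rvert \leq L\,\dist(x,\partial E)$ holds for $x$ close enough to $\partial E$ that the segment to the nearest point of $\partial E$ stays in $\Omega$, which suffices since $\partial E \Subset \Omega$.
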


\begin{proof}
We first consider the case when $\spt(\phi) \cap \partial E = \varnothing$;
in this case, the result follows by arguing exactly as in Lemma \ref{soporte}.
For $\phi \in \Sob^{1,\infty}(\Omega)$, we see that \eqref{eq:trace_div_V} remains true
by Corollary \ref{cor:normaltrace_lipschitz_extension}, so the identical argument goes through.

For general $\phi \in \Sob^{1,\infty}(\Omega)$ vanishing on $\partial E$, we
reduce to the first case via an approximation argument.
For $\delta>0$, define
\begin{equation}\label{eq:ddelta_defn}
        d_{\delta}(x) = \begin{cases}
            0 & \text{ if } 0 \leq \dist(x,\partial E) < \delta,\\
            \frac1{\delta}(\dist(x,\partial E) - \delta) & \text{ if } \delta \leq \dist(x,\partial E) < 2\delta,\\
            1 & \text{ if } \dist(x,\partial E) \geq 2\delta.
        \end{cases}
\end{equation}
Then $d_{\delta}$ is $\frac1{\delta}$-Lipschitz and vanishes in a neighborhood of $\partial E$.
Since $d_{\delta} \phi$ is supported away from $\partial E$, we know that
\begin{equation}\label{eq:tracezero_delta}
        0 = \langle \FF \cdot \nu, \,d_{\delta}\phi \rangle_{\partial E} = \int_{E} d_{\delta}\phi \,\d(\div \FF) + \int_{E} \d\overline{\nabla(d_{\delta}\phi)\cdot\FF}
\end{equation}
by above.
We now argue that we can pass to the limit as $\delta \to 0$.

This can be seen as follows:  Since $\phi$ vanishes on $\partial E$, by the Lipschitz property, we have
\begin{equation}
\sup_{B_{2\delta}(\partial E)} \lvert \phi(x)\rvert \leq  2\delta \lVert\nabla\phi\rVert_{\Leb^{\infty}(\Omega)},
\end{equation}
where $B_{2\delta}(\partial E) = \{ x \in \mathbb R^n : \dist(x,\partial E) < 2\delta\}$.
By the product rule (namely, using \eqref{eq:pairing_limit}), we see that, for any $\psi \in C_{\mathrm{c}}(\Omega)$,
    \begin{equation}
    \begin{split}
        \int_{\Omega} \psi \,\d\overline{\nabla(d_{\delta}\phi)\cdot\FF}
        &= \lim_{\eps \to 0} \int_{\Omega} \psi \nabla(d_{\delta}\phi)\ast\rho_{\eps} \cdot \d\FF\\
        &= \lim_{\eps \to 0} \int_{\Omega} \psi (d_{\delta}\nabla \phi + \phi \nabla d_{\delta}) \ast \rho_{\eps} \cdot \d \FF \\
        &= \int_{\Omega} \psi \,d_{\delta} \,\d\overline{\nabla \phi \cdot \FF}
         + \int_{\Omega} \psi \,\phi \,\d\overline{\nabla d_{\delta} \cdot \FF},
    \end{split}
    \end{equation}
which implies
    \begin{equation}
        \overline{\nabla(d_{\delta}\phi)\cdot\FF} = d_{\delta}\,\overline{\nabla \phi \cdot \FF} + \phi\,\overline{\nabla d_{\delta} \cdot \FF} \qquad\,\,\mbox{as measures in $\Omega$}.
    \end{equation}
Moreover, since $\spt(\nabla d_{\delta})\Subset
A_{\delta} := \{ \delta \leq \dist(x,\partial E) \leq 2\delta \} \subset B_{2\delta}(\partial E)$,
$\overline{\nabla d_{\delta} \cdot \FF}$ is also supported in $A_{\delta}$.
Hence, using \eqref{eq:product_measure_bound}, we can estimate
\begin{equation}\label{eq:ddelta_zero_conv}
\begin{split}
\left\lvert\int_{E} \phi\,\d\overline{\nabla d_{\delta} \cdot \FF}\right\rvert
&\leq \sup_{B_{2\delta}(\partial E)}\lvert\phi\rvert \lVert \nabla d_{\delta} \rVert_{\Leb^\infty(\Omega)}\lvert\FF \rvert(A_{\delta})\\
&\leq 2 \lVert\nabla\phi\rVert_{\Leb^\infty(\Omega)} \lvert \FF \rvert(A_{\delta}),
\end{split}
\end{equation}
which vanishes as $\delta\to0$ since $\displaystyle\limsup_{\delta\to0} A_{\delta} = \varnothing$.
Thus, passing to the limit in \eqref{eq:tracezero_delta} and
noting that $d_{\delta} \to \mathbbm{1}_{\Omega \setminus \partial E}$ pointwise,
    \begin{equation}\label{eq:delta_limit_zerotrace}
    \begin{split}
        0 &= \lim_{\delta \to 0} \langle \FF \cdot \nu, \,d_{\delta}\phi\rangle_{\partial E}\\
        &= \lim_{\delta \to 0} \int_E d_{\delta}\phi\, \d(\div \FF) + \lim_{\delta \to 0} \int_E d_{\delta} \,\d\overline{\nabla\phi\cdot\FF} \\
        &= \int_{E \setminus \partial E} \phi\, \d(\div \FF)
        + \int_{E \setminus \partial E} \d\overline{\nabla\phi\cdot\FF},
    \end{split}
    \end{equation}
by \eqref{eq:ddelta_zero_conv} and the Dominated Convergence Theorem.

Moreover, since $\phi$ vanishes in $\partial E$, it follows that
    \begin{equation}
        \int_{E \setminus \partial E} \phi\,\d(\div \FF) = \int_E \phi\,\d(\div \FF).
    \end{equation}
Now, if $E$ is open, then $E \setminus \partial E = 0$ and \eqref{eq:delta_limit_zerotrace}
implies that $\langle \FF \cdot \nu, \,\phi \rangle_{\partial E} = 0$.
If $E$ is closed, the same argument applies to $\Omega \setminus E$ so that, by Corollary \ref{cor:normaltrace_closure},
\begin{equation}
\langle \FF \cdot \nu \rangle_{\partial E}
= \int_{\Omega\setminus E} \phi \,\d(\div \FF)
+ \int_{\Omega \setminus E} \d \overline{\nabla \phi\cdot\FF} = 0.
\end{equation}
Finally, if $E$ is merely Borel with $\lvert \FF \rvert(\partial E) = 0$,
then, since $\overline{\nabla \phi \cdot \FF} \ll \lvert\FF\rvert$ by \eqref{eq:product_measure_bound}, it follows that
    \begin{equation}
        \int_{E \cap \partial E} \d\overline{\nabla \phi\cdot\FF} = 0.
    \end{equation}
Combining this with \eqref{eq:delta_limit_zerotrace}, we conclude the proof.
\end{proof}

\begin{remark}
We expect that the conclusion of Theorem \ref{prop:normaltrace_support} holds
for any Borel set $E \Subset \Omega$.
However, we were unable to establish this without a further topological
or measure-theoretic condition to ensure that $\lvert \overline{\nabla \phi\cdot\FF} \rvert(E \cap \partial E) = 0$.
Note that, for $\mathcal{DM}^p(\Omega)$--fields $\FF$,
under the mild regularity condition that $\mathcal L^n(\partial E)= 0$,
the condition that $\lvert \FF \rvert(\partial E) = \int_{\partial E} \lvert\FF\rvert\,\d\mathcal L^n= 0$
is always satisfied.
\end{remark}

\begin{corollary}\label{eq:normaltrace_lipschitz_dual}
Let $\FF \in \mathcal{DM}^{\ext}(\Omega)$, and let $U \Subset \Omega$ be open.
    Then there is a linear functional $N_U \in \Lip_{\mathrm{b}}(\partial U)^{\ast}$ satisfying
    \begin{equation}
        \langle \FF \cdot \nu, \,\phi \rangle_{\partial U}
        = N_U(\phi\rvert_{\partial U}) \qquad\text{for any } \phi \in \Lip_{\rm b}(\Omega).
    \end{equation}
\end{corollary}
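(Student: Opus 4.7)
The core observation is that Theorem \ref{prop:normaltrace_support}, applied to the open set $U$, tells us that $\langle \FF\cdot\nu, \,\phi\rangle_{\partial U} = 0$ whenever $\phi\in\Sob^{1,\infty}(\Omega)$ vanishes on $\partial U$. Since the paper notes $\Lip_{\rm b}(\Omega)\subset\Sob^{1,\infty}(\Omega)$ by Rademacher's Theorem, the plan is to exploit this vanishing property to factor the normal trace through restriction to $\partial U$, and then to estimate the resulting functional using a suitably chosen Lipschitz extension.

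Concretely, for $g\in\Lip_{\rm b}(\partial U)$, I would first produce a bounded Lipschitz extension $\tilde g\in\Lip_{\rm b}(\mathbb R^n)$ of $g$ satisfying $\lVert\tilde g\rVert_{\Lip_{\rm b}(\mathbb R^n)}\le\lVert g\rVert_{\Lip_{\rm b}(\partial U)}$. This can be achieved by the standard McShane extension
\begin{equation}
    \tilde g(x)=\inf_{y\in\partial U}\bigl(g(y)+\Lip(g)\,\lvert x-y\rvert\bigr),
\end{equation}
which is $\Lip(g)$-Lipschitz on $\mathbb R^n$, followed by truncation at heights $\pm\lVert g\rVert_{\Leb^\infty(\partial U)}$ (which preserves the Lipschitz constant). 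Restricting to $\Omega$ yields an element of $\Lip_{\rm b}(\Omega)\subset\Sob^{1,\infty}(\Omega)$, and I would then set
\begin{equation}
    N_U(g):=\langle \FF\cdot\nu,\,\tilde g\,\rangle_{\partial U},
\end{equation}
where the right-hand side makes sense by the extension in Corollary \ref{cor:normaltrace_lipschitz_extension}.

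To check that this is well-defined, suppose $\tilde g_1,\tilde g_2\in\Lip_{\rm b}(\Omega)$ both restrict to $g$ on $\partial U$. Then $\tilde g_1-\tilde g_2\in\Sob^{1,\infty}(\Omega)$ vanishes on $\partial U$, so Theorem \ref{prop:normaltrace_support} (applicable since $U$ is open) gives $\langle\FF\cdot\nu,\,\tilde g_1-\tilde g_2\rangle_{\partial U}=0$. Linearity of $N_U$ is then inherited from the linearity of the normal trace (choosing the McShane extension of $\alpha g_1+\beta g_2$ may not equal $\alpha\tilde g_1+\beta\tilde g_2$, but by the well-definedness just established, this does not matter). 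For continuity, I combine the bound from Corollary \ref{cor:normaltrace_lipschitz_extension} with the norm control of the McShane extension:
\begin{equation}
    \lvert N_U(g)\rvert\le\lVert\tilde g\rVert_{\Sob^{1,\infty}(\Omega)}\lVert\FF\rVert_{\mathcal{DM}^{\ext}(\Omega)}\le\lVert g\rVert_{\Lip_{\rm b}(\partial U)}\lVert\FF\rVert_{\mathcal{DM}^{\ext}(\Omega)},
\end{equation}
so $N_U\in\Lip_{\rm b}(\partial U)^\ast$. Finally, for any $\phi\in\Lip_{\rm b}(\Omega)$, the restriction $\phi\rvert_{\partial U}$ lies in $\Lip_{\rm b}(\partial U)$ (since $\partial U\Subset\Omega$ is compact), and $\phi$ itself is an admissible extension of $\phi\rvert_{\partial U}$, whence $\langle\FF\cdot\nu,\,\phi\rangle_{\partial U}=N_U(\phi\rvert_{\partial U})$.

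The only non-routine step is the Lipschitz extension with controlled norm; all other pieces—well-definedness, linearity, boundedness—follow directly from Theorem \ref{prop:normaltrace_support} and Corollary \ref{cor:normaltrace_lipschitz_extension}. Since $\partial U$ is compact and sits inside the Euclidean ambient space, the McShane/Kirszbraun construction is classical and presents no difficulty, making the proof essentially a packaging of the tools already assembled.
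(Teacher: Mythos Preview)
Your proof is correct and follows essentially the same approach as the paper: define $N_U$ by extending from $\partial U$ to $\Omega$, invoke Theorem~\ref{prop:normaltrace_support} for well-definedness, and use the bound from Corollary~\ref{cor:normaltrace_lipschitz_extension} for continuity. The only difference is the choice of Lipschitz extension---you use McShane plus truncation, while the paper cites \cite[Proposition~2.12]{afp} and multiplies by a Lipschitz cutoff $\chi(x)=\max\{0,1-\dist(x,\partial U)\}$---but this is an implementation detail with no substantive impact.
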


\begin{proof}
Given $\phi_0 \in \Lip_{\mathrm{b}}(\partial U)$, let $\phi \in \Lip_{\mathrm{b}}(\Omega)$ be an extension of $\phi_0$ to $\Omega$; this is possible by \cite[{\rm Proposition}\,2.12]{afp}
and multiplying the obtained extension by the Lipschitz cutoff
$\chi(x) = \max\{0,1-\dist(x,\partial U)\}$ to ensure that it is bounded.
We then define
\begin{equation}
N_U(\phi_0) := \langle \FF \cdot \nu, \phi \rangle_U.
\end{equation}
Note that this extension can be chosen to satisfy that
$\lVert\phi\rVert_{\Lip_{\mathrm{b}}(\Omega)}
\leq C\lVert\phi\rVert_{\Lip_{\mathrm{b}}(\partial U)},$
so it follows from the boundedness of
$\langle \FF \cdot \nu, \,\cdot \,\rangle_{\partial U}$ that $N_U$ is bounded.
To show that it is well-defined, observe that,
if $\phi_1, \phi_2 \in \Lip_{\mathrm{b}}(\Omega) \subset \Sob^{1,\infty}(\Omega)$ are two such extensions,
then $\phi_1-\phi_2$ vanishes on $\partial U$ so that
\begin{equation}
\langle \FF \cdot \nu, \,\phi_1 \rangle_U - \langle \FF \cdot \nu, \,\phi_2 \rangle_U
= \langle \FF \cdot \nu, \,\phi_1 - \phi_2 \rangle = 0,
\end{equation}
by Theorem \ref{prop:normaltrace_support}.
Therefore, $N_U$ is well-defined, independent of the choice of extension.
\end{proof}

\section{Representation and Limit Formula for the Normal Trace via Disintegration}\label{sec:disintegration}

In this section, we show the normal trace admits a measure representation
on \emph{almost every} open set and derive a limit formula in the general case.
A key tool in our analysis is the disintegration of measures,
which we apply in the following form:

\begin{theorem}\label{prop:disintegration}
Let $\mu$ be a finite Radon measure on an open set $U \subset\mathbb R^n$ such that $U \neq \varnothing, \mathbb R^n$,
and let $d=\dist(\,\cdot\,,\partial U)$ be the distance function.
Then there exist both a non-negative and finite Radon measure $\tau$ on $(0, \infty)$
and a family of measures $\mu_t$ in $U$ such that the mapping{\rm :} $t \mapsto \mu_t$ is $\tau$-measurable.
For $\tau$--\textit{a.e.}\,\,$t \in (0,\infty)$, $\mu_t$ is supported on $d^{-1} (t)$ with $\abs{\mu_t}(d^{-1}(t)) = 1$ and
  \begin{equation}
    \mu(A \cap U^{t_1}\setminus \overline{U}^{t_2}) = \int_{t_1}^{t_2} \int_{d^{-1} (t)} \chi_A(x) \,\d\mu_t(x)\,\d\tau(t),
  \end{equation}
where
the $\tau$-integral is understood to be over the open interval $(t_1,t_2)$.
Furthermore, for any bounded Borel function $\phi$ on $U$,
  \begin{equation}
  \label{localdisintegration}
    \int_U \phi(x) \,\d \mu(x) = \int_0^{\infty} \int_{d^{-1}(t)} \phi(x) \,\d\mu_t(x)\,\d \tau(t).
  \end{equation}
We write
  \begin{equation}\label{eq:disint_notation}
      \mu = \tau \otimes_{\partial U^t} \mu_t
  \end{equation}
  as a shorthand for this decomposition.
\end{theorem}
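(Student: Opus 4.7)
The plan is to apply the classical Disintegration Theorem (as stated, e.g., in \cite[Theorem 2.28]{afp}) to the finite positive measure $\lvert\mu\rvert$ through the $1$-Lipschitz, hence continuous, map $d := \dist(\,\cdot\,,\partial U) \colon U \to (0,\infty)$, and then to transfer the resulting slicing to $\mu$ via its polar decomposition.

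First, I would set $\tau := d_{\#}\lvert\mu\rvert$, which is a finite non-negative Radon measure on $(0,\infty)$ since $d$ is Borel and $\lvert\mu\rvert$ is finite. The Disintegration Theorem then produces a $\tau$-measurable family $\{\lambda_t\}_{t>0}$ of probability measures on $U$ with $\lambda_t$ concentrated on $d^{-1}(t)$ for $\tau$-a.e.\ $t$, and satisfying
\begin{equation}
    \int_U \psi(x)\,\d\lvert\mu\rvert(x) = \int_0^{\infty}\!\int_{d^{-1}(t)} \psi(x) \,\d\lambda_t(x)\,\d\tau(t)
\end{equation}
for every bounded Borel $\psi$ on $U$. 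Writing the polar decomposition $\mu = g\,\lvert\mu\rvert$ with $\lvert g\rvert = 1$ holding $\lvert\mu\rvert$-a.e., I would define $\mu_t := g\,\lambda_t$. The measurability of $t \mapsto \mu_t$ is inherited from $t\mapsto\lambda_t$; the support condition on $\mu_t$ is immediate; and $\lvert\mu_t\rvert = \lvert g\rvert\,\lambda_t = \lambda_t$ gives $\lvert\mu_t\rvert(d^{-1}(t)) = 1$ for $\tau$-a.e.\ $t$. Applying the disintegration identity to $\psi = \phi g$ (which is still bounded Borel) yields the global formula \eqref{localdisintegration}.

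For the interval identity, note that, for $0 < t_1 < t_2$, one has $U^{t_1}\setminus \overline{U^{t_2}} = \{x \in U : t_1 < d(x) < t_2\}$, using continuity of $d$ on $U$ together with the fact that $d > 0$ on all of $U$. Choosing $\phi(x) = \chi_A(x)\,\chi_{\{t_1 < d(x) < t_2\}}$ in \eqref{localdisintegration} and exploiting that $\mu_t$ is concentrated on $\{d = t\}$ gives
\begin{equation}
\mu\bigl(A \cap U^{t_1}\setminus \overline{U^{t_2}}\bigr)
= \int_0^{\infty} \chi_{(t_1,t_2)}(t)\int_{d^{-1}(t)} \chi_A\,\d\mu_t\,\d\tau(t)
= \int_{t_1}^{t_2}\!\int_{d^{-1}(t)} \chi_A\,\d\mu_t\,\d\tau(t),
\end{equation}
as desired.

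The core work is the invocation of a standard disintegration result, so the main obstacles are largely bookkeeping: verifying that the hypotheses of the cited theorem apply verbatim in the present setting (both $U$ and $(0,\infty)$ are Borel subsets of Euclidean space, hence Radon spaces), ensuring the polar decomposition interacts correctly with the $\tau$-measurability of the slice family (for vector-valued $\mu$ this is handled componentwise), and the minor topological identification of $\overline{U^{t_2}}$ with $\{d \geq t_2\} \cap U$ for $t_2 > 0$. None of these present a genuine difficulty; the heart of the matter is recognizing that the distance function provides exactly the right parameter along which to foliate $U$.
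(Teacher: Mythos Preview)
Your approach is correct and in fact more direct than the paper's. You apply the Disintegration Theorem straight to the map $d \colon U \to (0,\infty)$, which yields slice measures concentrated on the fibers $d^{-1}(t)$ automatically. The paper instead lifts via the injective map $\Phi(x) = (d(x), x)$ into $(0,\infty) \times U$ and disintegrates $\Phi_{\#}\mu$ along the projection onto the first factor; since the fibers of that projection are the full slices $\{t\} \times U$, the paper must then supply a separate Lebesgue-point argument to establish that the resulting $\mu_t$ are actually concentrated on $d^{-1}(t) = \partial U^t$. Your route bypasses this step entirely. (Incidentally, \cite[Theorem~2.28]{afp} already treats $\mathbb{R}^m$-valued measures, so even the polar-decomposition detour is optional.)

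One caveat: the identification $\overline{U^{t_2}} \cap U = \{d \geq t_2\}$ that you dismiss as minor is false in general---take $U = B_1(0)$ and $t_2 = 1$, where $\{d \geq 1\} = \{0\}$ but $U^{1} = \varnothing$. The set $U^{t_1} \setminus \overline{U^{t_2}}$ can therefore strictly contain $\{t_1 < d < t_2\}$, the excess lying inside $\{d = t_2\}$; the interval formula as literally written thus requires $\tau(\{t_2\}) = 0$, or else that $\overline{U^{t_2}}$ be read as $\{d \geq t_2\}$. This is a wrinkle in the statement itself rather than in your method, and the paper's argument shares it.
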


\begin{proof}
We define the function $\Phi\colon  U \to (0, \infty) \times U$ as $\Phi(x)=(d(x),x)$.
Then the push-forward measure $\Phi_{\#}\mu$ is a measure on $(0, \infty) \times U$.
By the Disintegration Theorem presented
in \cite[{\rm Theorem}~2.28]{afp}
applied to
$\Phi_{\#}\mu$,
it follows that there exists a family of measures $\mu_t$ satisfying
\begin{equation}
\label{clave}
\Phi_{\#}\mu = \tau \otimes \mu_t, \qquad \abs{\Phi_{\#}\mu} = \tau \otimes \abs{\mu_t},
\end{equation}
where $\tau$ is a measure on $(0, \infty)$ defined as
\begin{equation}
\tau= \pi_{\#} (\abs{\Phi_{\#}\mu})\qquad  \text{ with $\pi\colon (0, \infty) \times U \to (0, \infty)$
and $\pi(t,x)=t$},
\end{equation}
and $\mu_t$ is a family of measures on $U$
with $\abs{\mu_t}(U)=1$ for $\tau$--\textit{a.e.}\,\,$t \in (0, \infty)$.
For a Borel set $A \subset U \setminus \overline U^t$,
since $\Phi$ is injective, $\Phi^{-1}(\Phi(A)) = A$ so that
\begin{equation}\label{eq:disintegration_intermediate}
   \mu(A) = \Phi_{\#}\mu(\Phi(A)) = \int_0^t \int_U \chi_{\Phi^{-1}(A)}(t,x)\,\d \mu_t(x)\,\d \tau(t).
\end{equation}
We now claim that, for $\tau$--\textit{a.e.}\,\,$t \in (0,\infty)$,
$\mu_t$ is supported on $\partial U_t$.
Indeed, for any $t_1 < t_2$, we have
\begin{align}
\tau((t_1,t_2))&= \pi_{\#} (\abs{\Phi_{\#}\mu}) ((t_1,t_2))=\abs{\Phi_{\#}\mu}(\pi^{-1}((t_1,t_2))) \nonumber\\
&= \abs{\Phi_{\#}\mu}((t_1,t_2)\times U)= \Phi_{\#}\abs{\mu}((t_1,t_2)\times U) \nonumber \\
&= \abs{\mu} (\Phi^{-1}((t_1,t_2) \times U))= \abs{\mu}(U^{t_2}\setminus \overline{U}^{t_1}), \label{aqui}
\end{align}
and
\begin{equation}
\abs{\mu} (U^{t_2}\setminus \overline{U}^{t_1})=
\abs{\Phi_{\#}\mu}\big((t_1,t_2) \times (U^{t_2}\setminus \overline{U}^{t_1})\big)=\int_{t_1}^{t_2}\int_{U^{t_2} \setminus \overline{U^{t_1}}}  \,\d\abs{\mu_t}
\,\d\tau(t).
\end{equation}
Combining the above, we infer that
\begin{equation}
    1 = \frac{\lvert\mu\rvert(U^{t_2} \setminus \overline{U^{t_1}})}{\tau((t_1,t_2))} = \frac1{\tau((t_1,t_2))} \int_{t_1}^{t_2} \lvert\mu_t\rvert(U^{t_2} \setminus \overline{U^{t_1}})\,\d \tau(t).
\end{equation}
Let $t>0$ and $0 < \eps < \delta$. Then we can employ the above
with $t_1 = t-\eps$ and $t_2 = t+\eps$, and apply that
$U^{t+\eps} \setminus \overline{U^{t-\eps}}\subset U^{t+\delta} \setminus \overline{U^{t-\delta}}$
to obtain
\begin{equation}
    \frac1{\tau((t-\eps,t+\eps))} \int_{t-\eps}^{t+\eps} \lvert \mu_s \rvert(U^{t+\delta} \setminus \overline{U^{t-\delta}}) \,\d \tau(s) \geq 1.
\end{equation}
Since the function:
\begin{equation}\label{eq:mut_partialmap}
    s \mapsto \lvert\mu_s \rvert(U^{t+\delta} \setminus \overline{U^{t-\delta}})
\end{equation}
is $\tau$-measurable for $t \geq 0$ by \cite[(2.19)]{afp},
there is a $\tau$-null set $\mathcal N \subset [0,\infty)$
such that every $t \notin \mathcal N$ is a Lebesgue point for \eqref{eq:mut_partialmap}
for each $\delta = \frac{1}{k}$ with $k \in \mathbb N$.
Then, for such  $t$, we have
\begin{equation}
    1 \leq \lim_{\eps \to 0} \frac1{\tau((t-\eps,t+\eps))} \int_{t-\eps}^{t+\eps} \lvert \mu_s \rvert(U^{s+\frac1k} \setminus \overline{U^{s-\frac1k}}) \,\d \tau(s) = \lvert \mu_t \rvert(U^{t+\frac1k} \setminus \overline{U^{t-\frac1k}}).
\end{equation}
Since this holds for all $k$, we infer that
\begin{equation}
    \lvert \mu_t \rvert(\partial U^t) = \lim_{k \to \infty} \lvert\mu_t\rvert(U^{t+\frac1k} \setminus\overline{U^{t-\frac1k}}) \geq 1.
\end{equation}
However, since $\lvert \mu_t\rvert(U)=1$, it follows that $\mu_t$ is supported on $\partial U^t$.

Since $\tau$-almost every $\mu_t$ is supported on $\partial U^t$, \eqref{eq:disintegration_intermediate} simplifies to give
\begin{equation}
    \mu(A) = \int_0^t \int_{\partial U^t} \chi_A(x)\,\d\mu(x)\,\d \tau(t),
\end{equation}
which is what we set out to prove.
Moreover, an approximation argument implies \eqref{eq:disint_notation}.
\end{proof}

\begin{lemma}\label{lem:disint_polar}
Let $\mu$ be a Radon measure on an open set $U \subset \mathbb R^n$.
Consider the decomposition
  \begin{equation}
      \mu = \tau \otimes_{\partial U^t} \mu_t
  \end{equation}
  from  {\rm Theorem \ref{prop:disintegration}}.
  Then, for $\tau$--\textit{a.e.}\,\,$t \in [0,\infty)$,
   \begin{equation}
    D_{\lvert \mu\rvert}\mu(x) = D_{\lvert \mu^t\rvert}\mu^t(x),
    \qquad\,\,\mbox{$\lvert \mu^t\rvert$--\textit{a.e.}\,\,on $\partial U^t$}.
  \end{equation}
\end{lemma}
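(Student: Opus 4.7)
The plan is to compare the polar decompositions of $\mu$ and of the disintegrated slices $\mu_t$, using the fact that the total variation is compatible with the disintegration. Throughout I identify $\mu^t = \mu_t$ from Theorem \ref{prop:disintegration}.

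First, I would invoke the full strength of the Disintegration Theorem \cite[Theorem 2.28]{afp} used in the proof of Theorem \ref{prop:disintegration}: namely, as recorded in \eqref{clave}, one simultaneously has
\begin{equation}
    \mu = \tau \otimes_{\partial U^t} \mu_t, \qquad \lvert\mu\rvert = \tau \otimes_{\partial U^t} \lvert\mu_t\rvert.
\end{equation}
Write $\sigma(x) := D_{\lvert\mu\rvert}\mu(x)$ for the Radon-Nikodym density, so the polar decomposition $\mu = \sigma \lvert\mu\rvert$ holds with $\sigma$ Borel and $\lvert\sigma\rvert = 1$ for $\lvert\mu\rvert$--a.e.\ $x$.

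Next, I would fix a countable family $\{\phi_k\}_{k \in \mathbb N} \subset C_{\mathrm{c}}(U)$ which is dense in $C_{\mathrm{c}}(U)$ in the supremum norm, together with a countable basis $\{B_j\}_{j \in \mathbb N}$ of open subsets of $(0,\infty)$. For each pair $(k,j)$, applying \eqref{localdisintegration} to the bounded Borel function $x \mapsto \phi_k(x)\mathbbm{1}_{B_j}(d(x))$ in two ways --- directly against $\mu$, and against $\sigma \lvert\mu\rvert$ via the polar decomposition --- yields
\begin{equation}
    \int_{B_j} \left( \int_{\partial U^t} \phi_k \,\d\mu_t \right) \d\tau(t)
    = \int_{B_j} \left( \int_{\partial U^t} \phi_k \,\sigma \,\d\lvert\mu_t\rvert \right) \d\tau(t).
\end{equation}
Since $\{B_j\}$ generates the Borel $\sigma$-algebra on $(0,\infty)$, Lemma \ref{lem:measure_coincidence} applied to the signed $\tau$-absolutely continuous measures $B \mapsto \int_B \int_{\partial U^t}\phi_k \,\d\mu_t \,\d\tau(t)$ and $B \mapsto \int_B \int_{\partial U^t} \phi_k \sigma \,\d\lvert\mu_t\rvert \,\d\tau(t)$ (finiteness uses $\phi_k \in C_{\mathrm{c}}$ and $\lvert\mu_t\rvert$ being a probability on $\partial U^t$) produces a $\tau$-null set $\mathcal N_k$ such that
\begin{equation}
    \int_{\partial U^t} \phi_k \,\d\mu_t = \int_{\partial U^t} \phi_k \,\sigma \,\d\lvert\mu_t\rvert
    \qquad \text{for every } t \notin \mathcal N_k.
\end{equation}
Taking $\mathcal N = \bigcup_k \mathcal N_k$, which is $\tau$-null, and using density of $\{\phi_k\}$ together with the Riesz Representation Theorem, for every $t \notin \mathcal N$ we obtain the measure identity
\begin{equation}
    \mu_t = \sigma \,\lvert\mu_t\rvert \qquad \text{on } \partial U^t.
\end{equation}

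Finally, comparing this with the polar decomposition $\mu_t = D_{\lvert\mu_t\rvert}\mu_t \cdot \lvert\mu_t\rvert$ and invoking uniqueness of the Radon-Nikodym derivative delivers $\sigma(x) = D_{\lvert\mu_t\rvert}\mu_t(x)$ for $\lvert\mu_t\rvert$--a.e.\ $x \in \partial U^t$, which is the claim. The main obstacle I anticipate is the uniformity of the exceptional $\tau$-null set: one must be careful to arrange the exceptional sets so that a \emph{single} $\tau$-null $\mathcal N$ works for all test functions $\phi_k$ simultaneously, which forces the use of a countable dense family and the $\sigma$-finiteness afforded by Lemma \ref{lem:measure_coincidence}. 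The vector-valued case (should $\mu$ have values in $\mathbb R^m$) is handled by applying the scalar argument componentwise to each entry of $\sigma$.
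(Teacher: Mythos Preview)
Your proof is correct and follows essentially the same strategy as the paper: both arguments rest on the identity $\lvert\mu\rvert = \tau \otimes_{\partial U^t} \lvert\mu_t\rvert$ from \eqref{clave}, compute $\int \phi\,\d\mu$ in two ways (via the polar decomposition of $\mu$ and via the slice-wise polar decompositions), restrict to a countable dense family in $C_{\mathrm c}(U)$ to obtain a single $\tau$-null exceptional set, and finish by uniqueness of Radon--Nikodym derivatives. The only cosmetic difference is that the paper localizes to intervals $(t_1,t_2)$ and invokes Lebesgue differentiation to pass to the pointwise identity, whereas you use Lemma~\ref{lem:measure_coincidence} on a countable basis of $(0,\infty)$ and then the $\tau$-a.e.\ uniqueness of densities; these two steps are interchangeable here.
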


\begin{proof}
We know from the above proof that
$\Phi_{\#}\mu = \tau \otimes \mu_t$
and $\lvert \Phi_{\#}\mu\rvert  = \tau \otimes \lvert \mu_t \rvert$.
Thus, if $\phi$ is a bounded Borel-measurable function on $U$, we have
  \begin{equation}
      \int_U \phi \,\d \mu = \int_0^{\infty} \int_{\partial U^t} \phi \,\d\mu_t\,\d\tau= \int_0^{\infty} \int_{\partial U^t} \phi \, D_{\lvert\mu_t\rvert}\mu_t \,\d \lvert\mu_t\rvert\,\d\tau,
  \end{equation}
  and
  \begin{equation}
      \int_U \phi \,\d \mu = \int_U \phi\,D_{\lvert\mu\rvert}\mu \,\d\lvert\mu \rvert = \int_0^{\infty} \int_{\partial U^t} \phi \, D_{\lvert\mu\rvert}\mu  \,\d\lvert\mu_t\rvert\,\d\tau.
  \end{equation}
Now, replacing $\phi$ by $\mathbbm{1}_{U^{t_2} \setminus \overline{U^{t_1}}}\phi$
and then combining the above, we obtain
  \begin{equation}
      \int_{t_1}^{t_2} \int_{\partial U^t} \phi\, D_{\lvert\mu_t\rvert}\mu_t \,\d \lvert\mu_t\rvert\,\d\tau = \int_{t_1}^{t_2} \int_{\partial U^t} \phi\,D_{\lvert\mu\rvert}\mu  \,\d\lvert\mu_t\rvert\,\d\tau.
  \end{equation}
  Let $\{\phi_j\}$ be a countable and uniformly dense subset of $C_{\mathrm{c}}(U)$.
  Then, for $\tau$--\textit{a.e.}\,\,$t \geq 0$,
  \begin{equation}
      \begin{split}
      \int_{\partial U^t} \phi_j\,  D_{\lvert\mu_t\rvert}\mu_t \,\d \lvert\mu_t\rvert
      &= \lim_{\eps \to 0} \int_{t-\eps}^{t+\eps} \int_{\partial U^s} \phi_j \,D_{\lvert\mu_s\rvert}\mu_s \,\d \lvert\mu_s\rvert\,\d\tau \\
      &=\lim_{\eps \to 0} \int_{t-\eps}^{t+\eps} \int_{\partial U^s} \phi_j\,D_{\lvert\mu\rvert}\mu \,\d \lvert\mu_s\rvert\,\d\tau \\
      &= \int_{\partial U^t} \phi_j\,D_{\lvert\mu\rvert}\mu \,\d \lvert\mu_t\rvert.
      \end{split}
  \end{equation}
  By density of $\{\phi_j\}$, it follows that, for any such $t$,
   \begin{equation}
      (D_{\lvert\mu_t\rvert}\mu_t)  \lvert\mu_t\rvert  = (D_{\lvert\mu\rvert}\mu)  \lvert\mu_t\rvert
      \qquad\,\,\mbox{as measures on $\partial U^t$},
  \end{equation}
  from which the result follows.
\end{proof}

Let $\tau$ be a non-negative Radon measure on $\mathbb{R}$.
In the subsequent proof, we apply the Lebesgue-Besicovitch Differentiation Theorem to write
\begin{equation}
\label{primera}
 \tau= (D_{\mathcal{L}^1} \tau) \mathcal{L}^1 + \tau_{\text{sing}},
\end{equation}
where the Radon measure $\tau_{\text{sing}}$ satisfies
\begin{equation}
\tau_{\text{sing}}= \tau \,\, \rightangle Y_1,  \qquad Y_1= \{ D_{\mathcal{L}^1}^+ \tau = \infty \}, \qquad \mathcal{L}^1 (Y_1)=0.
\end{equation}
By Lemma \ref{elchiste}, $D_{\mathcal L^1}\tau_{\mathrm{sing}}(t) = 0$
for $\mathcal L^1$--\textit{a.e.}\,\,$t>0$.

\begin{theorem}\label{principal}
Let $F\in \mathcal{DM}^{\textnormal{ext}}(\Omega)$, and let $U \Subset \Omega$ be open.
Then there is an $\mathcal L^1$--null set $\mathcal N \subset (0,\infty)$ such that,
for any $\eps \notin \mathcal N$, there exists a measure
$(\FF \cdot \nu)_{\partial U^{\varepsilon}}$ supported on $\partial U^{\varepsilon}$ such that
\begin{equation}
\langle \FF \cdot \nu, \,\phi \rangle_{\partial U^{\varepsilon}}
= \int_{\partial U^{\varepsilon}} \phi \,\dr (\FF \cdot \nu)_{\partial U^{\varepsilon}}
\qquad\,\,\text{for any } \phi \in \Sob^{1,\infty}(\Omega).
 \end{equation}
Moreover, for every $\varepsilon_k \to 0$ with $\varepsilon_k \notin \mathcal{N}$,
the normal trace of $\FF$ on $\partial U$ can be represented as the limit of trace measures{\rm :}
\begin{equation}
\label{main2}
\langle \FF \cdot \nu, \,\phi\rangle_{ \partial U}
= \lim_{k \to \infty} \int_{\partial U^{\eps_{k}}}  \phi \,\dr (\FF \cdot \nu)_{\partial U^{\eps_{k}}}
\qquad \text{for any } \phi \in \Sob^{1,\infty}(\Omega).
\end{equation}
\end{theorem}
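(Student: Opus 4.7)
The plan is to realize $(\FF \cdot \nu)_{\partial U^{\varepsilon}}$ as the weak-$\ast$ limit, as $\delta \to 0$, of the pairing measures $\lambda_{\varepsilon,\delta} := \overline{\nabla \chi_{\varepsilon,\delta} \cdot \FF}$ produced by applying Theorem~\ref{productrule} to a family of distance-based Lipschitz cutoffs. Theorem~\ref{prop:disintegration} applied to $\mu := \lvert \FF \rvert + \lvert \div \FF \rvert$ with $d(\cdot) = \dist(\,\cdot\,, \partial U)$ furnishes the decomposition $\mu = \tau \otimes_{\partial U^t} \mu_t$, which supplies the quantitative control on the mass of $\FF$ concentrated in thin shells $\{\varepsilon \leq d \leq \varepsilon + \delta\}$ required for this convergence.

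Writing the Lebesgue decomposition $\tau = f \mathcal L^1 + \tau_{\mathrm{sing}}$ on $(0,\infty)$, I define $\mathcal N$ as the $\mathcal L^1$-null set containing the atoms of $\tau$, the support of $\tau_{\mathrm{sing}}$, and the non-Lebesgue points of $f$, so that for $\varepsilon \notin \mathcal N$ one has $\mu(\partial U^\varepsilon) = 0$ and $\delta^{-1}\tau((\varepsilon, \varepsilon+\delta)) \to f(\varepsilon) < \infty$. Fix such an $\varepsilon$ and $\delta > 0$, and let $\eta_\delta \in \Sob^{1,\infty}([0,\infty))$ be the piecewise-linear cutoff equal to $0$ on $[0,\varepsilon]$, affine with slope $\delta^{-1}$ on $[\varepsilon, \varepsilon+\delta]$, and $1$ on $[\varepsilon+\delta,\infty)$. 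Setting $\chi_{\varepsilon,\delta}(x) := \eta_\delta(\dist(x, \mathbb R^n \setminus U))$ yields a function in $\Sob^{1,\infty}(\Omega)$ with $\lVert \nabla\chi_{\varepsilon,\delta}\rVert_{\Leb^\infty} \leq \delta^{-1}$, gradient supported in the shell $S_{\varepsilon,\delta} := \{x \in U: \varepsilon \leq d(x) \leq \varepsilon+\delta\}$, and converging pointwise to $\mathbbm{1}_{U^\varepsilon}$. Applying Theorem~\ref{productrule} produces $\lambda_{\varepsilon,\delta}$, which is itself supported in $S_{\varepsilon,\delta}$ by tracking the mollification scheme in \eqref{eq:pairing_limit}, and combining \eqref{eq:product_measure_bound} with the disintegration gives the uniform total-variation bound
\begin{equation*}
 \lvert\lambda_{\varepsilon,\delta}\rvert(\Omega) \leq \delta^{-1}\lvert\FF\rvert(S_{\varepsilon,\delta}) \leq \delta^{-1}\mu(S_{\varepsilon,\delta}) = \delta^{-1}\tau((\varepsilon,\varepsilon+\delta)) \to f(\varepsilon).
\end{equation*}

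To identify the limit, for any $\phi \in C^1_{\rm c}(\Omega)$, the product rule applied to $\chi_{\varepsilon,\delta}\FF$ together with the definition of distributional divergence yields
\begin{equation*}
 \int_\Omega \phi \,\dr\lambda_{\varepsilon,\delta} = -\int_\Omega \chi_{\varepsilon,\delta} \,\nabla\phi \cdot \dr\FF - \int_\Omega \phi \,\chi_{\varepsilon,\delta} \,\dr(\div \FF),
\end{equation*}
and dominated convergence (using $\lvert\chi_{\varepsilon,\delta}\rvert \leq 1$, $\chi_{\varepsilon,\delta} \to \mathbbm{1}_{U^\varepsilon}$, and $\mu(\partial U^\varepsilon) = 0$) sends the right-hand side to $\langle\FF\cdot\nu,\phi\rangle_{\partial U^\varepsilon}$. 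Combined with the uniform total-variation bound, weak-$\ast$ compactness, and density of $C^1_{\rm c}$ in $C_{\rm c}$, this forces the full family $\{\lambda_{\varepsilon,\delta}\}$ to converge weak-$\ast$ to a single Radon measure $(\FF\cdot\nu)_{\partial U^\varepsilon}$, whose support lies in $\partial U^\varepsilon$ by Lemma~\ref{soporte}. The identity $\langle\FF\cdot\nu, \phi\rangle_{\partial U^\varepsilon} = \int \phi\,\dr(\FF\cdot\nu)_{\partial U^\varepsilon}$ then extends from $C^1_{\rm c}(\Omega)$ to $\Sob^{1,\infty}(\Omega)$ by approximating $\phi \in \Sob^{1,\infty}(\Omega)$ via $\phi_k := \psi \,(\phi \ast \rho_{1/k})$, with $\psi \in C^\infty_{\rm c}(\Omega)$ equal to $1$ on $\bar U$; the pairing convergence $\nabla\phi_k \cdot \FF \weakstarto \overline{\nabla\phi\cdot\FF}$ from \eqref{eq:pairing_limit} combined with a portmanteau-type argument for the open set $U^\varepsilon$ (valid since $\lvert\overline{\nabla\phi\cdot\FF}\rvert(\partial U^\varepsilon) \leq \lVert\nabla\phi\rVert_{\Leb^\infty}\lvert\FF\rvert(\partial U^\varepsilon) = 0$) and Corollary~\ref{cor:normaltrace_lipschitz_extension} yield $\langle\FF\cdot\nu, \phi_k\rangle_{\partial U^\varepsilon} \to \langle\FF\cdot\nu, \phi\rangle_{\partial U^\varepsilon}$, while the right-hand sides converge by uniform convergence of $\phi_k$ to $\phi$ on $\bar U^\varepsilon$. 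Finally, the limit formula \eqref{main2} is a direct consequence of Corollary~\ref{cor:normaltrace_lipschitz_extension}: writing
\begin{equation*}
 \langle \FF \cdot \nu, \phi \rangle_{\partial U^{\varepsilon_k}} = -\int_{U^{\varepsilon_k}} \phi \,\dr(\div \FF) - \int_{U^{\varepsilon_k}} \dr\overline{\nabla\phi\cdot\FF},
\end{equation*}
and applying the Dominated Convergence Theorem with $U^{\varepsilon_k} \uparrow U$ delivers $\langle\FF\cdot\nu, \phi\rangle_{\partial U}$.

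The main obstacle I foresee is precisely this upgrade of the measure representation from $C^1_{\rm c}$- to $\Sob^{1,\infty}$-test functions, since $C^1_{\rm c}$ is not norm-dense in $\Sob^{1,\infty}$; the success of the above approximation hinges critically on the portmanteau-type convergence $\int_{U^\varepsilon} \nabla\phi_k \cdot \dr\FF \to \int_{U^\varepsilon} \dr\overline{\nabla\phi\cdot\FF}$, which in turn relies on $\varepsilon$ lying outside the null set $\mathcal N$ where $\lvert\FF\rvert$ charges $\partial U^\varepsilon$. A secondary delicate point is the rigorous verification that $\lambda_{\varepsilon,\delta}$ is supported in the closed shell $S_{\varepsilon,\delta}$ rather than a mere neighborhood thereof, which requires tracking the support through the mollification construction in the proof of Theorem~\ref{productrule}.
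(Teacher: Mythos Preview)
Your argument is correct and follows a route that is closely related to, but organized differently from, the paper's proof. Both proofs hinge on the same distance-based Lipschitz cutoff (your $\chi_{\varepsilon,\delta}$ is precisely $\delta^{-1}\psi^U_{\varepsilon,\varepsilon+\delta}$ in the paper's notation) and the product rule, but the limiting step is handled differently. The paper disintegrates the pairing measure $\overline{\nabla d\cdot\FF}$ itself along $\{\partial U^t\}$, writes the resulting identity \eqref{eq:averaged_st} as a difference quotient in $t$, and applies Lebesgue differentiation to the slice measure $\tau$; this yields the explicit formula $(\FF\cdot\nu)_{\partial U^\varepsilon}=T(\varepsilon)\mu_\varepsilon$ with $T=D_{\mathcal L^1}\tau$, which is later exploited in Theorem~\ref{thm:reconstruct_normaltrace}. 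You instead disintegrate $\lvert\FF\rvert+\lvert\div\FF\rvert$ only to secure a uniform mass bound, and then obtain the trace measure by weak-$\ast$ compactness together with identification of the limit via dominated convergence against $C^1_{\mathrm c}$ test functions.

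Your approach is arguably more elementary for the bare statement of the theorem: it avoids the explicit slice identification and isolates the $\Sob^{1,\infty}$ extension as a separate approximation step, which you correctly flag as the delicate point (the portmanteau-type passage works because the $\nabla\phi_k\cdot\FF$ are uniformly dominated by $C\lvert\FF\rvert$ and $\lvert\FF\rvert(\partial U^\varepsilon)=0$). The paper's route, by contrast, handles $\phi\in\Sob^{1,\infty}$ from the outset by working with $\div(\phi\FF)$ as a measure, so no separate extension is needed, and the explicit disintegration formula it produces is essential input for the reconstruction result in \S\ref{sec:prop_disintegration}. One minor imprecision in your write-up: ``the support of $\tau_{\mathrm{sing}}$'' should be ``an $\mathcal L^1$-null carrier of $\tau_{\mathrm{sing}}$'' (equivalently, the set where $D_{\mathcal L^1}\tau_{\mathrm{sing}}\neq 0$, cf.\ Lemma~\ref{elchiste}), since the topological support of a singular measure can have full Lebesgue measure.
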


\begin{proof} We divide the proof into three steps.

\smallskip
{\bf 1}. For each $0<t<s$, we define $\psi_{t,s}^U \in \Lip_{\mathrm{c}}(\Omega)$ by
  \begin{equation}\label{eq:psi_st_definition}
    \psi_{t,s}^U(x) = \threepartdef{s-t}{\text{ if } x \in U^{s},}{d(x)-t}{\text{ if } x \in U^t \setminus U^{s},}{0}{\text{ if } x \not\in U^t.}
  \end{equation}
We first show that
\begin{equation}\label{eq:psi_st_equality}
  \int_{U} \psi_{t,s}^U  \, \d (\div(\phi \FF))
  = -\int_{U^t \setminus \overline{U^{s}}} \phi \, \d \overline{\nabla d \cdot \FF}
\end{equation}
for any $\phi \in \Sob^{1,\infty}(\Omega)$ and all $0 < t < s$ for which
  \begin{equation}\label{eq:nablad_endpoint_zero}
      \lvert \overline{\nabla d \cdot \FF}\rvert(\partial U^t) = \lvert \overline{\nabla d \cdot \FF}\rvert(\partial U^s) = 0
  \end{equation}
 that holds for all but countably many $s$ and $t$.

 Since $\psi^U_{t,s}$ is supported away from $\partial U$, by Lemma \ref{soporte}, we have
  \begin{equation}
      0 = \langle \FF \cdot \nu, \,\psi_{t,s}^U \phi \rangle_{\partial U}
      = \div(\phi \psi_{t,s}^U \FF)(U) = \langle \phi \FF \cdot \nu, \,\psi_{t,s}^U \rangle_{\partial U}.
  \end{equation}
Now, using the product rule (Theorem \ref{productrule}), we have
  \begin{eqnarray}
  \label{codo}
   0= \langle \phi  \FF \cdot \nu, \,\psi_{t,s}^U \rangle_{\partial U}
   = \int_{U} \psi_{t,s}^U  \,\dr (\div(\phi \FF)) + \int_{U} \dr \overline{ \nabla \psi_{t,s}^U  \cdot (\phi\FF) }.
  \end{eqnarray}
  By definition of the pairing measure,
  \begin{equation}
  \begin{split}
      \overline{\nabla\psi_{t,s}^U \cdot (\phi \FF)}
      &= \operatornamewithlimits{w*-lim}_{\delta \to 0}
        \big(\nabla(\psi_{t,s}^U \ast \rho_{\delta}) \phi \FF \big)\\
      &= \operatornamewithlimits{w*-lim}_{\delta \to 0}
       \big((\mathbbm{1}_{U^t \setminus \overline{U^s}}\nabla d) \ast \rho_{\delta} \phi \FF \big) \\
      &= \phi\overline{\nabla d \cdot \FF} \res \left( U^t \setminus \overline{U^s}\right),
      \end{split}
  \end{equation}
  where we have used \eqref{eq:nablad_endpoint_zero} to justify the weak${}^\ast$--limit in the last equality.
  Combining this  with \eqref{codo} yields
  \eqref{eq:psi_st_equality}, which can be written as
  \begin{equation}\label{eq:averaged_st}
      (s-t)\int_{U^{s}}\d (\div (\phi\FF))
      + \int_{U^t \setminus \overline{U}^{s}}(d(x)-t) \, \d (\div(\phi \FF))
      = -\int_{U^t \setminus \overline{U^{s}}} \phi \, \d \overline{\nabla d \cdot \FF},
  \end{equation}
  by definition of $\psi_{t,s}^U$.

\smallskip
{\bf 2}. We now apply the disintegration result (Theorem \ref{prop:disintegration}) with $\mu = \overline{\nabla d \cdot \FF}$ to write
  \begin{equation}
      \int_{U^t \setminus \overline{U^s}} \phi \,\d\overline{\nabla d \cdot \FF}
      = \int_t^s \int_{\partial U^r} \phi \,\d\mu_r\,\d\tau(r).
  \end{equation}
  Now, given $\eps, h>0$ with $h < \eps$, we apply \eqref{eq:averaged_st}
  with $s = \eps+h$ and $t = \eps-h$ to obtain
  \begin{equation}\label{eq:disint_prelimit}
  \begin{split}
    &- \int_{\eps-h}^{\eps+h} \int_{\partial U^t} \phi \,\d\mu_t\,\d\tau(t) \\
    &= 2h \int_{U^{\eps+h}} \d(\div(\phi\FF)) + \int_{U^{\eps-h} \setminus \overline{U^{\eps+h}}} (d(x) -\eps+h)\,\d (\div(\phi\FF)) .
      \end{split}
  \end{equation}
 This is valid for all but countably many $h>0$ depending on $\eps$.
 We also impose that
 \begin{equation}\label{eq:eps_no_concentration}\lvert \FF \rvert(\partial U^{\eps}) = \lvert \div \FF \rvert(\partial U^{\eps}) = 0,\end{equation}
 which holds for all but countably many $\eps$.

We now divide both sides by $2h$ and study the limit as $h \to 0$.
Let $h_k \searrow 0$ be any sequence such that \eqref{eq:disint_prelimit} is valid with each $h_k$ in place of $h$.
  Since $\lvert d(x) -\eps + h_k\rvert \leq 2h_k$ on $U^{\eps-h_k}\setminus \overline{U}^{\eps +h_k}$ and $\lvert\div(\phi \FF)\rvert (U^{\eps-h_k}\setminus \overline{U}^{\eps +h_k}) \to 0$ as $k \to \infty$ by \eqref{eq:eps_no_concentration}, we have
  \begin{equation}
      \lim_{k \to \infty} \frac1{2h_k}\int_{U^{\eps-h_k} \setminus \overline{U^{\eps+h_k}}} (d(x) -\eps+h_k)\,\d (\div(\phi\FF)) = 0.
  \end{equation}
  The Dominated Convergence Theorem gives
  \begin{equation}
      \lim_{k \to \infty} \int_{U^{\eps+h_k}} \d(\div(\phi\FF)) = \int_{U^\eps} \d(\div(\phi\FF))
      = -\langle \FF \cdot \nu, \,\phi \rangle_{\partial U^{\eps}}.
  \end{equation}
  Hence, we infer that
  \begin{equation}\label{eq:normaltrace_diff_prelimit}
      \langle \FF \cdot \nu, \,\phi \rangle_{\partial U^{\eps}}
      = \lim_{k \to \infty} \frac1{2h_k} \int_{\eps-h_k}^{\eps+h_k} \int_{\partial U^t} \phi \,\d\mu_t\,\d\tau(t).
  \end{equation}

\smallskip
{\bf 3}. To proceed, we decompose
measure $\tau$ in two parts:
the absolutely continuous part with respect to $\mathcal{L}^{1}$ and the singular part as
 \begin{equation*}
\tau = (D_{\mathcal{L}^{1}} \tau) \mathcal{L}^{1} + \tau_{\text{sing}},
\qquad D_{\mathcal{L}^{1}} \tau \in \Leb^1_{\text{loc}}((0,\infty)).
\end{equation*}
Hence, with $T:=D_{\mathcal{L}^{1}} \tau$, we have
\begin{equation} \label{calculoprincipal}
\int_{\eps - h_k}^{\eps + h_k }\int_{\partial U^t}\phi\,\d\mu_t \,\d \tau(t)
=\int_{\eps - h_k}^{\eps + h_k }\int_{\partial U^t}\phi\,\d\mu_t \,T(t) \,\d t
 +\int_{\eps - h_k}^{\eps + h_k }\int_{\partial U^t}\phi\,\d\mu_t \,\d \tau_{\text{sing}}(t).
\end{equation}
We can estimate the first integral on the right-hand side of \eqref{calculoprincipal}
by using
the Lebesgue Differentiation Theorem.
Indeed, for $\mathcal{L}^1$\textit{--a.e.}\,\,$\eps$, we obtain
 \begin{equation}\label{eq:required_eps1}
\lim_{k \to \infty} \frac{1}{2h_k} \int_{\eps - h_k}^{\eps + h_k }
\Big(\int_{\partial U^t}\phi \,\d \mu_t \Big) T(t)\,\d t
=\int_{\partial U^\eps}\phi \,T(\eps)\,\d \mu_{\eps},
\end{equation}
so that
\begin{equation}\label{eq:trace_equals_disint}
(\FF \cdot \nu)_{\partial U^{\eps}} = T(\eps) \mu_{\eps},
\end{equation}
from \eqref{eq:normaltrace_diff_prelimit}.
Since $\abs{\mu_t}(\partial U^{t})=1$ for $\tau_{\text{sing}}$\textit{--a.e.}\,\,$t$,  we have
\begin{equation*}
\Big\lvert \int_{\partial U^{t}} \phi \,\d \mu_{t} \Big\rvert
\leq \norm{\phi}_{\Leb^{\infty}(\Omega)}\abs{\mu_{t}} (\partial U^{t})=\norm{\phi}_{\Leb^{\infty}(\Omega)}.
\end{equation*}
Using this and Lemma \ref{elchiste}, we can estimate the second integral on the right-hand side of \eqref{calculoprincipal} as
\begin{align}
\lim_{k \to \infty} \frac{1}{2h_k}
\Big\lvert\int_{\eps - h_k}^{\eps + h_k }\int_{\partial U^t}\phi \,\d\mu_t \d \tau_{\textnormal{sing}}(t)\Big\rvert
&\leq  \norm{\phi}_{\Leb^{\infty}(\Omega)} \lim_{h_k \to 0}
\frac {\tau_{\text{sing}} (\eps-h_k, \eps + h_k)} {\mathcal{L}^{1} (\eps -h_k, \eps + h_k)} \nonumber \\
 &=  \norm{\phi}_{\Leb^{\infty}(\Omega)} D_{\mathcal{L}^1} \tau_{\textnormal{sing}}(\eps)=0
 \label{eq:required_eps2}
 \end{align}
 for $\mathcal{L}^{1}$--\textit{a.e.}\,\,$\eps>0$.
 Thus, defining $\mathcal N \subset (0,\infty)$ to be the set of points where \eqref{eq:eps_no_concentration},
 \eqref{eq:required_eps1}, or \eqref{eq:required_eps2} does not hold, which is $\mathcal L^1$--null, and
 from \eqref{eq:trace_equals_disint},
 \begin{equation}
 \label{labuena}
\langle \FF \cdot \nu, \,\phi \rangle_{\partial U^{\eps}}
= \int_{\partial U^{\eps}} \phi \, \d (\FF \cdot \nu)_{\partial U^{\eps}} \qquad\text{for any }\eps \notin \mathcal{N}.
 \end{equation}
 Finally, taking any $\eps_k \to 0$ with $\eps_k \notin \mathcal{N}$ for all $k$, we have
 \begin{equation}
     \langle \FF \cdot \nu, \,\phi\rangle_{\partial U}
     = \lim_{k \to \infty} \langle \FF \cdot \nu, \phi\rangle_{\partial U^{\eps_k}}
     = \lim_{k \to \infty} \int_{\partial U^{\eps_{k}}}  \phi \,\dr (\FF \cdot \nu)_{\partial U^{\eps_{k}}}
 \end{equation}
 as required.
\end{proof}

\begin{remark}
Similar results were previously obtained by Frid \cite{Frid2} for domains with Lipschitz deformable boundaries.
\end{remark}

Later in \S \ref{sec:flux_property_general}, we will use a similar decomposition to $\overline{\nabla d \cdot \FF}$
from the above proof applied to a non-negative measure, which we record here.

\begin{lemma}\label{lem:disint_tau_decomp}
    Let $\mu$ be a non-negative Radon measure on an open set $U \subset \mathbb R^n$.
    Then there exists a decomposition
    \begin{equation}
      \mu = \mathcal L^1 \res [0,\infty) \otimes_{\partial U^t} \mu_t + \tau_{\mathrm{sing}} \otimes_{\partial U^t} \tilde\mu_t,
    \end{equation}
where $D_{\mathcal L^1}\tau_{\mathrm{sing}} = 0$ $\,\mathcal L^1$--\textit{a.e.}\,\,on $[0,\infty)$,
and we have used notation \eqref{eq:disint_notation} from {\rm Theorem \ref{prop:disintegration}}.
\end{lemma}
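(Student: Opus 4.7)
The plan is to apply Theorem \ref{prop:disintegration} to the given non-negative Radon measure $\mu$ and then decompose the resulting base measure $\tau$ on $(0,\infty)$ via the Lebesgue--Radon--Nikodym theorem.

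Concretely, I would first invoke Theorem \ref{prop:disintegration} to produce a non-negative Radon measure $\tau$ on $(0,\infty)$ and a $\tau$-measurable family of probability measures $\{\nu_t\}$, with $\nu_t$ supported on $\partial U^t$ for $\tau$--a.e.\ $t$, such that
\begin{equation}
    \mu = \tau \otimes_{\partial U^t} \nu_t.
\end{equation}
Next, I would apply the Lebesgue--Besicovitch decomposition \eqref{Lebesgue-Besicovitch} to write
\begin{equation}
    \tau = T\,\mathcal L^1\res[0,\infty) + \tau_{\mathrm{sing}}, \qquad T := D_{\mathcal L^1}\tau,
\end{equation}
and then Lemma \ref{elchiste} applied with $\mu = \mathcal L^1$ and $\lambda = \tau$ yields $D_{\mathcal L^1}\tau_{\mathrm{sing}} = 0$ for $\mathcal L^1$--a.e.\ $t \in [0,\infty)$, which is the required vanishing condition.

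To assemble the two pieces into the claimed form, I would set $\mu_t := T(t)\nu_t$ and $\tilde\mu_t := \nu_t$, both supported on $\partial U^t$. Using the defining identity \eqref{localdisintegration} and linearity in the base measure, for every bounded Borel function $\phi$ on $U$,
\begin{equation}
    \int_U \phi \,\d\mu
    = \int_0^\infty \int_{\partial U^t}\phi\,\d\nu_t\,T(t)\,\d t + \int_0^\infty\int_{\partial U^t}\phi\,\d\nu_t\,\d\tau_{\mathrm{sing}}(t)
    = \int_0^\infty\!\!\int_{\partial U^t}\phi\,\d\mu_t\,\d t + \int_0^\infty\!\!\int_{\partial U^t}\phi\,\d\tilde\mu_t\,\d\tau_{\mathrm{sing}}(t),
\end{equation}
which is exactly the decomposition $\mu = \mathcal L^1\res[0,\infty) \otimes_{\partial U^t}\mu_t + \tau_{\mathrm{sing}}\otimes_{\partial U^t}\tilde\mu_t$ in the notation of \eqref{eq:disint_notation}.

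There is no serious obstacle here: the argument is essentially a direct combination of the disintegration theorem with the standard Lebesgue decomposition of $\tau$. The only point requiring a little care is verifying that the product of a scalar Radon--Nikodym density $T$ with the slice measures $\nu_t$ is compatible with the disintegration notation, i.e.\ that $(T\mathcal L^1)\otimes_{\partial U^t}\nu_t = \mathcal L^1\otimes_{\partial U^t}(T(t)\nu_t)$, which is immediate from \eqref{localdisintegration} since $T$ depends only on $t$ and can be absorbed into the slice.
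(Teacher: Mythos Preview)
Your proposal is correct and follows essentially the same approach as the paper: disintegrate $\mu$ via Theorem \ref{prop:disintegration}, apply the Lebesgue--Besicovitch decomposition to the base measure $\tau$, set $\mu_t = T(t)\tilde\mu_t$ with $T = D_{\mathcal L^1}\tau$, and invoke Lemma \ref{elchiste} for the vanishing density of $\tau_{\mathrm{sing}}$.
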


\begin{proof}
We apply Theorem \ref{prop:disintegration} to $\mu$ to decompose
  \begin{equation}
      \mu = \tau \otimes_{\partial U^t} \tilde\mu_t,
  \end{equation}
and then use the Lebesgue-Besicovitch Theorem to decompose
  \begin{equation}
      \tau = T \mathcal L^1 \res [0,\infty) + \tau_{\mathrm{sing}},
  \end{equation}
where $\mathcal L^1$ and $\tau_{\mathrm{sing}}$ are mutually singular,
and $T = D_{\mathcal L^1}\tau \in \Leb^1((0,\infty))$.
Taking the precise representative of $T$, we set $\mu_t = T(t)\tilde{\mu}_t$,
which is defined $\mathcal L^1$--\textit{a.e.}\,\,on $(0,\infty)$.
This gives the claimed decomposition by noting that the last part follows from Lemma \ref{elchiste}.
\end{proof}

\section{Properties of the Disintegration}\label{sec:prop_disintegration}

In \S \ref{sec:disintegration}, we have established Theorem \ref{principal} by considering a suitable disintegration of
$\overline{\nabla d \cdot \FF}$ along $\{\partial U^{\eps}\}_{\eps>0}$
and showing that the obtained measures $(\FF \cdot \nu)_{\partial U^{\eps}}$ coincided with the normal trace.
We now investigate this decomposition more closely
and show that
measure $\overline{\nabla d \cdot \FF}$ can actually be recovered from these traces.

Informally, if we take $d(x) = x_i$, which corresponds to taking $U$ to be
a half-plane $\{ x \in \mathbb R^n : x_i \geq 0\}$ for some $1 \leq i \leq n$,
then it follows from $\nabla d = e_i$ that $\overline{\nabla d \cdot \FF} = F_i$
for each $1 \leq i \leq n$,
where $\FF = (F_1,F_2,\cdots,F_n)$.
Thus, such a result allows us to recover the underlying field $\FF$ from the associated normal trace over
the sets $\{\partial U^{\eps}\}_{\eps>0}$; while Theorem \ref{thm:reconstruct_normaltrace} cannot
directly be applied to the half-space which is unbounded, we adapt this later in Lemma \ref{lem:hyperplane_reconstruct}.
This observation serves as a fundamental starting point for the theory of Cauchy fluxes in the extended setting,
which will be developed from \S \ref{sec:flux_definition} onwards.

\begin{theorem}\label{thm:reconstruct_normaltrace}
Let $\FF \in \mathcal{DM}^{\textnormal{ext}}(\Omega),$ $U \Subset \Omega$, and let $d(x) = \dist(x,\partial U)$.
Then
  \begin{equation}\label{eq:disint_equality}
    \int_0^{\infty} \langle \FF \cdot \nu, \,\phi\rangle_{\partial U^{t}} \,\d t = \int_{U } \phi \,\d\overline{\nabla d \cdot \FF}
    \qquad\,\, \mbox{for any $\phi \in \Sob^{1,\infty}(\Omega)$},
  \end{equation}
where $\overline{\nabla d \cdot \FF}$ is given as in {\rm Theorem \ref{productrule}}.
Moreover, \eqref{eq:disint_equality} extends to any bounded Borel function $\phi$ on $\Omega$,
understanding that $t \mapsto \langle \FF \cdot \nu, \,\phi\rangle_{\partial U^t}$ is only defined for $\mathcal L^1$--\textit{a.e.}\,\,$t>0$.
\end{theorem}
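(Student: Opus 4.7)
The plan is to compute $\int_U d(x)\,\dr(\div(\phi\FF))(x)$ in two different ways and equate the results: once via the layer-cake formula, producing the integrated normal traces, and once via the product rule together with Lemma~\ref{muyutil}, producing the pairing measure $\overline{\nabla d \cdot \FF}$.

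For $\phi \in \Sob^{1,\infty}(\Omega)$, the product rule (Theorem~\ref{productrule}) makes $\div(\phi \FF)$ into a finite signed Radon measure on $U$, and the definition of the normal trace gives $\div(\phi\FF)(U^t) = -\langle \FF \cdot \nu, \phi\rangle_{\partial U^t}$. Writing $d(x) = \int_0^\infty \mathbbm{1}_{U^t}(x)\,\dr t$ for $x \in U$ and applying Fubini (valid since $d$ is bounded on $U$ and $\lvert\div(\phi\FF)\rvert(U) < \infty$), one obtains
\begin{equation*}
  \int_0^\infty \langle \FF \cdot \nu, \phi\rangle_{\partial U^t}\,\dr t = -\int_U d(x)\,\dr(\div(\phi\FF))(x).
\end{equation*}

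Next, let $\tilde d$ denote the extension of $d\vert_{\overline U}$ by zero to $\Omega$, which is Lipschitz (continuous, since $d=0$ on $\partial U$) and compactly supported in $\overline U \Subset \Omega$. Iterating the product rule on $\phi\tilde d\FF$ and exploiting the continuity of $\tilde d$ to pass it through the pairing (so that $\overline{\nabla \phi \cdot (\tilde d \FF)} = \tilde d\,\overline{\nabla \phi \cdot \FF}$), one gets
\begin{equation*}
  \div(\phi\tilde d\FF) = \phi\tilde d\,\div\FF + \phi\,\overline{\nabla\tilde d\cdot\FF} + \tilde d\,\overline{\nabla\phi\cdot\FF}.
\end{equation*}
Since $\phi\tilde d\FF$ is compactly supported, Lemma~\ref{muyutil} gives $\div(\phi\tilde d\FF)(\Omega)=0$. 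Regrouping via $\div(\phi\FF) = \phi\,\div\FF + \overline{\nabla\phi\cdot\FF}$, and noting that $\tilde d \equiv 0$ off $\overline U$ (with $\tilde d = 0$ on $\partial U$ contributing nothing), one deduces
\begin{equation*}
  \int_U d(x)\,\dr(\div(\phi\FF))(x) = -\int_\Omega \phi\,\dr\overline{\nabla\tilde d\cdot\FF}.
\end{equation*}

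To finish, one identifies $\int_\Omega \phi\,\dr\overline{\nabla\tilde d\cdot\FF}$ with $\int_U \phi\,\dr\overline{\nabla d\cdot\FF}$. A local mollifier comparison shows $\overline{\nabla\tilde d\cdot\FF}\res U = \overline{\nabla d\cdot\FF}\res U$ (on any compact subset of $U$, the mollifications of $d$ and $\tilde d$ agree for small $\delta$), and the support of $\overline{\nabla\tilde d\cdot\FF}$ lies in $\overline U$ since $\nabla\tilde d \equiv 0$ outside $\overline U$ at the mollifier level. The crux is to show $\overline{\nabla\tilde d\cdot\FF}(\partial U) = 0$; this is the principal obstacle, since $\FF$ may concentrate on $\partial U$ (cf.~Example~\ref{eq:boundary_jump}) and the estimate $\lvert\overline{\nabla\tilde d\cdot\FF}\rvert \leq \lvert\FF\rvert$ is too crude. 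The plan is to exploit $\tilde d\vert_{\partial U} = 0$ via a cutoff argument in the spirit of Theorem~\ref{prop:normaltrace_support}: apply the identity already established to $\phi d_\delta$, where $d_\delta$ from \eqref{eq:ddelta_defn} vanishes in a $\delta$-neighborhood of $\partial U$, and pass to the limit $\delta\to 0$ using a Lipschitz cancellation analogous to \eqref{eq:ddelta_zero_conv}. For bounded Borel $\phi$, both sides extend to finite Radon measures on $U$ (the left-hand side through the measure representation $(\FF\cdot\nu)_{\partial U^t}$ furnished by Theorem~\ref{principal}, and the right-hand side as $\overline{\nabla d\cdot\FF}\res U$); equality on Lipschitz test functions then extends to bounded Borel $\phi$ by a density and Riesz-representation argument (via Lemma~\ref{lem:measure_coincidence}).
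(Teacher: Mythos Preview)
Your approach is correct and takes a genuinely different, more elementary route than the paper's. The paper works through the disintegration $\overline{\nabla d\cdot\FF}=\tau\otimes_{\partial U^t}\mu_t$ and reduces the claim to showing $\tau_{\mathrm{sing}}=0$: it tests with $\psi(d(x))g(x)$ for $\psi\in C^1_{\mathrm c}((0,\infty))$, derives a weak ODE in $t$, invokes a one-dimensional BV lemma (Lemma~\ref{lem:1d-bv}) to kill the singular part of $\big(\int g\,\d\mu_t\big)\tau$, and then approximates the polar $D_{|\mu|}\mu$ by Lusin's theorem together with Lemma~\ref{lem:disint_polar} to force $|\mu_t|(\partial U^t)=0$ $\tau_{\mathrm{sing}}$-a.e. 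Your argument bypasses all of this machinery by effectively taking $\psi(t)=t$: the layer-cake identity for $d$ against $\div(\phi\FF)$, the product rule on $\tilde d\phi\FF$, and Lemma~\ref{muyutil} give the formula in one stroke, with the $\partial U$ contribution absorbed by the cutoff $d_\delta$ exactly because $d$ itself vanishes on $\partial U$ (so $d\cdot\overline{\nabla d_\delta\cdot\FF}$ is $O(|\FF|(A_\delta))$, as in \eqref{eq:ddelta_zero_conv}). What the paper's route buys is the bounded Borel extension for free, since $\tau_{\mathrm{sing}}=0$ feeds directly into \eqref{localdisintegration}. Your extension via ``density and Riesz representation'' is a little thin: equality of the two sides as measures tested against $C^1_{\mathrm c}$ does not by itself justify the iterated-integral representation of the left side for Borel $\phi$; you should close with a monotone class argument, using the domination $\big|\int_{\partial U^t}\phi\,\d(\FF\cdot\nu)_{\partial U^t}\big|\le \|\phi\|_\infty\,T(t)$ with $T\in L^1$ coming from Theorem~\ref{principal}.
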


Equivalently, consider the disintegration given by Theorem \ref{prop:disintegration}, which allows us to write
\begin{equation}\label{eq:disintegration_2}
  \int_U \phi \,\d \overline{\nabla d \cdot \FF}
  = \int_0^{\infty} \int_{d^{-1}(t)} \phi(x) \,\d\mu_t(x)\,\d \tau(t)
\end{equation}
for any bounded Borel function $\phi$ on $U.$
Now, in Theorem \ref{principal}, we have seen that
\begin{equation}\label{eq:disintegration_slices}
  \langle \FF \cdot \nu, \cdot \rangle_{\partial U^t}
  =( \FF \cdot \nu)_{\partial U^\eps}= D_{\mathcal L^1} \tau(t)\, \mu_t \qquad\text{for } \mathcal L^1\textit{--a.e.\,\,} t>0,
\end{equation}
so Theorem \ref{thm:reconstruct_normaltrace} is equivalent to the assertion that $\tau_{\mathrm{sing}} = 0.$
To prove this, we use the following elementary lemma.
\begin{lemma}\label{lem:1d-bv}
  Let $\tau$ and $\sigma$ be finite Radon measures on $(0,\infty)$ such that
  \begin{equation}
    \tau' = \sigma\qquad  \text{ in $\mathcal D'((0,\infty))$},
  \end{equation}
  that is,
  \begin{equation}\label{eq:weakform_taudiff}
    -\int_0^{\infty} \psi'(t) \,\d\tau(t) = \int_0^{\infty} \psi(t) \,\d\sigma(t)
    \qquad\,\,\text{for any } \psi \in C^1_{\mathrm{c}}((0,\infty)).
  \end{equation}
  Then $\tau \ll \mathcal L^1$ and there exists $T \in \BV((0,\infty))$ such that $\tau = T \mathcal L^1 \res (0,\infty).$
\end{lemma}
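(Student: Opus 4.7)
The plan is to produce a canonical primitive of $\sigma$, identify $\tau$ with this primitive up to an additive constant, and then read off the conclusion. First I would define the cumulative distribution $S(t) := \sigma((0,t])$ for $t > 0$, which is a right-continuous function with classical total variation bounded by $\lvert\sigma\rvert((0,\infty))$. A direct Fubini computation confirms that the locally finite measure $S\,\mathcal L^1 \res (0,\infty)$ has distributional derivative $\sigma$: for $\psi \in C_{\mathrm{c}}^1((0,\infty))$,
\[
-\int_0^\infty \psi'(t) S(t)\,\d t
= -\int_{(0,\infty)} \int_s^\infty \psi'(t)\,\d t\,\d\sigma(s)
= \int_{(0,\infty)} \psi\,\d\sigma.
\]

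Next I would introduce the signed Radon measure $\mu := \tau - S\,\mathcal L^1 \res (0,\infty)$ on $(0,\infty)$. Subtracting the identity above from \eqref{eq:weakform_taudiff} yields $\mu' = 0$ in $\mathcal D'((0,\infty))$. Invoking the standard fact that a distribution on a connected open interval with vanishing derivative is a constant, one obtains $\mu = c\,\mathcal L^1 \res (0,\infty)$ for some $c \in \mathbb R$. Setting $T := S + c$ then gives $\tau = T\,\mathcal L^1 \res (0,\infty)$, whence $\tau \ll \mathcal L^1$. Finiteness of $\tau$ forces $T \in \Leb^1((0,\infty))$ (in fact $\lVert T\rVert_{\Leb^1} = \lvert\tau\rvert((0,\infty))$), and since $S$ has bounded classical variation, so does $T$; hence $T \in \BV((0,\infty))$ with distributional derivative $\sigma$.

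The main obstacle, if one wishes to avoid citing the distributional constancy lemma, is verifying it directly for $\mu$. This step is standard but worth checking in the measure-valued setting: fix $\eta_0 \in C_{\mathrm{c}}^1((0,\infty))$ with $\int\eta_0\,\d t = 1$, and note that every $\phi \in C_{\mathrm{c}}^1((0,\infty))$ decomposes as $\phi = \eta_1' + c_\phi\,\eta_0$ with $c_\phi := \int\phi\,\d t$ and $\eta_1(t) := \int_0^t(\phi - c_\phi\eta_0)\,\d s \in C_{\mathrm{c}}^1((0,\infty))$. Applying $\mu' = 0$ against $\eta_1$ gives $\int\phi\,\d\mu = c_\phi\int\eta_0\,\d\mu$, so $\mu$ coincides with $\mu(\eta_0)\,\mathcal L^1$ on $C_{\mathrm{c}}^1$; a density argument extends this to $C_{\mathrm{c}}((0,\infty))$, confirming $\mu = \mu(\eta_0)\,\mathcal L^1 \res (0,\infty)$ and thereby closing the argument.
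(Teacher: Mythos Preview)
Your proof is correct and follows essentially the same strategy as the paper: construct a primitive of $\sigma$ (the paper uses $v(x)=\sigma((0,x))$, you use the right-continuous variant $S(t)=\sigma((0,t])$), subtract it from $\tau$ to obtain a measure with vanishing distributional derivative, and conclude this difference is a constant multiple of $\mathcal L^1$. The only minor variation is in how the constancy step is justified---the paper mollifies $\tilde\tau$ and observes each $\tilde\tau_\delta$ is constant, whereas you use the standard test-function decomposition $\phi=\eta_1'+c_\phi\eta_0$; both are routine.
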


\begin{proof}
Define $v(x) = \sigma((0,x))$ for $x>0$, which lies in $BV((0,\infty))$.
Then $v$ satisfies $v' = \sigma$ and is bounded pointwise by $\lvert\sigma\rvert((0,\infty))$.
Define the measure: $\tilde \tau = \tau - v \mathcal L^1$.
Since $\tilde\tau' = 0$ in $\mathcal D'((0,\infty))$,
we claim $\tilde\tau$ is a constant multiple of the Lebesgue measure.
Indeed, mollifying $\tilde\tau$, we obtain a smooth function $\tilde\tau_{\delta}$
that satisfies $\tilde\tau_{\delta}' \equiv 0$ in $(0,\infty)$.
Thus, for each $\delta>0$,
there exists a constant $c_{\delta} \in\mathbb R$ such that $\tilde\tau_{\delta}(x) = c_{\delta}$ for all $x>\delta$.
Since $\tilde\tau_{\delta} \weakstarto \tilde\tau$ as $\delta\to0$ locally in $(0,\infty)$,
we know that $c_{\delta} \to c$ for some $c \in \mathbb R$.
Hence, $\tau = (v + c) \mathcal L^1$, which implies that $\tau$ is represented by a $BV$ function.
\end{proof}

\begin{proof}[Proof of Theorem \ref{thm:reconstruct_normaltrace}]  We divide the proof into four steps.

\smallskip
{\bf 1}. Let $\psi \in C^1_{\mathrm{c}}((0,\infty))$ and $g \in C^1_{\mathrm{c}}(\Omega)$,
and set $\phi(x) = \psi(d(x)) g(x)$.
Then $\phi \in \Lip_{\mathrm{c}}(U)$,
and the mollification $\phi_{\delta}$ is defined and vanishes on $\partial U$ for $\delta>0$ sufficiently small.
By Lemma \ref{muyutil}, we have
  \begin{equation}
    \int_U \nabla\phi \cdot \FF_{\delta} \,\d x = \int_U \nabla \phi_{\delta} \cdot \d \FF = - \int_U \phi_{\delta}\, \d(\div \FF) = - \int_U \phi \div \FF_{\delta} \,\d x.
  \end{equation}
Expanding $\nabla \phi(x) = \psi'(d(x)) g(x)\nabla d(x) + \psi (d(x))\nabla g(x)$ gives
  \begin{equation}
    -\int_U \psi'(d)\, g\, \nabla d \cdot\FF_{\delta} \,\d x
    = \int_U \big(\phi\, (\div \FF)_{\delta} + \psi(d) \nabla g \cdot \FF_{\delta}\big) \,\d x.
  \end{equation}
Sending $\delta \to 0$ and using Theorem \ref{productrule}, we obtain
  \begin{equation}\label{eq:tau_weak_1}
    -\int_U \psi'(d) \, g \,\d\overline{\nabla d \cdot \FF}
    =  \int_U \phi\, \d (\div \FF) + \int_U \psi(d)\, \nabla g \cdot \d \FF.
  \end{equation}
{\bf 2}. Now set $\lambda = g \, \div \FF + \nabla g \cdot \FF$ and use Proposition \ref{prop:disintegration} to write
 \begin{align}
 \overline{\nabla d \cdot \FF}= \tau \otimes_{\partial U^t} \mu_t, \qquad
 \lambda = \sigma \otimes_{\partial U^t} \lambda_t.
 \end{align}
Using this, we can rewrite \eqref{eq:tau_weak_1} as
  \begin{equation}
    -\int_0^{\infty} \psi'(t)\int_{\partial U^t} g(x) \,\d\mu_t(x) \,\d\tau(t)
    = \int_0^{\infty} \psi(t) \int_{\partial U^t} \,\d\lambda_t(x) \,\d\sigma(t).
  \end{equation}
Hence, as distributions on $(0,1),$ we have shown that
  \begin{equation}
    \frac{\d}{\d t} \Big(\int_{\partial U^t} g(x) \,\d\mu_t(x) \,\tau\Big)
    = \Big( \int_{\partial U^t} \,\d\lambda_t(x) \Big) \,\sigma.
  \end{equation}
By Lemma \ref{lem:1d-bv}, we infer that $\left(\int_{\partial U^t} g(x) \,\d\mu_t(x)\right)\tau$ can be represented by a $\BV$ function.
In particular, decomposing $\tau = T \mathcal L^1 \res [0,\infty) + \tau_{\mathrm{sing}}$,
we infer by uniqueness of the Lebesgue-Besicovitch Decomposition Theorem that
  \begin{equation}
    \Big(\int_{\partial U^t} g(x) \,\d\mu_t(x) \Big)\tau
    = \Big(\int_{\partial U^t} g(x) \,\d\mu_t(x)\Big) T(t) \, \mathcal L^1 \res [0,\infty)
  \end{equation}
as measures on $[0,\infty)$ so that, for any $g \in C^1_{\rm c}(\Omega)$,
\begin{equation}\label{eq:almost_tausing}
 \int_{\partial U^t} g(x) \,\d\mu_t(x) =0 \qquad\tau_{\mathrm{sing}}\textit{--a.e.}
\end{equation}

{\bf 3}. Set $\mu = \overline{\nabla d \cdot \FF}$ and put $\tilde g = D_{\lvert \mu\rvert}\mu$.
In order to apply \eqref{eq:almost_tausing} with $\tilde g$,
we approximate $\tilde g$ by mappings in $C^1_{\mathrm{c}}(U)$.
By the Lusin Theorem \cite[{\rm Theorem}~1.45, {\rm Remark}~1.46]{afp},
we can find a sequence $\tilde{g}_k$  of continuous functions in $U$ such that
\begin{equation}
\tilde g_k(x) \rightarrow \tilde g(x) \qquad\,\,\text{for $\lvert\mu\rvert$--\textit{a.e.}\,\,$x \in U\,\,$
as $k \to \infty$},
\end{equation}
and $\lvert \tilde g_k\rvert \leq 1$ holds $\lvert \mu\rvert$--\textit{a.e.}\,\,for each $k$.
Then, for any $\delta_k \to 0,$ $g_k =(\chi_{U^{2\delta_k}}\tilde g_k)_{\delta_k}$ is a sequence of $C^{\infty}_{\mathrm{c}}(U)$ functions converging pointwise $\lvert \mu\rvert$--\textit{a.e.}\,\,to $\tilde g$ in $U$, which is also uniformly bounded by the constant $1$.
Now applying \eqref{eq:almost_tausing} with each $g_k$ yields
  \begin{equation}
    \begin{split}
    \int_0^{\infty}\Big\lvert \int_{\partial U^t} \tilde g \,\d\mu_t(x)\Big\rvert \,\d \tau_{\mathrm{sing}}(t)
    &=\int_0^{\infty}\Big\lvert \int_{\partial U^t} g_k - \tilde g \,\d\mu_t(x)\Big\rvert
    \,\d \tau_{\mathrm{sing}}(t)\\
    &\leq \int_0^{\infty} \int_{\partial U^t} \lvert g_k - \tilde g\rvert \,\d \mu_t(x) \,\d \tau \\
    &= \int_U \lvert g_k - \tilde g\rvert \,\d \mu,
    \end{split}
  \end{equation}
which tends to zero as $k \to \infty$ by the Dominated Convergence Theorem.
Hence, it follows that
\begin{equation}
\int_{\partial U^t} D_{\lvert \mu\rvert}\mu(x) \,\d\mu_t(x) = 0 \qquad\text{$\tau_{\mathrm{sing}}$\textit{--a.e.}}
\end{equation}
By Lemma \ref{lem:disint_polar}, we have
  \begin{equation}
    D_{\lvert \mu\rvert}\mu(x) = D_{\lvert \mu_t\rvert}\mu_t(x) \qquad\text{for } \tau\textit{--a.e.\,\,}t \text{ and } \mu_t\textit{--a.e.\,\,} x,
  \end{equation}
so it follows that
  \begin{equation}
    \lvert \mu_t\rvert(\partial U^t) = \int_{\partial U^t} D_{\lvert \mu\rvert}\mu(x) \,\d\mu_t(x) = 0
    \qquad\text{ $\tau_{\mathrm{sing}}$\textit{--a.e.}}
  \end{equation}
However, by Theorem \ref{prop:disintegration},
$\lvert \mu_t\rvert$ is a probability measure supported
on $\partial U^t$ for $\tau$--\textit{a.e.}\,\,$t \in (0,\infty),$ so the above can only hold if $\tau_{\mathrm{sing}} = 0$.

\smallskip
{\bf 4}. By \eqref{eq:disintegration_2}--\eqref{eq:disintegration_slices}, we have
  \begin{equation}
      \int_{U} \phi \,\d\overline{\nabla d \cdot \FF}
      = \int_0^{\infty} \int_{\partial U^t} \phi \,\d(\FF\cdot\nu)_{\eps} \,\d\eps
      + \int_0^{\infty} \int_{\partial U_t} \phi \,\d\mu_t\,\d\tau_{\mathrm{sing}}
  \end{equation}
  for any bounded Borel function $\phi$ in $\Omega$.
  Since we have shown that $\tau_{\mathrm{sing}} = 0$, the last integral is zero, thus establishing the result.
\end{proof}

\begin{remark}
    A similar coarea-type formula was recently obtained in \cite[{\rm Theorem}~6.1]{comiextended} in a more general context.
    Moreover, while it is not explicitly stated, a careful inspection of their argument
    reveals an alternative proof of Theorem \ref{thm:reconstruct_normaltrace}.
\end{remark}

As a consequence, we obtain an alternative proof of the following result from \cite{Silhavy2}, which will be used later.
Similar results for Lipschitz-deformable boundaries were proved earlier in \cite{CF2}.

\begin{theorem}[Theorem 2.4 in {\cite{Silhavy2}}] \label{thm:averaged_trace}
 Let $\FF \in \mathcal{DM}^{\ext}(\Omega)$ and $U \Subset \Omega$, and let $d(x) = \dist(x,\partial U)$.
 Then
\begin{equation}\label{eq:normaltrace_averaged_limit}
 \langle \FF \cdot \nu, \,\phi \rangle_{\partial U}
 = \lim_{\eps \to 0} \frac1{\eps} \int_{U \setminus \overline{U^{\eps}}} \phi \,\d\overline{\nabla d \cdot \FF}
 \qquad\,\,\text{for any } \phi \in \Sob^{1,\infty}(\Omega).
\end{equation}
Moreover, if
   \begin{equation}
       \liminf_{\eps \to 0} \frac1{\eps}\, \lvert \overline{\nabla d \cdot \FF}\rvert(U \setminus \overline{U^{\eps}}) < \infty,
   \end{equation}
   then the normal trace $\langle \FF \cdot \nu, \,\cdot\,\rangle_{\partial U}$ is represented by a measure on $\partial U$.
\end{theorem}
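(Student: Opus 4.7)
The statement should follow directly from Theorem~\ref{thm:reconstruct_normaltrace} via a Lebesgue-differentiation argument. My plan is to first invoke that coarea-type formula to obtain the identity
\begin{equation}
\int_{U \setminus \overline{U^\eps}} \phi \,\d\overline{\nabla d \cdot \FF}
= \int_0^\eps \langle \FF \cdot \nu, \,\phi \rangle_{\partial U^t} \,\d t
\qquad\text{for every } \eps>0.
\end{equation}
This comes from Theorem~\ref{thm:reconstruct_normaltrace} together with the disintegration $\overline{\nabla d \cdot \FF} = \mathcal L^1 \otimes_{\partial U^t} (\FF\cdot\nu)_{\partial U^t}$ (noting that $\tau_{\mathrm{sing}}=0$ was established in its proof, and $\mu_t$ coincides with the trace measure on $\partial U^t$ by \eqref{eq:trace_equals_disint}); equivalently, one applies the extension of Theorem~\ref{thm:reconstruct_normaltrace} to the bounded Borel function $\mathbbm{1}_{U \setminus \overline{U^\eps}} \phi$.

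Dividing by $\eps$ and letting $\eps\to 0$, the right-hand side is a Lebesgue average of $t \mapsto \langle \FF \cdot \nu, \,\phi \rangle_{\partial U^t}$ near $t=0$. The key point is then to show that this function is right-continuous at $0$ with value $\langle \FF \cdot \nu, \,\phi \rangle_{\partial U}$. Using the representation in Corollary~\ref{cor:normaltrace_lipschitz_extension},
\begin{equation}
\langle \FF \cdot \nu, \,\phi \rangle_{\partial U^t}
= -\int_{U^t}\phi\,\d(\div\FF) - \int_{U^t}\d\overline{\nabla\phi\cdot\FF},
\end{equation}
the claim reduces to dominated convergence as $\mathbbm{1}_{U^t}\to\mathbbm{1}_U$ pointwise in $\Omega$ as $t\downarrow 0$; both measures $\div\FF$ and $\overline{\nabla\phi\cdot\FF}$ are finite on $\Omega$ and bounded by $\|\phi\|_{\Sob^{1,\infty}}\lVert\FF\rVert_{\mathcal{DM}^{\ext}}$. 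This right-continuity immediately yields $\frac{1}{\eps}\int_0^\eps \langle \FF \cdot \nu, \,\phi \rangle_{\partial U^t}\,\d t \to \langle \FF \cdot \nu, \,\phi \rangle_{\partial U}$, giving \eqref{eq:normaltrace_averaged_limit}.

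For the moreover part, fix a test $\phi \in C^1_{\mathrm{c}}(\Omega)$. Bounding the integrand on the right of \eqref{eq:normaltrace_averaged_limit} pointwise and taking the $\liminf$ gives
\begin{equation}
|\langle \FF \cdot \nu, \,\phi \rangle_{\partial U}|
\leq \|\phi\|_{\Leb^\infty(\Omega)} \liminf_{\eps\to 0}\tfrac{1}{\eps}|\overline{\nabla d \cdot \FF}|(U\setminus\overline{U^\eps}) =: M\|\phi\|_{\Leb^\infty(\Omega)}.
\end{equation}
Thus $\phi \mapsto \langle \FF \cdot \nu, \,\phi \rangle_{\partial U}$ extends by density from $C^1_{\mathrm{c}}(\Omega)$ to a bounded linear functional on $C_{\mathrm{c}}(\Omega)$ of norm at most $M$, which the Riesz Representation Theorem identifies with a finite Radon measure on $\Omega$; Lemma~\ref{soporte} confines its support to $\partial U$. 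The main obstacle is the limit step in the second paragraph: one must rule out the possibility that $t\mapsto \langle \FF \cdot \nu, \,\phi \rangle_{\partial U^t}$ has oscillatory behaviour preventing pointwise convergence at $t=0$, but this is ruled out precisely by the dominated convergence argument applied to the expression from Corollary~\ref{cor:normaltrace_lipschitz_extension}.
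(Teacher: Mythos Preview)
Your proof is correct and follows essentially the same route as the paper: both invoke Theorem~\ref{thm:reconstruct_normaltrace} with $\psi=\mathbbm{1}_{U\setminus\overline{U^\eps}}\phi$ to obtain the averaged identity, then use right-continuity of $t\mapsto\langle\FF\cdot\nu,\phi\rangle_{\partial U^t}$ at $t=0$ (via the representation from Corollary~\ref{cor:normaltrace_lipschitz_extension} and dominated convergence) to pass to the limit. For the ``moreover'' part, the paper argues via weak${}^\ast$-compactness of the uniformly bounded measure sequence $\frac{1}{\eps_k}\overline{\nabla d\cdot\FF}\res(U\setminus\overline{U^{\eps_k}})$ and uniqueness of the distributional limit, while you give the equivalent direct bound $|\langle\FF\cdot\nu,\phi\rangle_{\partial U}|\le M\|\phi\|_{\Leb^\infty}$ and invoke Riesz; both are valid and amount to the same thing.
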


\begin{proof}
By Theorem \ref{thm:reconstruct_normaltrace}, we see that
\begin{equation}
\int_0^{\infty} \int_{\partial U^t} \psi \,\d(\FF\cdot\nu)_{\partial U^t} \,\d t
= \int_{U } \psi \,\d\overline{\nabla d \cdot \FF}
\end{equation}
holds for all bounded Borel functions $\psi$ on $\Omega$.
Then, taking $\psi = \mathbbm{1}_{U \setminus \overline{U^{\eps}}} \,\phi$ with $\eps>0$ and $\phi \in \Sob^{1,\infty}(\Omega)$,
we have
\begin{equation}\label{eq:diff_disint_prelimit}
\frac1{\eps} \int_0^{\eps} \phi \,\d\overline{\nabla d \cdot \FF}
= \frac1{\eps} \int_0^{\eps} \langle \FF \cdot \nu, \,\phi \rangle_{\partial U^t} \,\d t.
\end{equation}
Since the mapping:
    \begin{equation}
        t \mapsto \langle \FF \cdot \nu, \,\phi \rangle_{\partial U^t}
        = \int_{U^t} \d\overline{\nabla \phi \cdot \FF} + \int_{U^t} \phi \,\ \d(\div \FF)
    \end{equation}
    is right-continuous on $[0,\infty)$ for $\phi \in \Sob^{1,\infty}(\Omega)$, sending $\eps \to 0$ in \eqref{eq:diff_disint_prelimit},
    we deduce \eqref{eq:normaltrace_averaged_limit}.
Now, for any $\eps_k \to 0$, the above implies that
    \begin{equation}
        \frac1{\eps_k} \overline{\nabla d \cdot \FF} \res \left(U \setminus \overline{U^\eps_k}\right)
        \xrightharpoonup{\,\mathcal D'\,} \langle \FF \cdot \nu, \,\cdot\,\rangle_{\partial U} \qquad\text{as } k \to \infty
    \end{equation}
    as distributions in $\Omega$.
    If this sequence of measures is uniformly bounded in $\mathcal M(\Omega)$,
    by the weak${}^{\ast}$--compactness and uniqueness of the limit,
    we infer that the limiting distribution is also a measure.
\end{proof}

\section{Localization of the Normal Trace}\label{sec:localisation}

In this section, we analyze
how the normal trace relative to different boundaries can differ.
In particular, we seek to understand whether the relation
\begin{equation}\label{eq:question_localisation}
    \langle \FF \cdot \nu, \,\phi \rangle_{\partial U} = \langle \FF \cdot \nu, \,\phi \rangle_{\partial V}
\end{equation}
holds for $U, V \subset \Omega$ with overlapping boundary and for any $\phi$ supported
in a suitable neighborhood of $\partial U \cap \partial V$.
Moreover, if the normal traces of $\FF$ on the boundaries of $U$ and $V$ are represented by measures,
one may ask if the equality holds as measures:
\begin{equation}\label{eq:question_localisation2}
    (\FF \cdot \nu)_{\partial U} \res (\partial U \cap \partial V)
    = (\FF \cdot \nu)_{\partial V} \res (\partial U \cap \partial V).
\end{equation}

This question is motivated by the study of Cauchy fluxes in the sequel.
Indeed, it is easily answered whenever the normal trace admits a representation of the form
\begin{equation}
    \langle \FF \cdot \nu, \,\phi \rangle_{\partial U}
    = \int_{\partial U} \phi(x)\, \FF(x) \cdot \nu_{\partial U} \,\d \mathcal{H}^{n-1},
\end{equation}
which is valid by the classical Gauss-Green formula for a Lipschitz $\FF$ and  a bounded Lipschitz domain $U$;
we see that it is necessary for the associated normals $\nu_{\partial U}$ and $\nu_{\partial V}$
to coincide $\mathcal{H}^{n-1}$\textit{--a.e.}\,\,on the common intersection.
For bounded divergence-measure fields and sets of finite perimeter,
this can be affirmatively answered by using \cite[{\rm Theorem} 5.3]{ctz} after reformulation
to consider suitable notions of measure-theoretic boundaries and normals.
However, for a general field $\FF$ and open sets $U$ and $V$,
the normal trace may \emph{fail} to be equal as measures on the common intersection,
which is illustrated by the following examples.

\begin{example}\label{eq:localisation_counterexample}
Following {\rm \cite[Example~1.1]{CF2}} which is itself
based on {\rm \cite[\S III.14, Example 1]{whitney2012geometric}},
consider the vector field
  \begin{equation}
    \FF(x) = \frac{(x_1,x_2)}{x_1^2+x_2^2} \qquad\,\,\text{for }x=(x_1,x_2) \in \mathbb R^2,
  \end{equation}
which lies in $\mathcal{DM}^1_{\loc}(\mathbb R^2)$ with $\div \FF = 2\pi \delta_0$.
Indeed, $\FF(x) = 2 \pi\, \nabla \Gamma(x)$, where $\Gamma$ is the fundamental solution
for the Laplacian in $\mathbb R^2$ (see \textit{e.g.},\,\cite[{\S 2.2.1}]{eg2}).
Let $H = (0,\infty) \times \mathbb R$ be the half-space where $x_1>0$, and let $Q = (0,1) \times (0,1)$ be the unit cube.
Then we claim the normal traces of $\FF$ on $\partial H$ and $\partial Q$ can be represented by measures that take the form:
 \begin{align}
     (\FF \cdot \nu)_{\partial H} &= - \pi \delta_0, \label{eq:whitney_halfspace}\\
     (\FF \cdot \nu)_{\partial Q} &= -\frac{\pi}2 \delta_0 + (\FF \cdot \nu)  \mathcal H^1 \res \partial Q. \label{eq:whitney_cube}
 \end{align}
Indeed, for \eqref{eq:whitney_halfspace}, using an unbounded version of {\rm \cite[Theorem 7.1]{ChenComiTorres}},
for $\phi \in C^{\infty}_{\mathrm{c}}(\mathbb R^2)$, we can show that
\begin{equation}
      \langle \FF \cdot \nu, \,\phi \rangle_{\partial H}
      = -\lim_{\eps \to 0} \int_{\mathbb R} \frac{\eps}{\eps^2 + x_2^2}\, \phi(\eps,x_2) \,\d x_2 = -\pi \phi(0,0),
\end{equation}
which implies \eqref{eq:whitney_halfspace}.
To prove this limit, observe that
  \begin{equation}
      \int_{-\infty}^{\infty} \frac{\eps}{\eps^2+x_2^2} \phi(0,0) \,\d x_2 = \pi \phi(0,0)
      \qquad \mbox{for any $\eps>0$}
  \end{equation}
and, if $\phi$ is supported in $B_R(0)$,
the remainder can be estimated by
  \begin{equation}
      \begin{split}
      &\Big\lvert \int_{\mathbb R} \frac{\eps}{\eps^2+x_2^2} ( \phi(\eps,x_2)-\phi(0,0)) \,\d x_2\Big\rvert \\
      &\leq \int_{-R}^R \frac{\eps}{\eps^2+x_2^2} \lvert \phi(\eps,x_2) - \phi(0,0) \rvert\, \d x_2
      + 2 \lvert\phi(0,0)\rvert\int_R^{\infty} \frac{\eps}{\eps^2 + x_2^2} \,\d x_2 \\
      &\leq \lVert \nabla\varphi\rVert_{\Leb^{\infty}(\mathbb R^2)} \int_{-R}^R \frac{\eps}{\sqrt{\eps^2 + x_2^2}} \,\d x_2 + \lvert\phi(0,0)\rvert \,\frac{2\eps}R,
      \end{split}
  \end{equation}
  which vanishes as $\eps \searrow 0$.
  Thus, \eqref{eq:whitney_halfspace} follows.

 For the flux on $\partial Q$, we apply {\rm Theorem \ref{thm:averaged_trace}} near the origin to check
 that $(\FF \cdot \nu)_{\partial Q}$ is a measure on $\partial Q$;
  indeed, we now show that
  \begin{equation}\label{eq:whitney_example_measure}
      \limsup_{\eps \to 0} \frac1{\eps} \int_{B_{1/2}(0) \cap Q \setminus \overline{Q}^{\eps}} \lvert \nabla d \cdot \FF \rvert \,\d x < \infty.
  \end{equation}
   For this, observe that, for $x \in Q$ satisfying $\lvert x \rvert < \frac12$, $d(x) = \dist(x,Q)$ satisfies
  \begin{equation}
      \nabla d(x) = \begin{cases}
           e_1 &\text{if } x_1 < x_2, \\
           e_2 &\text{if } x_2 < x_1.
      \end{cases}
  \end{equation}
  Then, for each $\eps>0$, we split the integral in \eqref{eq:whitney_example_measure} into four pieces:
  \begin{align}
      A_{1,\eps} &= \big\{ (x_1,x_2) \in B_{1/2}(0)\, :\, 0< x_1 < x_2 < \eps \big\}, \\
      A_{2,\eps} &= \big\{ (x_1,x_2) \in B_{1/2}(0)\, :\, 0< x_1 < \eps < x_2 < \frac12 \big\}, \\
      A_{3,\eps} &= \big\{ (x_1,x_2) \in B_{1/2}(0)\, :\, 0< x_2 < x_1 < \eps \big\}, \\
      A_{4,\eps} &= \big\{ (x_1,x_2) \in B_{1/2}(0)\, :\, 0<  x_2 < \eps < x_1 < \frac12 \big\},
  \end{align}
  so that
  \begin{equation}
      B_{1/2}(0) \cap (Q \setminus \overline{Q}^{\eps})
      = A_{1,\eps} \cup A_{2,\eps} \cup A_{3,\eps} \cup A_{4,\eps}.
  \end{equation}
  Since $\nabla d = e_1$ on $A_{1,\eps}$ and $A_{2,\eps}$, we can compute
  \begin{equation}\label{eq:integral_A1eps}
    \begin{split}
        \frac1{\eps} \int_{A_{1,\eps}} \lvert \nabla d \cdot \FF \rvert \,\d x
        &\leq \frac1{\eps} \int_0^{\eps} \int_{0}^{x_2} \frac{x_1}{x_1^2+x_2^2} \,\d x_1 \,\d x_2
        = \frac12 \log 2,
    \end{split}
  \end{equation}
  and
  \begin{equation}\label{eq:integral_A2eps}
    \begin{split}
        \frac1{\eps} \int_{A_{2,\eps}} \lvert \nabla d \cdot \FF \rvert \,\d x
        &\leq \frac1{\eps} \int_{\eps}^{\frac12} \int_0^{\eps} \frac{x_1}{x_1^2+x_2^2} \,\d x_1 \,\d x_2 \\
        &= \frac1{4\eps} \log(1+4\eps^2) - \frac12 \log 2 + \arctan(\frac1{2\eps}) - \frac{\pi}4,
    \end{split}
  \end{equation}
  by a direct calculation. Both terms remain bounded as $\eps \to 0$,
  since $\arctan(x) \leq \frac{\pi}2$ and $\log(1+x) \leq x$ for all $x \geq 0$.
Moreover, the integrals over $A_{3,\eps}$ and $A_{4,\eps}$ coincide with the integrals in \eqref{eq:integral_A1eps}
and \eqref{eq:integral_A2eps}, respectively, by swapping the coordinates. Then the claim follows.

Therefore, $(\FF \cdot \nu)_{\partial Q}$ is a measure on $\partial Q$,
which agrees with $(\FF \cdot \nu) \mathcal H^1 \res (\partial Q \setminus \{0\})$
on $\partial Q \setminus \{0\}$,
by using Theorem \ref{thm:localisation} below and since $\FF$ is smooth there.
Then there is $c_0 \in \mathbb R$ such that $(\FF \cdot \nu)_{\partial Q} = c_0 \delta_0 + (\FF \cdot \nu) \mathcal H^1 \res \partial Q$.
This constant can be determined by noting that
  \begin{equation}
    (\FF \cdot \nu)_{\partial Q}(\partial Q) = c_0 + \frac{\pi}2 = \int_{ Q} \d(\div \FF) = 0.
  \end{equation}
 so that $c_0 = -\frac{\pi}2$, establishing \eqref{eq:whitney_cube}.
 Then we have
  \begin{equation}
      (\FF \cdot \nu)_{\partial Q} \res (\partial Q \cap \partial H)
      = - \frac{\pi}2 \delta_0 \neq - \pi \delta_0
      = (\FF \cdot \nu)_{\partial H} \res (\partial Q \cap \partial H).
  \end{equation}
Thus, the respective normal traces concentrate at the corner point $(0,0)$ and
take different values there.
This shows that \eqref{eq:question_localisation2} may fail in general.
\end{example}

The above example is for a field $\FF$ that is singular at the origin, and the discrepancy arose due to the concentration of the normal trace at a point.
One may also encounter obstructions arising from the regularity of the domains, which is illustrated in the following example.

\begin{example}\label{eq:localisation_counterexample2}
    In $\mathbb R^2$, we can consider the domains:
    \begin{equation}
        V = B_1(0) \setminus \{ (x_1,0)\,:\, x_1 \geq 0 \}, \qquad U = \{x \in V\,:\,x_2 > 0 \} = B_1(0)^+.
    \end{equation}
    Then we take $\FF(x)  = e_2 = (0,1)$, which is smooth and hence lies in $\DM^{\infty}_{\mathrm{loc}}(\mathbb R^2)$.
    Set $A = \{x \in B_1(0) : x_1 > 0 \}$.
    Then $A \cap \partial U = A \cap \partial V = \{ (x_1,0)\,:\,0 < x_1 < 1\}$.
    However, we have
    \begin{align}
        (\FF \cdot \nu)_{\partial U} \res (A \cap\partial U)
         &= \mathcal H^1 \res \{ (x_1,0)\,:\,0<x_1<1 \}, \\
        (\FF \cdot \nu)_{\partial V} \res (A \cap\partial V) &= 0,
    \end{align}
    so the associated normal traces do not coincide, even though $U \subset V$.
    Indeed, the normal trace on $\partial U \cap A$ is understood in the classical sense, since $U$ is a Lipschitz domain.
    For $\partial V$, using the definition, we can compute the normal trace for $\phi \in \Sob^{1,\infty}_{\mathrm{c}}(\mathbb R^2)$ as
    \begin{equation}
        \begin{split}
        \langle \FF \cdot \nu, \,\phi \rangle_{\partial V}
        &= -\int_V \phi \,\div \FF \,\d x - \int_V \nabla \phi \cdot \FF \,\d x \\
        &= -\int_{B_1(0)} \phi\, \div \FF \,\d x - \int_{B_1(0)} \nabla \phi \cdot \FF \,\d x \\
        &= \langle \FF \cdot \nu, \,\phi \rangle_{\partial B_1(0)},
        \end{split}
    \end{equation}
by noting that $\mathcal L^2(B_1(0) \setminus V) = 0$, so the normal trace indeed vanishes on $A$.
\end{example}

In general, one may still hope for \eqref{eq:question_localisation} to hold on {\it relatively open} portions
of the common boundary, which will be sufficient for our later purposes.
More precisely, we have

\begin{theorem}\label{thm:localisation}
    Let $\Omega \subset \mathbb R^n$ be open and $\FF \in \mathcal{DM}^{\ext}(\Omega)$.
    Given $U, V \Subset \Omega$, let $A \subset \mathbb R^n$ be open such that
    \begin{equation}\label{eq:UV_local_equality}
        U \cap A = V \cap A.
    \end{equation}
    Then $A \cap \partial U = A \cap \partial V$ and
    \begin{equation}
        \langle \FF \cdot \nu, \,\phi \rangle_{\partial U}
        = \langle \FF \cdot \nu, \,\phi \rangle_{\partial V} \qquad\text{for any } \phi \in \Sob^{1,\infty}_{\mathrm{c}}(A).
    \end{equation}
    In particular, if the normal traces on $\partial U$ and $\partial V$ are represented by measures, then    \begin{equation}\label{eq:localisation_meausres}
        (\FF \cdot \nu)_{\partial U} \res (\partial U \cap A)
         = (\FF \cdot \nu)_{\partial V} \res (\partial V \cap A).
    \end{equation}
\end{theorem}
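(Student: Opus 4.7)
The topological equality $A \cap \partial U = A \cap \partial V$ is elementary: since $A$ is open and $U \cap A = V \cap A$ (which forces $U^{\mathrm{c}} \cap A = V^{\mathrm{c}} \cap A$), the small neighbourhoods of any $x \in A$ which certify $x \in \partial U$ are precisely those certifying $x \in \partial V$. For the trace identity with $\phi \in C^1_{\mathrm{c}}(A)$, I would simply read off \eqref{normal_trace}: since $\phi$ and $\nabla \phi$ vanish outside $\spt(\phi) \subset A$, each of the two integrals in
\begin{equation}
\langle \FF \cdot \nu, \phi \rangle_{\partial U} = -\int_U \nabla \phi \cdot \d \FF - \int_U \phi \,\d(\div \FF)
\end{equation}
agrees with its counterpart over $U \cap A = V \cap A$, hence with the one over $V$, giving the claim.

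For the general case $\phi \in \Sob^{1,\infty}_{\mathrm{c}}(A)$ I would invoke Corollary \ref{cor:normaltrace_lipschitz_extension} to rewrite
\begin{equation}
\langle \FF \cdot \nu, \phi \rangle_{\partial E} = -\div(\phi \FF)(E).
\end{equation}
The product rule (Theorem \ref{productrule}) gives $\div(\phi \FF) = \phi\, \div \FF + \overline{\nabla \phi \cdot \FF}$, and both summands are Radon measures supported in $\spt(\phi) \subset A$. For $\phi \, \div \FF$ this is immediate; for the pairing measure it follows from \eqref{eq:pairing_limit}, since any $\psi \in C_{\mathrm{c}}(\Omega)$ with $\spt(\psi) \cap \spt(\phi) = \varnothing$ satisfies $\psi\, \nabla \phi_\delta \equiv 0$ whenever $\delta < \dist(\spt \psi,\spt \phi)$. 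Consequently,
\begin{equation}
\div(\phi \FF)(U) = \div(\phi \FF)(U \cap A) = \div(\phi \FF)(V \cap A) = \div(\phi \FF)(V),
\end{equation}
which yields the desired equality of normal traces on $\Sob^{1,\infty}_{\mathrm{c}}(A)$.

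For the measure identity \eqref{eq:localisation_meausres}, assuming that both traces are represented by Radon measures, their restrictions to $A$ are concentrated on the common set $\partial U \cap A = \partial V \cap A$ by Lemma \ref{soporte} and induce the same distribution on $C^1_{\mathrm{c}}(A)$ by the preceding step; standard density together with the Riesz Representation Theorem (or Lemma \ref{lem:measure_coincidence}) then force them to coincide as measures on $A$. The only delicate point I anticipate is verifying the support property of $\overline{\nabla \phi \cdot \FF}$, but this reduces to a short mollification argument as above, so no serious obstacle is expected.
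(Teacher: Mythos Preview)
Your proof is correct, but it takes a genuinely different route from the paper's. The paper argues via Theorem \ref{thm:averaged_trace}: for $\phi \in \Sob^{1,\infty}_{\mathrm{c}}(A^{\delta})$ it shows that on the boundary strips $A^{\delta} \cap (U \setminus \overline{U^{\eps}})$ and $A^{\delta} \cap (V \setminus \overline{V^{\eps}})$ (which coincide for $\eps < \delta$) the distance functions $d_U$ and $d_V$ agree, hence so do the pairing measures $\overline{\nabla d_U \cdot \FF}$ and $\overline{\nabla d_V \cdot \FF}$ restricted there; the limit formula \eqref{eq:normaltrace_averaged_limit} then gives the trace identity, and letting $\delta \to 0$ covers all of $\Sob^{1,\infty}_{\mathrm{c}}(A)$. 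Your argument is more elementary: you bypass the averaged-limit machinery entirely by observing that $\div(\phi\FF) = \phi\,\div\FF + \overline{\nabla\phi\cdot\FF}$ is a Radon measure supported in $\spt(\phi) \subset A$ (the support claim for the pairing term following from \eqref{eq:pairing_limit} by the mollification argument you sketch), so that $\div(\phi\FF)(U)$ depends only on $U \cap A = V \cap A$. This needs only Theorem \ref{productrule} and Corollary \ref{cor:normaltrace_lipschitz_extension}, not the disintegration results of \S\ref{sec:disintegration}--\S\ref{sec:prop_disintegration}. The paper's approach aligns with the limit-formula toolkit used elsewhere (and is later adapted verbatim in the proof of Theorem \ref{thm:localisation_extended}), but your route is shorter and avoids invoking Theorem \ref{thm:averaged_trace} altogether.
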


\begin{proof}
  Replacing $A$ by $A \cap \Omega$ if necessary, we can assume that $A \subset \Omega$.
  We first show that $A \cap \partial U = A \cap \partial V$. Indeed, if $x \in A \cap \partial U$,
  since $U$ is open, then $x \notin U$.
  Moreover, since $A$ is open, there is a sequence $\{x_k\} \subset A \cap U$ such that $x_k \to x$.
  However, by \eqref{eq:UV_local_equality}, each $x_k \in V$ while $x \notin V$,
  so $x \in \partial V$ and the latter inclusion follows by symmetry.

  Now, for any $\delta>0$, when $\eps \in (0,\delta)$, we claim that
  \begin{equation}\label{eq:adelta_common}
    A^{\delta} \cap (U \setminus \overline{U^{\eps}})
    = A^{\delta} \cap (V \setminus \overline{V^{\eps}}),
  \end{equation}
  and that $d_U = d_V$ on this common intersection.
  Indeed, if $x \in A^{\delta} \cap (U \setminus \overline{U^{\eps}})$,
  there is $y \in \partial U$ such that $\lvert x - y \rvert < \eps$.
  Since $\eps<\delta$, we see that $y \in A$ and so $y \in \partial V$, giving
  \begin{equation}
      d_V(x) \leq \lvert x - y \rvert < \eps.
  \end{equation}
  Hence, $x \in A^{\delta} \cap (V \setminus \overline{V^{\eps}})$,
  and taking the infimum over all such $y$ yields that $d_V(x) \leq d_U(x)$.
  By symmetry of $U$ and $V$, the claim follows.

  In particular,
  \begin{equation}
      \overline{\nabla d_U \cdot \FF}\, \res (A^{\delta} \cap U \setminus \overline{U^{\eps}})
      \overline{\nabla d_V \cdot \FF}\, \res (A^{\delta} \cap V \setminus \overline{V^{\eps}}),
  \end{equation}
  by definition of the pairing from Theorem \ref{productrule}.
  Hence, by Theorem \ref{thm:averaged_trace},
  it follows that,  for any $\phi \in \Sob^{1,\infty}_{\mathrm{c}}(A^{\delta})$,
  \begin{equation}
    \begin{split}
      \langle \FF \cdot \nu, \,\phi \rangle_{\partial U}
      &= \lim_{\eps \to 0} \frac1{\eps} \int_{U \setminus \overline{U^{\eps}}} \phi \,\d\overline{\nabla d_U \cdot \FF} \\
      &= \lim_{\eps \to 0} \frac1{\eps} \int_{V \setminus \overline{V^{\eps}}} \phi \,\d\overline{\nabla d_V \cdot \FF}
      = \langle \FF \cdot \nu, \,\phi \rangle_{\partial V}.
    \end{split}
  \end{equation}
 Since $\delta>0$ is arbitrary, it follows that the above holds for any $\phi \in \Sob^{1,\infty}_{\mathrm{c}}(A)$.
  If the normal traces are represented by measures, \eqref{eq:localisation_meausres} follows by a standard density argument.
\end{proof}

\begin{remark}
    One may wonder if
    the condition: $U \cap A = V \cap A$ can be modified to hold at the level of boundaries,
    that is, to impose $\partial U \cap A = \partial V \cap A$ instead.
    However,
    Example \ref{eq:localisation_counterexample2} shows that this is insufficient, even if the additional assumption that $U \subset V$
    is made.
\end{remark}

We conclude this section by recording that
a maximal portion of the common intersection $\partial U \cap \partial V$ can be defined,
where Theorem \ref{thm:localisation} applies; that is, we can define
\begin{equation}
    S(U,V) = \{ x \in \partial U \cap \partial V\,:\,B_{\eps}(x) \cap U = B_{\eps}(x) \cap V \text{ for some } \eps>0 \}.
\end{equation}
Indeed, if $A$ is any open set such that $A \cap U = A \cap V$, then,
for every $x \in A \cap \partial U \cap \partial V$,
there is $\eps>0$ such that $B_{\eps}(x) \subset A$. It follows that $x \in S(U,V)$ so that $A \cap \partial U \cap \partial V \subset S(U,V)$.
Conversely, we can take $\tilde A = \bigcup_{x \in S(U,V)} B_{\eps_x}(x)$, where $\eps_x>0$ is as in the definition
of set $S(U,V)$.

\section{Cauchy Flux I: Main Results and Connections}\label{sec:flux_definition}

The balance law postulates that the production of a quantity in any open set $U \Subset \Omega$  is balanced by the Cauchy flux of this quantity through
boundary $\partial U$ of $U$.
We assume
the production is represented by
a finite (signed) Radon measure $\sigma$ in $\Omega$
such that
 \begin{equation}
\label{balancelaw3}
\sigma (U) = \mathcal{F}(\partial U) \qquad \text{for any } U \Subset \Omega,
 \end{equation}
where $\mathcal{F}$ is the flux through the boundary of $U$.
In this section, we introduce the conditions on the Cauchy flux $\mathcal{F}$
that guarantee the existence of an extended divergence-measure vector field $\FF$ satisfying
$$
-\div {\FF}= \sigma
$$
and such that $\mathcal{F}$ can be recovered locally, on the boundary of {\it almost every open set}, through the measure normal trace of $\FF$.
We begin by presenting the existing developments in this direction
and emphasizing the remaining difficulties, before stating our main results.

\subsection{Connections to other formulations of the Cauchy flux}\label{sec:flux_history}

The origin of the study of Cauchy fluxes dates back to the fundamental paper by Cauchy \cite{Cauchy1} in 1823
who considered the balance law in a bounded domain $\Omega$ in Classical Physics:
\begin{equation}\label{CauchyBalanceLaw}
\int_{U} p(x) \, \d x = \int_{\partial U} f(x, \nu(x)) \,\d \mathcal{H}^{n-1}(x) \qquad
\mbox{for any $U \subset \Omega$},
\end{equation}
where the production $p(x)$ is a bounded function in $x \in \Omega$,
and the {\it density function} $f(x,\nu)$ of the flux depends on point $x\in\partial U$
and the corresponding interior unit normal $\nu$.
It was shown in \cite{Cauchy1}
that, if $f$ is continuous in $x$, then $f$ must be linear in $\nu$.
For self-containedness and subsequent development, we now present a brief description of
Cauchy's argument.

\begin{theorem}[Cauchy's tetrahedron argument]\label{Cau}
Consider the classical balance law \eqref{CauchyBalanceLaw} in $\mathbb R^n$,
where $f$ is continuous in $x$ and $p \in \Leb^{\infty}(\Omega)$.
Then there exists a continuous vector field $\FF$ such that
\begin{equation*}
 f(x, \nu(x)) = \FF (x) \cdot \nu(x)  .
\end{equation*}
\end{theorem}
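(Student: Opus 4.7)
The plan is to follow Cauchy's classical tetrahedron argument, adapted to $\mathbb{R}^n$ by working with simplices. The guiding heuristic is a scaling comparison: when a set $U$ is shrunk homothetically to a point $x_0$ at scale $h$, the production term $\int_U p\,\d x$ is $O(h^n)$ while each surface flux is $O(h^{n-1})$, so the bulk vanishes faster and leaves a purely algebraic constraint forcing $f(x_0,\cdot)$ to be linear in $\nu$.

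First, I would establish the action--reaction identity $f(x,-\nu)=-f(x,\nu)$ for each unit $\nu$. For small $h,\varepsilon>0$, apply \eqref{CauchyBalanceLaw} to the pillbox
\begin{equation*}
    U_{h,\varepsilon} = \bigl(B_h(x)\cap\{y\cdot\nu = x\cdot\nu\}\bigr) + \bigl(-\tfrac{\varepsilon}{2},\tfrac{\varepsilon}{2}\bigr)\nu.
\end{equation*}
Its production is $O(h^{n-1}\varepsilon)$ and its lateral flux is $O(h^{n-2}\varepsilon)$, while the two flat faces carry interior normals $\pm\nu$. Sending $\varepsilon\to 0$ first kills the bulk and lateral terms and leaves $\int_{B_h}(f(y,\nu)+f(y,-\nu))\,\d\mathcal{H}^{n-1}(y) = 0$; dividing by the area and letting $h\to 0$, continuity of $f$ in $x$ delivers the identity.

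Next, fix $x_0\in\Omega$ and a unit $\nu$ with strictly positive components (other sign patterns reduce to this case by reflecting coordinates). Consider the simplex
\begin{equation*}
    T_h = \Big\{x\in\mathbb{R}^n \,:\, x_i > x_{0,i},\ \sum_{i=1}^n (x_i-x_{0,i})\nu_i < h\Big\},
\end{equation*}
which has $n$ coordinate faces with interior normals $+e_i$, one slanted face with interior normal $-\nu$, volume $O(h^n)$, slanted area $A_h = O(h^{n-1})$, and $i$-th coordinate face area $\nu_i A_h$ by the projection identity. Writing \eqref{CauchyBalanceLaw} for $T_h$, dividing by $A_h$, using continuity of $f(\cdot,\mu)$ at $x_0$ for each of the finitely many normals $\mu\in\{-\nu,e_1,\ldots,e_n\}$, and sending $h\to 0$ (so that $|T_h|/A_h = O(h)\to 0$ annihilates the production term) yields
\begin{equation*}
    f(x_0,-\nu) + \sum_{i=1}^n \nu_i\, f(x_0,e_i) = 0.
\end{equation*}
Combined with Step~1, this rearranges to $f(x_0,\nu) = \sum_{i=1}^n \nu_i f(x_0,e_i)$. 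Setting $\FF(x) := (f(x,e_1),\ldots,f(x,e_n))$ gives the desired representation for $\nu$ in the positive octant; it extends to all unit $\nu$ by coordinate reflections together with Step~1 applied componentwise, and continuity of $\FF$ is immediate from continuity of each $f(\cdot,e_i)$ in $x$.

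The main technical obstacle is the legitimacy of invoking \eqref{CauchyBalanceLaw} on the piecewise smooth domains $U_{h,\varepsilon}$ and $T_h$: strictly speaking, the balance law as posited requires a boundary regular enough for a classical $\mathcal{H}^{n-1}$-normal. This is handled either by noting that each face carries a well-defined constant interior normal and integrating face by face, or by a brief smooth approximation of the pillbox and simplex that preserves the asymptotics. Beyond this point the argument is purely geometric, and the passage to the limit uses only the assumed continuity of $f$ in $x$ on each of the finitely many flat faces appearing in the construction.
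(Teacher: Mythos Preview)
Your approach is essentially the paper's: the simplex scaling argument is identical, with the same projection identity for the face areas and the same $O(h)$ estimate killing the production term. Two minor differences are worth noting. First, your pillbox step is correct but not strictly needed: the tetrahedron identity $f(x_0,-\nu)+\sum_i\nu_i f(x_0,e_i)=0$ already delivers $f(x_0,\nu')=\FF(x_0)\cdot\nu'$ for all $\nu'$ with nonzero components simply by substituting $\nu'=-\nu$, since that set is closed under negation. Second, your extension to \emph{all} unit $\nu$ has a small gap: coordinate reflections together with action--reaction only reach $\nu$ with every $\nu_i\neq 0$, and continuity of $f$ in the second argument is not assumed, so you cannot pass to $\nu$ on the coordinate hyperplanes by density. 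The paper closes this by rerunning the simplex argument in a different orthonormal basis $\{\tilde e_i\}$ chosen so that $\nu\cdot\tilde e_i\neq 0$ for all $i$; you should add this remark (or an equivalent one) to complete the proof.
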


\begin{proof}
Consider the standard orthonormal basis $\{e_i\}_{i=1}^n$ of $\mathbb R^n$.
Let $\nu= (\nu_1,\nu_2,\cdots,\nu_n)$ be any unit normal vector satisfying $\nu \cdot e_i \neq 0$ for each $1 \leq i \leq n$.
Fix $\epsilon>0$ and consider the tetrahedron $T_{\eps}$ with vertices $v_0 = 0$,
$v_i = \eps \frac{\nu_n}{\nu_i} e_i$ for each $1 \leq i \leq n-1$, and $v_n = \eps e_n$.
Then the boundary of $T_{\eps}$ is composed of $(n+1)$ faces, denoted by $S_{i,\eps}$ for $1 \leq i \leq n$
and $S_{\eps}$.
These are chosen so that $S_{i,\eps}$ are contained in the planes: $x_i=0$, $1 \leq i \leq n$,
and $S_{\eps}$ is contained in the plane perpendicular to $\nu$
with the equation: $\sum_{i=1}^n \nu_i x_i= \eps \nu_n$.
For $\tilde{x} \in \mathbb R^n$, we consider the translated tetrahedron
$T_{\tilde{x},\eps}:= \tilde{x} + T_{\eps}$, whose faces are denoted as $S_{\tilde x,i,\eps}$ and $S_{\tilde x,\eps}$.

Then the balance law gives
\begin{equation}\label{eq:tetrahedron_balancelaw}
    \int_{T_{\tilde x, \eps}} p(x) \,\d x = \sum_{i=1}^n \int_{S_{\tilde x,i,\eps}} f(x,e_i) \,\d \mathcal H^{n-1}
    + \int_{S_{\tilde x,\eps}} f(x,-\nu) \,\d\mathcal H^{n-1},
\end{equation}
where we split the boundary integral in terms of the contributions on the $(n+1)$ faces.
Since the production is bounded, we can estimate
\begin{equation}
    \Big\lvert \int_{T_{\tilde x,\eps}} p(x) \,\d x \Big\rvert \leq \lVert p \rVert_{\Leb^{\infty}(\Omega)} \mathcal L^n(T_{\tilde x,\eps}).
\end{equation}
We then divide both sides of \eqref{eq:tetrahedron_balancelaw} by $\mathcal H^{n-1}(S_{\tilde x,\eps})$
and observe that
\begin{equation*}
(\nu \cdot e_i) \mathcal{H}^{n-1} (S_{\tilde x,\eps})= \mathcal{H}^{n-1}(S_{\tilde x,i, \eps})
\end{equation*}
to deduce
\begin{equation}\label{laultima2}
    \begin{split}
         &\left\lvert\frac{\int_{S_{\tilde x,\eps}}f(x,-\nu) \d \mathcal{H}^{n-1}(x)}{\mathcal{H}^{n-1} (S_{\tilde x,\eps})} + \sum_{i=1}^n (\nu \cdot e_i) \frac{\int_{S_{\tilde x,i,\eps}}f(x, e_i) \d \mathcal{H}^{n-1}(x)}{\mathcal{H}^{n-1}(S_{\tilde x,i,\eps})} \right\rvert \\
         &\leq  \lVert p \rVert_{\Leb^{\infty}(\Omega)} \frac{\mathcal{L}^{n}(T_{\tilde{x},\eps})} {\mathcal{H}^{n-1} (S_{\tilde x,\eps})}  =C(n)\lVert p \rVert_{\Leb^{\infty}(\Omega)} \, \eps.
    \end{split}
\end{equation}

We let $\eps \to 0$ in \eqref{laultima2} and use the fact that $x \mapsto f(x,\nu)$ is continuous to obtain
\begin{equation}\label{6.3a}
    f(\tilde{x},-\nu) + \sum_{i=1}^{n} \nu_i f(\tilde{x}, e_i)=0.
\end{equation}
Define a vector field $\FF=(F_1,F_2,\cdots,F_n)$ by
\begin{equation}
    F_i(x): = f(x,e_i) \qquad \mbox{for $x \in \Omega$}.
\end{equation}
Since $\tilde{x} \in \Omega$ is arbitrary in \eqref{6.3a},
then $\FF$ satisfies
\begin{equation}\label{eq:flux_linearity}
   f(x,-\nu)= -\FF(x) \cdot  \nu
\end{equation}
for any $x \in \Omega$ and $\nu \in \mathbb R^n$ such that $\nu_i \neq 0$ for all $1 \leq i \leq n$.
To extend this to hold for any $\nu$, we can choose a different orthonormal basis $\{\tilde e_i\}_{i=1}^n$ and
work in the associated coordinate system
$(\tilde x_1, \cdots, \tilde x_n)$
to allow for $\nu \in \mathbb R^n$ satisfying $\nu \cdot \tilde e_i \neq 0$ for all $1 \leq i \leq n$ in \eqref{eq:flux_linearity}.
Since $f$ is continuous, it follows that $\FF$ is also continuous.
\end{proof}

The above derivation assumes the existence of a continuous density $f$.
However, in applications, we may naturally encounter solutions that are discontinuous or singular,
thereby violating this continuity hypothesis.
It was not until 1959 that Noll in \cite{Noll2}
considered the problem of removing the continuity assumption of $f$ and proposed an axiomatic scheme
for Continuum Mechanics, which reduces to the familiar balance law for contact forces in the stationary case.
In this setting, the body $\Omega$ is composed of parts that are sets with smooth boundary.
In \cite[{\rm Theorem IV}]{Noll2}, it was shown that the density function should depend only on
the position $x$ and the normal vector $\nu(x)$, independent of other properties of the surface such as the curvature.
However, additional conditions are necessary to derive the linearity of $f$, as it was later remarked
in \cite[{\rm p}.\,79] {Noll1} that
``{\it it is unfortunate that nobody has been able, so far, to {\rm[}establish the linearity of $f${\rm]} under {\rm[}the proposed axioms{\rm]}
without the ad-hoc continuity assumption\dots}''.

From Classical Mechanics,
it follows that the object of study should be the total flux across a surface $S$ contained in $\partial U$,
that is,
\begin{equation}\label{object}
\mathcal{F}(S)= \int_{S} f(x, \nu(x)) \,\d \mathcal{H}^{n-1}(x).
\end{equation}
This viewpoint of studying the \emph{Cauchy flux} $\mathcal F$ was proposed by Gurtin \& Martin in \cite{gm}.
Since they assumed the production $p$ is bounded, the balance law \eqref{CauchyBalanceLaw} immediately implies the inequality:
\begin{equation}\label{acotar}
  \lvert\mathcal{F} (\partial U)\rvert \leq K\mathcal{L}^{n}(U) \qquad \mbox{for any $U \subset \Omega$}.
\end{equation}
Moreover, from \eqref{object}, it also follows that the flux should be an additive function in the sense that
\begin{equation}\label{sumar}
  \mathcal{F} (S_1 \cup S_2) = \mathcal{F}(S_1) + \mathcal{F} (S_2)  \qquad \mbox{for disjoint $S_1, S_2 \subset \partial U$}.
\end{equation}
Therefore, Gurtin \& Martins in \cite{gm}  considered an additive set function $\mathcal F$
defined on (sufficiently regular)
oriented surfaces $(S,\nu)$  which satisfies the properties of being \emph{area-bounded}:
\begin{equation}\label{eq:area_bounded}
  \lvert\mathcal F(S)\rvert \leq C \mathcal H^{n-1}(S),
\end{equation}
and \emph{weakly volume-bounded} in the sense that \eqref{acotar} holds.
Under these assumptions, they deduced the existence of a density function $f(x,\nu)$
defined for $\mathcal L^n$--\textit{a.e.}\,\,$x \in \Omega$ and all unit vectors $\nu$
for which \eqref{object} holds for every admissible surface.
Moreover, the following linearity of $f$ was proved: There exists a field $\FF \in \Leb^{\infty}$ such that
\begin{equation}
\label{linear}
 f(x,\nu)= \FF(x) \cdot \nu \qquad\text{for $\mathcal L^n$--\textit{a.e.\,}}\, x \in \Omega.
\end{equation}
Furthermore,
the following uniform average density condition on $\mathcal{F}$ was introduced:
\begin{equation}
\label{extra}
 \{f_r(x,\nu)\}_{r >0} \qquad\,
 \mbox{with $f_r(x,\nu):= \frac{ \mathcal{F} (D_r(x,\nu))}{\mathcal{H}^{n-1}(D_r(x,\nu))}$},
\end{equation}
which converges uniformly as $r \to 0$ on any compact set $K \subset \Omega$,
where $D_r(x,\nu)$ is the $(n-1)$-dimensional
disk of radius $r$ centered at $x$ with normal $\nu$.
It was proved that $\eqref{extra}$ is a necessary and sufficient condition for the Cauchy flux $\mathcal{F}$ to
have a continuous density $f$ that is linear for every $x$.

The class of admissible surfaces was extended by Ziemer in \cite{Ziemer1} to consider boundaries of sets of finite perimeter.
We refer the reader to \cite{Maggi} for a detailed exposition on the theory of sets of finite perimeter,
which gives a natural class of admissible surfaces that are closed under the set operations
and provides a suitable measure-theoretic notion of the normal.
Under the same assumptions \eqref{acotar}--\eqref{sumar},
considered on sets of finite perimeter,
the existence of a bounded density $f(x,\nu)$ satisfying \eqref{object} on every oriented surface $S$
was established in \cite[{\rm Theorem} 3.3]{Ziemer1}.
Once the existence of a density was established, the linearity of the flux followed from the results in \cite{gm}.
Indeed, in both \cite{gm} and \cite{Ziemer1}, the existence of the vector field is ultimately
based (after suitable mollifications) on the {\it Cauchy's tetrahedron argument}, as sketched above in Theorem \ref{Cau}.
Moreover, it was proved in \cite{Ziemer1} that the constructed vector field $\FF \in \Leb^{\infty}$
satisfies
$\div \FF \in \Leb^{\infty}$ and that the Gauss-Green formula
\begin{equation}
\int_{Q} \d(\div \FF) = -\int_{\partial Q} \FF (x) \cdot \nu (x)\, \d \mathcal{H}^{n-1}(x)
\end{equation}
holds for almost all cubes $Q = (a_1,b_1) \times \cdots\times (a_n,b_n)$.

The boundedness conditions \eqref{acotar} and \eqref{eq:area_bounded} were relaxed by \v{S}ilhav\'y
in \cite{S2, S3} to allow the Cauchy fluxes of the form:
\begin{align}
  &\lvert \mathcal F(S) \rvert \leq \int_S h \,\d\mathcal{H}^{n-1},\label{eq:lp_area_bound}\\
  &\lvert \mathcal F(\partial_{\ast} M)\rvert \leq \int_{M} k \,\d x,\label{eq:lp_volume_bound}
\end{align}
for $h, k \in \Leb^p$ and any Borel set $S \subset \partial_*M$,
where $M$ is assumed to be a normalized set of finite perimeter in $\Omega$
and  $\partial_{\ast}M$ is the measure-theoretic normal.
We denote the collection of such $M$ by $\mathscr P$.
We note that \eqref{eq:lp_area_bound} holds only for a particular representative $h$ in the $\Leb^p$ class, which encodes fine properties of the underlying field, and the right-hand side may fail to be finite in general.
As such the class of admissible surfaces must be relaxed to ``almost all'' surfaces, which is made precise by the class
\begin{equation}\label{eq:Ph_class}
    \mathscr{P}_h = \left\{ M \in \mathscr P\,:\, \mbox{$M\Subset \Omega\,$ and $\,\int_{\partial_{\ast}M} h \,\d\mathcal{H}^{n-1} < \infty$}\right\}.
\end{equation}
This leads to the establishment of a one-to-one correspondence between Cauchy fluxes
and $\Leb^p$--integrable fields $\FF$ with $\Leb^p$ divergence in \cite[Theorem 5.1]{S3}.
It is shown that conditions \eqref{eq:lp_area_bound}--\eqref{eq:lp_volume_bound} imply the existence of a vector field in $\Leb^p$.

However, the $\Leb^p$--regularity of the divergence rules out the important physical behavior where the Cauchy flux may \emph{jump} across the boundary.
One may naturally wonder whether the case of divergence-measure fields can be treated in this framework.
This was answered by Degiovanni, Marzocchi \& Musesti \cite{degiovanni1999cauchy},
providing a definition of Cauchy fluxes representable by $\mathcal{DM}^p$--fields.
For this, the upper bound \eqref{eq:lp_area_bound} for the flux remains the same, while \eqref{eq:lp_volume_bound} is replaced by
the measure bound:
\begin{equation}
    \lvert \mathcal F(\partial_{\ast}M) \rvert \leq \lambda(M)
\end{equation}
for some non-negative Radon measure $\lambda$ on $\Omega$.
In this setting, the flux is recovered on the class of surfaces
\begin{equation}
    \mathscr{P}_{h,\lambda} = \Big\{ M \in \mathscr{P}\,:\,M \Subset \Omega,\,\, \int_{\partial_{\ast}M} h \,\d \mathcal H^{n-1} < \infty,\,\,
    \lambda(\partial_*M) = 0\Big\}.
\end{equation}
However, in general, the condition $\lambda(\partial_{\ast}M)=0$ rules out surfaces along which
$\mathcal F$ has a jump,
say, $\mathcal F(-\partial_{\ast}M) \neq -\mathcal F(\partial_*M)$.
This requirement was removed by Schuricht \cite{Sch}, where the flux is defined for all subsets in $\mathscr P_h$,
where a formulation in terms of contact interactions between two bodies is proposed.

We also refer to \cite{ctz} for bounded divergence-measure fields
in which \emph{every} set of finite perimeter can be treated
and to \cite{ChenComiTorres} in which the limit formula for any general open subset is obtained.

Nevertheless, a characterization of Cauchy fluxes in the extended case has still been missing.
Note that a formulation was proposed in \cite{S4} in which the flux is defined only on planar polyhedra
and involves the estimates valid for {\it almost all} translates, making the hypotheses difficult to verify.
Instead, we now seek a construction in the spirit of \cite{gm,degiovanni1999cauchy,Sch,S3}
that is also sufficiently robust to treat jumps across boundaries and one-sided singularities.

\subsection{Formulation of the Cauchy flux and statement of the main theorem for the Cauchy flux }\label{sec:flux_mainresults}

We begin by precisely defining the admissible class of surfaces under consideration.
This is because the class $\mathscr{P}_h$ from \eqref{eq:Ph_class} has seemingly
no natural replacement when $h$ is replaced by a Radon measure $\mu$ and, moreover,
we wish to depart from sets of finite perimeter and consider general open sets.

\begin{definition}\label{defn:adapted_sop}
For a non-negative and finite Radon measure $\mu$ in $\Omega$, denote
\begin{equation}\label{eq:good_surface_criterion}
\mathcal O_{\mu}:=\Big\{ U \Subset \Omega \,\,\,\mbox{open}\,
:\, \liminf_{\eps \to 0} \frac1{\eps}\,\mu\left(\{ x \in U\,:\,\dist(x,\partial U) < \eps \}\right) < \infty\Big\}.
  \end{equation}
  \end{definition}

This condition is motivated by Theorem \ref{thm:averaged_trace}.
We see later as a consequence of Lemma \ref{lem:disint_flux} that,
for any open set $U \Subset \Omega$,
$U^{\eps} \in \mathcal O_{\mu}$ for $\mathcal L^1$--\textit{a.e.}\,\,$\eps>0$.
Thus, in this sense, $\mathcal O_{\mu}$ contains \emph{almost all} open sets.

\begin{remark}
Recall that, for $U \subset \Omega$ and $\eps>0,$ we write
\begin{equation}
\label{adentro}
  U^{\eps} = \{ x \in U\,:\,\dist(x,\partial U) > \eps \}.
\end{equation}
We also set $U^0 = U$ and
\begin{equation}
\label{afuera}
  U^{-\eps} = \{ x \in \mathbb{R}^n \,:\,\dist(x,U) < \eps \}.
\end{equation}
Using this notation, we can equivalently formulate \eqref{eq:good_surface_criterion} as
  \begin{equation}\label{6.15a}
    \liminf_{\eps \to 0}\, \frac{\mu(U) - \mu(\overline{U}^{\eps})}{\eps}  = \liminf_{\eps \to 0}\, \frac1{\eps}\,\mu(U \setminus \overline{U^{\eps}}) < \infty.
  \end{equation}
Notice that $\mu(\partial U^{\eps}) = 0$ for $\mathcal L^1$--\textit{a.e.}\,\,$\eps>0$
and $\eps \mapsto \mu(\overline U^{\eps})$ is non-increasing and left-continuous.
If $\eps_k \to 0$ is a sequence realizing this limit inferior in \eqref{6.15a},
then, for each $k$, we can find $0<\tilde\eps_k \leq \eps_k$
such that
$\mu(\overline U^{\tilde \eps_k}) - \frac{\eps_k}k
\leq \mu(\overline U^{\eps_k}) \leq \mu(\overline U^{\tilde \eps_k})$
and $\mu(\partial U^{\tilde \eps_k})=0$ for each $k$.
Thus, we always have
  \begin{equation}
      \liminf_{\eps \to 0}\, \frac{\mu(U) - \mu(\overline{U}^{\eps})}{\eps}  =\liminf_{\eps \to 0}\, \frac{\mu(U) - \mu(U^{\eps})}{\eps},
  \end{equation}
regardless of whether both sides are finite or not.
In addition, we may also consider the $\mathcal O_{\mu}$--condition
for subsets $U \subset \mathbb R^n$ that are not necessarily compactly contained in $\Omega$; for this,
we simply extend
measure $\mu$ by zero to $\mathbb R^n$.
\end{remark}

\begin{definition}
\label{DefinitionCauchyFlux}
A \emph{Cauchy flux} in $\Omega$ is a mapping $\mathcal{F}$ defined on pairs $(S,U)$ with
$S \subset \partial U$ Borel and $U \Subset \Omega$ open, so that
the balance law is satisfied{\rm :}
\begin{equation}
\label{balancelaw2}
\mathcal{F}_{U}(\partial U)=\sigma (U)\qquad  \text{ for any $U \Subset \Omega$},
 \end{equation}
 and there is a non-negative finite Radon measure $\mu$ such that the following conditions hold{\rm :}

\begin{enumerate}[label=\rm(\roman*)]

\item\label{item:additivity}
Additivity property{\rm :} For any $U \in \mathcal O_{\mu}$ and disjoint Borel subsets $S,T \subset \partial U$,
\begin{equation}
\mathcal F_{ U}(S \cup T)
= \mathcal F_{U}(S) + \mathcal F_{U}(T).
\end{equation}

\item\label{item:localisation}
Localization property{\rm :} For any $U,V \in \mathcal O_{\mu}$
and any open set $A \subset\Omega$ such that $A \cap U = A \cap V$, then
    \begin{equation}
      \mathcal F_{U}(A \cap \partial U) = \mathcal F_{V}(A \cap \partial V).
    \end{equation}

\item\label{item:eta_bound}
Upper bound{\rm :}
For any $U \in \mathcal O_{\mu}$,
\begin{equation}
    \label{eq:eta_bound}
    \lvert\mathcal F_{U}(S)\rvert \leq  \mu^{n-1}_{U}\left( S \right)
\end{equation}
holds for any Borel set $S \subset \partial U$ and
any limiting measure
\begin{equation}\label{eq:eps_n-1_limit}
      \frac1{\eps_k} \, \mu \res \left(U \setminus \overline{U^{\eps_k}}\right) \,\xrightharpoonup{\,\, * \,\,}
     \mu^{n-1}_{U} \qquad\mbox{as $\eps_k \searrow 0$.}
\end{equation}
\end{enumerate}
\end{definition}

In Remark \ref{rem:eta_bound_lp},
we will draw some connections to Condition \ref{item:eta_bound} in relation to the existing formulations in the $\Leb^p$ setting. However, our condition appears to be fundamentally different
to the class $\mathscr{P}_h$ that appears in the previously existing formulations.

\begin{remark}\label{rem:eta_bound}
In Condition \ref{item:eta_bound}, since $U \in \mathcal O_{\mu}$,
then there is a sequence $\eps_k \searrow 0$ for which the non-negative measure sequence:
  \begin{equation}
      \nu_k := \frac1{\eps_k} \, \mu \res \left( U \setminus \overline{U^{\eps_k}}\right)
  \end{equation}
  satisfies
  \begin{equation}
      \lim_{k \to \infty} \nu_k(\Omega) = \liminf_{\eps \to 0} \frac1{\eps} \, \mu\left( U \setminus \overline{U^{\eps}}\right) < \infty.
  \end{equation}
  Thus, $\{\nu_k\}$ is uniformly bounded in $\mathcal M(\Omega)$ and, up to a subsequence,
  converges weakly${}^*$ to a measure $\nu$.
  Moreover, since each $\nu_k$ vanishes on $(\Omega \setminus \overline{U}) \cup  U^{\eps_k}$ for each $k$, the limit measure $\nu$ is supported on $\partial U$.
  Therefore, such a limiting measure $\mu^{n-1}_{U}$ always exists, even though it is non-unique in general,
  and we impose \eqref{eq:eta_bound} for every limiting measure obtained in this way.
  However, it turns out that this limiting measure is unique on {\it most}  surfaces,
  which will be made precise in Lemma \ref{lem:disint_flux}.
\end{remark}

The main result of this section is that Definition \ref{DefinitionCauchyFlux} implies the existence of a vector-valued Radon measure $\FF=(F_1,\cdots,F_n)$ in $\Omega$,
whose normal trace corresponds to $\mathcal F$.
More precisely, we prove the following main theorem for the Cauchy flux.

\begin{theorem}\label{maintheorem}
Let $\mathcal F$ be a Cauchy flux in $\Omega$.
    Then there exists a unique divergence-measure field $\FF \in \mathcal{DM}^{\ext}(\Omega)$ such that
    \begin{equation}\label{eq:divF_sigma}
        -\div \FF = \sigma
    \end{equation}
    and
    \begin{equation}\label{eq:flux_localrecovery}
        \mathcal F_{U}(S) = (\FF \cdot \nu)_{\partial U}(S)
    \end{equation}
for any $U \in \mathcal O_{\mu}$ and any Borel set $S \subset \partial U$,
where the measure $(\FF \cdot \nu)_{\partial U}$ is the normal trace of $\FF $ on $\partial U$.
Conversely, every $\FF \in \mathcal{DM}^{\ext}(\Omega)$ defines a Cauchy flux,
by taking $\mu = \lvert \FF \rvert$ and $\sigma = -\div \FF$.
\end{theorem}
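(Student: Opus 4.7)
\emph{Converse direction.} Given $\FF \in \mathcal{DM}^{\ext}(\Omega)$, set $\mu := \lvert\FF\rvert$, $\sigma := -\div \FF$, and $\mathcal F_{U}(S) := (\FF \cdot \nu)_{\partial U}(S)$ for $U \in \mathcal O_\mu$ and Borel $S \subset \partial U$. The balance law is then \eqref{eq:trace_balance_law}, additivity is the measure property, and localization is Theorem \ref{thm:localisation}. For \ref{item:eta_bound}, note that $U \in \mathcal O_\mu$ is precisely the finiteness hypothesis of Theorem \ref{thm:averaged_trace}; combining its limit formula with the bound $\lvert \overline{\nabla d \cdot \FF}\rvert \leq \lvert\FF\rvert = \mu$ from \eqref{eq:product_measure_bound} and passing to the weak$^*$ limit yields $\lvert \mathcal F_{U}\rvert \leq \mu^{n-1}_{U}$ as measures on $\partial U$. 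Uniqueness of $\FF$ reduces via $\GG := \FF_1 - \FF_2$ to showing that a divergence-free field whose normal trace vanishes on every $U \in \mathcal O_{\lvert\FF_1\rvert + \lvert\FF_2\rvert}$ must be zero; applying Theorem \ref{thm:reconstruct_normaltrace} to an exhausting family of axis-aligned boxes forces each component $G_i$ to vanish as a measure, and hence $\GG = 0$.

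\emph{Existence via component construction.} The plan is to construct $\FF$ component-wise from the flux on half-slabs perpendicular to the coordinate directions. Exhaust $\Omega$ by open axis-aligned boxes $R_k$ with $\overline{R_k} \subset \Omega$. Fix $i \in \{1,\ldots,n\}$; a Fubini/disintegration argument on $\mu$ (in the spirit of Lemma \ref{lem:disint_flux}) ensures that the half-slab $U_s := R_k \cap \{x_i > s\}$ lies in $\mathcal O_\mu$ for $\mathcal L^1$--a.e.\ $s \in \mathbb R$, and on such levels the axioms \ref{item:additivity} and \ref{item:eta_bound} render $\mathcal F_{U_s}$ a signed Radon measure on $\partial U_s$ dominated by $\mu^{n-1}_{U_s}$. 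Motivated by Theorem \ref{thm:reconstruct_normaltrace} applied to $d(x) = x_i - s$ (so that $\overline{\nabla d \cdot \FF}$ corresponds to $F_i$ on the slab), define a candidate signed measure $F_i$ on $R_k$ by
\begin{equation*}
\int \phi \, dF_i := \int_{\mathbb R} \int_{\{x_i = s\} \cap R_k} \phi(x) \, d\mathcal F_{U_s}(x)\, ds
\qquad \text{for } \phi \in C_{\mathrm c}(R_k).
\end{equation*}
Integrating \ref{item:eta_bound} in $s$ against \eqref{eq:eps_n-1_limit} yields $\lvert F_i \rvert \leq \mu$ on $R_k$, and consistency across different boxes $R_k$ follows from \ref{item:localisation}; hence $F_i$ extends to a finite Radon measure on $\Omega$. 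Setting $\FF := (F_1,\ldots,F_n)$ and applying the balance law \eqref{balancelaw2} to shrinking coordinate cubes then yields $-\div \FF = \sigma$ distributionally, so $\FF \in \mathcal{DM}^{\ext}(\Omega)$.

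\emph{Local recovery and main obstacle.} For $U \in \mathcal O_\mu$, apply Theorem \ref{thm:averaged_trace} to the constructed $\FF$ to represent $(\FF \cdot \nu)_{\partial U}$ as the weak$^*$ limit of the averaged pairing measures $\frac{1}{\eps}\overline{\nabla d_U \cdot \FF}\res(U \setminus \overline{U^\eps})$. Using the definition of $F_i$ together with the Localization axiom \ref{item:localisation} to compare the half-slab fluxes with $\mathcal F_{U^\eps}$ on tubular neighborhoods of $\partial U^\eps$, one identifies this limit with $\mathcal F_U$ on $\partial U$. The principal technical obstacle is precisely this identification on \emph{every} Borel $S \subset \partial U$, not merely through test-function pairings: it requires simultaneously selecting sequences $\eps_k \to 0$ that are generic for both the field $\FF$ (in the sense of Theorem \ref{principal}) and the flux $\mathcal F$, combining the disintegration of Theorem \ref{thm:reconstruct_normaltrace} with inner regularity of Radon measures, and carefully reconciling the three admissibility conditions of Definition \ref{DefinitionCauchyFlux}.
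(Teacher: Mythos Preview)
Your overall architecture matches the paper's: the converse is Theorem~\ref{prop:flux_sufficiency}, the component-wise construction via hyperplane fluxes is Theorem~\ref{thm:flux_construct}, the divergence identity comes from a mollified cube argument (Theorem~\ref{thm:flux_globalrecovery}), and uniqueness from the reconstruction formula adapted to half-spaces (Lemma~\ref{lem:hyperplane_reconstruct}, Theorem~\ref{thm:flux_unique}). Two points deserve comment.

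First, in the construction step you pass over the measurability of $s \mapsto \mathcal F_{U_s}(\,\cdot\,)$ and over the fact that the hyperplane flux must not concentrate on the ``edges'' $\partial R_k \cap \{x_i=s\}$. The paper spends \S\ref{sec:flux_property_cubes} (Lemmas~\ref{eq:cube_eta_bound}--\ref{lem:hyperplane_integrable}) establishing exactly these facts, including the corner condition \eqref{eq:corner_small} and the two-sided identity \eqref{eq:hyperplane_twosided}. Without them, neither the integral defining $F_i$ nor the consistency across boxes $R_k$ is justified.

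Second, and more seriously, your local recovery paragraph contains a genuine gap. You propose to identify the averaged-trace limit of Theorem~\ref{thm:averaged_trace} with $\mathcal F_U$ by using localization~\ref{item:localisation} to ``compare the half-slab fluxes with $\mathcal F_{U^\eps}$ on tubular neighborhoods of $\partial U^\eps$''. But the localization axiom requires $A \cap U_s = A \cap U^\eps$ on an open set $A$, which for a general open $U$ essentially never holds: $\partial U^\eps$ has no flat pieces aligned with the coordinate hyperplanes. The paper resolves this by an intermediate step you have omitted entirely: it first proves recovery on almost every \emph{cube} (Lemma~\ref{lem:local_cube_recovery}), where the faces really are hyperplane slabs; then on finite unions of such cubes (Lemmas~\ref{lem:good_cube_collection}--\ref{lem:good_cube_union}); and only then approximates an arbitrary $U \in \mathcal O_\mu$ from within by such unions $V_k$, passing to the limit via a careful splitting of $\sigma(Q^\eps \cap (U\setminus\overline{V_k}))$ (the proof of Theorem~\ref{thm:flux_localrecovery}). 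Your outline does not supply any substitute for this cube-approximation bridge, and the direct identification you sketch does not go through.
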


This makes precise the one-to-one correspondence
\begin{equation}
    \left\{\text{Cauchy fluxes } \mathcal F \text{ in } \Omega \right\}\,\longleftrightarrow\, \mathcal{DM}^{\ext}(\Omega)
\end{equation}
as we claimed in the introduction,
and the local recovery is now precisely formulated via the class $\mathcal O_{\mu}$.
We point out that there is no hope to obtain the recovery on all open sets $U$,
as the normal trace appearing in \eqref{eq:flux_localrecovery} is not generally a measure on arbitrary open sets.

The converse statement of Theorem \ref{maintheorem} is a consequence of the properties of the normal trace established in earlier sections,
which we record here.

\begin{theorem}\label{prop:flux_sufficiency}
Let $\FF \in \mathcal{DM}^{\ext}(\Omega)$. Define
\begin{equation}\label{eq:sufficiency_local}
      \mathcal F_{U}(S) := (\FF \cdot \nu)_{\partial U}(S)
  \end{equation}
for any open set $U \Subset \Omega$ and any Borel set $S \subset \partial U$,
whenever the normal trace $(\FF \cdot \nu)_{\partial U}$ restricts to a measure on $\partial U$.
Furthermore, for $U \Subset \Omega$, define globally
\begin{equation}\label{eq:sufficiency_global}
    \mathcal F_{U}(\partial U) := \langle \FF \cdot \nu, \,\mathbbm{1}_{\Omega} \rangle_{\partial U}.
  \end{equation}
  Then $\mathcal F$ defines a Cauchy flux in the sense of {\rm Definition \ref{DefinitionCauchyFlux}},
  with $\sigma = -\div \FF$ and any non-negative Radon measure $\mu$ such that $\lvert \FF\rvert \leq \mu$.
\end{theorem}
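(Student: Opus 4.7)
The plan is to verify each of the conditions in Definition \ref{DefinitionCauchyFlux} by assembling the properties of the normal trace established in Sections \ref{sec:definitions}--\ref{sec:localisation}. The balance law \eqref{balancelaw2} is immediate from \eqref{eq:trace_balance_law}: for any $U \Subset \Omega$, the global definition \eqref{eq:sufficiency_global} gives $\mathcal F_U(\partial U) = \langle \FF \cdot \nu,\,\mathbbm{1}_\Omega\rangle_{\partial U} = -(\div \FF)(U) = \sigma(U)$. Before addressing the remaining conditions, I would first confirm that $(\FF \cdot \nu)_{\partial U}$ is a Radon measure on $\partial U$ whenever $U \in \mathcal O_\mu$, so that the local definition \eqref{eq:sufficiency_local} is meaningful and consistent with \eqref{eq:sufficiency_global} on $S = \partial U$. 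Since $d = \dist(\cdot,\partial U)$ is $1$-Lipschitz with $\norm{\nabla d}_{\Leb^\infty(\Omega)} \leq 1$, the product-rule bound \eqref{eq:product_measure_bound} yields $\lvert \overline{\nabla d \cdot \FF} \rvert \leq \lvert \FF \rvert \leq \mu$, and so the $\mathcal O_\mu$-condition supplies the finiteness hypothesis in Theorem \ref{thm:averaged_trace}, producing the desired measure representation.

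With this measure representation in hand, the additivity condition \ref{item:additivity} is automatic from $\sigma$-additivity of Radon measures. For the localization condition \ref{item:localisation}, given $U, V \in \mathcal O_\mu$ and an open set $A \subset \Omega$ with $A \cap U = A \cap V$, Theorem \ref{thm:localisation} supplies both the identity $A \cap \partial U = A \cap \partial V$ and the measure equality \eqref{eq:localisation_meausres}; evaluating both sides on the common Borel set $A \cap \partial U$ yields $\mathcal F_U(A \cap \partial U) = \mathcal F_V(A \cap \partial V)$ as required.

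The upper bound \ref{item:eta_bound} is the only condition with any genuine content. Given $U \in \mathcal O_\mu$, I would fix an arbitrary limiting measure $\mu^{n-1}_U$ and pick a sequence $\eps_k \searrow 0$ along which $\rho_k := \eps_k^{-1} \mu \res (U \setminus \overline{U^{\eps_k}}) \weakstarto \mu^{n-1}_U$, and then set $\lambda_k := \eps_k^{-1} \overline{\nabla d \cdot \FF} \res (U \setminus \overline{U^{\eps_k}})$. The pointwise bound $\lvert \lambda_k \rvert \leq \rho_k$ coming from \eqref{eq:product_measure_bound} gives that $\{\lambda_k\}$ is uniformly bounded in total variation, and Theorem \ref{thm:averaged_trace} upgrades the distributional convergence to weak${}^{\ast}$ convergence $\lambda_k \weakstarto (\FF \cdot \nu)_{\partial U}$ in $\mathcal M(\Omega)$. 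Extracting a further subsequence along which $\lvert \lambda_k \rvert \weakstarto \kappa$ for some non-negative Radon measure $\kappa$, weak${}^{\ast}$ lower semicontinuity of total variation yields $\lvert (\FF \cdot \nu)_{\partial U} \rvert \leq \kappa$, while testing the pointwise bound against non-negative $\phi \in C_{\mathrm c}(\Omega)$ and passing to the limit gives $\kappa \leq \mu^{n-1}_U$. Combining these inequalities and restricting to any Borel $S \subset \partial U$ yields \eqref{eq:eta_bound}. The proof is essentially a collation of results derived earlier; the only minor subtlety is the simultaneous extraction of subsequences needed to obtain both weak${}^{\ast}$ limits, handled by a standard diagonal/compactness argument.
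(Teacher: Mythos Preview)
Your proposal is correct and follows essentially the same route as the paper: both verify the balance law via \eqref{eq:trace_balance_law}, obtain the measure representation on $\mathcal O_\mu$ from Theorem \ref{thm:averaged_trace} together with the bound $\lvert \overline{\nabla d \cdot \FF}\rvert \leq \lvert \FF\rvert \leq \mu$, deduce additivity and localization from that representation and Theorem \ref{thm:localisation}, and derive the upper bound \ref{item:eta_bound} by passing the inequality $\lvert \lambda_k\rvert \leq \rho_k$ to the weak${}^\ast$ limit. Your version is slightly more explicit in two respects---you emphasize that the limiting measure $\mu^{n-1}_U$ is arbitrary (which the definition requires) and you interpose the auxiliary limit $\kappa$ of $\lvert\lambda_k\rvert$---but these are just spelled-out details of the same argument.
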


\begin{proof}
By definition of the normal trace, it follows that $\mathcal F_{U}(\partial U) = -\div \FF(U)$.
Using \eqref{eq:trace_balance_law}, we see that the balance law holds for all $U \Subset \Omega$.
Also, if $(\F\cdot \nu)_{\partial U}$ is represented by a measure on $\partial U$, then \eqref{eq:sufficiency_local}
and \eqref{eq:sufficiency_global} coincide when $S=\partial U$.

By Theorem \ref{thm:averaged_trace}, the normal trace $(\FF \cdot \nu)_{\partial U}$ is represented
by a measure whenever $U \in \mathcal O_{\lvert \FF\rvert}$, property \ref{item:additivity} follows
from the fact that $\mathcal O_{\mu} \subset \mathcal O_{\lvert \FF\rvert}$.
The localization property \ref{item:localisation} follows from Theorem \ref{thm:localisation}.
Finally, for \ref{item:eta_bound}, for any $U \in \mathcal O_{\mu}$, it follows from Remark \ref{rem:eta_bound}
that there exists both $\eps_k \to 0$ and a limiting measure $\mu_U^{n-1}$
such that
  \begin{equation}
      \frac1{\eps_k} \mu \res (U \setminus \overline{U^{\eps_k}}) \xrightharpoonup{\,\, * \,\,}
     \mu_U^{n-1}.
  \end{equation}
 Since $\lvert \overline{\nabla d \cdot \FF} \rvert \leq \lvert \FF \rvert \leq \mu$ by \eqref{eq:product_measure_bound}
 from the product rule, we see from Theorem \ref{thm:averaged_trace}
 that
   \begin{equation}
      \frac1{\eps_k}\,\overline{\nabla d \cdot \FF} \res (U \setminus \overline{U}^{\eps_k})
      \xrightharpoonup{\,\, * \,\,}
      (\FF \cdot \nu)_{\partial U},
  \end{equation}
 which implies
  that $\lvert (\FF \cdot \nu)_{\partial U}\rvert \leq \mu_{U}^{n-1}$ as measures.
 Therefore, $\mathcal F$ is a Cauchy flux.
\end{proof}

The main implication of  Theorem \ref{maintheorem} will be proven in several steps,
which will be broken up into separate theorems.
After establishing several useful properties of the flux in \S \ref{sec:flux_properties}, we will first construct a candidate field $\FF$ by integrating the Cauchy flux along hyperplanes.
This will be done in \S \ref{sec:flux_field_construct}, where
the constructed field is shown to satisfy $-\div \FF = \sigma$,
thereby establishing {\it global recovery} of the flux.
Although the construction of $\FF$ involves a choice of measure to integrate the flux,
in \S \ref{sec:flux_field_unique}, we will justify our choice by showing that the said flux is uniquely determined; this will rely on a version of Theorem \ref{thm:reconstruct_normaltrace} valid for half-spaces.
Finally, we will show that the flux and the normal trace coincide in the sense of \eqref{eq:flux_localrecovery}, which will be a {\it local recovery} result, in \S \ref{sec:flux_localrecovery}.
This will first be done on generic cubes, after which an approximation argument
will extend this to general surfaces.
Once the main theorem is proven, we will also collect a few consequences in \S \ref{sec:flux_consequences}.

\section{Cauchy Flux II: Properties of the Cauchy Flux}\label{sec:flux_properties}

\subsection{General properties of the Cauchy flux}\label{sec:flux_property_general}
We now present some useful properties of the Cauchy flux,
starting with some properties of sets in $\mathcal O_{\mu}$ and consequences of property \ref{item:eta_bound}.
Recall that we have defined the class of open subsets:
\begin{equation}\label{eq:oeta_rn}
    \mathcal O_{\mu}
    = \Big\{ U \Subset \Omega \text{ open}\, :\, \liminf_{\eps \to 0} \frac1{\eps} \,\mu(U \setminus \overline{U}^{\eps}) < \infty \Big\}.
\end{equation}
In what follows, we also consider the more restrictive class:
\begin{equation}\label{eq:tilde_oeta}
    \widetilde{\mathcal O}_{\mu}
    = \Big\{ U \Subset \Omega \text{ open}\,:\,\limsup_{\eps \to 0} \frac1{\eps} \,\mu(U \setminus \overline{U}^{\eps}) < \infty \Big\}.
\end{equation}

\begin{lemma}\label{lem:flux_measure}
If $U \in \mathcal O_{\mu},$ then $\mathcal F_{U}(\cdot)$ is represented by a measure supported on $\partial U$.
\end{lemma}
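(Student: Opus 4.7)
The plan is to exploit the fact that, when $U \in \mathcal O_{\mu}$, property \ref{item:eta_bound} furnishes a finite positive Radon measure $\mu^{n-1}_U$ that dominates the absolute value of $\mathcal F_U$ on Borel subsets of $\partial U$, and then combine this domination with the finite additivity from \ref{item:additivity} to upgrade $\mathcal F_U$ to a countably additive signed measure.

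First, since $U \in \mathcal O_{\mu}$, as explained in Remark \ref{rem:eta_bound}, we can select a sequence $\eps_k \searrow 0$ such that $\nu_k := \tfrac{1}{\eps_k} \mu \res(U \setminus \overline{U^{\eps_k}})$ is uniformly bounded in $\mathcal M(\Omega)$. By Banach--Alaoglu, after extracting a subsequence, $\nu_k \xrightharpoonup{*} \mu^{n-1}_U$ for some non-negative Radon measure $\mu^{n-1}_U$ on $\Omega$, which is supported on $\partial U$ since each $\nu_k$ vanishes outside $\overline{U} \setminus U^{\eps_k}$ and the intersection of these sets as $k \to \infty$ is $\partial U$. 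Crucially, $\mu^{n-1}_U(\partial U) \leq \liminf_k \nu_k(\Omega) < \infty$, so $\mu^{n-1}_U$ is a finite positive Radon measure.

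Next, I would use property \ref{item:eta_bound} applied to this particular limiting measure to obtain $|\mathcal F_U(S)| \leq \mu^{n-1}_U(S)$ for every Borel $S \subset \partial U$, giving a uniform upper bound on $|\mathcal F_U|$ by a genuine $\sigma$-additive measure. Combined with finite additivity from \ref{item:additivity}, countable additivity follows by a standard argument: if $S = \bigsqcup_{k=1}^\infty S_k$ is a disjoint Borel decomposition, then finite additivity yields
\begin{equation}
\Big| \mathcal F_U(S) - \sum_{k=1}^{N} \mathcal F_U(S_k) \Big| = \Big| \mathcal F_U\Big( \bigsqcup_{k>N} S_k \Big) \Big| \leq \mu^{n-1}_U\Big( \bigsqcup_{k>N} S_k \Big),
\end{equation}
and the right-hand side tends to zero as $N \to \infty$ by $\sigma$-additivity and finiteness of $\mu^{n-1}_U$. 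Hence $\mathcal F_U$ is countably additive on Borel subsets of $\partial U$, and the bound $|\mathcal F_U| \leq \mu^{n-1}_U$ shows that its total variation is finite, so it is a signed Radon measure. Extending by $\mathcal F_U(E) := \mathcal F_U(E \cap \partial U)$ for Borel $E \subset \Omega$ gives the desired representation as a signed measure supported on $\partial U$.

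The main subtlety, rather than an obstacle, is remembering that the upper bound in \ref{item:eta_bound} is postulated to hold for \emph{every} limiting measure $\mu^{n-1}_U$ obtained via \eqref{eq:eps_n-1_limit}, so one must first produce at least one such limit before being allowed to invoke the inequality; this is precisely where the defining property of $\mathcal O_{\mu}$ is used, guaranteeing boundedness of the approximating sequence and hence the existence of a weak${}^*$ accumulation point.
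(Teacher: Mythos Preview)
Your proof is correct and follows essentially the same approach as the paper: both arguments invoke the existence of a limiting measure $\mu^{n-1}_U$ (the paper via Remark~\ref{rem:eta_bound}, you by spelling out the compactness argument), apply the bound from \ref{item:eta_bound}, and then use finite additivity from \ref{item:additivity} together with the domination $|\mathcal F_U| \leq \mu^{n-1}_U$ to upgrade to countable additivity via the identical tail estimate. The only cosmetic difference is that you are slightly more explicit about the weak${}^*$ compactness step and the extension to a measure on all of $\Omega$.
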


\begin{proof}
 By Condition \ref{item:additivity}, the flux is finitely additive.
 By \ref{item:eta_bound}, there exists some measure $\mu^{n-1}_{U}$ such that
 the following upper bound holds:
  \begin{equation}
    \lvert \mathcal F_{U}(S)\rvert \leq \mu^{n-1}_{U}(S)
    \qquad\mbox{for any Borel set $S \subset \partial U$}.
  \end{equation}
Note that setting $S = \varnothing$ gives $\mathcal F_{U}(\varnothing) = 0$.
For countable additivity, let $\{S_k\}_{k=1}^{\infty}$ be disjoint Borel subsets of $\partial U$,
and set $S = \bigcup_{k=1}^{\infty} S_k$.
Then, for each $N \geq 1$, we can estimate
  \begin{equation}
    \begin{split}
      \Big\lvert \mathcal F_{U}(S) - \sum_{k=1}^N \mathcal F_{U}(S_k)\Big\rvert
      &= \Big\lvert \mathcal F_{U}( \bigcup_{k=N+1}^{\infty} S_k ) \Big\rvert \\
      &\leq \mu^{n-1}_{U}( \bigcup_{k=N+1}^{\infty} S_k)
      = \sum_{k=N+1}^{\infty} \mu^{n-1}_{U}(S_k) \to 0 \qquad\,\mbox{as $N \to \infty$},
    \end{split}
  \end{equation}
where we have repeatedly used the additivity property \ref{item:additivity},
in addition to \ref{item:eta_bound} and the fact that $\mu_U^{n-1}$ is a measure.
Thus, it follows that
  \begin{equation}
    \mathcal F_{U}(S) = \sum_{k=1}^{\infty} \mathcal F_{U}(S_k),
  \end{equation}
  from which the result follows.
\end{proof}

\begin{lemma}\label{lem:sop_algebra}
Suppose that $U_1 \in \mathcal O_{\mu}$ and $U_2 \in \widetilde{\mathcal O}_{\mu}$.
Then $U_1 \cap U_2$ and $U_1 \cup U_2$ lie in $\mathcal O_{\mu}$.
In addition, if $U_1 \in \widetilde{\mathcal O}_{\mu}$, then $U_1 \cap U_2,\, U_1 \cup U_2 \in \widetilde{\mathcal O}_{\mu}$.
\end{lemma}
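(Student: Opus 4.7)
The plan is to reduce both claims to simple pointwise inclusions between the ``thin shells'' $U \setminus U^{\eps}$, and then pass these through $\mu$ and take the appropriate $\liminf$ or $\limsup$. A preliminary observation is that, by the remark following Definition \ref{defn:adapted_sop} and an analogous argument showing that $\mu(\partial U^{\eps}) = 0$ for all but countably many $\eps$ (since $\partial U^{\eps} \subset \{d_U = \eps\}$, where $d_U(x) = \dist(x,\partial U)$), the conditions defining $\mathcal O_{\mu}$ and $\widetilde{\mathcal O}_{\mu}$ are equivalent if one replaces $\overline{U^{\eps}}$ by $U^{\eps}$, so I will work with the latter throughout.

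The two key inclusions to establish are
\[
    W \setminus W^{\eps} \subset (U_1 \setminus U_1^{\eps}) \cup (U_2 \setminus U_2^{\eps}) \qquad \text{for } W \in \{U_1 \cap U_2,\; U_1 \cup U_2\}.
\]
For $W = U_1 \cap U_2$, this follows from the pointwise identity $d_{U_1 \cap U_2} = \min(d_{U_1}, d_{U_2})$ on $U_1 \cap U_2$, which is immediate from $(U_1 \cap U_2)^{\mathrm{c}} = U_1^{\mathrm{c}} \cup U_2^{\mathrm{c}}$. For $W = U_1 \cup U_2$, it follows from the reverse-flavored observation $U_1^{\eps} \cup U_2^{\eps} \subset (U_1 \cup U_2)^{\eps}$ (any $\eps$-ball contained in some $U_i$ is contained in $U_1 \cup U_2$): any $x \in W \setminus W^{\eps}$ lying in, say, $U_1$ must fail to be in $U_1^{\eps}$. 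Applying subadditivity of $\mu$ produces, in both cases, the single estimate
\[
    \mu(W \setminus W^{\eps}) \leq \mu(U_1 \setminus U_1^{\eps}) + \mu(U_2 \setminus U_2^{\eps}).
\]

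The final step is to divide by $\eps$ and pass to the limit. For the first claim, with $U_1 \in \mathcal O_{\mu}$ and $U_2 \in \widetilde{\mathcal O}_{\mu}$, I would apply on the left the elementary inequality $\liminf(a_{\eps} + b_{\eps}) \leq \liminf a_{\eps} + \limsup b_{\eps}$, valid whenever the $\limsup$ is finite, to conclude that $W \in \mathcal O_{\mu}$. For the second claim (both $U_i \in \widetilde{\mathcal O}_{\mu}$), I would take $\limsup$ on the left and use its standard subadditivity. I do not anticipate any serious obstacle; the only subtle point is this asymmetric handling of the $\liminf$ and $\limsup$ conditions in the mixed case, which is precisely what necessitates the stronger hypothesis $U_2 \in \widetilde{\mathcal O}_{\mu}$ (rather than merely $\mathcal O_{\mu}$) in the first statement.
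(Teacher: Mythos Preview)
Your proposal is correct and follows essentially the same route as the paper: establish the shell inclusions $W \setminus W^{\eps} \subset (U_1 \setminus U_1^{\eps}) \cup (U_2 \setminus U_2^{\eps})$ for $W \in \{U_1 \cap U_2, U_1 \cup U_2\}$, apply $\mu$, divide by $\eps$, and use $\liminf(a+b) \leq \liminf a + \limsup b$ (resp.\ subadditivity of $\limsup$). The only cosmetic difference is that the paper works directly with $U \setminus \overline{U^{\eps}}$, which spares it your preliminary observation; note that for the $\widetilde{\mathcal O}_{\mu}$ direction of that observation, the bare fact that $\mu(\partial U^{\eps})=0$ for all but countably many $\eps$ does not by itself control a $\limsup$, so you should invoke the same monotonicity trick as in the remark (replace $\eps$ by a nearby $\eps' \in (\eps,2\eps)$ with $\mu(\partial U^{\eps'})=0$) to close that step.
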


\begin{proof}
  For any $U_1, U_2 \subset \Omega$, set $U = U_1 \cap U_2$ and  $V = U_1 \cup U_2.$ Then we claim:
  \begin{align}
    U\setminus \overline{U^{\eps}} &\subset (U_1 \setminus \overline{U_1^{\eps}}) \cup (U_2 \setminus \overline{U_2^{\eps}}), \label{eq:intersection_inclusion}\\
    V\setminus \overline{V^{\eps}} &\subset (U_1 \setminus \overline{U_1^{\eps}}) \cup (U_2 \setminus \overline{U_2^{\eps}}).\label{eq:union_inclusion}
  \end{align}
  Indeed, if $x \in U \setminus \overline{U^{\eps}}$, then there exists $y \in \mathbb R^n \setminus U$
  such that $\lvert x - y \rvert < \eps$.
  Since $\mathbb R^n \setminus U = (\mathbb R^n \setminus U_1) \cup (\mathbb R^n \setminus U_2)$,
  then $x \in U_1 \setminus \overline{U_1^{\eps}}$ if $y \in \mathbb R^n \setminus U_1$,
  and $x \in U_2 \setminus \overline{U_2^{\eps}}$ if $y \in \mathbb R^n \setminus U_2$.
  Similarly, if $x \in V \setminus V^{\eps}$, there is $y \in \mathbb R^n \setminus V$
  with $\lvert x-  y\rvert < \eps$.
  If $x \in U_1$, then $y \in \mathbb R^n \setminus U_1$ so that $x \in U_1 \setminus \overline{U_1^{\eps}}$;
  similarly, if $x \in U_2$, then $x \in U_2 \setminus \overline{U_2^{\eps}}$.

  Thus, taking $\mu$ on both sides of \eqref{eq:intersection_inclusion}, we obtain
  \begin{equation}\label{eq:eta_Uintersection}
      \frac1{\eps} \, \mu(U \setminus \overline{U^{\eps}})
      \leq \frac1{\eps} \, \mu(U_1 \setminus \overline{U_1^{\eps}})
      + \frac1{\eps} \, \mu(U_2 \setminus \overline{U_2^{\eps}})\qquad \mbox{ for any $\eps >0$},
  \end{equation}
  so that
  \begin{equation}
      \liminf_{\eps \to 0} \frac1{\eps} \, \mu(U \setminus \overline{U^{\eps}})
      \leq \liminf_{\eps \to 0} \frac1{\eps} \, \mu(U_1 \setminus \overline{U_1^{\eps}})
      + \limsup_{\eps \to 0} \frac1{\eps} \, \mu(U_2 \setminus \overline{U_2^{\eps}}) < \infty,
  \end{equation}
  which implies that $U \in \mathcal O_{\mu}$.
  Moreover, if $U_1 \in \widetilde{\mathcal O}_{\mu}$, we have
  \begin{equation}
      \limsup_{\eps \to 0} \frac1{\eps} \, \mu(U \setminus \overline{U^{\eps}})
      \leq \limsup_{\eps \to 0} \frac1{\eps} \, \mu(U_1 \setminus \overline{U_1^{\eps}})
      + \limsup_{\eps \to 0} \frac1{\eps} \, \mu(U_2 \setminus \overline{U_2^{\eps}}) < \infty,
  \end{equation}
  so $U \in \widetilde{\mathcal O}_{\mu}$.
  The same result holds for $V$ by using \eqref{eq:union_inclusion} instead to obtain an estimate analogous to \eqref{eq:eta_Uintersection}.
\end{proof}

Adapting the arguments of the above proof allows us to establish the following upper bound
for the limit measure $\mu_{\partial U}^{n-1}$.

\begin{lemma}\label{lem:eta_intersection}
Let $U_1, \cdots, U_N \in \mathcal O_{\mu}$ such that there exists a common subsequence $\eps_k \to 0$ so that
  \begin{equation}
    \frac1{\eps_k} \, \mu \res (U_i \setminus \overline{U_i^{\eps_k}}) \xrightharpoonup{\,\, * \,\,}
    \mu^{n-1}_{U_i} \qquad
    \mbox{for each $1 \leq i \leq N$},
  \end{equation}
 and set
  \begin{equation}
      U = \bigcap_{i=1}^N U_i, \quad V = \bigcup_{i=1}^N U_i.
  \end{equation}
  \begin{enumerate}[label=\rm(\roman*)]
  \item\label{item:upperbound_intersection}
  If
  $\frac1{\eps_k} \, \mu \res (U \setminus \overline {U^{\eps_k}}) \xrightharpoonup{\,\, * \,\,}
  \mu^{n-1}_{U},$ then
      \begin{equation}
  \mu^{n-1}_{U}(S) \leq \sum_{i=1}^N \mu^{n-1}_{U_i}(S \cap \partial U_i \cap \bigcap_{j \neq i} \overline{U_j})
  \qquad\mbox{ for any Borel set $S \subset \partial U$}.
      \end{equation}

\smallskip
      \item\label{item:upperbound_union}
      If
      $\frac1{\eps_k} \, \mu \res (V \setminus \overline {V^{\eps_k}})\xrightharpoonup{\,\, * \,\,}
      \mu^{n-1}_{V}$, then
      \begin{equation}
          \mu_V^{n-1}(S) \leq \sum_{i=1}^N \mu_{U_i}^{n-1}( S \cap (\partial U_i \setminus \bigcup_{j \neq i} U_j))
       \qquad\mbox{ for any Borel set $S \subset \partial V$}.
       \end{equation}
 \end{enumerate}
\end{lemma}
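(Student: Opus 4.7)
My strategy is to lift the pointwise inclusions established in the proof of Lemma \ref{lem:sop_algebra} to measure-valued inequalities, pass them through the weak-star limit, and then exploit the hypothesis that $S \subset \partial U$ (respectively, $S \subset \partial V$) to identify $T_i$ with the simpler set $S \cap \partial U_i$. From the proof of Lemma \ref{lem:sop_algebra}, I have the inclusions
\[ U \setminus \overline{U^\eps} \subset \bigcup_{i=1}^N (U_i \setminus \overline{U_i^\eps}), \qquad V \setminus \overline{V^\eps} \subset \bigcup_{i=1}^N (U_i \setminus \overline{U_i^\eps}), \]
valid for every $\eps>0$. By monotonicity and subadditivity of the non-negative Radon measure $\mu$, these translate to the measure-valued inequalities
\[ \mu \res (U \setminus \overline{U^\eps}) \leq \sum_{i=1}^N \mu \res (U_i \setminus \overline{U_i^\eps}), \]
together with the analogous inequality obtained by replacing $U$ with $V$ on the left-hand side, both valid as non-negative Radon measures on $\Omega$.

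Dividing through by $\eps_k$ and invoking the common weak-star convergences assumed in the hypotheses, the left-hand sides converge respectively to $\mu_U^{n-1}$ and $\mu_V^{n-1}$, while the right-hand side converges to $\sum_{i=1}^N \mu_{U_i}^{n-1}$. Since weak-star convergence of non-negative Radon measures preserves non-strict inequalities (by testing against any non-negative $\phi \in C_{\mathrm c}(\Omega)$ and using $\int \phi\,\d\alpha_k \leq \int \phi\,\d\beta_k$), I obtain
\[ \mu_U^{n-1} \leq \sum_{i=1}^N \mu_{U_i}^{n-1} \qquad \text{and} \qquad \mu_V^{n-1} \leq \sum_{i=1}^N \mu_{U_i}^{n-1} \]
as non-negative measures on $\Omega$.

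To conclude, I would observe that the right-hand side of each claimed inequality simplifies under the restriction on $S$. In part (i), since $\partial U \subset \overline{U} \subset \bigcap_j \overline{U_j}$, we have $S \subset \overline{U_j}$ for every $j$, so $T_i = S \cap \partial U_i$. In part (ii), since each $U_j$ is open and contained in $V$, every $x \in \partial V$ satisfies $x \notin U_j$, so again $T_i = S \cap \partial U_i$. Combining this with the fact (cf.\ Remark \ref{rem:eta_bound}) that each $\mu_{U_i}^{n-1}$ is supported on $\partial U_i$ gives $\mu_{U_i}^{n-1}(T_i) = \mu_{U_i}^{n-1}(S)$, and the desired bounds follow by applying the measure inequalities above to $S$.

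There is no substantial obstacle here; the argument amounts to routine bookkeeping of weak-star limits once the two simplifications of $T_i$ are noted. In particular, no fine analysis of the ``corner set'' where several $\partial U_i$ meet is needed, as such points lie in $T_i$ for each $i$ with $x \in \partial U_i$ and are thus counted (with possible multiplicity) in the sum on the right-hand side, consistent with the sharpness of the measure inequality.
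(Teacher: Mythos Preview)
Your proof is correct and follows essentially the same approach as the paper: both use the set inclusions from Lemma \ref{lem:sop_algebra}, pass to the weak-star limit to obtain $\mu_U^{n-1} \leq \sum_i \mu_{U_i}^{n-1}$ as measures, and then sharpen using the support of $\mu_{U_i}^{n-1}$ on $\partial U_i$ together with the elementary inclusions $\partial U \subset \bigcap_j \overline{U_j}$ and $\partial V \cap U_j = \varnothing$. The only cosmetic difference is that the paper tests against non-negative $\phi \in C_{\mathrm c}(\Omega)$ and then extends to Borel sets, whereas you phrase the pre-limit bound directly as a measure inequality; the content is identical.
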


\begin{proof}
  For \ref{item:upperbound_intersection},
  arguing as in \eqref{eq:intersection_inclusion} from the proof of Lemma \ref{lem:sop_algebra}, we have
  \begin{equation}
    U \setminus \overline{U^\eps} \subset \bigcup_{i=1}^N \left( U_i \setminus \overline{U_i^{\eps}} \right)
    \qquad\mbox{ for $\eps>0$}.
  \end{equation}
  Given any non-negative $\phi \in C_{\mathrm{c}}(\Omega)$,
  we can integrate $\phi$ over these levels sets with respect to $\mu$ and take the limit as $\eps_k \to 0$ to obtain
  \begin{equation}
    \int_{\partial U} \phi \,\d\mu^{n-1}_{U} \leq \sum_{i=1}^N \int_{\partial U_i} \phi \,\d\mu^{n-1}_{U_i}.
  \end{equation}
Then we can argue by density to infer that
  \begin{equation}
      \mu_U^{n-1}(S) \leq \sum_{i=1}^N \mu_{U_i}^{n-1}(S \cap \partial U_i) \qquad\mbox{for any Borel set $S\subset \Omega$}.
  \end{equation}
Now, since $\mu^{n-1}_{\partial U}$ is supported on $\partial U$, we can replace $S$ by $S \cap \partial U$ on the right-hand side.
Moreover, since
  \begin{equation}
      \partial U_i \cap \partial U \subset \partial U_i \cap \bigcap_{j \neq i} \overline{U_j},
  \end{equation}
  the result follows.
  The argument for
   \ref{item:upperbound_union}
  is analogous, based on the inclusions:
  \begin{equation}
      V \setminus \overline{V^{\eps}} \subset \bigcup_{i=1}^N (U_1 \setminus \overline{U_1^{\eps}})
  \end{equation}
  for each $\eps>0$ which is established analogously to \eqref{eq:union_inclusion} and
  \begin{equation}
      \partial V \subset \bigcup_{i=1}^N \partial U_i \setminus \bigcup_{j \neq i} U_j.
  \end{equation}
\end{proof}

\begin{lemma}\label{lem:disint_flux}
Let $\mathcal F$ be a Cauchy flux, and let $U \subset \mathbb R^n$ be an open set.
  Consider the disintegration of $\mu$ along $\partial U^t$ given by
  \begin{equation}
    \mu = \mathcal L^1 \res [0,\infty) \otimes_{\partial U^t} \mu_t + \tau_{\mathrm{sing}} \otimes_{\partial U^t} \tilde\mu_t
  \end{equation}
  from {\rm Lemma \ref{lem:disint_tau_decomp}}, extending $\mu$ by zero to a measure on $\mathbb R^n$.
  Then, for $\mathcal L^1$--\textit{a.e.}\,\,$t>0$,
  \begin{align}\label{eq:disint_mut_conv}
    \frac1{\eps} \, \mu \res  \Omega \cap U^{t} \setminus \overline{U^{t+\eps}} ) \xrightharpoonup{\,\, * \,\,}
    \mu_t,
    \quad
    \frac1{\eps} \, \mu \res ( \Omega \cap U^{t-\eps} \setminus \overline{U^{t}} )  \xrightharpoonup{\,\, * \,\,}
     \mu_t
\qquad\,\,\,\mbox{ as $\eps \to 0$},
\end{align}
and $\mu(\Omega \cap \partial U^t) = 0$.
In particular, if $U \Subset \Omega$, then $U^t, (\overline{U}^t)^{\mathrm{c}} \in \mathcal O_{\mu}$,
and $\mu^{n-1}_{U^t}$ and $\mu^{n-1}_{(\overline{U}^t)^{\mathrm{c}}}$ are uniquely determined as $\mu_t$
for $\mathcal{L}^{1}$--\textit{a.e.}\,\,$t>0$.
\end{lemma}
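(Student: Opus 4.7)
The plan is to extract both one-sided convergences from the disintegration of Lemma \ref{lem:disint_tau_decomp} by Lebesgue differentiation, and then deduce the remaining claims. Starting from the decomposition
\[
\mu = \mathcal L^1 \res [0,\infty) \otimes_{\partial U^s} \mu_s + \tau_{\mathrm{sing}} \otimes_{\partial U^s} \tilde\mu_s,
\]
with $D_{\mathcal L^1}\tau_{\mathrm{sing}} = 0$ $\mathcal L^1$--a.e., and using that $\mu_s$ and $\tilde\mu_s$ are supported on $\partial U^s \subset U^t \setminus \overline{U^{t+\eps}}$ whenever $s \in (t,t+\eps)$, for any $\phi \in C_{\mathrm c}(\mathbb R^n)$ I obtain
\[
\int_{U^t \setminus \overline{U^{t+\eps}}} \phi\,\d\mu = \int_t^{t+\eps}\!\!\int \phi\,\d\mu_s\,\d s + \int_t^{t+\eps}\!\!\int\phi\,\d\tilde\mu_s\,\d\tau_{\mathrm{sing}}(s).
\]

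Next I pick a countable uniformly dense family $\{\phi_j\} \subset C_{\mathrm c}(\mathbb R^n)$. Since $s \mapsto \int \phi_j\,\d\mu_s$ is $\mathcal L^1$--measurable by Theorem \ref{prop:disintegration}, the Lebesgue Differentiation Theorem furnishes an $\mathcal L^1$--null set $N$ outside which, simultaneously for every $j$, $\tfrac{1}{\eps}\int_t^{t+\eps}\!\int \phi_j\,\d\mu_s\,\d s \to \int\phi_j\,\d\mu_t$ and $\tau_{\mathrm{sing}}((t,t+\eps))/\eps \to 0$. Since $\lvert \tilde\mu_s \rvert(\partial U^s) = 1$, the singular contribution is bounded by $\lVert\phi_j\rVert_\infty\,\tau_{\mathrm{sing}}((t,t+\eps))/\eps$, so a density argument (combined with the uniform total-variation bound established in the next paragraph) yields the first convergence; the second is obtained identically on $(t-\eps, t)$. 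For the vanishing on slices, $d_\#\mu$ is a Radon measure on $[0,\infty)$ with at most countably many atoms, so $\mu(\Omega \cap \partial U^t) \leq d_\#\mu(\{t\}) = 0$ for all but countably many $t$.

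The remaining claims reduce to the geometric identity $(U^t)^\eps = U^{t+\eps}$. The inclusion ``$\supset$'' follows from the 1-Lipschitz property of $d$: for $x \in U^{t+\eps}$ and $z \in \partial U^t$, $\lvert x-z\rvert \geq d(x) - d(z) > \eps$. For the reverse, given $x \in (U^t)^\eps$, choose $y \in \partial U$ realizing $d(x) = \lvert x - y\rvert$ and let $z \in \partial U^t$ be the first hitting point on the segment $[x,y]$; then $\lvert x-y\rvert = \lvert x-z\rvert + \lvert z-y\rvert > \eps + t$, since $\lvert z-y\rvert \geq d(z) = t$. Consequently $\mu(U^t \setminus \overline{(U^t)^\eps})/\eps = \tfrac1\eps \int_t^{t+\eps} T(s)\,\d s + \tau_{\mathrm{sing}}((t,t+\eps))/\eps \to T(t) < \infty$ for $\mathcal L^1$--a.e.\ $t$, so $U^t \in \mathcal O_\mu$ and the first convergence uniquely identifies $\mu_{U^t}^{n-1} = \mu_t$. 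For $V = (\overline{U}^t)^{\mathrm{c}}$, the main obstacle is that $V$ is unbounded and $\partial V$ does not coincide cleanly with $\{d = t\}$ away from $U$; using that $\mathrm{supp}(\mu) \subset \Omega$, a parallel check identifies $V \setminus \overline{V^\eps}$ with the inner shell $U^{t-\eps} \setminus \overline{U^t}$ via the radial nearest-point projection from $\{0 < d < t\}$ onto $\partial U^t$, after which the second convergence yields $\mu_V^{n-1} = \mu_t$.
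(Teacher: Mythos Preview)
Your argument is the paper's: disintegrate via Lemma~\ref{lem:disint_tau_decomp}, fix a countable uniformly dense family $\{\phi_j\}\subset C_{\mathrm c}(\mathbb R^n)$, apply the Lebesgue Differentiation Theorem to each $s\mapsto\int\phi_j\,\d\mu_s$ together with $D_{\mathcal L^1}\tau_{\mathrm{sing}}=0$, bound the singular term by $\lVert\phi_j\rVert_\infty\tau_{\mathrm{sing}}((t,t+\eps))/\eps$, and upgrade to all $\phi$ by density using the uniform total-variation bound. Your explicit verification of $(U^t)^\eps=U^{t+\eps}$ is a genuine addition; the paper uses this identity without comment.

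The one soft spot is the last sentence on $V=(\overline{U^t})^{\mathrm c}$. The ``radial nearest-point projection from $\{0<d<t\}$ onto $\partial U^t$'' is not a well-defined map: the nearest point $y\in\partial U$ to $x$ need not be unique, and even when it is, the ray from $y$ through $x$ need not reach $\partial U^t$ at all, since nothing forces $d$ to continue increasing past $x$; moreover $\mu$ is a general Radon measure and may charge the exceptional set. So this does not deliver the set identification you assert. The clean replacement is to observe that the distance function has no interior local minimum, whence $\partial V=\partial(\overline{U^t})=\partial U^t\subset\{d=t\}\cap U$, and then the same $1$-Lipschitz and segment reasoning you already wrote gives $(V\setminus\overline{V^\eps})\cap\Omega\subset U^{t-\eps}\setminus\overline{U^t}$ for $0<\eps<t$; this yields $V\in\widetilde{\mathcal O}_\mu$. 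The paper is no more explicit on the complement than you are---it simply records $\limsup_{\eps\to0}\tfrac1{2\eps}\mu(U^{t-\eps}\setminus\overline{U^{t+\eps}})<\infty$, asserts $(\overline{U^t})^{\mathrm c}\in\widetilde{\mathcal O}_\mu$, and says the identification of the limit is ``similar''---but you should not paper over the gap with an operation that does not exist.
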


\begin{proof}
We apply Lemma \ref{lem:disint_tau_decomp}, which asserts that, for any $\phi \in C_{\mathrm{c}}(\Omega)$ and any $0<t<s$,
  \begin{equation}
    \frac1{s-t} \int_{U^t \setminus \overline{U^{s}}} \phi\,\d \mu
    = \frac1{s-t} \int_t^{s} \int_{\partial U^r} \phi \,\d\mu_r \,\d r
    + \frac1{s-t}\int_{(t,s)} \int_{\partial U^r} \phi \,\d\tilde{\mu}_r \,\d \tau_{\mathrm{sing}}(r).
  \end{equation}
For $\eps > 0$, we apply the above with $s=t+\eps$.
For any countable dense subset $\{\phi_j\}$ of $C_{\mathrm{c}}(\Omega)$ (with respect to the uniform convergence),
consider the partial mappings:
  \begin{align}
    \Phi_j(t) &:= \int_{\partial U^t} \phi_j \,\d \mu_t
    \qquad\text{for each } j.\label{eq:partialmap_leb}
  \end{align}
Then
we choose $t>0$ to be a Lebesgue point for each $\Phi_j$ and to satisfy $D_{\mathcal L^1}(\tau_{\mathrm{sing}})(t) = 0$ (using Lemma \ref{elchiste}),
which forms a set whose complement is $\mathcal L^1$--null.
For such $t$, sending $\eps \to 0$ gives that, for each $j$,
\begin{equation}
\lim_{\eps \to 0} \frac1{\eps} \int_t^{t+\eps} \int_{\partial U^s} \phi_j \,\d \mu_s \,\d s = \Phi(t) = \int_{\partial U^t} \phi_j \,\d \mu_t,
\end{equation}
and
  \begin{equation}
    \limsup_{\eps \to 0} \frac1{\eps} \Big\lvert \int_{(t,t+\eps)} \int_{\partial U^t} \phi_j \,\d \tilde{\mu}_s\,\d\tau_{\mathrm{sing}}(s)\Big\rvert
     \leq \lVert \phi_j \rVert_{\Leb^{\infty}(\Omega)} \limsup_{\eps \to 0} \frac1{\eps} \, \lvert \tau_{\mathrm{sing}}((t,t+\eps))\rvert = 0.
  \end{equation}
Combining the above, we deduce
  \begin{equation}\label{eq:limit_denseclass}
    \lim_{\eps \to 0} \frac1{\eps} \int_{U^t \setminus \overline U^{t+\eps}} \phi_j\,\d \mu
    = \int_{\partial U^t} \phi_j \,\d\mu_t.
  \end{equation}
Applying the same to $(t,s) = (t-\eps,t)$ with $0<\eps<t$, we have
  \begin{equation}
      \lim_{\eps \to 0} \frac1{\eps} \int_{U^{t-\eps} \setminus \overline{U^t}} \phi_j \,\d \mu
      = \int_{\partial U^t} \phi_j \,\d \mu_t
     \qquad \mbox{ for the same $t$}.
  \end{equation}
Discarding a null set if necessary, we can moreover assume that
  \begin{equation}
    \limsup_{\eps \to 0}\frac1{2\eps} \, \mu(U^{t-\eps} \setminus \overline U^{t+\eps}) < \infty,
  \end{equation}
so, in particular, $U^t, (\overline{U}^t)^{\mathrm{c}} \in \widetilde{\mathcal O}_{\mu}$
and $\mu(\Omega \cap \partial U^t) = 0$.
Then, for any sequence $\eps_k \to 0$ giving rise to a limiting measure
\begin{equation}
\frac1{\eps} \, \mu \res U^t \setminus \overline U^{t+\eps_k} \xrightharpoonup{\,\, * \,\,} \nu,
\end{equation}
testing against $\phi_j$ and using \eqref{eq:limit_denseclass} yields
  \begin{equation}
    \int_{\Omega} \phi_j \,\d \nu = \int_{\partial U^t} \phi_j \,\d \mu_t \qquad \mbox{for any $j$.}
  \end{equation}
By density of $\{\phi_j\}$, it follows that $\nu = \mu^t \res \partial U^t$.
This uniquely determines the limit. We can similarly show that
\begin{equation}
\frac1{\eps} \mu \res U^{t-\eps} \setminus \overline{U^{t}}\xrightharpoonup{\,\, * \,\,} \mu^t,
\end{equation}
as required.
\end{proof}

\begin{remark}\label{rem:eta_bound_lp}
In the theory of $\Leb^p$--integrable fields, one may consider
$\mu = h \,\mathcal L^n$ with a non-negative function $h \in \Leb^p(\Omega)$.
In this case, the disintegration reads as
    \begin{equation}
        \mu = \mathcal L^1 \res [0,\infty) \otimes_{\partial U^t} \mu_t,
        \qquad \mu_t = \left.h\right\rvert_{\partial U^t} \mathcal H^{n-1} \res \partial U^t
    \end{equation}
by using the coarea formula, where a representative of $h$ is fixed.
In this case, using Lemma \ref{lem:disint_flux}, condition \ref{item:eta_bound} implies that
\begin{equation}
\lvert \mathcal F_{ U^t}(S) \rvert \leq \int_{S} h \,\d \mathcal H^{n-1}
\qquad\mbox{ for $\mathcal L^1$--\textit{a.e.}\,\,$t>0$ and any Borel set $S \subset \partial U^t$.}
\end{equation}
This is reminiscent of the flux bound appearing in
\cite{S3,degiovanni1999cauchy, Sch, ChenComiTorres},
even though the notions need not coincide for a general open set $U$.
\end{remark}

For the next result, we introduce the sets:
\begin{equation}\label{eq:tilde_omega_delta}
    \widetilde\Omega^{\delta} = B_{1/\delta} \cap \Omega^{\delta}
    = \Big\{ x \in \Omega\,:\,\dist(x,\partial\Omega)>\delta, \ \lvert x \rvert < \frac1{\delta} \Big\}
    \qquad\mbox{for each $\delta>0$.}
\end{equation}
Note that each $\widetilde\Omega^{\delta} \Subset \Omega$ and $\bigcup_{\delta>0} \widetilde\Omega^{\delta} = \Omega$.
Using Lemmas \ref{lem:sop_algebra} and \ref{lem:disint_flux},
we see that $\widetilde\Omega^{\delta} \in \widetilde{\mathcal O}_{\mu}$ for $\mathcal L^1$--\textit{a.e.}\,\,$\delta>0$.

\begin{lemma}\label{lem:flux_extension}
Suppose that $U \subset \mathbb R^n$ is open such that
  \begin{equation}\label{eq:generalopen_etalimit}
    \frac1{\eps} \, \mu \res (\Omega \cap (U \setminus \overline{U^{\eps}}))
   \xrightharpoonup{\,\, * \,\,}
    \mu^{n-1}_{U} \qquad\text{as } \eps\to0.
  \end{equation}
Then an extension of the Cauchy flux can be defined by
\begin{equation}\label{eq:flux_extension}
    \mathcal F_{U}(S) := \mathcal F_{U \cap \widetilde\Omega^{\delta}}(S)
\end{equation}
for $\mathcal L^1$--\textit{a.e.}\,\,$\delta>0$ such that $\widetilde\Omega^{\delta} \in \widetilde{\mathcal O}_{\mu}$
and for any Borel set $S \subset\partial U$ such that $S \subset \widetilde\Omega^{\delta} \cap \partial U$.
Moreover, this extends uniquely to a measure on $\Omega \cap \partial U$ satisfying
  \begin{equation}\label{eq:extension_eta_bound}
    \lvert \mathcal F_{U}(S)\rvert \leq \mu^{n-1}_{U}(S)
    \qquad\text{for any Borel set } S \subset \Omega \cap \partial U.
    \end{equation}
\end{lemma}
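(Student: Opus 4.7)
The plan is to piece together the finite-volume fluxes $\mathcal F_{U \cap \widetilde\Omega^{\delta}}$, which already make sense as signed Radon measures by the established theory, use the localization axiom to verify independence of the cut-off parameter $\delta$, combine the upper-bound axiom \ref{item:eta_bound} with Lemma \ref{lem:eta_intersection} to bound each piece by $\mu^{n-1}_{U}$, and finally pass to the limit $\delta \to 0$.

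\textbf{Well-posedness of the finite-volume pieces.} First I would check that $U \cap \widetilde\Omega^{\delta} \in \mathcal O_{\mu}$ whenever $\widetilde\Omega^{\delta} \in \widetilde{\mathcal O}_{\mu}$, so that $\mathcal F_{U \cap \widetilde\Omega^{\delta}}$ is a finite Radon measure on $\partial(U \cap \widetilde\Omega^{\delta})$ by Lemma \ref{lem:flux_measure}. The inclusion
\[
(U \cap \widetilde\Omega^{\delta}) \setminus \overline{(U \cap \widetilde\Omega^{\delta})^{\eps}} \subset (U \setminus \overline{U^{\eps}}) \cup (\widetilde\Omega^{\delta} \setminus \overline{(\widetilde\Omega^{\delta})^{\eps}})
\]
already recorded in the proof of Lemma \ref{lem:sop_algebra} reduces this to bounding $\frac1{\eps}\,\mu(\Omega \cap U \setminus \overline{U^{\eps}})$ near $\widetilde\Omega^{\delta}$; the weak${}^*$--convergence \eqref{eq:generalopen_etalimit} supplies this bound tested against a cutoff supported in a slightly larger $\widetilde\Omega^{\delta_0} \Supset \widetilde\Omega^{\delta}$, and the $\widetilde{\mathcal O}_{\mu}$--condition handles the other piece.

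\textbf{Independence of $\delta$.} Given admissible $\delta < \delta^*$ one has $\widetilde\Omega^{\delta^*} \subset \widetilde\Omega^{\delta}$. Taking $A = \widetilde\Omega^{\delta^*}$ as the open set in the localization axiom \ref{item:localisation} gives
\[
A \cap (U \cap \widetilde\Omega^{\delta}) = A \cap U = A \cap (U \cap \widetilde\Omega^{\delta^*}),
\]
so $\mathcal F_{U \cap \widetilde\Omega^{\delta}}$ and $\mathcal F_{U \cap \widetilde\Omega^{\delta^*}}$ coincide as measures on $A \cap \partial U$. Restricting to any Borel $S \subset \widetilde\Omega^{\delta^*} \cap \partial U$ yields independence. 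Applied with $S \subset \widetilde\Omega^{\delta}\cap \partial U$ for the original $\delta$, and any smaller admissible $\delta^*$ with $\widetilde\Omega^{\delta^*} \supset S$, this shows the definition \eqref{eq:flux_extension} is unambiguous.

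\textbf{Upper bound against $\mu^{n-1}_U$.} For Borel $S \subset \widetilde\Omega^{\delta} \cap \partial U$, I would apply Lemma \ref{lem:eta_intersection}\ref{item:upperbound_intersection} with $U_1 = U$ and $U_2 = \widetilde\Omega^{\delta}$, choosing a common subsequence $\eps_k \to 0$ that realizes \eqref{eq:generalopen_etalimit} as well as subsequential weak${}^*$--limits $\mu^{n-1}_{\widetilde\Omega^{\delta}}$ and $\mu^{n-1}_{U \cap \widetilde\Omega^{\delta}}$ (possible because both sets are in $\widetilde{\mathcal O}_{\mu}$ and $\mathcal O_{\mu}$ respectively, giving uniformly bounded mass on compacta). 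Since $S \subset \widetilde\Omega^{\delta}$ is disjoint from $\partial \widetilde\Omega^{\delta}$, only the $U$--term of the bound survives, yielding $\mu^{n-1}_{U \cap \widetilde\Omega^{\delta}}(S) \leq \mu^{n-1}_{U}(S)$. Combined with the flux bound \ref{item:eta_bound} applied to $U \cap \widetilde\Omega^{\delta}$ for this chosen limit, we obtain
\[
\lvert \mathcal F_{U \cap \widetilde\Omega^{\delta}}(S)\rvert \leq \mu^{n-1}_{U}(S).
\]

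\textbf{Extension and $\sigma$--additivity.} Finally, for Borel $S \subset \Omega \cap \partial U$, set
\[
\mathcal F_{U}(S) := \lim_{\delta \to 0} \mathcal F_{U \cap \widetilde\Omega^{\delta}}(S \cap \widetilde\Omega^{\delta}),
\]
with $\delta$ ranging over admissible values. The uniform bound from the previous step controls the increments between admissible $\delta$'s by $\mu^{n-1}_{U}$ restricted to the nested shells $\widetilde\Omega^{\delta} \setminus \widetilde\Omega^{\delta^*}$ which shrink to $\mathbb R^n \setminus \Omega$; since $\mu^{n-1}_{U}$ is a locally finite Radon measure on $\Omega$ supported in $\partial U$, the limit exists (finite whenever $\mu^{n-1}_{U}(S) < \infty$). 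Combining the $\sigma$--additivity of each $\mathcal F_{U \cap \widetilde\Omega^{\delta}}$ with monotone convergence under the majorant $\mu^{n-1}_{U}$ propagates $\sigma$--additivity to $\mathcal F_{U}$, yielding a signed Radon measure on $\Omega \cap \partial U$ whose total variation satisfies \eqref{eq:extension_eta_bound}. Uniqueness is automatic since any measure extending \eqref{eq:flux_extension} is forced to agree on each $\widetilde\Omega^{\delta}$.

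\textbf{Main obstacle.} The principal technical difficulty is that the limit measures $\mu^{n-1}_{\widetilde\Omega^{\delta}}$ and $\mu^{n-1}_{U \cap \widetilde\Omega^{\delta}}$ are only defined subsequentially, so one must carefully extract a common weak${}^*$--convergent subsequence along which the bound of Lemma \ref{lem:eta_intersection}\ref{item:upperbound_intersection} applies \emph{simultaneously} with axiom \ref{item:eta_bound} for the same limiting measure; one must also confirm that $\partial \widetilde\Omega^{\delta}$ carries no mass for the chosen limits, so that the contribution from the artificial boundary can be discarded and the bound is genuinely controlled by $\mu^{n-1}_{U}$ alone.
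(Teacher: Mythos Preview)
Your proposal is correct and follows essentially the same route as the paper: establish $U\cap\widetilde\Omega^{\delta}\in\mathcal O_{\mu}$ via Lemma~\ref{lem:sop_algebra}, use the localization axiom for independence of $\delta$, bound by $\mu^{n-1}_U$ via Lemma~\ref{lem:eta_intersection}, and pass to the limit in $\delta$. Two minor remarks: the paper splits $\widetilde\Omega^{\delta}=\Omega^{\delta}\cap B_{1/\delta}$ and applies Lemma~\ref{lem:eta_intersection} with three sets (invoking Lemma~\ref{lem:disint_flux} to secure full-limit convergence for $\Omega^{\delta}$ and $B_{1/\delta}$), whereas your two-set version with a common subsequence is slightly more direct; and your ``main obstacle'' about mass on $\partial\widetilde\Omega^{\delta}$ is not really an obstacle, since $S\subset\widetilde\Omega^{\delta}$ already forces $S\cap\partial\widetilde\Omega^{\delta}=\varnothing$, so the artificial-boundary term in Lemma~\ref{lem:eta_intersection}\ref{item:upperbound_intersection} vanishes trivially.
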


\begin{proof}
First, the weak${}^{\ast}$--convergence in \eqref{eq:generalopen_etalimit} implies that
  \begin{equation}
    \limsup_{\eps \to 0} \frac1{\eps} \, \mu(\Omega \cap (U \setminus \overline{U^{\eps}})) < \infty.
  \end{equation}
By Lemma \ref{lem:sop_algebra} applied in $\mathbb R^n$ via extending $\mu$ by zero outside $\Omega$,
if $\delta>0$ such that $\widetilde\Omega^{\delta} \in \widetilde{\mathcal O}_{\mu}$,
then $\widetilde\Omega^{\delta} \cap U \in \widetilde{\mathcal O}$.
By property \ref{item:localisation}, we see that
\begin{equation}
\mathcal F_{U \cap \widetilde\Omega^{\delta_1}}(A \cap \partial U)
= \mathcal F_{U \cap \widetilde\Omega^{\delta_2}}(A \cap \partial U)
\end{equation}
for any $\delta_1$ and $\delta_2$ such that $0 < \delta_1 < \delta_2$, as above,
and $A \subset \widetilde\Omega_{\delta_2}$ open.
Since the flux is a measure on $\mathcal O_{\mu}$ by Lemma \ref{lem:flux_measure},
by approximation (Lemma \ref{lem:measure_coincidence}), the above extends to general Borel sets $S \subset \widetilde\Omega^{\delta_2} \cap \partial U$.
Hence, \eqref{eq:flux_extension} is well-defined.
To show that it extends to a measure, using Lemma \ref{lem:disint_flux},
let $\delta>0$ such that
  \begin{equation}\label{eq:omega_delta_limit}
  \frac1{\eps} \, \mu \res (\Omega^{\delta} \setminus \overline{\Omega^{\delta+\eps}})
  \xrightharpoonup{\,\, * \,\,}
   \mu^{n-1}_{\Omega^{\delta}},
  \quad \frac1{\eps} \, \mu \res (B_{1/\delta} \setminus \overline{B_{1/\delta}^\eps})
  \xrightharpoonup{\,\, * \,\,}
   \mu^{n-1}_{B_{1/\delta}}.
  \end{equation}
Then, since $U \cap \widetilde\Omega^{\delta} \in \mathcal O_{\mu}$ by Lemma \ref{lem:eta_intersection}, we can estimate
  \begin{equation}
    \mu^{n-1}_{U \cap \widetilde\Omega^{\delta}}
    \leq \mu^{n-1}_{U} \res \overline{ \widetilde\Omega^{\delta}}
    + (\mu^{n-1}_{\Omega^{\delta}} + \mu^{n-1}_{B^{1/\delta}})\res (\overline{ \widetilde\Omega^{\delta}}\cap  \overline{U})
  \end{equation}
  as measures.
Then, if $S \subset \Omega \cap \partial U$ such that $S \Subset \widetilde\Omega^{\delta}$, we obtain the claimed bound
  \begin{equation}
    \lvert \mathcal F_{U}(S) \rvert \leq \mu^{n-1}_{U}(S).
  \end{equation}
Since this estimate is independent of $\delta>0$, we claim that
$\mathcal F_U$ extends uniquely to a measure on $\Omega \cap \partial U$.
Indeed, take $\delta_j \searrow 0$ such that each $\widetilde\Omega^{\delta_j} \in \widetilde{\mathcal O}_{\mu}$
and, given $S \subset \Omega \cap \partial U$, define $S_1 = S \cap \widetilde\Omega^{\delta_1}$ and
$S_j = S \cap (\widetilde\Omega^{\delta_j} \setminus \overline{\widetilde\Omega^{\delta_{j-1}}})$ for $j \geq 2$.
Then, by additivity, we have
  \begin{equation}
      \sum_{j=1}^N \mathcal F_U(S_j) = \mathcal F_U( \bigcup_{j=1}^N S_j),
  \end{equation}
  which is uniformly bounded in $N$ by noting that
  \begin{equation}
      \sum_{j=1}^{\infty}\lvert \mathcal F_U(S_j)\rvert \leq \mu_U^{n-1}(S) < \infty.
  \end{equation}
 Thus, there is a unique limit
  \begin{equation}
      \mathcal F_U(S) := \sum_{j=1}^{\infty} \mathcal F_{U}(S_j),
  \end{equation}
which also satisfies \eqref{eq:extension_eta_bound}.
Arguing analogously with disjoint sets $S_1, S_2\subset \partial U$ and considering
$S_{i,j} = S_i \cap (\widetilde\Omega^{\delta_j} \setminus \overline{\widetilde\Omega^{\delta_{j-1}}})$,
we see that this extension is also additive.
Since this extension is additive and satisfies \eqref{eq:extension_eta_bound},
  arguing as in the proof of Lemma \ref{lem:flux_measure}, we conclude that
  this extension $\mathcal F_U$ is a measure on $\Omega \cap \partial U$.
\end{proof}

\begin{remark}\label{rem:flux_complement}
In the case that $U = \overline{V}^{\mathrm{c}} = \mathbb R^n \setminus {\overline V}$ for some $V \Subset \Omega$,
it suffices to assume that
    \begin{equation}
        \liminf_{\eps \to 0} \frac1{\eps} \,\mu(\Omega \cap (V^{-\eps} \setminus \overline{V})) < \infty,
    \end{equation}
which is written as $\overline{V}^{\mathrm{c}} \in \mathcal O_{\mu}$.
In this case, we can set
 \begin{equation}\label{eq:flux_complement}
 \mathcal F_{\overline{V}^{\mathrm{c}}} := \mathcal F_{\widetilde\Omega^{\delta} \setminus \overline{V}} \res \tilde\Omega^{\delta}
\end{equation}
for any $\delta>0$ such that $\delta < \dist(V,\partial\Omega)$, $V \subset B_{1/\delta}$,
and $\widetilde\Omega^{\delta} \in \widetilde{\mathcal O}_{\mu}$.
Indeed, by Lemma \ref{lem:sop_algebra} (applied in $\mathbb R^n$ extending $\mu$ by zero),
$\widetilde\Omega^{\delta} \setminus \overline V \in \mathcal O_{\mu}$, so it follows from
Lemma \ref{lem:flux_measure} that $\mathcal F_{\widetilde\Omega^{\delta} \setminus \overline V}$ is a measure.
By the localization property \ref{item:localisation}, its restriction to $\partial V$ is independent of the choice of $\delta>0$,
so \eqref{eq:flux_complement} is well-defined.
Also, for any such $\delta$, we can set
$\mu_{\overline{V}^{\mathrm{c}}}^{n-1} := \mu_{\widetilde\Omega^{\delta} \setminus \overline{V}}^{n-1} \res \partial V$,
which satisfies
    \begin{equation}
        \lvert \mathcal F_{\overline{V}^{\mathrm{c}}}(S) \rvert \leq \mu_{\overline{V}^{\mathrm{c}}}^{n-1}(S)
        \qquad\text{for any Borel set } S \subset \partial V.
          \end{equation}
\end{remark}

\subsection{ Properties of the Cauchy flux on cubes and half-spaces}\label{sec:flux_property_cubes}
Our proof of Theorem \ref{maintheorem} relies on considering the Cauchy flux $\mathcal F$
on the boundaries of half-spaces and cubes, and integrating along suitable hyperplanes to construct the desired field $\FF$.
Since a general half-space $H = \{ x \in \mathbb R^n\, :\, a \cdot x > t\}$ need not be contained in $\Omega$,
we use Lemma \ref{lem:flux_extension} to consider the flux on $\Omega \cap \partial H$ for {\it almost all} $H$.
In doing so, it is useful to understand the behavior of the flux on cubes and, in particular, write $\mathcal F_{Q}(\cdot)$
as a sum of fluxes going through the respective faces.
It turns out that this is not possible for arbitrary cubes,
since the vector field may potentially concentrate
at the corners, as illustrated for the normal traces
in Example \ref{eq:localisation_counterexample}.

However, localization to faces is possible for {\it almost all} cubes, which we will make precise below.
For this, it is useful to introduce some notation.
For simplicity, we consider only cubes and hyperplanes subordinate to the coordinate axes $x_1, \cdots, x_n$,
even though more general frames in $\mathbb R^n$ can also be considered
as done in \cite{degiovanni1999cauchy} with minimal modifications.

\begin{definition}
For a cube $Q = Q(a,b) := (a_1,b_1) \times \cdots \times (a_n,b_n)$, define the
$2$-skeleton of $Q$ as
\begin{equation}
\partial^2Q = \big\{ x \in \overline Q\,:\,x_i \in \{a_i,b_i\}, x_j \in \{ a_j,b_j\} \text{ for some } i \neq j \big\}.
\end{equation}
Note that, in two dimensions, $\partial^2Q$ consists of the corner points,
whereas it is the union of all of the edges of $Q$ $($including the vertices$)$ in three dimensions.

Additionally, for $1 \leq i \leq n$, define the half-spaces
  \begin{equation}
    H_{i, +}^t = \big\{ x \in \mathbb R^n\,:\,x_i > t\big\},
    \quad H_{i, -}^t = \big\{ x \in \mathbb R^n\,:\,x_i < t\big\}.
  \end{equation}
Note that $(H_{i,+}^t)^{\eps} = H_{i,+}^{t+\eps}$ and $(H_{i,-}^t)^{\eps} = H_{i,-}^{t-\eps}$ by recalling definition \eqref{adentro}.
In particular, $\partial H_{i,+}^{a_i}$ and $\partial H_{i,-}^{b_i}$ contain the faces of $Q$ for each $1 \leq i \leq n$.
\end{definition}

We also define suitable null sets to make precise our notions of {\it almost all} cubes and half-spaces.

\begin{definition}\label{defn:good_hyperplanes}
  Given the Radon measures $\mu$ and $\sigma$ from {\rm Definition \ref{DefinitionCauchyFlux}},
  for $1 \leq i \leq n$,  define $\mathcal N_{i,\mu} \subset \mathbb R$
  to be the complement of the set of points $t \in \mathbb R$ for which
  $\mu(\partial H_{i,+}^t )=0$, and  the limits{\rm :}
\begin{equation}
\frac1{\eps}\,\mu \res (H_{i,+}^{t} \setminus \overline{H_{i,+}^{t+\eps})}
\xrightharpoonup{\,\, * \,\,}
\mu_{H_{i,+}^{t}}^{n-1}
\quad\mbox{and}\quad \frac1{\eps}\,\mu \res (H_{i,-}^{t-\eps} \setminus \overline{H_{i,-}^{t})}
\xrightharpoonup{\,\, * \,\,}
\mu_{H_{i,-}^{t}}^{n-1}
\end{equation}
exist as $\eps \to 0$ with $\mu_{H_{i,+}^{t}}^{n-1} = \mu_{H_{i,-}^{t}}^{n-1}$,
where $\mu$  is viewed as a measure on $\mathbb R^n$ by setting $\lvert \mu\rvert(\mathbb R^n \setminus \Omega) = 0$,
and the set $\mathcal N_{i,\mu} \subset \mathbb R$ is
$\mathcal L^1$-null by {\rm Lemma \ref{lem:disint_flux}}.
Furthermore, define $\mathcal M_{i,\sigma} \subset \mathbb R$
as the complement of the set of points $t \in \mathbb R$ such that
  \begin{equation}
    \lvert\sigma\rvert( \partial H_{i,+}^t) = 0,
  \end{equation}
  which is at most countable and hence $\mathcal L^1$-null.
\end{definition}

Observe that Lemma \ref{lem:flux_extension} applies to the half-spaces $H_{i,\pm}^t$, provided
that $t \notin \mathcal N_{i,\mu}$, allowing us to make sense of $\mathcal{F}_{ H_{i,\pm}^t}$,
which will frequently be used in what follows.

\begin{lemma}\label{eq:cube_eta_bound}
  Let $Q = Q(a,b) \Subset \Omega$ be a cube such that $a_i,b_i \notin \mathcal N_{i,\mu}$ and
    \begin{equation}\label{eq:corner_small}
    \mu_{H_{i,+}^{a_i}}^{n-1}(\partial^2Q) = \mu_{H_{i,-}^{b_i}}^{n-1}(\partial^2Q) = 0 \qquad
    \mbox{for any $1 \leq i \leq n$}.
  \end{equation}
 Then $Q, \overline Q^{\mathrm{c}} \in \widetilde{\mathcal O}_{\mu}$,
 and there is a unique measure $\mu_Q^{n-1}$ such that
  \begin{equation}\label{eq:cube_eta_unique}
    \frac1{\eps} \,\mu(Q \setminus \overline{Q^{\eps}})
    \xrightharpoonup{\,\, * \,\,}
     \mu_Q^{n-1}, \quad
    \frac1{\eps} \,\mu(Q^{-\eps} \setminus \overline{Q})
    \xrightharpoonup{\,\, * \,\,}
     \mu_Q^{n-1}\qquad\,\, \mbox{  as $\eps \to 0$}.
  \end{equation}
Furthermore,  for each $1 \leq i \leq n$,
  \begin{align}
    \mu^{n-1}_Q \res (\partial H_{i,+}^{a_i} \cap \partial Q)
      &= \mu^{n-1}_{H_{i,+}^{a_i}} \res(\partial H_{i,+}^{a_i} \cap \partial Q), \\
    \mu^{n-1}_Q \res (\partial H_{i,-}^{b_i} \cap \partial Q)
      &= \mu^{n-1}_{H_{i,-}^{b_i}} \res(\partial H_{i,-}^{b_i} \cap \partial Q),
  \end{align}
which uniquely determines $\mu_Q^{n-1}$.
  In particular, $\mu_Q^{n-1}(\partial^2Q) = \mu_{\,\overline Q^{\mathrm c}}^{n-1}(\partial^2Q) = 0$.
\end{lemma}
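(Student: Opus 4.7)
The plan is to exploit that $Q$ can be written as the intersection of the $2n$ half-spaces $H_{i,+}^{a_i}$ and $H_{i,-}^{b_i}$, and that, near the relative interior of each face, the approximating slabs for $Q$ and for the corresponding half-space coincide pointwise.

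First, I would verify that $Q,\,\overline{Q}^{\mathrm c} \in \widetilde{\mathcal O}_{\mu}$. The elementary inclusion
\begin{equation}
    Q \setminus \overline{Q^{\eps}} \subset \bigcup_{i=1}^{n}\Big[\big(H_{i,+}^{a_i}\setminus \overline{H_{i,+}^{a_i+\eps}}\big) \cup \big(H_{i,-}^{b_i}\setminus \overline{H_{i,-}^{b_i-\eps}}\big)\Big]
\end{equation}
(and its analogue with $Q^{-\eps}\setminus\overline{Q}$ on the left), combined with the assumption $a_i, b_i \notin \mathcal N_{i,\mu}$, which makes each half-space lie in $\widetilde{\mathcal O}_{\mu}$ by Lemma \ref{lem:disint_flux}, yields a finite $\limsup$. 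Hence, by weak${}^{\ast}$--compactness, any sequence $\eps_k \to 0$ has a subsequence along which $\frac{1}{\eps_k}\mu \res (Q\setminus \overline{Q^{\eps_k}}) \weakstarto \nu$ for some non-negative measure $\nu$ supported on $\partial Q$.

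To identify $\nu$, I decompose $\partial Q = \partial^2 Q \cup \bigsqcup_{i,\pm} F_{i,\pm}^{o}$ into the $2$-skeleton and the relatively open faces. The main localization step is: for any $\phi \in C_{\mathrm c}(\Omega)$ supported in an open set $A$ such that $A \cap \partial Q \subset F_{i,+}^{o}$, there is $\delta>0$ with
\begin{equation}
    A \cap (Q\setminus \overline{Q^{\eps}}) = A \cap (H_{i,+}^{a_i}\setminus \overline{H_{i,+}^{a_i+\eps}}) \qquad\text{for all } 0<\eps<\delta,
\end{equation}
because near the relative interior of that face, both distance functions $d_Q(x)$ and $d_{H_{i,+}^{a_i}}(x)$ coincide with $x_i - a_i$. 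Passing to the limit as $\eps_k \to 0$ in the resulting equality of integrals, I obtain $\int \phi \,\d\nu = \int \phi \,\d\mu_{H_{i,+}^{a_i}}^{n-1}$, and analogously on the other open faces. Covering each $F_{i,\pm}^{o}$ by such $\phi$ gives $\nu \res F_{i,\pm}^{o} = \mu_{H_{i,\pm}^{\cdot}}^{n-1}\res F_{i,\pm}^{o}$.

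The main obstacle is controlling $\nu$ on the $2$-skeleton $\partial^2 Q$, which I address by exploiting monotonicity of weak${}^{\ast}$--limits for non-negative measures. Integrating the inclusion displayed above against non-negative $\phi \in C_{\mathrm c}(\Omega)$ and passing to the limit (using that the full, not subsequential, limit of each half-space slab exists by $a_i, b_i \notin \mathcal N_{i,\mu}$) yields
\begin{equation}
    \nu \leq \sum_{i=1}^{n}\big( \mu_{H_{i,+}^{a_i}}^{n-1} + \mu_{H_{i,-}^{b_i}}^{n-1} \big)\qquad \text{as measures on } \Omega.
\end{equation}
Evaluating at $\partial^2 Q$ and invoking hypothesis \eqref{eq:corner_small} gives $\nu(\partial^2Q)=0$. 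Combined with the face-by-face identification, this determines
\begin{equation}
    \nu = \sum_{i=1}^{n}\big(\mu_{H_{i,+}^{a_i}}^{n-1}\res F_{i,+}^{o} + \mu_{H_{i,-}^{b_i}}^{n-1}\res F_{i,-}^{o}\big)
\end{equation}
independently of the subsequence, establishing existence and uniqueness of $\mu_Q^{n-1}$ and the stated face identification. The exterior approximation is treated by the same argument, noting that near $F_{i,+}^{o}$ the exterior slab of $Q$ coincides with $H_{i,-}^{a_i}\setminus \overline{H_{i,-}^{a_i-\eps}}$, and then using the equality $\mu_{H_{i,+}^{a_i}}^{n-1}=\mu_{H_{i,-}^{a_i}}^{n-1}$ built into Definition \ref{defn:good_hyperplanes}; this simultaneously yields $\mu_{\overline Q^{\mathrm c}}^{n-1}=\mu_Q^{n-1}$ and hence the vanishing of both on $\partial^2 Q$.
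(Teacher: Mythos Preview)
Your proposal is correct and follows essentially the same approach as the paper: you unpack directly the inclusion argument that the paper packages into Lemmas~\ref{lem:sop_algebra} and~\ref{lem:eta_intersection}, use it both to get $Q,\overline Q^{\mathrm c}\in\widetilde{\mathcal O}_\mu$ and to bound any subsequential limit on $\partial^2Q$, and then identify the limit face-by-face via the local coincidence of slabs, exactly as the paper does. The only cosmetic difference is that the paper tests against $\phi\in C_{\mathrm c}(\Omega\setminus\partial^2Q)$ and decomposes all faces at once, whereas you localize to a single face at a time; both yield the same identification, and the extension from open to closed faces follows since both sides vanish on $\partial^2Q$.
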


\begin{proof}
Viewing $\mu$ as a measure on $\mathbb R^n$, we see that
each $H_{i,+}^{a_i}, H_{i,-}^{b_i} \in \widetilde{\mathcal O}_{\mu}$.
Then it follows from Lemma \ref{lem:sop_algebra}
that $Q \in  \widetilde{\mathcal O}_{\mu}$.
Now, for any $\eps_k \searrow 0$, there exists a limit measure $\lambda$ such that
  \begin{equation}
    \frac1{\eps_k} \, \mu \res (Q \setminus \overline{Q^{\eps_k}}) \xrightharpoonup{\,\, * \,\,}
       \lambda
    \qquad\,\,\text{as } k \to \infty.
  \end{equation}
We now show this limit measure is uniquely determined,
from which the full convergence \eqref{eq:cube_eta_unique} follows.

Since $Q = \bigcap_{i=1}^n (H_{i,+}^{a_i} \cap H_{i,-}^{b_i})$
by using Lemma \ref{lem:eta_intersection}\ref{item:upperbound_intersection} together
with \eqref{eq:corner_small}, we have
\begin{equation}\label{eq:cube_corner_small}
\lambda(\partial^2 Q) = 0.
\end{equation}
Moreover, if $\phi \in C_{\mathrm{c}}(\Omega \setminus \partial^2Q)$,
then $\phi$ is supported in some open subset $A \Subset \Omega \setminus \partial^2Q$.
Choosing $\eps>0$ such that $\eps < \dist(A,\partial^2Q)$ and $\eps < \frac12(b_j - a_j)$ for each $1 \leq j \leq n$, we have
\begin{equation}
A \cap (Q \setminus \overline{Q^{\eps}})
= \bigcup_{j=1}^n \Big(A \cap (H_{j,+}^{a_j} \setminus \overline{H_{j,+}^{a_j+\eps}})\Big)
\cup \bigcup_{j=1}^n \Big(A \cap (H_{j,-}^{b_j} \setminus \overline{H_{j,-}^{b_j-\eps}})\Big),
\end{equation}
and the sets of the right-hand side are disjoint.
Hence, integrating over $\phi$ with respect to $\frac1{\eps} \,\mu$
and sending $\eps_k \searrow 0$ give
\begin{equation}
\int_{\partial Q} \phi \,\d \lambda
= \sum_{j=1}^n \int_{\partial H_{j,+}^{a_j}} \phi \,\d\mu_{H_{j,+}^{a_j}}^{n-1}
  +\sum_{j=1}^n \int_{\partial H_{j,-}^{b_j}} \phi \,\d\mu_{H_{j,-}^{b_j}}^{n-1}.
\end{equation}
This, combined with \eqref{eq:cube_corner_small}, uniquely determines
$\lambda = \mu_{ Q}^{n-1}$.
Similarly, for the complement, we obtain that
$\overline Q^{\mathrm{c}} \in \widetilde{\mathcal O}_{\mu}$
by using Lemma \ref{lem:sop_algebra} and noting that it is the union
of $H_{i,-}^{a_i}$ and $H_{i,+}^{b_i}$ for each $1 \leq i \leq n$.
Then, if $\eps_k \searrow 0$ for which
\begin{equation}
\frac1{\eps_k} \mu \res (Q^{-\eps_k} \setminus \overline{Q}) \xrightharpoonup{\,\, * \,\,}
\tilde\lambda,
\end{equation}
it follows from Lemma \ref{lem:eta_intersection}(b) that $\tilde\lambda(\partial^2Q) = 0$.
Moreover, we argue that,  for each $A \Subset \Omega \setminus \partial^2Q$,
\begin{equation}
\tilde\lambda \res  A
      = \sum_{j=1}^n \Big( \mu_{\partial H_{j,-}^{a_j}}^{n-1} \res (A \cap \partial Q)
      +  \mu_{\partial H_{j,+}^{b_j}}^{n-1} \res (A \cap \partial Q) \Big),
\end{equation}
from which we infer that $\tilde\lambda = \mu^{n-1}_Q$,
since $\mu_{\partial H_{j,-}^{a_j}}^{n-1} = \mu_{\partial H_{j,+}^{a_j}}^{n-1}$ and $\mu_{\partial H_{j,+}^{b_j}}^{n-1} = \mu_{\partial H_{j,-}^{b_j}}^{n-1}$ for each $1 \leq j \leq n$.
\end{proof}

\begin{lemma}\label{lem:cube_localisation}
Let $Q = Q(a,b) \Subset \Omega$ be a cube such that $a_i,b_i \notin \mathcal N_{i,\mu}$ for each $1 \leq i \leq n$ and that \eqref{eq:corner_small} holds.
Then, for any Borel set $S \subset \partial Q$,
\begin{align}
&\mathcal F_{Q}(S)
= \sum_{i=1}^n \Big( \mathcal F_{H_{i,+}^{a_i}}(S \cap \partial H_{i,+}^{a_i})
+ \mathcal F_{H_{i,-}^{b_i}}(S \cap \partial H_{i,-}^{b_i}) \Big),\label{eq:cube_faces}\\
&\mathcal F_{\,\overline Q^{\mathrm{c}}}(S)
 = \sum_{i=1}^n \Big( \mathcal F_{H_{i,-}^{a_i}}(S \cap \partial H_{i,-}^{a_i})
 + \mathcal F_{H_{i,+}^{b_i}}(S \cap \partial H_{i,+}^{b_i})\Big).\label{eq:cube_faces_complement}
\end{align}
\end{lemma}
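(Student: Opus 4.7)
The plan is to exploit the localization property \ref{item:localisation} face by face, identifying $\mathcal F_Q$ with the appropriate half-space flux on each relative face interior and discarding the $2$-skeleton $\partial^2 Q$ via \ref{item:eta_bound} together with \eqref{eq:corner_small}. The companion formula \eqref{eq:cube_faces_complement} then follows by the same argument after noting that $\partial(\overline Q^{\mathrm c}) = \partial Q$ and that the inward half-spaces are simply replaced by their outward counterparts.

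I would first record that Lemma \ref{eq:cube_eta_bound} places $Q, \overline Q^{\mathrm c} \in \widetilde{\mathcal O}_{\mu}$, so Lemmas \ref{lem:flux_measure} and \ref{lem:flux_extension} together with Remark \ref{rem:flux_complement} make $\mathcal F_Q$, $\mathcal F_{\overline Q^{\mathrm c}}$, and each $\mathcal F_{H_{i,\pm}^{a_i/b_i}}$ into Radon measures dominated by the corresponding $\mu^{n-1}_{(\cdot)}$. Combining $\lvert \mathcal F_Q \rvert \le \mu_Q^{n-1}$ with $\mu_Q^{n-1}(\partial^2 Q) = 0$ from Lemma \ref{eq:cube_eta_bound} annihilates the $2$-skeleton contribution of $\mathcal F_Q$, while hypothesis \eqref{eq:corner_small} analogously kills that of each half-space flux. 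From this point on I may assume $S$ does not meet $\partial^2 Q$.

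Next, for each $1 \le i \le n$ I introduce the open slabs
\begin{equation*}
A_{i,+} = \Big\{ x \in \mathbb R^n : a_j < x_j < b_j \text{ for all } j \ne i \text{ and } x_i < \tfrac{a_i+b_i}{2}\Big\},
\end{equation*}
together with the symmetric $A_{i,-}$ near the opposite face $\{x_i=b_i\}$. Unwrapping the definitions one checks that $A_{i,+} \cap Q = A_{i,+} \cap H_{i,+}^{a_i}$ and $A_{i,+} \cap \overline Q^{\mathrm c} = A_{i,+} \cap H_{i,-}^{a_i}$, while $A_{i,+} \cap \partial Q = A_{i,+} \cap \partial H_{i,+}^{a_i}$ is exactly the relative interior of the face $\{x_i=a_i\} \cap \partial Q$. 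Applying \ref{item:localisation} to every open $B \subset A_{i,+}$ and then Lemma \ref{lem:measure_coincidence} upgrades these set identities to the equalities of measures
\begin{equation*}
\mathcal F_Q \res (A_{i,+} \cap \partial Q) = \mathcal F_{H_{i,+}^{a_i}} \res (A_{i,+} \cap \partial Q),\qquad \mathcal F_{\overline Q^{\mathrm c}} \res (A_{i,+} \cap \partial Q) = \mathcal F_{H_{i,-}^{a_i}} \res (A_{i,+} \cap \partial Q),
\end{equation*}
with analogous identities on $A_{i,-} \cap \partial Q$ involving $H_{i,-}^{b_i}$ and $H_{i,+}^{b_i}$. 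Since $\partial Q$ decomposes as the finite disjoint union of $\partial^2 Q$ and the $2n$ relative face interiors $A_{i,\pm} \cap \partial Q$, finite additivity of the flux combined with the vanishing from the previous step yields both \eqref{eq:cube_faces} and \eqref{eq:cube_faces_complement}.

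The main obstacle I expect is the bookkeeping surrounding the fact that $H_{i,\pm}^{t}$ is not compactly contained in $\Omega$; strictly speaking, one must truncate to $H_{i,\pm}^{t} \cap \widetilde\Omega^{\delta}$ with $\delta$ small enough that $Q \Subset \widetilde\Omega^{\delta} \in \widetilde{\mathcal O}_{\mu}$ before invoking \ref{item:localisation}. However, each $A_{i,\pm} \cap H_{i,\pm}^{a_i/b_i}$ is a bounded subset of $Q \subset \widetilde\Omega^{\delta}$, so the truncation is invisible to the slab identities above and the extension furnished by Lemma \ref{lem:flux_extension} is legitimately compatible with the localization property.
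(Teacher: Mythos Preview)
Your proof is correct and follows essentially the same strategy as the paper: establish that $\mathcal F_Q$, $\mathcal F_{\overline Q^{\mathrm c}}$, and the half-space fluxes are Radon measures, discard $\partial^2 Q$ via \eqref{eq:corner_small} and Lemma \ref{eq:cube_eta_bound}, apply localization \ref{item:localisation} on suitable open neighborhoods of each relative face interior, and extend to Borel sets through Lemma \ref{lem:measure_coincidence}. The only cosmetic difference is that the paper works with arbitrary open $A \subset \Omega \setminus \partial^2 Q$ and localizes using thin rectangular neighborhoods $Q_{i,\delta} = (a_i-\delta,a_i+\delta) \times Q'$, whereas you use the fixed slabs $A_{i,\pm}$; both choices serve the same purpose, and your explicit treatment of the truncation to $\widetilde\Omega^{\delta}$ makes transparent a point the paper leaves implicit.
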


\begin{proof}
By Lemma \ref{lem:flux_extension},
the flux is defined on the hyperplanes associated with each face of $Q$.
Also, by Lemma \ref{eq:cube_eta_bound} and condition \ref{item:eta_bound},
$Q, \overline Q^{\mathrm{c}} \in \widetilde{\mathcal O}_{\mu}$ so that
$\mathcal F_Q$ and $\mathcal F_{\,\overline Q^{\mathrm{c}}}$ are measures
majorized by $\mu_Q^{n-1}$ (by using Remark \ref{rem:flux_complement} for the complement).

Since $\Omega \setminus \partial^2Q$ is open,
for any open set $A \subset \Omega \setminus \partial^2Q$,
the additivity property \ref{item:additivity} from Definition \ref{DefinitionCauchyFlux} gives
\begin{equation}
  \begin{split}
    \mathcal F_{Q}(A \cap \partial Q)
    &=  \sum_{i=1}^n \Big( \mathcal F_{ Q}(A \cap \partial Q\cap \partial H_{i,+}^{a_i})
      + \mathcal F_{Q}(A \cap \partial Q\cap \partial H_{i,-}^{b_i}) \Big),
    \end{split}
 \end{equation}
by noting that the collection $\{(\partial H_{i,+}^{a_i}, \partial H_{i,-}^{b_i}) : 1 \leq i \leq n\}$
is pairwise disjoint in $\mathbb R^n \setminus \partial^2Q$.

We now claim that,  for each $1 \leq i \leq n$,
  \begin{align}
      \mathcal F_{Q}(A \cap \partial Q \cap \partial H_{i,+}^{a_i}) &= \mathcal F_{ H_{i,+}^{a_i}}(A \cap \partial Q\cap \partial H_{i,+}^{a_i}), \\
      \mathcal F_{Q}(A \cap \partial Q\cap \partial H_{i,-}^{b_i}) &= \mathcal F_{H_{i,-}^{b_i}}(A \cap \partial Q\cap \partial H_{i,-}^{b_i}).
  \end{align}
Indeed, when $i=1$, writing $Q = (a_1,b_1) \times Q'$ with $Q' \subset \mathbb R^{n-1}$,
for $\delta>0$ sufficiently small,
we see that $Q_{1,\delta} = (a_1-\delta,a_1+\delta) \times Q'$ is an open cube
such that
\begin{equation}
A \cap Q_{1,\delta} \cap Q = A \cap Q_{1,\delta} \cap  H_{1,+}^{a_1}.
\end{equation}
Thus, applying property \ref{item:localisation} with the open set $A \cap Q_{1,\delta}$, we have
\begin{equation}
\begin{split}
\mathcal F_{Q}(A \cap \partial Q \cap \partial H_{1,+}^{a_1})
&= \mathcal F_{Q}(A \cap Q_{1,\delta} \cap \partial Q) \\
&= \mathcal F_{ H_{1,+}^{a_1}}(A \cap Q_{1,\delta} \cap \partial H_{1,+}^{a_1})
= \mathcal F_{ H_{1,+}^{a_1}}(A \cap \partial Q \cap \partial H_{1,+}^{a_1}).
\end{split}
\end{equation}
Arguing similarly for the remaining terms, the claim follows.

Therefore, we have
\begin{equation}
\mathcal F_{Q}(A \cap \partial Q)
 =  \sum_{i=1}^n \Big( \mathcal F_{ H_{i,+}^{a_i}}(A \cap \partial Q\cap \partial H_{i,+}^{a_i})
 + \mathcal F_{H_{i,-}^{b_i}}(A\cap \partial Q \cap \partial H_{i,-}^{b_i}) \Big)
\end{equation}
for any open set $A \subset \Omega \setminus \partial^2Q$.

By Lemmas \ref{lem:flux_measure} and \ref{lem:flux_extension}, we know that
both sides of \eqref{eq:cube_faces} are Radon measures on $\partial Q$,
and the equality holds for any relatively open set $S \subset \partial Q \setminus \partial^2Q$,
so that this extends to all Borel subsets $S \subset \partial Q \setminus \partial^2Q$ by Lemma \ref{lem:measure_coincidence}.
Moreover, since both sides of \eqref{eq:cube_faces} vanish on $\partial^2Q$
by property \ref{item:eta_bound}, \eqref{eq:corner_small}, and Lemma \ref{eq:cube_eta_bound},
this extends to hold for any Borel set $S \subset \partial Q$.
The argument for the flux on the complement, namely \eqref{eq:cube_faces_complement}, is analogous.
\end{proof}

The previous lemma relies crucially on condition \eqref{eq:corner_small},
which ensures that the flux does not concentrate on the corners of the cube.
The following result asserts that this condition holds for {\it almost all} cubes
in a suitable sense, which is used frequently in the sequel.
We postpone the technical proof until the end of this section,
as it is disjoint from the discussion which follows.

\begin{lemma}\label{lem:good_cubes}
Given a cube $Q=Q(a,b) \Subset \Omega$,
for $\mathcal L^n$--\textit{a.e.}\,\,$x \in \mathbb R^n$,
\eqref{eq:corner_small} holds for $x+Q${\rm :}
\begin{equation}
\mu_{H_{i,+}^{x_i+a_i}}^{n-1}(\partial^2(x+Q)) = \mu_{H_{i,-}^{x_i+b_i}}^{n-1}(\partial^2(x+Q)) = 0
      \qquad\text{for any } 1 \leq i \leq n.
\end{equation}
\end{lemma}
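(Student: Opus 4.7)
The plan is to fix $i \in \{1,\dots,n\}$ and establish $\mu^{n-1}_{H_{i,+}^{x_i+a_i}}(\partial^2(x+Q)) = 0$ for $\mathcal{L}^n$-a.e. $x$; the statement for the $b_i$-trace is symmetric, and the full result will follow by intersecting $2n$ sets of full $\mathcal{L}^n$-measure. By Fubini, it suffices to show that for $\mathcal{L}^1$-a.e. $x_i \in \mathbb{R}$, the slice
\[ \{ \hat x = (x_j)_{j \ne i} \in \mathbb{R}^{n-1} \,:\, \mu^{n-1}_{H_{i,+}^{x_i+a_i}}(\partial^2(x+Q)) > 0 \} \]
is $\mathcal{L}^{n-1}$-null.

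Setting $t = x_i + a_i$, for $\mathcal{L}^1$-a.e. $x_i$ we have $t \notin \mathcal{N}_{i,\mu}$, so by Definition \ref{defn:good_hyperplanes} the measure $\nu := \mu^{n-1}_{H_{i,+}^t}$ exists as a finite Radon measure: finiteness follows from the $\liminf$ bound implicit in $t \notin \mathcal{N}_{i,\mu}$, while the weak-$\ast$ convergence of $\tfrac{1}{\eps}\mu \res (H_{i,+}^t \setminus H_{i,+}^{t+\eps})$ shows that $\nu$ is concentrated on the hyperplane $\Omega \cap \{y_i = t\}$. Geometrically, $\partial^2(x+Q) \cap \{y_i = t\}$ coincides with the relative boundary of the $(n-1)$-dimensional face $\{x_i+a_i\} \times \prod_{j \ne i}(x_j+a_j, x_j+b_j)$ inside $\{y_i = t\}$, and is therefore contained in
\[ \bigcup_{j \ne i} \big( \{y_j = x_j + a_j\} \cup \{y_j = x_j + b_j\} \big). \]

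For each fixed $j \ne i$, finiteness of $\nu$ forces $\{s \in \mathbb{R} : \nu(\{y_j = s\}) > 0\}$ to be at most countable, hence $\mathcal{L}^1$-null; thus, for $\mathcal{L}^1$-a.e. $x_j$, both $\nu(\{y_j = x_j + a_j\})$ and $\nu(\{y_j = x_j + b_j\})$ vanish. A second application of Fubini in the $(n-1)$ remaining coordinates then yields that, for $\mathcal{L}^{n-1}$-a.e. $\hat x$, all $2(n-1)$ of these values vanish simultaneously, whence $\nu(\partial^2(x+Q)) = 0$. No deep obstacle is expected beyond the bookkeeping of the Fubini reductions; the argument uses only the finiteness of the trace measure $\nu$ together with the standard fact that a finite Radon measure on $\mathbb{R}^n$ can charge at most countably many parallel hyperplanes.
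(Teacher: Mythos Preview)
Your argument is correct and takes a genuinely different, much more elementary route than the paper.

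The paper proceeds by fixing two generic direction vectors $v,w\in\mathbb R^n$ (with the property that any two pairs $(v_i,v_j)$ and $(w_i,w_j)$ are linearly independent), setting $\mathrm C=\bigcup_{\beta\in\mathbb R}(\beta w+\partial^2 Q)$, and then proving via a pigeonhole argument that any $M=2n(n-1)+1$ translates $\alpha_kv+\mathrm C$ have empty common intersection. A descending induction combined with inclusion--exclusion then forces $\mu(\alpha v+\mathrm C)=0$ for all but countably many $\alpha$; disintegrating along the hyperplanes and a further Fubini step in the $(\alpha,\beta)$-plane finishes the argument. This is considerably more intricate.

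Your approach bypasses all of this combinatorics by observing that once $t=x_i+a_i\notin\mathcal N_{i,\mu}$ the trace measure $\nu=\mu^{n-1}_{H_{i,+}^t}$ is a \emph{finite} Radon measure supported on the hyperplane $\{y_i=t\}$, and then using the elementary fact that a finite measure can charge at most countably many parallel hyperplanes $\{y_j=s\}$ for each $j\neq i$. The only point worth spelling out is the joint measurability needed for the Fubini step: one needs $(x_i,x_j)\mapsto\nu_{x_i}(\{y_j=x_j+a_j\})$ to be $\mathcal L^2$-measurable. This holds because $t\mapsto\mu_t(\{y_j\le s\})$ is measurable for each fixed $s$ (by the disintegration theorem, as cited from \cite[(2.19)]{afp} in the proof of Theorem~\ref{prop:disintegration}) and right-continuous in $s$ for each fixed $t$, so the standard Carath\'eodory-type argument gives joint measurability of the difference $\mu_t(\{y_j\le s\})-\mu_t(\{y_j<s\})$. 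With that detail supplied, your proof is complete and substantially shorter.
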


\begin{lemma}\label{lem:flux_cube_continuity}
Consider a cube $Q = Q(a,b) \Subset \Omega$ with $a_i,b_i \notin \mathcal N_{i,\mu}$ which satisfies \eqref{eq:corner_small} for each $1 \leq i \leq n$.
Then, for each $1 \leq j \leq n$, there exists an $\mathcal L^1$-null set $\mathcal N_{j,Q} \subset (a_j,b_j)$ such that
\begin{equation}\label{eq:Q_flux_map}
t \mapsto \mathcal F_{H_{j,+}^t}(Q \cap \partial H_{j,+}^t)
\end{equation}
is continuous when restricted to $\mathbb R \setminus \mathcal N_{j,Q}$
and hence is $\mathcal L^1$-measurable on $\mathbb R$, and
\begin{equation}\label{eq:hyperplane_cube_twosided}
\mathcal F_{H_{j,+}^t}(Q \cap \partial H_{j,+}^t)
= - \mathcal F_{H_{j,-}^{t}}(Q \cap \partial H_{j,-}^{t}) \qquad\text{for any }t \in \mathbb R \setminus \mathcal N_{j,Q}.
\end{equation}
\end{lemma}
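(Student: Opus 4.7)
The strategy is to derive both the equality \eqref{eq:hyperplane_cube_twosided} and the continuity statement from a common source: the balance law applied to the sub-cubes $Q_\pm(t) := Q \cap H_{j,\pm}^t$ and to thin $x_j$-slabs, combined with the localization property \ref{item:localisation} and the face decomposition of $\mathcal{F}_Q$ from Lemma \ref{lem:cube_localisation}. The first step is to define the $\mathcal{L}^1$-null set $\mathcal{N}_{j,Q}$ as the union of $\mathcal{N}_{j,\mu}$ (so that the half-spaces $H_{j,\pm}^t$ are $\mu$-admissible), $\mathcal{M}_{j,\sigma}$ (so that $|\sigma|$ does not charge $\{x_j = t\}$), and, for each $i \neq j$, the at-most-countable set of $t$ at which either of the finite Radon measures $\mu^{n-1}_{H_{i,+}^{a_i}}$ or $\mu^{n-1}_{H_{i,-}^{b_i}}$ charges the $(n-2)$-dimensional edge $E_{i,\cdot}(t) := \{x : x_i \in \{a_i, b_i\},\, x_j = t,\, x_k \in [a_k, b_k] \text{ for } k \neq i,j\}$; pairwise disjointness of these edges in $t$ makes countability automatic.

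For $t \in (a_j, b_j) \setminus \mathcal{N}_{j,Q}$, the topological inclusions from the proof of Lemma \ref{lem:sop_algebra} show that $Q_\pm(t) \in \widetilde{\mathcal{O}}_{\mu}$, so that $\mathcal{F}_{Q_\pm(t)}$ are finite Radon measures on $\partial Q_\pm(t)$ by Lemma \ref{lem:flux_measure}. Writing $\partial Q_+(t) = (\partial Q \cap H_{j,+}^t) \sqcup (\overline{Q} \cap \partial H_{j,+}^t)$, I apply \ref{item:localisation} with $A = H_{j,+}^t$ and with $A = Q$ to obtain $\mathcal{F}_{Q_+(t)}(\partial Q \cap H_{j,+}^t) = \mathcal{F}_Q(\partial Q \cap H_{j,+}^t)$ and $\mathcal{F}_{Q_+(t)}(Q \cap \partial H_{j,+}^t) = g(t)$ respectively, where $g(t) := \mathcal{F}_{H_{j,+}^t}(Q \cap \partial H_{j,+}^t)$ and $h(t) := \mathcal{F}_{H_{j,-}^t}(Q \cap \partial H_{j,-}^t)$. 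Combining with the analogous decomposition for $Q_-(t)$ and subtracting the balance law $\mathcal{F}_Q(\partial Q) = \sigma(Q) = \sigma(Q_+(t)) + \sigma(Q_-(t)) = \mathcal{F}_{Q_+(t)}(\partial Q_+(t)) + \mathcal{F}_{Q_-(t)}(\partial Q_-(t))$ gives
\begin{equation*}
    g(t) + h(t) + \mathcal{F}_{Q_+(t)}(\partial_0 Q) + \mathcal{F}_{Q_-(t)}(\partial_0 Q) = \mathcal{F}_Q(\partial_0 Q),
\end{equation*}
where $\partial_0 Q := \partial Q \cap \{x_j = t\} = \bigcup_{i \neq j}(E_{i,a_i}(t) \cup E_{i,b_i}(t))$. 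Lemma \ref{lem:cube_localisation} applied to $\mathcal{F}_Q$ on $\partial_0 Q$ bounds $\mathcal{F}_Q(\partial_0 Q)$ by $\sum_{i \neq j}(\mu^{n-1}_{H_{i,+}^{a_i}}(E_{i,a_i}(t)) + \mu^{n-1}_{H_{i,-}^{b_i}}(E_{i,b_i}(t)))$, which vanishes by construction of $\mathcal{N}_{j,Q}$. The main technical obstacle is then to show $\mathcal{F}_{Q_\pm(t)}(\partial_0 Q) = 0$: the plan is to localize $\mathcal{F}_{Q_+(t)}$ on an open neighbourhood $A$ of each edge $E_{i,a_i}(t)$ in which $Q_+(t)$ coincides with the quadrant $H_{i,+}^{a_i} \cap H_{j,+}^t$, apply \ref{item:localisation} to transfer the flux to this quadrant, and exhaust the relative interior of $E_{i,a_i}(t)$ by such neighbourhoods so that the contribution is further identified with $\mathcal{F}_{H_{i,+}^{a_i}}$-flux on $E_{i,a_i}(t)$, which vanishes by the choice of $\mathcal{N}_{j,Q}$. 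The residual $(n-3)$-dimensional strata where this identification fails are absorbed into $\mathcal{N}_{j,Q}$ by the same disjointness-of-strata principle applied to the lower-dimensional edges of $\partial^2 Q$. This yields the identity \eqref{eq:hyperplane_cube_twosided}.

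For the continuity claim, given $t_k \to t$ with all of $t_k, t \in \mathbb{R} \setminus \mathcal{N}_{j,Q}$, I apply the balance law to the thin slab $S_k := Q \cap \{\min(t,t_k) < x_j < \max(t,t_k)\}$, which lies in $\widetilde{\mathcal{O}}_\mu$ by Lemma \ref{lem:sop_algebra}, and repeat the preceding face decomposition. The two horizontal faces contribute $g(t)$ and $-g(t_k)$ (after invoking \eqref{eq:hyperplane_cube_twosided} on the upper face), the side-face contributions are bounded via condition \ref{item:eta_bound} by $\mu^{n-1}_Q(\partial Q \cap \{|x_j - t| \leq |t_k - t|\})$, which tends to $0$ since $\mu^{n-1}_Q(\partial_0 Q) = 0$ and $\mu^{n-1}_Q$ is finite, and $\sigma(S_k) \to \sigma(\{x_j = t\} \cap Q) = 0$ since $t \notin \mathcal{M}_{j,\sigma}$. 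Hence $g(t_k) \to g(t)$. Measurability of $g$ on $\mathbb{R}$ is then immediate, since a function continuous on a co-null set is $\mathcal{L}^1$-measurable after any measurable extension across the null set $\mathcal{N}_{j,Q}$.
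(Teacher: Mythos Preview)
Your overall strategy---balance law on sub-regions of $Q$ combined with localization---matches the paper's, and your continuity argument via thin slabs is essentially identical. However, there is a genuine gap in your definition of $\mathcal N_{j,Q}$ and in the step where you ``further identify'' the quadrant flux on the edge $E_{i,a_i}(t)$ with $\mathcal F_{H_{i,+}^{a_i}}$. Once you localize $\mathcal F_{Q_+(t)}$ near $E_{i,a_i}(t)$ to the quadrant $H_{i,+}^{a_i}\cap H_{j,+}^t$, you cannot further localize to $H_{i,+}^{a_i}$ via property~\ref{item:localisation}, because the edge lies on $\partial H_{j,+}^t$ and hence in no open $A$ with $A\cap(\text{quadrant})=A\cap H_{i,+}^{a_i}$. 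To control $\mathcal F_{Q_+(t)}(\partial_0 Q)$ you are forced to use~\ref{item:eta_bound} together with Lemma~\ref{lem:eta_intersection}, and the resulting bound on $\mu^{n-1}_{Q_+(t)}(E_{i,a_i}(t))$ contains the term $\mu^{n-1}_{H_{j,+}^t}(E_{i,a_i}(t))$, which your null set does not control: the disjointness-of-strata argument fails here because the measure $\mu^{n-1}_{H_{j,+}^t}$ itself varies with $t$. The same uncontrolled term appears in your continuity step, where you write $\mu^{n-1}_Q$ for the side-face bound but~\ref{item:eta_bound} only gives $\mu^{n-1}_{S_k}$, and passing between the two via Lemma~\ref{lem:eta_intersection} again produces the $\mu^{n-1}_{H_{j,\pm}^t}$ edge terms.

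The fix is to enlarge $\mathcal N_{j,Q}$ by the $\mathcal L^1$-null set of $t$ for which $\mu^{n-1}_{H_{j,\pm}^t}(\{x_i\in\{a_i,b_i\}\})>0$ for some $i\neq j$; this is null by disintegration (Lemma~\ref{lem:disint_flux}) since $\mu(\{x_i=a_i\})=\mu(\{x_i=b_i\})=0$ from $a_i,b_i\notin\mathcal N_{i,\mu}$. This is exactly what the paper does: in its Step~1 it verifies condition~\eqref{eq:corner_small} directly for the thin slabs $Q_{s,t}=(s,t)\times Q'$ (which amounts precisely to this enlarged null set), then applies Lemma~\ref{lem:cube_localisation} to $Q_{s,t}$ to obtain a clean face decomposition. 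Working with slabs $Q_{t-\eps,t}$, $Q_{t,t+\delta}$, $Q_{t-\eps,t+\delta}$ from the outset also yields the two-sided identity and continuity in one stroke, avoiding your separate half-cube argument and the delicate handling of $\mathcal F_{Q_\pm(t)}(\partial_0 Q)$.
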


\begin{proof} We divide the proof into two steps.

\smallskip
{\bf 1}. By permuting the coordinates, we can assume that $j = 1$.
We write $Q = (a_1,b_1) \times Q'$ and set $Q_{s,t} := (s,t) \times Q' \subset Q$ for $a_1 < s < t < b_1$.
This satisfies
  \begin{equation}
      \partial^2 Q_{s,t} \subset \{s,t\} \times \partial Q' \cup \partial^2Q,
  \end{equation}
viewing $\partial Q'$ as the boundary of $Q' \subset \mathbb R^{n-1}$.
Furthermore, notice that
\begin{equation}
\partial^2 Q_{s,t} \cap (\partial H_{i,+}^{a_i} \cup \partial H_{i,-}^{b_i})
\subset \partial Q' \cup \partial^2Q \qquad\text{for any } 2 \leq i \leq n.
\end{equation}
combining this with \eqref{eq:corner_small}, we obtain
  \begin{equation}\label{eq:Qst_corner_small}
 \mu_{H_{i,+}^{a_i}}^{n-1}(\partial^2 Q_{s,t}) = \mu_{H_{i,-}^{b_i}}^{n-1}(\partial^2Q_{s,t}) = 0
 \qquad\,\,\text{for any } 2\leq i \leq n.
 \end{equation}
 Observe that, since $a_i,b_i \not\in \mathcal N_{i,\mu}$,
 we have
 \begin{equation}
 \mu((a_1,b_1) \times \partial Q') = 0
 \end{equation}
 owing to $\mu(\partial H_{i,+}^{a_i}) = \mu(\partial H_{i,-}^{b_i}) = 0$ for all $2 \leq i \leq n$
 by Definition \ref{defn:good_hyperplanes}.
 Thus, for $\mathcal L^1$--\textit{a.e.}\,\, $t \in (a_1,b_1)$,
   \begin{equation}
    \mu_{H_{i,+}^{t}}^{n-1}(((a_1,b_1) \times \partial Q') \cap \partial H_{i,+}^{t})
    = \mu_{H_{i,-}^{t}}^{n-1}(((a_1,b_1) \times \partial Q') \cap \partial H_{i,-}^{t}) = 0.
  \end{equation}
 We denote by $\mathcal T$ the set of points $t \in (a_1,b_1) \setminus (\mathcal N_{1,\mu} \cup \mathcal M_{1,\sigma})$
 for which the above holds.
 Then, for $s,t \in \mathcal T$ with $s<t$, we have
  \begin{equation}
    \mu_{H_{1,+}^{s}}^{n-1}(\partial^2Q_{s,t}) = \mu_{H_{1,-}^{t}}^{n-1}(\partial^2Q_{s,t}) = 0.
  \end{equation}
Then, combined with \eqref{eq:Qst_corner_small},
Lemmas \ref{eq:cube_eta_bound}--\ref{lem:cube_localisation} apply to $Q_{s,t}$.

\smallskip
{\bf 2}. We now show the result holds with $\mathcal N_{1,Q} = (a_1,b_1) \setminus \mathcal T$.

Let $\eps_k,\delta_k>0$ such that $\eps_k,\delta_k \searrow 0$ and $t - \eps_k, t + \delta_k \in \mathcal T$ for each $k$.
Then, applying Lemma \ref{lem:cube_localisation} and the balance law \eqref{balancelaw2} to cube $Q_{t-\eps_k,t}$,
we deduce
\begin{equation}
    \begin{split}
      \sigma(Q_{t-\eps_k,t})
      &= \mathcal F_{Q_{t-\eps_k,t}}(\partial Q_{t-\eps_k,t}) \\
      &= \mathcal F_{H_{1,+}^{t-\eps_k}}(Q \cap \partial H_{1,+}^{t-\eps_k}) + \mathcal F_{H_{1,-}^{t}}(Q \cap \partial H_{1,-}^{t}) \\
      &\quad + \sum_{i=2}^n \Big( \mathcal F_{H_{i,+}^{a_i}}(Q_{t-\eps_k,t} \cap \partial H_{i,+}^{a_i})
         + \mathcal F_{H_{i,-}^{b_i}}(\partial Q_{t-\eps_k,t} \cap \partial H_{i,-}^{b_i})\Big)  \\
      &=: I_{1,+}^k + I_{1,-} + \sum_{i=2}^n \big( I_{i,+}^k + I_{i,-}^k \big).
    \end{split}
\end{equation}
By Lemma \ref{lem:flux_measure}, the fluxes on the respective hyperplanes are measures.
It follows that $I_{i,\pm}^k \to 0$ as $k \to \infty$ for each $2 \leq i \leq n$.
Similarly, $\sigma(Q_{t-\eps_k,t}) \to 0$.
Then we see that
  \begin{equation}
    \lim_{k \to \infty} \mathcal F_{H_{1,+}^{t-\eps_k}}(Q \cap \partial H_{1,+}^{t-\eps_k})
    = \lim_{k \to \infty} I_{1,+}^k = - I_{1,-} =- \mathcal F_{H_{1,-}^{t}}(Q \cap \partial H_{1,-}^{t}).
  \end{equation}
Analogously, for $Q_{t,t+\delta_k}$, it follows that
  \begin{equation}
    \lim_{k \to \infty} \mathcal F_{H_{1,-}^{t+\delta_k}}(Q \cap \partial H_{1,-}^{t+\delta_k})=- \mathcal F_{H_{1,+}^{t}}(Q \cap \partial H_{1,+}^{t}),
\end{equation}
and, for $Q_{t-\eps_k,t+\delta_k}$, we apply that $\sigma(\partial H_{1,+})=0$ to obtain
  \begin{equation}
    \lim_{k \to \infty} \mathcal F_{H_{1,-}^{t+\delta_k}}(Q \cap \partial H_{1,-}^{t+\delta_k})
    + \lim_{k \to \infty} \mathcal F_{H_{1,+}^{t-\eps_k}}(Q \cap \partial H_{1,+}^{t-\eps_k}) = 0.
  \end{equation}
Thus, it follows that
  \begin{equation}
    \mathcal F_{H_{1,+}^{t}}(Q \cap \partial H_{1,+}^{t}) = - \mathcal F_{H_{1,-}^{t}}(Q \cap \partial H_{1,-}^{t})
  \end{equation}
establishing \eqref{eq:hyperplane_cube_twosided}, which implies
  \begin{align}
    \lim_{k \to \infty} \mathcal F_{H_{1,-}^{t+\delta_k}}(Q \cap \partial H_{1,-}^{t+\delta_k})
    & = \mathcal F_{H_{1,-}^{t}}(Q \cap \partial H_{1,-}^{t}), \\
    \lim_{k \to \infty} \mathcal F_{H_{1,+}^{t-\eps_k}}(Q \cap \partial H_{1,+}^{t-\eps_k})
     &= \mathcal F_{H_{1,+}^{t}}(Q \cap \partial H_{1,+}^{t}),
  \end{align}
leading to the continuity of \eqref{eq:Q_flux_map} restricted to $\mathbb R \setminus \mathcal N_{j,Q}$.
Finally, the $\mathcal L^1$--measurability follows from a general principle;
indeed, denoting this mapping by $g$, if $A \subset \mathbb R$ is open, then
\begin{equation}
      g^{-1}(A) = \big(g\rvert_{\mathbb R \setminus \mathcal N_{j,Q}}\big)^{-1}(A)
                   \cup \big(g^{-1}(A) \cap \mathcal N_{j,Q}\big).
\end{equation}
This is the union of a relatively open subset of the $\mathcal L^1$-measurable set $\mathbb R \setminus N_{j,Q}$
and an $\mathcal L^1$-null set, so both are $\mathcal L^1$-measurable by using the completeness of the Lebesgue measure.
Therefore, $g$ is $\mathcal L^1$-measurable, as required.
\end{proof}

Combining Lemma \ref{lem:good_cubes} with measure-theoretic argument allows us to extend
Lemma \ref{lem:flux_cube_continuity} to hold for general Borel subsets.
This provides a key step in the construction of an associated field $\FF$
in \S \ref{sec:flux_field_construct}.

\begin{lemma}\label{lem:hyperplane_integrable}
  For each $1 \leq j \leq n$, there is a null set $\mathcal K_j \subset \mathbb R$ containing $\mathcal N_{j,\mu}\cup \mathcal M_{j,\sigma}$ such that, for $t \notin \mathcal K_j$,
   \begin{equation}\label{eq:hyperplane_twosided}
    \mathcal F_{H_{j,+}^t}(S \cap \partial H_{j,+}^t) = -\mathcal F_{H_{j,-}^{t}}(S \cap \partial H_{j,-}^{t})
   \qquad\text{for any Borel set $S \subset \Omega$}.
    \end{equation}
Moreover, for any such $S$, the mapping{\rm :}
  \begin{equation}\label{eq:hyperplane_flux_map}
    t \mapsto \begin{cases}
      \mathcal F_{H_{j,+}^t}(S \cap \partial H_{j,+}^t) &\text{if } t \notin \mathcal K_j, \\
      0 &\text{if } t \in \mathcal K_j,
    \end{cases}
  \end{equation}
  is $\mathcal L^1$-integrable satisfying
  \begin{equation}\label{eq:hyperplane_integral_bound}
    \int_{\mathbb R} \lvert \mathcal F_{H_{j,+}^t}(S \cap \partial H_{j,+}^t)\rvert \,\d t \leq \mu(S).
  \end{equation}
\end{lemma}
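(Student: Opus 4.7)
The plan is to first establish the two-sided identity \eqref{eq:hyperplane_twosided} on a countable generating family of good cubes, then promote it to all Borel sets via the measure-uniqueness criterion in Lemma \ref{lem:measure_coincidence}, and finally derive the $\mathcal{L}^1$-bound \eqref{eq:hyperplane_integral_bound} from condition \ref{item:eta_bound} combined with a disintegration of $\mu$ along $\{x_j = t\}$ supplied by Lemma \ref{lem:disint_flux}. The key reduction is that, for any fixed good cube $Q$, Lemma \ref{lem:flux_cube_continuity} already gives both the two-sided identity and continuity of $t \mapsto \mathcal{F}_{H_{j,+}^t}(Q \cap \partial H_{j,+}^t)$ outside an $\mathcal{L}^1$-null set depending on $Q$; the whole task is to collapse the $Q$-dependence of this null set.

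First I would use Lemma \ref{lem:good_cubes} together with a density argument to fix a countable collection $\{Q_k\}_{k \in \mathbb{N}}$ of cubes compactly contained in $\Omega$ with rational vertices (up to an $\mathcal{L}^n$-null translation) for which the no-corner-concentration condition \eqref{eq:corner_small} holds, and such that $\{Q_k\}$ is stable under finite intersections and generates the Borel $\sigma$-algebra of $\Omega$; this is possible since, for each fixed side length, the set of admissible translates is of full Lebesgue measure. For each $Q_k$, Lemma \ref{lem:flux_cube_continuity} yields an $\mathcal{L}^1$-null set $\mathcal{N}_{j,Q_k}$, and I set
\[
\mathcal{K}_j := \mathcal{N}_{j,\mu} \cup \mathcal{M}_{j,\sigma} \cup \bigcup_{k \in \mathbb{N}} \mathcal{N}_{j,Q_k},
\]
which is still $\mathcal{L}^1$-null. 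For any $t \notin \mathcal{K}_j$, Lemma \ref{lem:flux_extension} ensures that both $S \mapsto \mathcal{F}_{H_{j,+}^t}(S \cap \partial H_{j,+}^t)$ and $S \mapsto -\mathcal{F}_{H_{j,-}^t}(S \cap \partial H_{j,-}^t)$ are finite signed Radon measures on $\Omega$ that coincide on every $Q_k$; hence they agree on all Borel subsets by Lemma \ref{lem:measure_coincidence}, establishing \eqref{eq:hyperplane_twosided}. For the $\mathcal{L}^1$-measurability of \eqref{eq:hyperplane_flux_map}, I would run a Dynkin $\pi$-$\lambda$ argument: the class of Borel $S$ for which this map is $\mathcal{L}^1$-measurable contains $\{Q_k\}$ by Lemma \ref{lem:flux_cube_continuity}, is closed under proper differences by the additivity property \ref{item:additivity}, and is closed under countable increasing unions by the monotone convergence theorem applied to the Radon measure $S \mapsto \mathcal{F}_{H_{j,+}^t}(S \cap \partial H_{j,+}^t)$ for each fixed $t$; thus it exhausts the Borel $\sigma$-algebra.

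For the integral bound, condition \ref{item:eta_bound} gives pointwise
\[
\lvert \mathcal{F}_{H_{j,+}^t}(S \cap \partial H_{j,+}^t)\rvert \leq \mu_{H_{j,+}^t}^{n-1}(S \cap \partial H_{j,+}^t) \qquad \text{for any } t \notin \mathcal{K}_j,
\]
and applying Lemma \ref{lem:disint_flux} to $U = H_{j,+}^0$ and $U = H_{j,-}^0$ recognizes the family $\{\mu_{H_{j,+}^t}^{n-1}\}_{t \in \mathbb{R}}$ as the disintegration of $\mu$ along the foliation by hyperplanes $\{x_j = t\}$; integrating in $t$ then yields $\int_{\mathbb{R}} \mu_{H_{j,+}^t}^{n-1}(S \cap \partial H_{j,+}^t)\,\d t = \mu(S)$, which gives \eqref{eq:hyperplane_integral_bound} and simultaneously guarantees the finiteness needed for the integral to make sense. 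The main obstacle I anticipate is the joint behavior in $(t,S)$: one must pool the exceptional null sets across a \emph{single} countable generating family $\{Q_k\}$ so that one $\mathcal{K}_j$ works uniformly for all Borel $S$, which is why the countable, intersection-closed, generating nature of $\{Q_k\}$ coming from Lemma \ref{lem:good_cubes} is essential; once this is set up, the measurability reduction and the disintegration step are essentially formal.
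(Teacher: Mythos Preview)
Your proposal is correct and follows essentially the same route as the paper's proof: fix a countable, intersection-stable, Borel-generating family of translated rational cubes via Lemma~\ref{lem:good_cubes}, pool the null sets from Lemma~\ref{lem:flux_cube_continuity} to define $\mathcal{K}_j$, extend the two-sided identity to all Borel sets by Lemma~\ref{lem:measure_coincidence}, run a Dynkin-type argument for measurability, and bound the integral by the disintegration of $\mu$ from Lemma~\ref{lem:disint_flux}. One small correction: the disintegration generally carries a singular part $\tau_{\mathrm{sing}}$, so you only obtain $\int_{\mathbb{R}} \mu_{H_{j,+}^t}^{n-1}(S \cap \partial H_{j,+}^t)\,\d t \leq \mu(S)$ rather than equality, but this is all that \eqref{eq:hyperplane_integral_bound} requires.
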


\begin{proof}
Consider the collection $\{ Q(a,b) \,:\, a,b \in \mathbb Q^n\}$ of cubes with rational endpoints
in $\mathbb R^n$.
Then, for $\mathcal L^n$--\textit{a.e.}\,\,$x \in \mathbb R^n$,
the conclusion of Lemma \ref{lem:good_cubes} is satisfied for all $x+Q(a,b)$ with $a,b$ rational,
and $x_i+a_i, x_i+b_i \notin \mathcal N_{i,\mu} \cup \mathcal M_{i,\sigma}$ for all $1 \leq i \leq n$.
We then let $\mathcal Q_{\Omega}$ be the collection of such cubes $x+Q \Subset \Omega$ that are compactly contained in $\Omega$.
Observe that $\mathcal Q_{\Omega}$ is a countable collection of open cubes generating the Borel $\sigma$-algebra of $\Omega$,
which is also closed under finite intersections.

Writing $\mathcal Q_{\Omega} = \{Q_k\}_{k \in \mathbb N}$ and applying Lemma \ref{lem:flux_cube_continuity} to each $Q_k$,
we obtain an associated null set $\mathcal N_{j,Q_k}$ for each $1 \leq j \leq n$, which
contains $\mathcal N_{j,\mu} \cup \mathcal M_{j,\sigma}$ by construction.
We set $\mathcal K_j := \bigcup_k \mathcal N_{j,Q_k}$, which is a null set for each $j$.
Then, for all $t \notin \mathcal K_j$ and $k \in \mathbb N$,
\eqref{eq:hyperplane_twosided} holds with $S = Q_k$,
and the mapping in \eqref{eq:hyperplane_flux_map} with $S = Q_k$ is $\mathcal L^1$-measurable.

Now, for any $t \notin \mathcal K_j$, Lemma \ref{lem:flux_extension} ensures that
$\mathcal F_{H_{j,\pm}^t}(\,\cdot \cap \partial H_{j,\pm}^t)$ are measures on $\Omega$,
so we can extend \eqref{eq:hyperplane_twosided}
that holds for any $S = Q_k$ to all Borel sets $S \subset \Omega$ by using Lemma \ref{lem:measure_coincidence}.
For the second part, let $\mathcal D_j$ denote the set of Borel subsets $S \subset \Omega$ for which the result holds;
that is, \eqref{eq:hyperplane_flux_map} is $\mathcal L^1$-integrable satisfying \eqref{eq:hyperplane_integral_bound}.

We now show that $\mathcal D_j$ contains all Borel subsets of $\Omega$,
by a similar argument to that in \cite[Remark 1.9]{afp}, thereby establishing the result.
For this, observe first that, if $S \subset \Omega$ is a Borel set,
then, by property \ref{item:eta_bound} and Lemma \ref{lem:disint_flux},
  \begin{equation}\label{eq:cube_pointwise_est}
    \lvert \mathcal F_{H_{j,+}^t}(S \cap \partial H_{j,+}^t) \rvert\leq \mu_t(S \cap \partial H_{j,+}^t)
    \qquad\mbox{ for $\mathcal L^1$--\textit{a.e.}\,\,$t$},
  \end{equation}
where $\mu_t$ is given by the disintegration
\begin{equation}
\mu = \mathcal L^1 \res [0,\infty)\otimes_{\partial H_{j,+}^t} \mu_t + \tau_{\mathrm{sing}} \otimes_{\partial H_{j,+}^t} \tilde\mu_t.
\end{equation}
Hence, if mapping \eqref{eq:hyperplane_flux_map} is measurable,
we can integrate in $t$ to deduce \eqref{eq:hyperplane_integral_bound} and thus
infer that $S \in \mathcal D_j$.
In particular, $\mathcal Q_{\Omega} \subset \mathcal D_j$, since the associated mapping is measurable.

We now claim that $\mathcal D_j$ is closed under the set differences and countable increasing unions.
Indeed, if $A, B \in \mathcal D_j$ with $A \subset B$, then, for $t \notin \mathcal K_j$, we can write
\begin{equation}
\mathcal F_{H^t_{j,+}}((B \setminus A) \cap \partial H_{j,+}^t) = \mathcal F_{H^t_{j,+}}(B \cap \partial H_{j,+}^t)
- \mathcal F_{H^t_{j,+}}(A \cap \partial H_{j,+}^t),
\end{equation}
which is evidently measurable in $t$, so that $B \setminus A \in \mathcal D_j$.
Also, if $\{S_j\} \subset \mathcal D_j$ is any countable collection of pairwise disjoint Borel subsets in $\Omega$,
then, setting $S = \bigcup_j S_j$, we have
\begin{equation}
\sum_{j}\mathcal F_{H^t_{j,+}}(S_j \cap \partial H_{j,+}^t) = \mathcal F_{H^t_{j,+}}(S \cap \partial H_{j,+}^t)
\qquad\mbox{ for any $t \notin \mathcal K_j$},
\end{equation}
so this is also measurable in $t$.
Now, if $\{A_j\}\subset \mathcal D_j$ is a countable collection of Borel subsets which is not necessarily disjoint,
define $\{S_k\}$ by setting $S_1 = A_1$ and $S_k = A_k \setminus \big( \bigcup_{j=1}^{k-1} S_j \big)$ for each $k \geq 2$.
Then $\{S_k\}$ are pairwise disjoint such that $\bigcup_k S_k = \bigcup_k A_k =:A$ and, by induction,
each $S_k \in \mathcal D_j$, so that $A \in \mathcal D_j$.
Using this, we also obtain $\Omega \in \mathcal D_j$, so $\mathcal D_j$ is a $\sigma$-algebra
containing $\mathcal Q_{\Omega}$ and hence contains all Borel subsets $S \subset \Omega$, thereby establishing the result.
\end{proof}

\begin{lemma}\label{cor:cube_localisation2}
Suppose that $Q = Q(a,b) \Subset \Omega$ such that $a_i,b_i \notin \mathcal K_{i}$ for all $1 \leq i \leq n$
and \eqref{eq:corner_small} holds.
Then
\begin{equation}\label{eq:cube_faces2}
\mathcal F_{Q}(S)
= \sum_{i=1}^n \Big( \mathcal F_{H_{i,+}^{a_i}}(S \cap \partial H_{i,+}^{a_i})
  - \mathcal F_{H_{i,+}^{b_i}}(S \cap \partial H_{i,+}^{b_i}) \Big)
  = - \mathcal F_{\overline Q^{\mathrm{c}}}(S)
\end{equation}
for any Borel set $S \subset \partial Q$.
In particular, for any cube $Q$, this holds for $x+Q$ for $\mathcal L^n$--\textit{a.e.}\,$\,x \in \mathbb R^n$.
\end{lemma}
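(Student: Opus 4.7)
The strategy is to directly combine Lemma \ref{lem:cube_localisation} with the two-sided identity from Lemma \ref{lem:hyperplane_integrable}, applied at each face of $Q$. Since $\mathcal K_i \supset \mathcal N_{i,\mu} \cup \mathcal M_{i,\sigma}$ by construction in the proof of Lemma \ref{lem:hyperplane_integrable}, the hypotheses $a_i, b_i \notin \mathcal K_i$ ensure in particular that $a_i, b_i \notin \mathcal N_{i,\mu}$; together with \eqref{eq:corner_small}, this puts us in the setting of Lemma \ref{lem:cube_localisation}, so that for any Borel $S \subset \partial Q$ we have the decompositions \eqref{eq:cube_faces}--\eqref{eq:cube_faces_complement}.

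Now I would invoke Lemma \ref{lem:hyperplane_integrable} at each of the $2n$ coordinate values $a_i, b_i$: since $b_i \notin \mathcal K_i$, the two-sided identity \eqref{eq:hyperplane_twosided} gives
\begin{equation}
\mathcal F_{H_{i,-}^{b_i}}(S \cap \partial H_{i,-}^{b_i}) = -\mathcal F_{H_{i,+}^{b_i}}(S \cap \partial H_{i,+}^{b_i}),
\end{equation}
and substituting this term-by-term into \eqref{eq:cube_faces} produces the first equality in \eqref{eq:cube_faces2}. For the second equality, I would apply the analogous substitution at each $a_i$ to \eqref{eq:cube_faces_complement}, which replaces $\mathcal F_{H_{i,-}^{a_i}}(S \cap \partial H_{i,-}^{a_i})$ by $-\mathcal F_{H_{i,+}^{a_i}}(S \cap \partial H_{i,+}^{a_i})$, yielding $\mathcal F_{\overline Q^{\mathrm c}}(S) = -\mathcal F_Q(S)$.

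For the final assertion about $\mathcal L^n$--almost every translate $x + Q$, I would argue by Fubini-type reasoning applied to the exceptional sets. Lemma \ref{lem:good_cubes} provides an $\mathcal L^n$--null set outside which condition \eqref{eq:corner_small} holds for $x+Q$, and for each $1 \leq i \leq n$ the requirement $x_i + a_i, x_i + b_i \notin \mathcal K_i$ fails only on an $\mathcal L^1$--null set of $x_i$, hence on an $\mathcal L^n$--null set of $x \in \mathbb R^n$ by taking a product with $\mathbb R^{n-1}$ in the remaining coordinates. The union of these $2n+1$ null sets is still $\mathcal L^n$--null, so on its complement all hypotheses of the first part of the lemma are met for $x+Q$. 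I do not anticipate a main obstacle; the content is essentially a bookkeeping consequence of the two previous lemmas, with the only mildly delicate point being the verification that the exceptional set of translates remains Lebesgue-null, which is handled by a routine Fubini argument.
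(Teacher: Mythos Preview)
Your proposal is correct and follows essentially the same approach as the paper: combine Lemma~\ref{lem:cube_localisation} with the two-sided identity \eqref{eq:hyperplane_twosided} from Lemma~\ref{lem:hyperplane_integrable} at each face, using $\mathcal N_{i,\mu}\cup\mathcal M_{i,\sigma}\subset\mathcal K_i$, and then handle the a.e.\ translate statement via Lemma~\ref{lem:good_cubes} together with the $\mathcal L^1$-nullity of each $\mathcal K_i$. The paper's proof is more terse but the logic is identical.
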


\begin{proof}
The representations of $\mathcal F_Q(S)$ and $\mathcal F_{\overline Q^{\mathrm{c}}}(S)$ follow
by combining Lemma \ref{lem:cube_localisation} with \eqref{eq:hyperplane_twosided}
from Lemma \ref{lem:hyperplane_integrable} and recalling that
$\mathcal N_{j,\mu} \cup \mathcal M_{j,\sigma} \subset \mathcal K_j$ for each $1 \leq j \leq n$.
For the last part, we apply Lemma \ref{lem:good_cubes} to see that \eqref{eq:corner_small} is satisfied
for $\mathcal L^n$--\text{a.e.}\,\,$x \in \mathbb R^n$.
By increasing the null set if necessary, we also require that $x_j+a_j, x_j+b_j \notin \mathcal K_j$
for all $1 \leq j \leq n$, where $Q = Q(a,b)$.
\end{proof}

We now conclude this section with the proof of Lemma \ref{lem:good_cubes}.

\begin{proof}[Proof of Lemma {\ref{lem:good_cubes}}]
Extending $\mu$ by zero, we view $\mu$ as a measure on $\mathbb R^n$ in what follows:
Let $v, w \in \mathbb R^n$ be unit vectors with non-zero components such that,
for all $1 \leq i < j \leq n$, the vectors $(v_i,v_j)$ and $(w_i,w_j)$ are linearly independent.
For instance, we can define $\tilde v, \tilde w \in \mathbb R^n$
by setting $\tilde v_j = j$ and $\tilde w_j = j^2$ for each $1 \leq j \leq n$,
which can then be normalized by letting $v = \frac{\tilde v}{\lvert \tilde v \rvert}$
and $w = \frac{\tilde w}{\lvert \tilde w \rvert}$.
For this choice, we can readily verify that $v_iw_j \neq v_jw_i$ for all $i \neq j$, which implies linear independence.
Now we define
\begin{equation}\label{eq:corner_intersection}
\mathrm C:= \bigcup_{\beta \in \mathbb R} \big(\beta w +\partial^2Q\big).
\end{equation}

Then we divide the rest of the proof into three steps.

\smallskip
\textbf{1}. We first prove that, for
the
set $\mathrm C$ as defined in \eqref{eq:corner_intersection} above,
 \begin{equation}\label{eq:good_cube_key_claim}
        \mu(\alpha v + \mathrm{C}) = 0 \qquad\,\,\mbox{for $\mathcal L^1$--\textit{a.e.}\,\,$\alpha \in \mathbb R$}.
    \end{equation}

We establish this in two substeps, starting with the following algebraic result.

\textbf{1.1}. We show that there is $M \in \mathbb N$ sufficiently large such that, for any $\alpha_1,\alpha_2,\cdots,\alpha_M \in \mathbb R$ distinct,
\begin{equation}\label{eq:Mintersection_empty}
\bigcap_{k=1}^{M} ( \alpha_k v + \mathrm{C}) = \varnothing.
\end{equation}
In fact, we can take $M = 2n(n-1) + 1$.

To see this, observe that, for each $x \in \alpha v + \mathrm{C}$, there is
some $\beta \in \mathbb R$ such that $y := x - \alpha v - \beta w \in \partial^2Q$.
Then there exist $i$ and $j$ with $1 \leq i < j \leq n$ such that
$y_i \in \{ a_i, b_i\}$ and $y_j \in \{ a_j, b_j\}$.
We can associate $y$ as lying in the {\it corner} associated to $((i,j),(y_i,y_j))$, for which there are $2^2 \binom{n}{2} = 2n(n-1) = M-1$ distinct choices.

Now, for this choice of $M$, suppose that there are distinct
$\alpha_1,\cdots,\alpha_M \in \mathbb R$ for which the claim fails to hold, so that
there exists some $x \in \bigcap_{k=1}^M (\alpha_kv + \mathrm{C})$.
For each $k$, we can find $\beta_k$ such that
\begin{equation}
y_k := x - \alpha_{k} v - \beta_k w \in \partial^2Q.
\end{equation}
By the pigeonhole principle, there exists two points that lie in the same corner,
that is, there exist $k_1, k_2 \in \{1,2,\cdots,M\}$ with $k_1 \neq k_2$ and $1 \leq i < j \leq n$
such that $(y_1)_i = (y_2)_i \in\{a_i,b_i\}$ and $(y_1)_j = (y_2)_j \in\{a_j,b_j\}$.
We denote these common values by $c_i$ and $c_j$, respectively:
\begin{align}
(y_q)_i &= x_i - \alpha_{k_q}v_i - \beta_{k_q} w_i= c_i, \\
(y_q)_j &= x_j - \alpha_{k_q}v_j - \beta_{k_q} w_j = c_j,
\end{align}
for $q=1,2$.
Rearranging for $x_i-c_i$ and $x_j-c_j$, we obtain the following equations:
\begin{align}
\alpha_{k_1}v_i + \beta_{k_1} w_i &= \alpha_{k_2}v_i + \beta_{k_2} w_i,  \\
\alpha_{k_1}v_j + \beta_{k_1} w_j &= \alpha_{k_2}v_j + \beta_{k_2} w_j.
\end{align}
Since $v$ and $w$ have been chosen so that $(v_i,v_j)$ and $(w_i,w_j)$ are linearly independent, it follows that $\alpha_{k_1} = \alpha_{k_2}$ and $\beta_{k_1}=\beta_{k_2}$.
However, this contradicts the fact that $\alpha_k$ are distinct, which implies that  \eqref{eq:Mintersection_empty} holds as claimed.

\smallskip
\textbf{1.2}. For each $1 \leq k \leq M$, we will show there exists a countable set $\mathcal N_k$
with $\mathcal N_M \subset \mathcal N_{M-1} \subset \cdots \subset \mathcal N_1$
such that
  \begin{equation}\label{eq:kint_mu_null}
    \mu\big(\bigcap_{i=1}^k (\alpha_i v + \mathrm{C})\big) = 0
  \end{equation}
for all $\alpha_1, \cdots, \alpha_k \in \mathbb R \setminus \mathcal N_k$ distinct.
Note that claim \eqref{eq:good_cube_key_claim} is precisely the case $k=1$.

We show this by induction descending in $k$. First, we notice that this holds for $k = M$ with $\mathcal N_M = \varnothing$,
since the previous step ensures that the intersection is empty.

For general $k < M$, suppose that there is a $\mu$-null set $\mathcal N_{k+1}$ as claimed, and consider any finite collection $F$ of
subsets $\Lambda = \{\alpha_1,\cdots,\alpha_k\}$ containing $k$ distinct elements
taking values in $\mathbb R \setminus \mathcal N_{k+1}$.
Observe that, if $\Lambda_1,\Lambda_2 \in F$ are distinct,
then the union $\Lambda_1 \cup \Lambda_2$ contains at least $k+1$ elements and,  by assumption on $\mathcal N_{k+1}$,
\begin{equation}
\mu\big(\bigcap_{\alpha \in \Lambda_1 \cup \Lambda_2} (\alpha v + \mathrm{C})\big) = 0.
\end{equation}
Now the inclusion-exclusion principle gives
\begin{align}
&\mu\big(\bigcup_{\Lambda \in F} \bigcap_{\alpha \in \Lambda} (\alpha v + \mathrm{C})\big)
\nonumber\\
 &= \sum_{\Lambda \in F} \mu\big(\bigcap_{\alpha \in \Lambda} (\alpha v + \mathrm{C})\big)
 + \sum_{\substack{S \subset F \\ \lvert S \rvert > 1}} (-1)^{1+\lvert F \rvert}\sum_{\Lambda \in F} \mu\big(\bigcap_{\alpha \in \bigcup S} (\alpha v + \mathrm{C})\big).\label{eq:inc_exc}
\end{align}
We see that all terms in the second sum are zero, since $\bigcup S$ always contains at least $k+1$ distinct elements of $\mathbb R \setminus \mathcal N_{k+1}$.
Thus,  we deduce from \eqref{eq:inc_exc} that
\begin{equation}
\sum_{\Lambda \in F} \mu\big(\bigcap_{\alpha \in \Lambda}(\alpha v + \mathrm{C})  \big)
=\mu\big( \bigcup_{\Lambda \in F} \bigcap_{\alpha\in\Lambda}(\alpha_i^m v + \mathrm{C}) \big)
\leq \mu(\Omega) < \infty.
\end{equation}
Since $F$ is arbitrary, it follows that there are at most countably many
collections $\Lambda_m = \{\alpha_1^m,\cdots,\alpha_k^m\}$ of subsets of $k$ distinct elements in $\mathbb R \setminus \mathcal N_{k+1}$ such that
$$\mu\big(\bigcup_{\alpha \in \Lambda_m} (\alpha v + \mathrm{C})\big) \neq 0.
$$
Thus, we take $\mathcal N_k = \mathcal N_{k+1} \cup \bigcup_m  \Lambda_m$
which is $\mathcal L^1$-null, and observe that
any collection $\alpha_1, \dots, \alpha_k \in \mathbb R\setminus \mathcal N_k$
necessarily does not coincide with any of  $\Lambda_m$, which implies
that
$$
\mu\big(\sum_{i=1}^k (\alpha_iv+\mathrm{C})\big)=0.
$$
Therefore, the claim follows by induction.

\medskip
\textbf{2}: We now show that $(\alpha v + \beta w) +Q$ satisfies \eqref{eq:corner_small}
for $\mathcal L^2$--\emph{a.e.}\,\,$(\alpha,\beta) \in \mathbb R^2$,

\smallskip
Let $\alpha \in \mathbb R \setminus \mathcal N_1$.
For $\beta \in \mathbb R$ to be determined, let $x = \alpha v + \beta w$.
Then $x+Q$ has faces intersecting
with $\partial H^{\alpha v_i + \beta w_i + a_i}_{i,+}$
and $\partial H^{\alpha v_i + \beta w_i + b_i}_{i,-}$ for each $1 \leq i \leq n$.
Moreover, by Lemma \ref{lem:disint_flux}, we can estimate
  \begin{equation}
      \int_{\mathbb R} \mu^{n-1}_{H^{\alpha v_i + t + a_i}_{i,+}}
      \big( (\alpha v + \mathrm{C}) \cap \partial H^{\alpha v_i + t + a_i}_{i,+}\big) \,\d t \leq \mu(\alpha v + \mathrm{C}) = 0,
  \end{equation}
and similarly for $\partial H^{\alpha v_i + t + b_i}_{i,-}$. Thus considering $t = \beta w_i$ in the respective cases, by noting that $w_i \neq 0$,
 \begin{equation}
      \mu_{H^{\alpha v_i +\beta w_i + a_i}_{i,+}}^{n-1}(\alpha v+\mathrm{C}) =\mu_{H^{\alpha v_i + \beta w_i + b_i}_{i,-}}^{n-1}(\alpha v+\mathrm{C}) = 0
      \qquad\,\mbox{for $\mathcal L^1$--\textit{a.e.}\,\,$\beta\in\mathbb R$}.
  \end{equation}

For such $\beta$, we see that $x+Q$ with $x= \alpha v + \beta w$ satisfies \eqref{eq:corner_small}, provided that $x+Q \Subset \Omega$.
That is, for $\mathcal L^2$--\textit{a.e.}\,\,$(\alpha,\beta)\in \mathbb R^2$, the result holds for $x = \alpha v + \beta w$ .

\medskip
\textbf{3}: We now show that $x+Q$ satisfies \eqref{eq:corner_small}
for $\mathcal L^n$--\textit{a.e.}\,\,$x \in \mathbb R^n$.

For each $y \in P := (\operatorname{span}\{v,w\})^{\perp}$, applying the above to $y+Q$,
we obtain an $\mathcal L^2$-null set $\mathcal M_y \subset \mathbb R^2$
such that the result holds for $(\alpha v + \beta w) + y + Q$,
whenever $(\alpha,\beta) \notin \mathcal M_y$.
Then,
for $\widetilde{\mathcal M}_y :=\{ y+ \alpha v + \beta w
\,:\, (\alpha,\beta) \in \mathcal M_y\}$,
$\mathcal H^2(\widetilde{\mathcal M}_y)=0$ by the area formula (\cite[Theorem 2.71]{afp}).
Then,
setting $\mathcal M = \bigcup_{y \in P} \widetilde M_y$,
by the Fubini Theorem,
we have
  \begin{equation}
      \mathcal L^n(\mathcal M) = \int_{P} \mathcal H^2(\widetilde{\mathcal M}_y) \,\d \mathcal H^{n-2}(y) = 0.
  \end{equation}
Therefore, \eqref{eq:corner_small} holds for all $x \notin \mathcal M$,
provided that $x+Q \Subset \Omega$.
\end{proof}

\smallskip
\section{Cauchy Flux III: Construction and Uniqueness of the Representing Divergence-Measure Field}
\label{sec:flux_existence}

We now employ the properties of the flux established in \S 7 to work towards the proof of Theorem \ref{maintheorem}.
In particular, we define a candidate vector field $\FF$ to represent the flux, recover the balance law, and show that any such field must take this form.

\subsection{Construction of a divergence-measure field}\label{sec:flux_field_construct}

\begin{theorem}\label{thm:flux_construct}
    Given a Cauchy flux, there exists a measure-valued vector field
    \begin{equation}
        \FF = (F_1,F_2,\cdots,F_n) \in \mathcal{M}(\Omega,\mathbb R^n)
    \end{equation}
    such that $\lvert F_j \rvert \leq \mu$ for each $1 \leq j \leq n$ and
        \begin{equation}\label{eq:flux_general_integral}
        F_j(S) = \int_{\mathbb R} \mathcal{F}_{H_{j,+}^t}(S \cap \partial H_{j,+}^t) \,\d t
   \qquad\,\,\mbox{for any Borel set $S \subset \Omega$}.
   \end{equation}
   \end{theorem}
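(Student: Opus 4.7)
The plan is to take \eqref{eq:flux_general_integral} itself as the defining formula for each component $F_j$, and then verify that the resulting set function is a signed Radon measure dominated by $\mu$. Specifically, for each $1 \leq j \leq n$ and each Borel set $S \subset \Omega$, I would set
\begin{equation*}
   F_j(S) := \int_{\mathbb{R}} \mathcal{F}_{H_{j,+}^t}(S \cap \partial H_{j,+}^t) \,\d t,
\end{equation*}
where the integrand is understood as in \eqref{eq:hyperplane_flux_map}, with the convention that it vanishes on the $\mathcal L^1$-null set $\mathcal K_j$ where the sliced flux is not canonically defined. Lemma \ref{lem:hyperplane_integrable} directly provides, for every Borel $S \subset \Omega$, both the $\mathcal L^1$-measurability of this integrand and the integrability estimate $\int_{\mathbb R}\lvert\mathcal F_{H_{j,+}^t}(S\cap\partial H_{j,+}^t)\rvert\,\d t \leq \mu(S)$, so $F_j(S)$ is a well-defined finite number and automatically satisfies the set-function bound $\lvert F_j(S)\rvert \leq \mu(S)$.

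The next step would be to verify that $F_j$ is countably additive. Finite additivity is immediate because, for each $t \notin \mathcal K_j$, the functional $\mathcal F_{H_{j,+}^t}(\,\cdot \cap \partial H_{j,+}^t)$ is itself a measure on $\Omega$ by Lemma \ref{lem:flux_extension}. For $\sigma$-additivity, given pairwise disjoint Borel sets $\{S_k\}$ with $S = \bigcup_k S_k$, the identity
\begin{equation*}
   \mathcal F_{H_{j,+}^t}(S \cap \partial H_{j,+}^t) = \sum_{k=1}^{\infty} \mathcal F_{H_{j,+}^t}(S_k \cap \partial H_{j,+}^t)
\end{equation*}
holds for $\mathcal L^1$-a.e.\ $t$, and the partial sums are pointwise dominated by the integrable majorant $\mu_t(S \cap \partial H_{j,+}^t)$ coming from the disintegration estimate \eqref{eq:cube_pointwise_est} used in the proof of Lemma \ref{lem:hyperplane_integrable}. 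The Dominated Convergence Theorem then permits interchange of the sum and the integral, giving $F_j(S) = \sum_k F_j(S_k)$ as required.

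Finally, I would upgrade the pointwise bound $\lvert F_j(S)\rvert \leq \mu(S)$ on every Borel set to the total-variation bound $\lvert F_j\rvert \leq \mu$ as measures, which follows from a standard Hahn decomposition argument applied to the finite signed measure $F_j$, and assemble the components into $\FF := (F_1,\ldots,F_n) \in \mathcal M(\Omega,\mathbb R^n)$. Honestly, there is no serious obstacle remaining at this stage: the real work has been carried out in Section~\ref{sec:flux_properties}, and in particular in Lemma \ref{lem:hyperplane_integrable}, whose measurability and integrability statements render the construction a short verification. The only mild technicality is the dominated-convergence step for countable additivity, but the integrable majorant is immediately at hand.
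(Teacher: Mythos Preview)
Your proposal is correct and follows essentially the same route as the paper: define $F_j$ by the integral formula, invoke Lemma~\ref{lem:hyperplane_integrable} for well-definedness and the bound $\lvert F_j(S)\rvert \leq \mu(S)$, and then verify countable additivity. The only cosmetic difference is that the paper handles countable additivity by the direct tail estimate $\lvert F_j(S) - \sum_{k=1}^M F_j(S_k)\rvert = \lvert F_j(S\setminus\bigcup_{k\leq M}S_k)\rvert \leq \mu(S\setminus\bigcup_{k\leq M}S_k)\to 0$, whereas you invoke dominated convergence; both are immediate.
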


\begin{proof}
By Lemma \ref{lem:hyperplane_integrable}, we know that integral \eqref{eq:flux_general_integral} is well-defined for any Borel
set $S \subset \Omega$ (understanding that it is zero on the null set $\mathcal K_j$) and satisfies the estimate:
\begin{equation}\label{eq:Fj_eta_bound}
\lvert F_j(S) \rvert \leq \mu(S).
\end{equation}
From this, it also follows that $F_j(\varnothing) = 0$.
By the additivity property \ref{item:additivity} of the flux and
the linearity of the integral, we know that $F_j$ is additive on Borel sets.
To show the countable additivity, let $\{S_k\}$ be a collection of pairwise disjoint Borel subsets of $\Omega$, and let $S = \bigcup_k S_k$.
Then, by finite additivity and \eqref{eq:Fj_eta_bound}, we have
  \begin{equation}
      \Big\lvert F_j(S) - \sum_{k=1}^M F_j(S_k) \Big\vert
      = \Big\lvert F_j\big(S \setminus \bigcup_{k=1}^M S_k \big)\Big\vert
      \leq \mu\big(S \setminus \bigcup_{k=1}^M S_k \big) \to 0
\qquad\mbox{as $M \to \infty$}.
\end{equation}
This shows that each $F_j$ is a signed measure on $\Omega$, as required.
\end{proof}

\begin{theorem}[Recovery of the balance law]\label{thm:flux_globalrecovery}
The field $\FF$ constructed in {\rm Theorem \ref{thm:flux_construct}} lies in $\mathcal{DM}^{\ext}(\Omega)$ and satisfies
    \begin{equation}
        -\div \FF = \sigma \qquad\text{in } \Omega.
    \end{equation}
In particular, for any $U \Subset \Omega$,
\begin{equation}\label{eq:global_recovery}
\mathcal F_U(\partial U) = \sigma(U)
= \langle \FF \cdot \nu, \,\mathbbm{1}_{\Omega} \rangle_{\partial U}.
\end{equation}
\end{theorem}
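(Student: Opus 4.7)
The plan is to reduce the theorem to proving the distributional identity $-\div \FF = \sigma$ on $\Omega$. Once that is in hand, the finiteness of $\sigma$ gives $\lvert\div \FF\rvert(\Omega) < \infty$ and hence $\FF \in \mathcal{DM}^{\ext}(\Omega)$; the balance identity \eqref{eq:global_recovery} then follows from $\mathcal F_U(\partial U) = \sigma(U)$ (the balance law \eqref{balancelaw2}) combined with \eqref{eq:trace_balance_law}. As a preliminary, I would establish the Fubini-type identity
\begin{equation}\label{eq:plan_fubini_F}
    \int_\Omega g \,\d F_j = \int_{\mathbb R}\int_{\partial H_{j,+}^t} g \,\d\mathcal F_{H_{j,+}^t}\,\d t
\end{equation}
for every bounded Borel function $g$ on $\Omega$. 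By construction \eqref{eq:flux_general_integral} this holds with $g = \mathbbm 1_S$ for any Borel $S \subset \Omega$; linearity extends it to simple functions, and dominated convergence, using the pointwise bound $\lvert\mathcal F_{H_{j,+}^t}\rvert \le \mu_{H_{j,+}^t}^{n-1}$ from Condition \ref{item:eta_bound} and the $\mathcal L^1(\d t)$-integrability furnished by Lemmas \ref{lem:disint_flux} and \ref{lem:hyperplane_integrable}, pushes it to all bounded Borel $g$.

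The heart of the argument is a cube-partition/telescoping scheme. Fix a cube $R \Subset \Omega$ with $\supp \phi \subset R$ and, for small $\delta>0$ and a translate $\tau \in (-\delta,\delta)^n$, consider the grid $Q_k^\tau = \tau + k\delta + (0,\delta)^n$ for $k \in \mathbb Z^n$. By Lemma \ref{lem:good_cubes} and the $\mathcal L^1$-nullity of each $\mathcal K_j$ from Lemma \ref{lem:hyperplane_integrable}, for $\mathcal L^n$-a.e.~$\tau$ every grid cube intersecting $R$ satisfies the hypotheses of Corollary \ref{cor:cube_localisation2}. Fixing such a $\tau$ and writing $a^k=\tau+k\delta$, $b^k=a^k+\delta(1,\dots,1)$, $c_k=a^k+\tfrac{\delta}{2}(1,\dots,1)$, the balance law \eqref{balancelaw2} combined with the face decomposition \eqref{eq:cube_faces2} gives
\begin{equation}
    \sigma(Q_k^\tau) = \sum_{j=1}^n\!\Big(\mathcal F_{H_{j,+}^{a_j^k}}(\partial Q_k^\tau \cap \partial H_{j,+}^{a_j^k}) - \mathcal F_{H_{j,+}^{b_j^k}}(\partial Q_k^\tau \cap \partial H_{j,+}^{b_j^k})\Big).
\end{equation}
Multiplying by $\phi(c_k)$ and summing over $k$ (finitely many nonzero terms), the left-hand side converges to $\int_\Omega \phi \,\d\sigma$ as $\delta \to 0$ by standard Riemann-sum convergence for a continuous function against a finite Radon measure. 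On the right-hand side, for each $j$ set $t_l = \tau_j+l\delta$ and $k=(k',l)$ with $k'\in\mathbb Z^{n-1}$; cubes with $k_j=l$ contribute their lower $j$-face and cubes with $k_j=l-1$ their upper $j$-face at the common hyperplane $\partial H_{j,+}^{t_l}$, so the $j$-contributions at level $t_l$ collect into $\int_{\partial H_{j,+}^{t_l}} \Delta_\delta^{j,l}\,\d\mathcal F_{H_{j,+}^{t_l}}$, where $\Delta_\delta^{j,l}$ equals $\phi(c_{(k',l)})-\phi(c_{(k',l-1)})$ on the $(n-1)$-cell indexed by $k'$. Since $c_{(k',l)}-c_{(k',l-1)} = \delta\,e_j$ and $\phi \in C^1$, one has $\Delta_\delta^{j,l}/\delta \to \partial_j\phi$ uniformly on $\supp\phi$. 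Summation in $l$ is then a Riemann sum in step $\delta$ approximating $\int_\mathbb R \int_{\partial H_{j,+}^t} \partial_j\phi \,\d\mathcal F_{H_{j,+}^t}\,\d t$, and passing to the limit and invoking \eqref{eq:plan_fubini_F} yields $\sum_k \phi(c_k)\mathcal F_{Q_k^\tau}(\partial Q_k^\tau) \to \sum_j \int_\Omega \partial_j\phi\,\d F_j$; comparing with the left-hand limit gives $-\int_\Omega \nabla\phi\cdot\d\FF = \int_\Omega \phi\,\d\sigma$, as desired.

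The main technical obstacle will be making this last Riemann-sum-in-$l$ limit fully rigorous, because the measures $\mathcal F_{H_{j,+}^{t_l}}$ vary non-trivially with $l$ and can carry singular concentrations. The crucial input is the uniform majorization $\lvert\mathcal F_{H_{j,+}^t}\rvert \le \mu_{H_{j,+}^t}^{n-1}$ from Condition \ref{item:eta_bound} combined with the disintegration of $\mu$ from Lemma \ref{lem:disint_tau_decomp}, which identifies $\mu_{H_{j,+}^t}^{n-1}$ with the absolutely-continuous slice $\mu_t$ for $\mathcal L^1$-a.e.~$t$; this produces an $\mathcal L^1(\d t)$-integrable dominant. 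Combined with the uniform convergence $\Delta_\delta^{j,l}/\delta \to \partial_j\phi$ from the $C^1$-regularity of $\phi$, a dominated convergence argument in the $t$ variable controls the error and closes the Riemann-sum step. The resulting distributional identity $-\div\FF = \sigma$ completes the proof of the main theorem for the Cauchy flux.
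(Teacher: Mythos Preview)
Your route is genuinely different from the paper's. The paper argues by mollification: with $\FF_\delta = \FF\ast\rho_\delta$, the classical divergence theorem on a \emph{fixed} cube $Q$ expresses $\int_Q\div\FF_\delta$ as a boundary integral; unpacking the convolution together with the disintegration $F_j = \mathcal L^1\otimes_{\partial H_j^t}\mathcal F_{H_j^t}$ turns each face contribution into an average over translates $z$ of the corresponding half-space flux. Corollary~\ref{cor:cube_localisation2} then sums the face fluxes of $Q_{-z}$ to $\sigma(Q_{-z})$ for $\mathcal L^n$--a.e.\ $z$, so $\int_Q\div\FF_\delta = -\int\rho_\delta(z)\sigma(Q-z)\,\d z = -\int_Q\sigma\ast\tilde\rho_\delta$; as this holds for every cube, $-\div\FF_\delta=\sigma\ast\tilde\rho_\delta$ in $\Omega^\delta$, and $\delta\to0$ finishes. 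Your scheme is the dual picture: keep $\FF$ exact but discretize the test function over a $\delta$-grid and telescope.

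The weak point is the Riemann-sum-in-$l$ passage. Dominated convergence governs limits of \emph{integrals}, not of Riemann sums: the slice function $t\mapsto\int_{\partial H_{j,+}^t}\partial_j\phi\,\d\mathcal F_{H_{j,+}^t}$ is only known to lie in $\Leb^1(\d t)$ via your majorant $\mu_t$, and Riemann sums of a merely $\Leb^1$ function need not converge for a given offset---indeed $\delta\sum_l\mu_{t_l}(\supp\phi)$ can be unbounded on a dense set of offsets. Since your $\tau$ is already chosen $\delta$-dependently to satisfy the corner conditions of Lemma~\ref{lem:good_cubes}, you have not explained how to simultaneously guarantee Riemann-sum convergence at the nodes $t_l=\tau_j+l\delta$; the $\Leb^1$ dominant alone does not do this. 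The clean repair is to \emph{average} your grid identity over $\tau\in(0,\delta)^n$ before sending $\delta\to0$: integration in $\tau_j$ converts $\sum_l(\cdot)(\tau_j+l\delta)$ into $\delta^{-1}\int_{\mathbb R}(\cdot)(t)\,\d t$, after which your Fubini identity \eqref{eq:plan_fubini_F} and dominated convergence are legitimately applicable. Note, however, that this averaging is precisely convolution against the box kernel $\delta^{-n}\mathbbm 1_{(0,\delta)^n}$, so the repaired argument essentially collapses to the paper's mollification proof with a particular choice of mollifier.
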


\begin{proof}
To simplify notation, we write $H_{j,+}^t = H_j^t$.
Let $\rho_{\delta}$ be a standard mollifier,
and define $\FF_{\delta}:= \FF \ast \rho_{\delta}$.
Then, for any cube $Q = Q(a,b) \Subset \Omega$ and
$0 < \delta < \dist(Q,\partial\Omega)$, since $\FF_{\delta}$ is smooth
on $\overline Q$, the classical Divergence Theorem gives
\begin{equation}
\begin{split}
 \int_{Q} \div \FF_{\delta} (x) \,\d x
 &= -\int_{\partial Q} \FF_{\delta}(x) \cdot \nu_{\partial Q} (x)\,\d \mathcal{H}^{n-1} (x)\\
 &= - \int_{\partial Q} \int_{\mathbb{R}^n} \rho_{\delta} (x-y)\,\nu_{\partial Q} (x) \cdot \d \FF (y)\,\d \mathcal{H}^{n-1} (x),
\end{split}
\end{equation}
where $\nu_{\partial Q}$ is the inwards facing normal for $\partial Q$.

We consider the face: $\partial Q \cap \partial H_1^{a_1}$
on which $\nu_{\partial Q}= e_1$.
Then, by construction of $F_j$ from Theorem \ref{thm:flux_construct}, we know that the disintegration
\begin{equation}
  F_j = \mathcal L^1 \otimes_{\partial H_{j}^t} F_{j,t} = \mathcal L^1 \otimes_{\partial H_{j}^t} \mathcal F_{H_{j}^t}(\,\cdot \cap \partial H_{j}^t)
\end{equation}
holds.
Using this and writing $Q = (a_1,b_1) \times Q'$, we can write
\begin{equation}
\begin{split}
&\int_{\partial Q \cap \partial H_1^{a_1}}
\int_{\mathbb{R}^n} \rho_{\delta} (x-y)\,\nu_{\partial Q}(x)\cdot \d \FF (y)\,\d \mathcal{H}^{n-1}(x)\\
&= \int_{\mathbb R^{n-1}} \int_{\mathbb R} \int_{\mathbb R^{n-1}} \chi_{Q'}(x')\rho_{\delta}(a_1-y_1,x'-y')\, \d F_{1,y_1}(y) \,\d y_1\,\d x' \\
&= \int_{\mathbb R^{n-1}} \int_{\mathbb R} \int_{\mathbb R^{n-1}} \chi_{Q'-z'}(y')\rho_{\delta}(z_1,z')\, \d F_{1,a_1-z_1}(a_1-z_1,y') \,\d z_1\,\d z' \\
&= \int_{\mathbb R^n} \rho_{\delta}(z) F_{1,a_1-z_1}(\{a_1 - z_1\} \times (Q'-z'))\,\d z \\
&= \int_{\mathbb R^n}  \rho_{\delta}(z) \mathcal F_{-z + H_1^{a_1}}(-z+(\partial Q \cap \partial H_1^{a_1})) \,\d z,
\end{split}
\end{equation}
where we have made the change of variables $y_1 \mapsto z_1 = a_1-y_1$
and $x' \mapsto z' = x'-y'$.
The remaining sides can be computed similarly.
Writing $Q_z = z+Q$ that has faces $H_{i}^{a_i-z_i}$ and $H_i^{b_i-z_i}$,
we deduce
\begin{equation}
  \begin{split}
  \int_{Q} \div \FF_{\delta} (x) \,\d x
  &= \sum_{i=1}^n \int_{\mathbb R^n} \rho_{\delta}(z) \mathcal F_{H_j^{a_j-z_j}}(\partial Q_{-z} \cap \partial H_j^{a_j-z_j}) \,\d z \\
  &\quad- \sum_{i=1}^n \int_{\mathbb R^n} \rho_{\delta}(z) \mathcal F_{H_j^{b_j-z_j}}(\partial Q_{-z} \cap \partial H_j^{b_j-z_j})\,\d z.
  \end{split}
\end{equation}
Now, by Lemmas \ref{cor:cube_localisation2} and \ref{lem:good_cubes},
for $\mathcal L^n$--\textit{a.e.}\,\,$z$ with $\lvert z\rvert \leq \dist(Q,\partial\Omega)$,
we have
\begin{equation}
\begin{split}
&\sum_{i=1}^n \int_{\mathbb R^n} \Big(\mathcal F_{H_j^{a_j-z_j}}(\partial Q_{-z} \cap \partial H_j^{a_j-z_j}) - \mathcal F_{H_j^{b_j-z_j}}(\partial Q_{-z} \cap \partial H_j^{b_j-z_j}) \Big)\\
& = \mathcal F_{Q_{-z}}(\partial Q_{-z}) = \sigma(Q_{-z}).
\end{split}
\end{equation}
Thus, we obtain
\begin{equation}
  \int_Q \div \FF_{\delta}(z) \,\d x = -\int_{\mathbb R^n} \rho_{\delta}(z) \sigma(Q+z) \,\d z = -\int_Q \sigma \ast \tilde\rho_{\delta}(x) \,\d x,
\end{equation}
where $\tilde\rho_{\delta}(x) = \rho_{\delta}(-x)$ is also a mollifier.
Since this holds for all cubes $Q \Subset \Omega$ whenever $0<\delta < \dist(Q,\partial\Omega)$, it follows that
\begin{equation}
  -\div \FF \ast \rho_{\delta} = \sigma \ast \tilde\rho_{\delta} \qquad\text{in } \Omega^{\delta}.
\end{equation}
Sending $\delta \to 0$, we deduce that
$\FF \in \mathcal{DM}^{\ext}(\Omega)$ with $-\div \FF = \sigma$.
Finally, it follows from the balance law \eqref{balancelaw2} that,
for any $U \Subset \Omega$,
\begin{equation}
  \mathcal F_U(\partial U) = \sigma(U) = -\int_U \d(\div \FF),
\end{equation}
and, by \eqref{eq:trace_balance_law},
this is equal to $\langle \FF \cdot \nu, \,\mathbbm{1}_{\Omega}\rangle_{\partial U}$, thereby establishing \eqref{eq:global_recovery}.
\end{proof}

\subsection{Uniqueness of the field}\label{sec:flux_field_unique}

We now show that the constructed field $\FF$ is unique.
Note that, in Theorem \ref{thm:flux_construct}, the divergence-measure field $\FF$
has been constructed componentwise by integrating the fluxes
on $\partial H_{j,+}^t$ with respect to the Lebesgue measure.
However, this choice of the measure is seemingly arbitrary,
and one may wonder if there is a {\it singular part} that is missed with this construction.
This possibility is essentially ruled out by
Theorem \ref{thm:reconstruct_normaltrace}; however,
this cannot be directly applied with $U = H_{j,+}^t$ since $U \not\Subset \Omega$.
Nevertheless, using the localization results from \S \ref{sec:localisation}
and \S \ref{sec:flux_property_general}, we can adapt the result as follows:

\begin{lemma}\label{lem:hyperplane_reconstruct}
Let $\FF = (F_1,F_2,\cdots,F_n) \in \mathcal{DM}^{\ext}(\Omega)$.
Consider the disintegration{\rm :}
\begin{equation}
F_j = \tau \otimes_{\partial H_{j,+}^t} \tilde F_{j,t}
\qquad \mbox{ for $1 \leq j \leq n$},
\end{equation}
where $H_{j,+}^t = \{ x \in \mathbb R^n : x_j > t\}$, extending $F_j$
by zero to a measure on $\mathbb R^n$.
Then $\tau \ll \mathcal L^1$ and, for $\mathcal L^1$--\textit{a.e.}\,\,$t \in \mathbb R$,  $F_{j,t} := D_{\mathcal L^1}\tau(t) \tilde F_{j,t}$ is well-defined and satisfies
\begin{equation}
F_{j,t} \res \widetilde\Omega^{\delta}= (\FF \cdot \nu)_{\partial (\widetilde\Omega^{\delta} \cap H_{j,+}^t)} \res  \widetilde\Omega^{\delta} \qquad\text{for } \mathcal L^1\textit{--a.e.\,\,} \delta>0
\end{equation}
as measures. In particular,
$F_j = \mathcal L^1 \otimes_{\partial H_{j,+}^t} F_{j,t}.$
\end{lemma}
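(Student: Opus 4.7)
The plan is to adapt the proof of Theorem~\ref{thm:reconstruct_normaltrace} to the slicing of $F_j$ by the affine hyperplanes $\{\partial H_{j,+}^t\}_{t\in\mathbb R}$ (i.e., the level sets of the coordinate function $x_j$), rather than by the level sets of the distance function of a bounded open set. The key point is that, although the half-spaces $H_{j,+}^t$ are unbounded, all relevant identities can be localized via compactly supported test functions to reduce to the bounded truncations $U_t := \widetilde\Omega^\delta \cap H_{j,+}^t \Subset \Omega$.

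First, I would show $\tau \ll \mathcal L^1$. Testing the distributional identity $\div\FF = -\sigma$ against $\psi(x_j)\,h(x) \in C^1_{\mathrm c}(\Omega)$ for $\psi \in C^1_{\mathrm c}(\mathbb R)$ and $h \in C^\infty_{\mathrm c}(\Omega)$ yields
\begin{equation}
    \int_\Omega \psi'(x_j)\, h(x) \,\d F_j = -\int_\Omega \psi(x_j)\, \nabla h \cdot \,\d\FF - \int_\Omega \psi(x_j)\, h(x) \,\d(\div\FF).
\end{equation}
Pushing forward by $x \mapsto x_j$, the right-hand side equals $-\int_{\mathbb R} \psi \,\d\lambda_h$ for some finite Radon measure $\lambda_h$ on $\mathbb R$, while the left-hand side becomes $\int_{\mathbb R} \psi'(t)\,\alpha_h(t) \,\d\tau(t)$ via the disintegration, with $\alpha_h(t) := \int_{\partial H_{j,+}^t} h \,\d\tilde F_{j,t}$. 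Thus $(\alpha_h\tau)' = \lambda_h$ as distributions in $\mathbb R$, and Lemma~\ref{lem:1d-bv} gives $\alpha_h\tau \ll \mathcal L^1$ for every such $h$, so that $\alpha_h = 0$ holds $\tau_{\mathrm{sing}}$--a.e. Approximating the polar density $\tilde g := D_{|F_j|}F_j$ by a countable family $\{g_k\} \subset C^\infty_{\mathrm c}(\Omega)$ with $|g_k| \leq 1$ and $g_k \to \tilde g$ in $L^1(|F_j|)$ (via the Lusin theorem, exactly as in Step~3 of Theorem~\ref{thm:reconstruct_normaltrace}), and using Lemma~\ref{lem:disint_polar} to identify $D_{|F_j|}F_j = D_{|\tilde F_{j,t}|}\tilde F_{j,t}$ on each slice, we conclude $|\tilde F_{j,t}|(\partial H_{j,+}^t) = 0$ $\tau_{\mathrm{sing}}$--a.e., contradicting the fact that $\tilde F_{j,t}$ is a probability measure for $\tau$--a.e.\ $t$. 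Hence $\tau_{\mathrm{sing}} = 0$.

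With $\tau = T \mathcal L^1$ and $F_{j,t} := T(t)\tilde F_{j,t}$, I would identify $F_{j,t} \res \widetilde\Omega^\delta$ with the normal trace on $\partial U_t \cap \widetilde\Omega^\delta$ via Theorem~\ref{thm:averaged_trace}. For $\phi \in C_{\mathrm c}(\widetilde\Omega^\delta)$ supported in $\widetilde\Omega^{\delta+\delta_0}$ for some $\delta_0>0$, once $\eps<\delta_0$ the intersection $\spt(\phi) \cap (U_t \setminus \overline{U_t^\eps})$ lies in the slab $\{t < x_j < t+\eps\}$ on which $d_{U_t}(x) = x_j - t$, so that $\overline{\nabla d_{U_t}\cdot\FF}$ coincides with $F_j$ there by the product rule. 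Theorem~\ref{thm:averaged_trace} then gives
\begin{equation}
    \langle \FF\cdot\nu, \phi\rangle_{\partial U_t} = \lim_{\eps\to 0}\frac{1}{\eps}\int_t^{t+\eps}\Big(\int_{\partial H_{j,+}^s}\phi \,\d\tilde F_{j,s}\Big) T(s) \,\d s = \int \phi \,\d F_{j,t}
\end{equation}
for $\mathcal L^1$--a.e.\ $t$ by Lebesgue differentiation. A countable density argument over $\phi$ upgrades this scalar identity to the measure-level equality $F_{j,t} \res \widetilde\Omega^\delta = (\FF\cdot\nu)_{\partial U_t} \res \widetilde\Omega^\delta$ for $\mathcal L^1$--a.e.\ $t$ and $\mathcal L^1$--a.e.\ $\delta$ (with $\widetilde\Omega^\delta \in \widetilde{\mathcal O}_{|\FF|}$). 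The final claim $F_j = \mathcal L^1 \otimes_{\partial H_{j,+}^t} F_{j,t}$ is then immediate upon substituting $\tau = T\mathcal L^1$ into the disintegration formula.

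The principal obstacle is the measure-theoretic bridge in the first step, namely passing from the scalar absolute continuity $\alpha_h\tau \ll \mathcal L^1$ (valid for each fixed $h$) to the absolute continuity of $\tau$ itself. This requires producing a countable family of smooth compactly supported $h$'s whose pairings $\alpha_h$ simultaneously detect $|\tilde F_{j,t}|$ on every $\tau_{\mathrm{sing}}$-nontrivial slice, and is achieved through the Lusin approximation of the polar decomposition of $F_j$ combined with Lemma~\ref{lem:disint_polar}; the localisation step is, by comparison, a routine consequence of the averaged trace formula once the distance function has been identified locally with the affine function $x_j - t$.
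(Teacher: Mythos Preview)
Your argument is correct and takes a somewhat different route from the paper's. The paper does not re-run the $\tau_{\mathrm{sing}}=0$ argument for the hyperplane slicing directly; instead it invokes the Cauchy flux machinery (Theorem~\ref{prop:flux_sufficiency} and Lemma~\ref{lem:flux_extension}) to define the extended trace $\widetilde{\mathcal F}_{H_{j,+}^t}$, then applies Theorem~\ref{thm:reconstruct_normaltrace} as a black box to the bounded set $U=\widetilde\Omega^\delta\cap H_{j,+}^t$, and finally localizes via Theorem~\ref{thm:localisation} on the strip $V_{t,\delta}$ where $d_U=x_j-t$. The absence of a singular part is thus inherited from Theorem~\ref{thm:reconstruct_normaltrace} applied to the truncation, rather than proved afresh for the global hyperplane disintegration.

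Your approach, by contrast, transplants Steps~1--3 of the proof of Theorem~\ref{thm:reconstruct_normaltrace} directly to the coordinate function $x_j$ in place of $d_U$: testing against $\psi(x_j)h(x)$, invoking Lemma~\ref{lem:1d-bv} to get $\alpha_h\tau\ll\mathcal L^1$, and then using the Lusin/polar-decomposition argument to kill $\tau_{\mathrm{sing}}$. This avoids the Cauchy flux scaffolding entirely and is more self-contained; it also makes transparent that the lemma is really a variant of Theorem~\ref{thm:reconstruct_normaltrace} for an affine ``distance'' function. The paper's route is more modular---it reuses already-proven results---at the cost of threading through the flux-extension formalism. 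One small point: in your second step you test against $\phi\in C_{\mathrm c}(\widetilde\Omega^\delta)$, but Theorem~\ref{thm:averaged_trace} is stated for $\phi\in W^{1,\infty}(\Omega)$; you should first run the argument for Lipschitz $\phi$ and then pass to general $C_{\mathrm c}$ once the trace is known to be a measure (your ``countable density argument'' implicitly does this, but the quantifier on $\phi$ should be tightened).
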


\begin{proof}
By Theorem \ref{prop:flux_sufficiency}, the divergence-measure field
$\FF$ defines a Cauchy flux $\widetilde{\mathcal F}$ via its normal trace
with $\mu = \lvert \FF \rvert$.
Fix $t \in \mathbb R$ be such that
\begin{equation}
\frac1{\eps} \lvert \FF \rvert \res H_{j,+}^t \setminus \overline{H_{j,+}^{t+\eps}}
\xrightharpoonup{\,\, * \,\,}
\mu^{n-1}_t \qquad\text{ as } \eps\to0
\end{equation}
to some limiting measure $\mu_t^{n-1}$, which is satisfied
for $\mathcal L^1$--\textit{a.e.}\,\,$t\,$ by Lemma \ref{lem:disint_flux} applied
with $\mu = \lvert \FF \rvert$.
Then, by Lemma \ref{lem:flux_extension}, there is a well-defined measure
$\widetilde{\mathcal F}_{H_{j,+}^t}(\cdot)$ on $\Omega$ such that
\begin{equation}\label{eq:hyperplane_trace_equality}
\widetilde{\mathcal F}_{H_{j,+}^t}(S)
=(\FF \cdot \nu)_{\partial(\widetilde\Omega^{\delta} \cap H_{j,+}^t)}(S)
\end{equation}
for $\mathcal L^1$--\textit{a.e.}\,\,$\delta\in(0,1)$ with
$\widetilde\Omega^{\delta} \in \mathcal O_{\lvert \FF \rvert}$ and
for any Borel set $S \subset \widetilde\Omega^{\delta} \cap \partial H_{j,+}^t$,
where $\widetilde\Omega^{\delta}$ is defined in \eqref{eq:tilde_omega_delta}.
For any such $\delta>0$, we can apply Theorem \ref{thm:reconstruct_normaltrace}
to $U := \widetilde\Omega^{\delta} \cap H_{j,+}^t \Subset \Omega$, which gives
    \begin{equation}\label{eq:hyperplane_reconstruct}
    \int_0^{\delta} \langle \FF \cdot \nu, \,\phi \rangle_{\partial(\widetilde\Omega^{\delta} \cap H_{j,+}^t)^{\eps}} \,\d \eps
    = \int_{U \setminus \overline{U^{\delta}}} \,\phi \,\d\overline{\nabla d_U \cdot \FF}
    \qquad\text{for any } \phi \in \Lip_{\mathrm{c}}(\Omega).
  \end{equation}
 Now suppose further that $\phi$ is compactly supported
in $\widetilde\Omega^{2\delta}$, and set
\begin{equation}
V_{t,\delta} := \widetilde\Omega^{2\delta} \cap
(H_{j,+}^t \setminus \overline{H_{j,+}^{t+\delta}}).
\end{equation}
Then, for each $\eps \in (0,\delta)$,
observe that $d_U(x) = d_{H_{j,+}^t}(x) =  \eps$
for $x \in \widetilde\Omega^{2\delta} \cap \partial H_{j,+}^{t+\eps}$,
since $\dist(x,\partial\widetilde\Omega^{\delta}) \geq \min\{\delta,\delta^{-1}\}$
and $\delta<1$.
Hence, $V_{t,\delta} \subset U \setminus \overline{U}^{\delta}$
and $d_U = d_{H_{j,+}^t}$ on $V_{t,\delta}$ that is open.
Then, by the product rule (Theorem \ref{productrule}), we infer
\begin{equation}\label{eq:Fj_measure_equality}
    \overline{\nabla d_U \cdot \FF} \res V_{t,\delta} = \overline{\nabla d_{H_{j,+}^t} \cdot \FF} \res V_{t,\delta}  = \FF \cdot e_j \res V_{t,\delta}
 \end{equation}
as measures.
In addition, for each $\eps \in (0,\delta)$, the above implies that
  \begin{equation}
      (\widetilde\Omega^{\delta} \cap H_{j,+}^t)^{\eps} \cap V_{\delta} = \widetilde\Omega^{\delta} \cap  H_{j,+}^{t+\eps} \cap V_{\delta}.
  \end{equation}
By localization (Theorem \ref{thm:localisation}), we have
  \begin{equation}\label{eq:slice_fluxes_local}
      (\FF \cdot \nu)_{\partial(\widetilde\Omega^{\delta} \cap H_{j,+}^t)^{\eps}} \res V_{\delta} = (\FF \cdot \nu)_{\partial(\widetilde\Omega^{\delta} \cap H_{j,+}^{t+\eps})} \res V_{\delta} = \widetilde{\mathcal F}_{H_{j,+}^t} \res V_{\delta}
 \qquad\mbox{for $\mathcal L^1$--\textit{a.e.}\,\,$\eps$},
 \end{equation}
 by using \eqref{eq:hyperplane_trace_equality}.
Combining \eqref{eq:slice_fluxes_local} with \eqref{eq:Fj_measure_equality} in \eqref{eq:hyperplane_reconstruct}, we arrive at
  \begin{equation}
    \int_0^{\delta} \int_{\Omega \cap \partial H_{j,+}^{t+\eps}} \phi \,\d \widetilde{\mathcal F}_{H_{j,+}^{t+\eps}} \,\d \eps = \int_{H_{j,+}^t \setminus \overline{H_{j,+}^{t+\delta}}} \,\phi \,\d F_j, \qquad\text{for any } \phi  \in \Lip_{\mathrm{c}}(\Omega^{2\delta}).
  \end{equation}
  This shows that
\begin{equation}
    F_j \res V_{t,\delta} = \mathcal L^1 \otimes_{\partial H_{j,+}^t} \widetilde{\mathcal F}_{H_{j,+}^{t}}\res V_{t,\delta}
\end{equation}
as measures.
Since $\delta \in (0,1)$ is arbitrary and
$\mathcal L^1$--\textit{a.e.}\,\,$t \in \mathbb R$ can be taken, it follows that
\begin{equation}
F_j = \mathcal L^1 \otimes_{\partial H_{j,+}^t} \widetilde{\mathcal F}_{H_{j,+}^{t}}
\end{equation}
as measures in $\mathbb R^n$. Then the result follows by using \eqref{eq:hyperplane_trace_equality}.
\end{proof}

\begin{theorem}[Uniqueness of the representing field]\label{thm:flux_unique}
Given a Cauchy flux $\mathcal F$, suppose that there exists a field $\GG \in \mathcal{DM}^{\ext}(\Omega)$ such that
\begin{equation}\label{eq:normaltrace_unique}
    \mathcal F_{U}(S) = (\GG \cdot \nu)_{\partial U}(S)
\qquad\mbox{for any $U \in \mathcal O_{\mu}$ and
any Borel set $S \subset \partial U$}.
\end{equation}
Then $\FF = \GG$ as measures, where $\FF$ the field constructed in {\rm Theorem \ref{thm:flux_construct}}.
\end{theorem}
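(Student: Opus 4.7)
The plan is to apply Lemma \ref{lem:hyperplane_reconstruct} to the given field $\GG$ and identify the resulting slicewise measures on hyperplanes with the very fluxes used to define $\FF$ in \eqref{eq:flux_general_integral}. Extending $G_j$ and $\mu$ by zero to $\mathbb R^n$, Lemma \ref{lem:hyperplane_reconstruct} yields, for each $1 \leq j \leq n$, a decomposition
\begin{equation*}
G_j = \mathcal L^1 \otimes_{\partial H_{j,+}^t} G_{j,t}, \qquad G_{j,t} \res \widetilde\Omega^{\delta} = (\GG \cdot \nu)_{\partial(\widetilde\Omega^{\delta} \cap H_{j,+}^t)} \res \widetilde\Omega^{\delta},
\end{equation*}
valid for $\mathcal L^1$--\textit{a.e.}\,\,$t \in \mathbb R$ and $\mathcal L^1$--\textit{a.e.}\,\,$\delta > 0$.

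The second step is to invoke the hypothesis \eqref{eq:normaltrace_unique} on the sets $U = \widetilde\Omega^{\delta} \cap H_{j,+}^t$. By Lemma \ref{lem:disint_flux}, $H_{j,+}^t \in \widetilde{\mathcal O}_{\mu}$ for $\mathcal L^1$--\textit{a.e.}\,\,$t$, and $\widetilde\Omega^{\delta} \in \widetilde{\mathcal O}_{\mu}$ for $\mathcal L^1$--\textit{a.e.}\,\,$\delta$, so Lemma \ref{lem:sop_algebra} gives $U \in \mathcal O_{\mu}$ for $\mathcal L^2$--\textit{a.e.}\,\,$(t,\delta)$. The hypothesis then yields $(\GG \cdot \nu)_{\partial U}(S) = \mathcal F_U(S)$, while the definition of the flux extension in Lemma \ref{lem:flux_extension} identifies this with $\mathcal F_{H_{j,+}^t}(S)$ for Borel $S \subset \widetilde\Omega^{\delta} \cap \partial H_{j,+}^t$. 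Combining these identifications,
\begin{equation*}
G_{j,t}(S) = \mathcal F_{H_{j,+}^t}(S \cap \partial H_{j,+}^t)
\end{equation*}
for $\mathcal L^1$--\textit{a.e.}\,\,$t$ and all Borel $S \subset \widetilde\Omega^{\delta}$. Since $\Omega = \bigcup_{\delta > 0} \widetilde\Omega^{\delta}$, taking $\delta$ along a countable sequence for which the identities are valid upgrades the equality to all Borel $S \subset \Omega$. Integrating in $t$ and comparing with \eqref{eq:flux_general_integral} gives $G_j = F_j$ as signed measures on $\Omega$ for each $j$, hence $\FF = \GG$ as vector-valued Radon measures.

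The chief obstacle is that $H_{j,+}^t$ is not compactly contained in $\Omega$, so Theorem \ref{thm:reconstruct_normaltrace} cannot be applied to it directly; Lemma \ref{lem:hyperplane_reconstruct}, obtained via the exhaustion by $\widetilde\Omega^{\delta}$ together with the localization property of Theorem \ref{thm:localisation}, is tailor-made to resolve exactly this difficulty, and is the only nontrivial input beyond the construction of $\FF$ itself. A secondary bookkeeping issue is controlling the various $\mathcal L^1$--null sets in $t$ and $\delta$ simultaneously, but this is handled by a routine Fubini argument together with the fact that $\widetilde\Omega^{\delta}$ exhausts $\Omega$ monotonically.
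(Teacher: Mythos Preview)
Your proposal is correct and follows essentially the same route as the paper: both arguments invoke Lemma \ref{lem:hyperplane_reconstruct} to disintegrate each component $G_j$ along hyperplanes, then identify the slice measures with the extended flux $\mathcal F_{H_{j,+}^t}$ via the hypothesis applied on $\widetilde\Omega^{\delta}\cap H_{j,+}^t \in \mathcal O_{\mu}$ (through Lemma \ref{lem:flux_extension}), and conclude $G_j=F_j$ by comparison with \eqref{eq:flux_general_integral}. The paper phrases the identification step slightly differently, first packaging the normal trace of $\GG$ as its own Cauchy flux $\mathcal G$ (with control measure $\lvert\GG\rvert$) before matching $\mathcal F_{H_{j,+}^t}=\mathcal G_{H_{j,+}^t}$, but this is a cosmetic difference and your direct appeal to the hypothesis on the intersected domain achieves the same end.
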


\begin{proof}
By Theorem \ref{prop:flux_sufficiency}, the field $\GG = (G_1,G_2,\cdots,G_n)$ defines
a Cauchy flux via the normal trace, which is denoted by $\mathcal G$.
Let $1 \leq j \leq n$, and consider $t \in \mathbb R$ for which both limits:
\begin{align}
\frac1{\eps} \mu \res (H_{j,+}^t \setminus \overline{H_{j,+}^{t+\eps}})
\xrightharpoonup{\,\, * \,\,}
\mu_t^{n-1}, \qquad
\frac1{\eps} \lvert \GG \rvert \res (H_{j,+}^t \setminus \overline{H_{j,+}^{t+\eps}})
\xrightharpoonup{\,\, * \,\,}
\tilde\mu_t^{n-1}
\end{align}
exist as $\eps \to 0$, which is satisfied
by $\mathcal L^1$--\textit{a.e.}\,\,$t \in \mathbb R$.
For any such $t$, applying Lemma \ref{lem:flux_extension}
to $\mathcal F$ and $\mathcal G$ with $U = H_{j,+}^t$, we see that
\begin{equation}
\mathcal F_{H_{j,+}^t}(\,\cdot \cap\Omega\cap \partial H_{j,+}^t) = \mathcal G_{H_{j,+}^t}(\,\cdot \cap\Omega\cap \partial H_{j,+}^t)
\end{equation}
are well-defined and agree as measures,
since they agree on $\widetilde\Omega^{\delta} \cap \partial H_{j,+}^t$
for $\mathcal L^1$--\textit{a.e.}\,\,$\delta>0$.
Then, applying Lemma \ref{lem:hyperplane_reconstruct} above, we conclude
 \begin{equation}
 G_j = \mathcal L^1 \otimes \mathcal F_{H_{j,+}^t}(\,\cdot\cap \partial H_{j,+}^t) \res \Omega \qquad\text{for each }1\leq j \leq n.
  \end{equation}
This is precisely how $\FF$ has been defined
in Theorem \ref{thm:flux_construct}. Therefore, $\FF = \GG$.
\end{proof}

\section{Cauchy Flux IV: Local Recovery and Applications}\label{sec:flux_localrecovery}

In \S \ref{sec:flux_existence}, we have constructed an associated field $\FF \in \mathcal{DM}^{\ext}(\Omega)$
and have shown that any field representing the flux $\mathcal F$ must take this form.
It remains to show that the field $\FF$ represents the flux in the sense of Theorem \ref{maintheorem},
which is the content of the following subsection.

\subsection{Local recovery of the flux}\label{sec:flux_localrecovery2}

The precise statement of the local recovery result is following:

\begin{theorem}[Local recovery of the flux]\label{thm:flux_localrecovery}
Let $\mathcal F$ be a Cauchy flux, and let $\FF \in \mathcal{DM}^{\ext}(\Omega)$
be the associated field from {\rm Theorem \ref{thm:flux_construct}}.
Then
    \begin{equation}\label{eq:flux_trace_equality}
        \mathcal F_{U}(S) = (\FF \cdot \nu)_{\partial U}(S)
    \end{equation}
    for any $U \in \mathcal O_{\mu}$ and any Borel set $S \subset \partial U$.
\end{theorem}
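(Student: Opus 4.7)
We would set $\widetilde{\mathcal F}_V(T) := (\FF \cdot \nu)_{\partial V}(T)$ whenever this is represented by a measure. By Theorem \ref{prop:flux_sufficiency} (and the inclusion $\mathcal O_\mu \subset \mathcal O_{\lvert \FF \rvert}$ coming from $\lvert F_j \rvert \leq \mu$ for each $j$), the mapping $\widetilde{\mathcal F}$ is a Cauchy flux in $\Omega$, with production $\sigma = -\div \FF$ in agreement with that of $\mathcal F$ by Theorem \ref{thm:flux_globalrecovery}. The goal would be to prove $\mathcal F_U = \widetilde{\mathcal F}_U$ as measures on $\partial U$ for every $U \in \mathcal O_\mu$; the strategy proceeds through three layers---hyperplanes, cubes, then general open sets---each stage propagating equality by a localization argument.

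The first step would be equality on hyperplanes. Applying Lemma \ref{lem:hyperplane_reconstruct} to $\FF$ yields the disintegration
\begin{equation*}
F_j = \mathcal L^1 \otimes_{\partial H_{j,+}^t} (\FF \cdot \nu)_{\partial H_{j,+}^t}
\end{equation*}
in $\mathbb R^n$, while the construction of $\FF$ in Theorem \ref{thm:flux_construct} gives $F_j = \mathcal L^1 \otimes_{\partial H_{j,+}^t} \mathcal F_{H_{j,+}^t}(\,\cdot \cap \partial H_{j,+}^t)$. Uniqueness of the disintegration would then force $\mathcal F_{H_{j,+}^t} = \widetilde{\mathcal F}_{H_{j,+}^t}$ as measures on $\Omega \cap \partial H_{j,+}^t$ for $\mathcal L^1$-a.e.\ $t$ and every $j$. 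Passing to cubes is then immediate: applying Lemma \ref{cor:cube_localisation2} to both fluxes, the face decomposition identifies each as a signed sum of hyperplane fluxes, which coincide by the above. Taking intersections over a countable family of rational-endpoint cubes and all indices $j$, this yields a single $\mathcal L^n$-null set $N \subset \mathbb R^n$ such that, for every $x \notin N$ and every rational cube $Q_k$, one has $\mathcal F_{x+Q_k} = \widetilde{\mathcal F}_{x+Q_k}$ on $\partial(x+Q_k)$.

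To extend to general $U \in \mathcal O_\mu$, fix $x \notin N$ and let $\mathcal Q^\star$ be the family of cubes $x + Q_k \Subset \Omega$, which is closed under finite intersections and generates the Borel $\sigma$-algebra. For $Q \in \mathcal Q^\star$, Lemmas \ref{eq:cube_eta_bound} and \ref{lem:sop_algebra} give $V := U \cap Q \in \mathcal O_\mu$, and since $Q \cap U = Q \cap V$, the localization axiom of Definition \ref{DefinitionCauchyFlux} together with Theorem \ref{thm:localisation} reduces the claim for $\mathcal F_U(Q \cap \partial U)$ and $\widetilde{\mathcal F}_U(Q \cap \partial U)$ to the analogous statement for $\mathcal F_V$ and $\widetilde{\mathcal F}_V$ on $Q \cap \partial V$. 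Partitioning $\partial V$ into the disjoint pieces $(Q \cap \partial U) \sqcup (U \cap \partial Q) \sqcup (\partial U \cap \partial Q)$ and using the common balance law $\mathcal F_V(\partial V) = \sigma(V) = \widetilde{\mathcal F}_V(\partial V)$, the question further reduces to comparing the two fluxes on $U \cap \partial Q$ and on the corner set $\partial U \cap \partial Q$. For the former, a second application of localization identifies $\mathcal F_V$ with $\mathcal F_Q$ and $\widetilde{\mathcal F}_V$ with $\widetilde{\mathcal F}_Q$ there, which are equal by the cube step. Once equality is secured on the corner set as well, Lemma \ref{lem:measure_coincidence} propagates the equality from $\{Q \cap \partial U : Q \in \mathcal Q^\star\}$ to all Borel subsets of $\partial U$.

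The principal obstacle is the corner set $\partial U \cap \partial Q$: for generic $Q$, I would aim to show $\mu_V^{n-1}(\partial U \cap \partial Q) = 0$ via Lemma \ref{lem:eta_intersection} combined with a Fubini-type argument applied to the disintegration of $\mu$ along the coordinate hyperplanes, which forces both Cauchy fluxes to vanish there by condition \ref{item:eta_bound}. This succeeds for $\mathcal L^n$-a.e.\ translation provided $\mu(\partial U) = 0$; the case $\mu(\partial U) > 0$ requires a finer analysis---likely a dyadic refinement in which the total corner contribution vanishes in the limit, or a direct comparison of how both fluxes inherit their concentrated components from $\FF$ on $\partial U \cap \partial Q$ via the hyperplane-slice structure established in the first step.
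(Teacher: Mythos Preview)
Your route differs substantially from the paper's and is in several respects more economical. The paper establishes Lemma \ref{lem:local_cube_recovery} essentially as you do, but then builds a good cube family (Lemma \ref{lem:good_cube_collection}) and proves a two-sided recovery on finite unions of such cubes (Lemma \ref{lem:good_cube_union}); only then does it approximate $U$ from the interior by $V_k \in \mathcal V_\Omega$ and apply the balance law on the annular region $Q^\eps \cap (U \setminus \overline{V_k})$, letting $k \to \infty$. Your direct use of the balance law on $V = U \cap Q$ bypasses Lemmas \ref{lem:good_cube_collection}--\ref{lem:good_cube_union} entirely, since the piece $U \cap \partial Q$ localizes straight to $\mathcal F_Q$ rather than to a complement flux $\mathcal F_{\overline{V_k}^{\mathrm c}}$ requiring the two-sided property.

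There is, however, a genuine gap at the corner when $\mu(\partial U) > 0$, and neither of your proposed fixes is the right one. The missing observation---which is exactly the trick the paper uses for its own corner term $I_4$---is that $V = U \cap Q$ is open and contained in $U$, so $V \cap \partial U = \varnothing$ and the approximating regions $V \setminus \overline{V^\eps}$ never meet $\partial U$. Consequently $\mu_V^{n-1} = \widetilde\mu_V^{n-1}$ along any subsequence, where $\widetilde\mu := \mu \res (\Omega \setminus \partial U)$. One then bounds $\widetilde\mu_V^{n-1}(\partial U \cap \partial Q)$ via Lemma \ref{lem:eta_intersection} applied with $\widetilde\mu$ in place of $\mu$ (noting $U \in \mathcal O_{\widetilde\mu}$ and $Q \in \widetilde{\mathcal O}_{\widetilde\mu}$ since $\widetilde\mu \leq \mu$). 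Because $\widetilde\mu(\partial U) = 0$ by construction, your disintegration argument applies verbatim: for $\mathcal L^n$-a.e.\ translation both $\widetilde\mu_Q^{n-1}(\partial U) = 0$ and $\widetilde\mu_U^{n-1}(\partial Q) = 0$, whence condition \ref{item:eta_bound} forces both fluxes to vanish on the corner. With this fix---and choosing the translation $x$ to also avoid the $U$-dependent null set just described, which is permissible since $U$ is fixed---your argument goes through.
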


We first show this local recovery result holds on {\it almost all} cubes, before extending to general open sets in $\mathcal O_{\mu}$.

\begin{lemma}\label{lem:local_cube_recovery}
Let $\mathcal F$ be a Cauchy flux, and let $\FF$ be the associated field from {\rm Theorem \ref{thm:flux_construct}}.
Then, for any cube $Q$, there exists a null set $\mathcal N \subset \mathbb R^n$
such that, for any $x \notin \mathcal N$, if $Q_x = x+Q\Subset \Omega$, then
\begin{equation}\label{eq:cube_localrecovery}
    \mathcal F_{Q_x}(S) = (\FF \cdot \nu)_{\partial Q_x}(S) = - \mathcal F_{\,\overline{Q}_x^{\mathrm{c}}}(S) = - (\FF \cdot \nu)_{\partial\overline{Q}_x^{\mathrm{c}}}(S)
\end{equation}
for any Borel set $S \subset \partial Q_x$, where the flux
on $\overline{Q}^{\mathrm{c}}_x$ in \eqref{eq:cube_localrecovery}
is defined in {\rm Remark \ref{rem:flux_complement}}.
\end{lemma}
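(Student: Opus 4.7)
The plan is to match $\mathcal F_{Q_x}$ and $(\FF \cdot \nu)_{\partial Q_x}$ face by face on $\partial Q_x$, showing that each of the $2n$ faces contributes the same hyperplane flux to both measures, then checking that neither measure charges the $2$-skeleton $\partial^2 Q_x$. I first assemble an $\mathcal L^n$-null exceptional set $\mathcal N \subset \mathbb R^n$ consisting of: the null set from Lemma~\ref{lem:good_cubes} so that \eqref{eq:corner_small} holds for $Q_x$; translates $x$ such that some $x_i+a_i$ or $x_i+b_i$ lies in $\mathcal K_i$ from Lemma~\ref{lem:hyperplane_integrable}; and translates for which the slice identity of Lemma~\ref{lem:hyperplane_reconstruct} fails at one of the $2n$ endpoint values $\{x_i+a_i, x_i+b_i\}_{i=1}^n$. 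A Fubini-over-translation argument modeled on the proof of Lemma~\ref{lem:good_cubes} yields $\mathcal L^n(\mathcal N) = 0$. For $x \notin \mathcal N$ with $Q_x \Subset \Omega$, Lemma~\ref{cor:cube_localisation2} immediately supplies the flux decomposition
\begin{equation}
\mathcal F_{Q_x}(S) = \sum_{i=1}^n \bigl( \mathcal F_{H_{i,+}^{x_i+a_i}}(S \cap \partial H_{i,+}^{x_i+a_i}) - \mathcal F_{H_{i,+}^{x_i+b_i}}(S \cap \partial H_{i,+}^{x_i+b_i}) \bigr) = -\mathcal F_{\overline Q_x^{\mathrm c}}(S).
\end{equation}

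The heart of the argument is identifying the hyperplane flux measure with the hyperplane normal trace of $\FF$. By the very construction of $\FF$ in Theorem~\ref{thm:flux_construct}, each component admits the disintegration $F_j = \mathcal L^1 \otimes_{\partial H_{j,+}^t} \mathcal F_{H_{j,+}^t}(\cdot \cap \partial H_{j,+}^t)$. Lemma~\ref{lem:hyperplane_reconstruct} applied to $\FF$ itself produces, for $\mathcal L^1$-a.e.\ $\delta > 0$, a second disintegration $F_j = \mathcal L^1 \otimes_{\partial H_{j,+}^t} (\FF \cdot \nu)_{\partial(\widetilde \Omega^\delta \cap H_{j,+}^t)} \res \widetilde\Omega^\delta$. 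Uniqueness of disintegration forces the two slices to agree for $\mathcal L^1$-a.e.\ $t$, and by the construction of $\mathcal N$ this holds at $t = x_j+a_j$ and $t = x_j+b_j$ for every $1 \leq j \leq n$. Choosing $\delta$ small enough that $Q_x \Subset \widetilde\Omega^\delta$ from the full-measure set of permissible $\delta$ delivers the face-level identification of the hyperplane flux with the normal trace of $\FF$ on the boundary of the truncated half-space.

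The face-by-face match for the cube then follows from Theorem~\ref{thm:localisation}: near any interior point of a face of $\partial Q_x$, the cube coincides with $\widetilde\Omega^\delta$ intersected with the corresponding half-space on a small open ball, so the respective normal traces agree there. Exhausting $\partial Q_x \setminus \partial^2 Q_x$ by such balls across all $2n$ faces yields the same sum-over-faces decomposition for $(\FF \cdot \nu)_{\partial Q_x}$ as Lemma~\ref{cor:cube_localisation2} supplies for $\mathcal F_{Q_x}$. To eliminate the $2$-skeleton, Lemma~\ref{eq:cube_eta_bound} combined with \eqref{eq:corner_small} gives $\mu_{Q_x}^{n-1}(\partial^2 Q_x) = 0$; condition~\ref{item:eta_bound} then kills the $2$-skeleton mass of $\mathcal F_{Q_x}$, while Theorem~\ref{prop:flux_sufficiency} applied to $\FF$ with the majorant $\mu' = n\mu$ (justified by the componentwise bound $\lvert F_j \rvert \leq \mu$ in Theorem~\ref{thm:flux_construct}) does the same for $(\FF \cdot \nu)_{\partial Q_x}$, yielding \eqref{eq:cube_localrecovery}. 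The complement identities are established in parallel using \eqref{eq:cube_faces_complement}, Remark~\ref{rem:flux_complement}, and Corollary~\ref{cor:normaltrace_closure}. I expect the principal technical obstacle to be the combinatorial bookkeeping required to build a single $\mathcal N$ guaranteeing that all $2n$ axis endpoints of $Q_x$ simultaneously satisfy the slice identity, $\mathcal K_j$-avoidance, and the good-cube condition; this is a Fubini-over-translation argument in the spirit of the proof of Lemma~\ref{lem:good_cubes}.
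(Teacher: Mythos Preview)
Your proposal is correct and follows the same essential strategy as the paper: match the hyperplane fluxes face by face on $\partial Q_x$ and show the $2$-skeleton carries no mass. The paper streamlines the argument by observing at the outset (via Theorem~\ref{prop:flux_sufficiency}) that the normal trace of $\FF$ is itself a Cauchy flux $\widetilde{\mathcal F}$ with majorant $\lvert\FF\rvert \leq n\mu$, so Lemma~\ref{cor:cube_localisation2} applies verbatim to $\widetilde{\mathcal F}$ and delivers the face decomposition for $(\FF\cdot\nu)_{\partial Q_x}$ in one stroke, together with the complement identity and the vanishing on $\partial^2 Q_x$. This replaces your separate invocation of Theorem~\ref{thm:localisation} near interior face points and the explicit $2$-skeleton bookkeeping: once both $\mathcal F_{Q_x}$ and $\widetilde{\mathcal F}_{Q_x}$ are written as identical sums over hyperplane fluxes (using the slice identity $\widetilde{\mathcal F}_{H_{j,+}^t} = \mathcal F_{H_{j,+}^t}$ for $\mathcal L^1$-a.e.\ $t$, which is exactly your disintegration-uniqueness step via Lemma~\ref{lem:hyperplane_reconstruct}), equality is immediate. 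Your route works but re-derives for the normal trace what Lemma~\ref{cor:cube_localisation2} already packages; the paper's wrapper buys a shorter proof with no new ideas.
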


\begin{proof}
By Theorem \ref{prop:flux_sufficiency},  $\FF$ defines a Cauchy
flux $\widetilde{\mathcal F}$ via its normal trace.
Also, by construction of $\FF=(F_1,F_2,\cdots,F_n)$ from Theorem \ref{thm:flux_construct} and Lemma \ref{lem:hyperplane_reconstruct} applied to $\FF$,
we know that there is a null set $\mathcal N_j \subset \mathbb R$ such that
\begin{equation}\label{eq:flux_hyperplane_equality}
\widetilde{\mathcal F}_{ H_{j,+}^t} = F_j
= \mathcal F_{ H_{j,+}^t}
\end{equation}
as measures whenever $t \notin \mathcal N_j$.
Now, for any cube $Q = Q(a,b)$, consider $Q_x = x+Q$ for $x \in \mathbb R^n$ such that $Q_x \Subset \Omega$,
Lemma \ref{cor:cube_localisation2} holds for both $\mathcal F$ and $\widetilde{\mathcal F}$, and  Lemma \ref{lem:hyperplane_integrable} applies
at the endpoints $x_j+a_j$ and $x_j+b_j$ for each $1\leq j \leq n$
(so, in particular, they do not lie in $\mathcal N_j$).
By Lemma \ref{lem:good_cubes}, this holds for $\mathcal L^n$--\textit{a.e.}\,\,$x$
such that $Q_x \Subset \Omega$.
Then, for such $x$, using \eqref{eq:flux_hyperplane_equality}, we infer that,
for any Borel set $S \subset \partial Q_x$,
\begin{equation*}
\begin{split}
\mathcal F_{Q_x}(S)
&= \sum_{i=1}^n \Big( \mathcal F_{H_{i,+}^{x_i+a_i}}(S \cap \partial H_{i,+}^{x_i+a_i})
  - \mathcal F_{H_{i,+}^{x_i+b_i}}(S \cap \partial H_{i,+}^{x_j+ b_i}) \Big) \\
&= \sum_{i=1}^n \Big( \widetilde{\mathcal F}_{H_{i,+}^{x_i+a_i}}(S \cap \partial H_{i,+}^{x_i+a_i})
  - \widetilde{\mathcal F}_{H_{i,+}^{x_i+b_i}}(S \cap \partial H_{i,+}^{x_j+ b_i}) \Big) \\
&= \widetilde{\mathcal F}_{Q_x}(S) = (\FF \cdot \nu)_{\partial Q_x}(S).
\end{split}
\end{equation*}
Since Lemma \ref{cor:cube_localisation2} also gives $\mathcal F_{Q_x} = - \mathcal F_{\,\overline{Q}_x^{\mathrm{c}}}$ and similarly for $\widetilde{\mathcal F}$, \eqref{eq:cube_localrecovery} holds as required.
\end{proof}

We now extend Lemma \ref{lem:local_cube_recovery} by approximating a general
domain $U \in \mathcal O_{\mu}$ via finite unions of {\it good} cubes
for which the local recovery holds, and argue that we can pass to the limit.
To do this, we first show the existence of a family of the said good cubes.

\begin{lemma}\label{lem:good_cube_collection}
There exists a countable collection $\mathcal Q_{\Omega}$ of cubes such that
\begin{enumerate}[label=\rm(\roman*)]
\item
\label{item:cube_containment} For each $Q \in \mathcal Q_{\Omega}$,
$Q, \overline{Q}^{\mathrm{c}} \in \widetilde{\mathcal O}_{\mu}$ and $Q \Subset \Omega$.

\item
\label{item:cube_lattice} If $Q_1, Q_2 \in \mathcal Q_{\Omega}$ intersect non-trivially, then $Q_1 \cap Q_2 \in \mathcal Q_{\Omega}$.

\item
\label{item:cube_union} Every open set $U \Subset \Omega$ can be written as a union of cubes in $\mathcal Q_{\Omega}$.

\item
\label{item:cube_recovery} For each $Q \in \mathcal Q_{\Omega}$, the recovery of the flux on $Q$ and its complement can be achieved in the sense that
\begin{equation}
\mathcal F_{Q} = (\FF \cdot \nu)_{\partial Q} = - \mathcal F_{\,\overline{Q}^{\mathrm{c}}} = - (\FF \cdot \nu)_{\partial\overline{Q}^{\mathrm{c}}}
\end{equation}
holds as measures on $\partial Q$.

\item
\label{item:cube_two_sided} For each $Q \in \mathcal Q_{\Omega}$, the localization and two-sided properties of {\rm Lemmas \ref{lem:cube_localisation}} and {\rm \ref{cor:cube_localisation2}} hold for both the flux $\mathcal F$ and the normal trace $\FF \cdot \nu$.

\item
\label{item:cube_no_concentration}
$\mu(\partial Q) = 0$ and $\mu_{Q}^{n-1}(\partial^2Q)=\mu_{\overline{Q}^{\,\mathrm{c}}}^{n-1}(\partial^2Q)=0$ for each $Q \in \mathcal Q_{\Omega}$.
  \end{enumerate}
\end{lemma}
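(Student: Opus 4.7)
The plan is to define $\mathcal Q_\Omega$ to be the collection of all cubes $Q(a,b) \Subset \Omega$ whose endpoints $a_i, b_i$ all lie in a carefully chosen countable dense set $D \subset \mathbb R$. By Theorem \ref{prop:flux_sufficiency}, the field $\FF$ constructed in Theorem \ref{thm:flux_construct} itself defines a Cauchy flux $\widetilde{\mathcal F}$ through its normal trace, with associated measure $\lvert \FF \rvert$ and production $-\div \FF = \sigma$. The key idea is to build $D$ so that every $t \in D$ simultaneously serves as a ``good'' level for the flux $\mathcal F$ (with measure $\mu$) and for the flux $\widetilde{\mathcal F}$ (with measure $\lvert \FF \rvert$); once such a $D$ is available, cubes with endpoints in $D^n$ will automatically inherit every structural property used in Sections \ref{sec:flux_properties}--\ref{sec:flux_existence}.

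To build $D$, I would start from any countable dense $D_0 \subset \mathbb R$ and remove the following $\mathcal L^1$-null sets: the exceptional sets $\mathcal N_{j,\mu}$ and $\mathcal M_{j,\sigma}$ from Definition \ref{defn:good_hyperplanes}, the sets $\mathcal K_j$ from Lemma \ref{lem:hyperplane_integrable}, their analogues built from $\lvert \FF \rvert$, and the null set from Lemma \ref{lem:hyperplane_reconstruct} applied to $\FF$, on whose complement $\mathcal F_{H_{j,+}^t} = \widetilde{\mathcal F}_{H_{j,+}^t}$ as measures on $\Omega$. Since any finite Radon measure can put positive mass on only countably many disjoint hyperplanes, for each already-chosen $s$ and each pair $i \neq j$ the set of $t$ for which $\mu_{H_{i,+}^s}^{n-1}(\partial H_{j,+}^t) + \lvert \FF \rvert_{H_{i,+}^s}^{n-1}(\partial H_{j,+}^t) > 0$ is countable, and I would iteratively remove such values. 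The result is a still countable dense subset $D \subset \mathbb R$.

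Properties (i)--(iii) and (vi) will then follow quickly. Closure under intersection in (ii) is immediate since $Q(a,b) \cap Q(c,d) = Q(\max(a,c), \min(b,d))$ has endpoints in $D^n$. For (iii), density of $D$ allows covering any open $U \Subset \Omega$ by cubes in $\mathcal Q_\Omega$, from which a countable subcover can be extracted by Lindel\"of. Property (vi) uses that $\mu(\partial H_{j,\pm}^t) = 0$ for $t \in D$ forces $\mu(\partial Q) = 0$, while the pairwise construction of $D$ delivers the corner bound $\mu_{H_{i,\pm}^{a_i/b_i}}^{n-1}(\partial^2 Q) = 0$ required by Lemma \ref{eq:cube_eta_bound}; that lemma then yields $\mu_Q^{n-1}(\partial^2 Q) = \mu_{\overline Q^{\mathrm c}}^{n-1}(\partial^2 Q) = 0$ and the inclusions $Q, \overline Q^{\mathrm c} \in \widetilde{\mathcal O}_\mu$ demanded by (i).

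The substantive content is in (iv) and (v). By construction, every $Q \in \mathcal Q_\Omega$ satisfies the hypotheses of Lemma \ref{cor:cube_localisation2} simultaneously for the two Cauchy fluxes $\mathcal F$ and $\widetilde{\mathcal F}$, which supplies the localization and two-sided identities asserted in (v) and lets us decompose $\mathcal F_Q(S)$ and $\widetilde{\mathcal F}_Q(S) = (\FF \cdot \nu)_{\partial Q}(S)$ as identical signed sums of face fluxes on the hyperplanes $\partial H_{i,\pm}^{a_i}$ and $\partial H_{i,\pm}^{b_i}$. Since every $a_i, b_i$ avoids the null set from Lemma \ref{lem:hyperplane_reconstruct}, the hyperplane fluxes $\mathcal F_{H_{i,+}^t}$ and $\widetilde{\mathcal F}_{H_{i,+}^t}$ coincide as measures on $\Omega$, and summing the face contributions yields $\mathcal F_Q = (\FF \cdot \nu)_{\partial Q}$, with the analogous identity on $\overline Q^{\mathrm c}$ following from the two-sided property. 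The main obstacle I anticipate is the bookkeeping of the long list of null sets removed at each stage, and in particular verifying that the corner concentration condition in Lemma \ref{eq:cube_eta_bound} genuinely reduces to the pairwise hyperplane conditions built into $D$; once that reduction is made precise, the rest of the proof is a direct compilation of results from Sections \ref{sec:flux_definition}--\ref{sec:flux_existence}.
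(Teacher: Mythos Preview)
Your route differs from the paper's. Rather than building a good set $D \subset \mathbb R$ coordinate-by-coordinate, the paper fixes a single translation $x \in \mathbb R^n$ (using Lemmas \ref{lem:good_cubes} and \ref{lem:local_cube_recovery} simultaneously for all rational cubes) and sets $\mathcal Q_\Omega = \{x + Q(a,b) : a,b \in \mathbb Q^n,\ x+Q(a,b) \Subset \Omega\}$; properties (i)--(vi) then follow at once. This concentrates all the difficulty in the combinatorial Lemma \ref{lem:good_cubes}, which delivers the corner condition \eqref{eq:corner_small} for almost every translate in $\mathbb R^n$ in one stroke.

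Your approach can be made to work but has a real gap. First, removing an $\mathcal L^1$-null set from a fixed countable $D_0$ need not leave anything dense (the null set could contain all of $D_0$), so $D$ must be built inductively, choosing each $d_k$ in a prescribed interval while avoiding the accumulated bad sets. In that inductive scheme, the pairwise obstruction you name---for each already-chosen $s$, remove the countably many $t$ with $\mu_{H_{i,+}^s}^{n-1}(\partial H_{j,+}^t)>0$---handles only half of what \eqref{eq:corner_small} demands. For a cube $Q(a,b)$ with $a,b \in D^n$ you also need $\mu_{H_{i,+}^{d_k}}^{n-1}(\partial H_{j,+}^{d_m})=0$ when $d_k$ is the \emph{new} value and $d_m$ the old one; here the set of bad $d_k$'s is not merely countable. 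It \emph{is} $\mathcal L^1$-null (disintegrate $\mu$ along $\partial H_{i,+}^t$ via Lemma \ref{lem:disint_flux} and use $\mu(\partial H_{j,+}^{d_m})=0$, which holds since $d_m$ already avoids $\mathcal N_{j,\mu}$), but this argument is absent from your proposal. Once this second family of null sets is added at each inductive step, your construction goes through and has the merit of bypassing Lemma \ref{lem:good_cubes} entirely.
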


\begin{proof}
Consider the collection $\{ Q(a,b)\,:\,a,b \in \mathbb Q^n\}$
of open cubes with rational endpoints in $\mathbb R^n$,
which is countable and closed under finite intersections.

Then, for $\mathcal L^n$--\textit{a.e.}\,\,$x \in \mathbb R^n$,
Lemma \ref{lem:local_cube_recovery} applies to each $Q_x := x + Q(a,b)$ such that $Q_x \Subset \Omega$:
\begin{align}
 \mathcal F_{Q_x}(S)
 &= \sum_{i=1}^n \Big( \mathcal F_{H_{i,+}^{x_i+a_i}}(S \cap \partial H_{i,+}^{x+a_i})
    - \mathcal F_{H_{i,+}^{x_i+b_i}}(S \cap \partial H_{i,+}^{b_i})\Big) \nonumber\\
&= \sum_{i=1}^n \Big(- \mathcal F_{H_{i,-}^{x_i+a_i}}(S \cap \partial H_{i,-}^{x+a_i})
   + \mathcal F_{H_{i,-}^{x_i+b_i}}(S \cap \partial H_{i,-}^{b_i})\Big)\nonumber \\
&= - \mathcal F_{\,\overline{Q}_x^{\mathrm{c}}}(S) = (\FF \cdot \nu)_{\partial Q_x}(S)
   = - (\FF \cdot \nu)_{\partial\overline{Q}_x^{\mathrm{c}}}(S)
\label{eq:localisation_two_sided}
\end{align}
for each Borel set $S \subset \partial Q_x$.
Moreover, we can ensure that \eqref{eq:corner_small} holds
for each $Q_x$ by Lemma \ref{lem:good_cubes}.
We can also choose $x$ such that \eqref{eq:localisation_two_sided} holds
with the normal trace $\FF \cdot \nu$ in place of the flux $\mathcal F$,
by using Theorem \ref{prop:flux_sufficiency},
so that each $Q_x$ satisfies properties \ref{item:cube_recovery} and \ref{item:cube_two_sided}.
In addition, we can ensure that the endpoints $x_j+a_j$ and $x_j+b_j$
do not lie in $\mathcal N_{j,\mu}$ from Definition \ref{defn:good_hyperplanes}
for each $1 \leq j \leq n$ so that
$\mu(\partial Q_x) = 0$.
Also, Lemma \ref{eq:cube_eta_bound} holds for $Q_x$ as \eqref{eq:corner_small}
is satisfied, which ensures that
$\mu_{Q}^{n-1}(\partial^2Q)=\mu_{\,\overline{Q}^{\mathrm{c}}}^{n-1}(\partial^2Q)=0$.
Hence, $Q_x$ satisfies \ref{item:cube_no_concentration}.
Now, for such $x$, we let
\begin{equation}
\mathcal Q_{\Omega}
= \{ Q_x = x+Q(a,b)\,:\,Q_x \Subset \Omega, a, b \in \mathbb Q^n\}.
\end{equation}
By construction, \ref{item:cube_containment} and \ref{item:cube_lattice} hold.
Since every open set $U$ can be written as the union of cubes with rational endpoints, by considering $-x + U$, we see that \ref{item:cube_union} also holds.
Therefore, $\mathcal Q_{\Omega}$ satisfies \ref{item:cube_containment}--\ref{item:cube_no_concentration} as required.
\end{proof}

Next, we show that local recovery holds for unions of cubes from this good collection.

\begin{lemma}\label{lem:good_cube_union}
Given $\mathcal Q_{\Omega}$ as
in {\rm Lemma \ref{lem:good_cube_collection}},
denote $\mathcal V_{\Omega}$ as the set of all finite unions $V$ of cubes
in $\mathcal Q_{\Omega}$ given by
\begin{equation}\label{eq:closure_union}
    V = \interior\big( \bigcup_{i=1}^k \overline{Q_i} \big)
    \qquad\text{for } Q_1, \cdots, Q_k \in \mathcal Q_{\Omega}.
\end{equation}
Then, for any $V \in \mathcal V_{\Omega}$,
$\,V, \overline V^{\mathrm{c}} \in \widetilde{\mathcal O}_{\mu}$ and
\begin{equation}\label{eq:two_sided_recovery}
\mathcal F_{V} = - \mathcal F_{\,\overline{V}^{\mathrm{c}}}
= (\FF \cdot \nu)_{\partial V}
= - (\FF \cdot \nu)_{\partial\overline{V}^{\mathrm{c}}}
\end{equation}
as measures on $\Omega$.
\end{lemma}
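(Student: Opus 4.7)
The plan is to decompose the problem into three steps: membership in $\widetilde{\mathcal O}_\mu$ for both $V$ and $\overline V^{\mathrm{c}}$, an inclusion-exclusion formula for the normal trace $(\FF\cdot\nu)_{\partial V}$, and a localization-based matching of $\mathcal F_V$ with this trace. The main obstacle will be a codimension-two ``corner set'' along $\partial V$ where at least two cube boundaries meet, on which neither measure should carry mass.

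For membership, I would first verify the geometric inclusion
\begin{equation}
V \setminus \overline{V^\eps} \subset (V \setminus \textstyle\bigcup_i Q_i) \cup \textstyle\bigcup_i (Q_i \setminus \overline{Q_i^\eps})
\end{equation}
by noting that $\partial V \subset \bigcup_i \partial Q_i$, so if $x \in Q_i \subset V$ satisfies $\operatorname{dist}(x,\partial V) < \eps$, the segment from $x$ to a witnessing point of $\partial V$ (which necessarily lies outside $Q_i$ since $Q_i \subset V$) crosses $\partial Q_i$ within distance $\eps$. The leftover piece $V \setminus \bigcup_i Q_i$ is $\mu$-null by property (vi) of Lemma \ref{lem:good_cube_collection}, and the second piece is controlled termwise via property (i); dividing by $\eps$ and taking $\limsup$ yields $V \in \widetilde{\mathcal O}_\mu$. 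The same argument, applied to $\overline V^{\mathrm{c}} \setminus \overline{(\overline V^{\mathrm{c}})^\eps} \subset \bigcup_i (\overline{Q_i}^{\mathrm{c}} \setminus \overline{(\overline{Q_i}^{\mathrm{c}})^\eps})$, gives $\overline V^{\mathrm{c}} \in \widetilde{\mathcal O}_\mu$.

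For the normal trace identity, since $\lvert F_j\rvert \leq \mu$ componentwise (Theorem \ref{thm:flux_construct}), $\lvert\FF\rvert \leq n\mu$, and $V$ differs from $\bigcup_i Q_i$ only by a $\lvert\FF\rvert$-null subset of $\bigcup_i \partial Q_i$. The standard inclusion-exclusion $\mathbbm{1}_{\bigcup Q_i} = \sum_{\varnothing \neq I} (-1)^{|I|+1}\mathbbm{1}_{Q_I}$ with $Q_I := \bigcap_{i \in I}Q_i \in \mathcal Q_\Omega$ by property (ii), combined with the linearity of the distributional divergence and Definition \ref{defn:normal_trace}, then yields
\begin{equation}
(\FF\cdot\nu)_{\partial V} = \sum_I (-1)^{|I|+1}(\FF\cdot\nu)_{\partial Q_I} = \sum_I (-1)^{|I|+1}\mathcal F_{Q_I}
\end{equation}
as measures on $\Omega$, the second equality by property (iv) of Lemma \ref{lem:good_cube_collection}.

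To conclude $\mathcal F_V = (\FF\cdot\nu)_{\partial V}$, I would split $\partial V$ via the corner set $\mathcal N := \partial V \cap \bigcup_{i\neq j}(\partial Q_i \cap \partial Q_j)$. On $\partial V \setminus \mathcal N$, each point $x$ lies on a unique $\partial Q_{i(x)}$ and admits a small open ball $A_x$ with $A_x \cap V = A_x \cap Q_{i(x)}$; the localization properties (Theorem \ref{thm:localisation} for $\FF\cdot\nu$, and Definition \ref{DefinitionCauchyFlux}(ii) for $\mathcal F$), together with Lemma \ref{lem:good_cube_collection}(iv), give $\mathcal F_V\res A_x = (\FF\cdot\nu)_{\partial V}\res A_x$, and a countable-covering argument extends this to all of $\partial V \setminus \mathcal N$. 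The hard part is the set $\mathcal N$: any codimension-one face shared between two $Q_i$'s is sandwiched in the interior of $V$ and hence omitted from $\partial V$, so $\mathcal N \subset \bigcup_i \partial^2 Q_i$ lies in the codimension-two skeleton. Applying Lemma \ref{lem:eta_intersection}(ii) to the family $\{Q_i\}$ and invoking property (vi) of Lemma \ref{lem:good_cube_collection} then gives $\mu_V^{n-1}(\mathcal N) = 0$, whence $|\mathcal F_V|(\mathcal N) = 0$ by Definition \ref{DefinitionCauchyFlux}(iii); since $\FF$ induces its own Cauchy flux with bounding measure $n\mu$ (Theorem \ref{prop:flux_sufficiency}), the same bound yields $|(\FF\cdot\nu)_{\partial V}|(\mathcal N) = 0$, completing the matching. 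The two-sided identities $\mathcal F_V = -\mathcal F_{\overline V^{\mathrm{c}}}$ and $(\FF\cdot\nu)_{\partial V} = -(\FF\cdot\nu)_{\partial \overline V^{\mathrm{c}}}$ then follow by repeating the three steps with $\overline V^{\mathrm{c}}$ in place of $V$, using the two-sided cube recovery in Lemma \ref{lem:good_cube_collection}(iv).
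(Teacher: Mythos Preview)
Your Step~1 (membership in $\widetilde{\mathcal O}_\mu$) is fine, and your Step~2 inclusion-exclusion identity for $(\FF\cdot\nu)_{\partial V}$ is a clean and genuinely different route from the paper's: linearity of $E\mapsto\div(\mathbbm 1_E\FF)-\mathbbm 1_E\div\FF$ together with $\lvert\FF\rvert(\partial Q_i)=\lvert\sigma\rvert(\partial Q_i)=0$ yields $(\FF\cdot\nu)_{\partial V}=\sum_I(-1)^{|I|+1}(\FF\cdot\nu)_{\partial Q_I}$ directly, bypassing the paper's induction.

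The gap is in Step~3, specifically in the treatment of the corner set $\mathcal N$. Two related problems arise because you work with overlapping cubes. First, your claim that ``any codimension-one face shared between two $Q_i$'s is sandwiched in the interior of $V$'' is false when the two cubes lie on the \emph{same} side of the hyperplane: take $Q_1=(0,2)\times(0,1)$ and $Q_2=(1,3)\times(0,1)$, so that $[1,2]\times\{0\}\subset\partial Q_1\cap\partial Q_2\cap\partial V$ has positive $\mathcal H^{n-1}$-measure and need not be $\mu_V^{n-1}$-null. Second, even where the boundaries of two cubes meet transversally, the intersection need not lie in $\bigcup_i\partial^2 Q_i$: with $Q_1=(0,2)^2$ and $Q_2=(1,3)^2$ the point $(2,1)$ lies in $\partial Q_1\cap\partial Q_2\cap\partial V$ but in neither $\partial^2 Q_1$ nor $\partial^2 Q_2$, so property~(vi) of Lemma~\ref{lem:good_cube_collection} does not apply. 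In the first case localization to a single $Q_i$ still works, so $\mathcal N$ is simply defined too broadly; in the second case one would need the $2$-skeleta of the intersection cubes $Q_I$ to cover the bad set, which requires additional argument.

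The paper sidesteps both issues by first rewriting $V$ as a union of \emph{disjoint} cubes from $\mathcal Q_\Omega$ obtained by subdividing along the common grid of endpoints. For such grid-aligned disjoint cubes, any shared codimension-one face really is interior to $V$ (the two cubes lie on opposite sides of the hyperplane), and any remaining boundary overlap lies in the genuine $2$-skeleton of both cubes. The paper then proves $\mathcal F_V=\sum_i\mathcal F_{Q_i}$ by induction, handling the shared face via the two-sided hyperplane identity from Lemma~\ref{lem:good_cube_collection}(v). Your localization strategy is morally the same, but it needs this preliminary disjointification (or a more careful description of the null corner set via the $Q_I$) to go through.
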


\begin{proof}
Since $\lvert \FF\rvert \leq \mu$, by Theorem \ref{prop:flux_sufficiency},
we can view the normal trace $\FF \cdot \nu$ as a flux $\widetilde{\mathcal F}$ with the same $\mu$.
Now we divide the proof into four steps.

\smallskip
\textbf{1.} We first show that every $V \in \mathcal V_{\Omega}$ can be written
as a union of disjoint cubes.
It suffices to establish this claim in the case of two cubes, since
the general case follows by inductively replacing the cubes that overlap.
Suppose that $Q_1 = Q(a,b)$ and $Q_2 = Q(c,d)$ are contained in the union of $V$ and intersect non-trivially.
Denote $\mathcal S$ as the set of cubes $Q(x,y) \subset Q_1 \cup Q_2$ such that $x_j,y_j \in \{a_j,b_j,c_j,d_j\}$ and $x_j < y_j$ for all $1 \leq j \leq n$,
which defines a finite collection of cubes in $\mathcal Q_{\Omega}$.
We use  $\mathcal S$ to define $\widetilde S$ by discarding all
cubes $Q \in \mathcal S$ for which there is $\widetilde Q \in \mathcal S$
with $\widetilde Q \subset Q$ (this can be done inductively in any order).
Then $\widetilde S$ is a disjoint collection of cubes such that
  \begin{equation}\interior\big( \bigcup_{Q \in \widetilde S} \overline Q \big)
  = \interior\big(\overline Q_1 \cup \overline Q_2\big),
  \end{equation}
so that $Q_1$ and $Q_2$ can be replaced by this collection $\widetilde S$.

\medskip
\textbf{2}. We now show that, for any $V \in \mathcal V_{\Omega}$,
\begin{equation}\label{eq:V_limsup}
\limsup_{\eps \to 0} \frac1{2\eps} \, \mu(V^{-\eps} \setminus \overline{V^\eps}) < \infty.
\end{equation}
That is, $\mu(\partial V) = 0$ and $V, \overline V^{\mathrm{c}} \in \widetilde{\mathcal O}_{\mu}$.

We induct on the number of cubes in the union to show this.
If $V = Q$, this follows by properties \ref{item:cube_containment}
and \ref{item:cube_no_concentration} of $\mathcal Q_{\Omega}$.
Assuming that this holds for some $V$,
we consider a disjoint cube
$Q \in \mathcal Q_{\Omega}$ and put $\widetilde V
= \interior\big(\overline V \cup \overline Q\big)$.
Then, arguing analogously as in the proof of Lemma \ref{lem:sop_algebra},
we have
\begin{equation}
\widetilde V^{-\eps} \setminus \overline{\widetilde V^\eps} \subset (V^{-\eps} \setminus \overline{V^{\eps}}) \cup (Q^{-\eps} \setminus \overline{Q^{\eps}}) \qquad\text{for any } \eps>0.
\end{equation}
Integrating this over $\frac1{2\eps}\, \mu$ and sending $\eps \to 0$ yield
\eqref{eq:V_limsup}, by noting that both $Q$ and $V$ satisfy \eqref{eq:V_limsup}.

\medskip
\textbf{3}. We next show that,
for every $V \in \mathcal V_{\Omega}$ and any cube $Q \in \mathcal Q_{\Omega}$ contained in the union,
\begin{equation}
\mathcal F_V \res \partial V \cap \partial^2Q = \mathcal F_{\,\overline V^{\mathrm{c}}} \res \partial V \cap \partial^2Q = 0,
\end{equation}
and the same holds with $\widetilde{\mathcal F}$ in place of $\mathcal F$.

Writing $V \in \mathcal V_{\Omega}$ as a union of disjoint
cubes $\{Q_i\}_{i=1}^k$, by property \ref{item:cube_no_concentration}
of $\mathcal Q_{\Omega}$, we know that
  \begin{equation}
      \mu_{Q_i}^{n-1}(\partial^2Q_i)=\mu_{\overline{Q}_i^{\mathrm{c}}}^{n-1}(\partial^2Q_i)=0
      \qquad\text{ for each } 1 \leq i \leq k.
  \end{equation}
Then, by Lemma \ref{lem:eta_intersection}\ref{item:upperbound_union},
we see that
\begin{equation}
\mu_V^{n-1}(\partial V \cap \partial^2Q_i)
=\mu_{\,\overline{V}^{\mathrm{c}}}^{n-1}(\partial V \cap \partial^2Q_i)=0\qquad\text{ for each } 1 \leq i \leq k.
\end{equation}
By Definition \ref{DefinitionCauchyFlux}\ref{item:eta_bound} applied to both $\mathcal F$ and $\widetilde{\mathcal F}$, the result follows.

\medskip
\textbf{4}. We prove that, if $V \in \mathcal V_{\Omega}$ is the disjoint union
of cubes $\{Q_i\}_{i=1}^k$ in $\mathcal Q_{\Omega}$, then
\begin{equation}\label{eq:V_flux_add}
\mathcal F_{V} = \sum_{i=1}^k \mathcal F_{Q_k},
    \qquad\,\,
\mathcal F_{\,\overline V^{\mathrm{c}}}
    = \sum_{i=1}^k \mathcal F_{\,\overline Q_k^{\mathrm{c}}}
\end{equation}
as measures in $\Omega$, and the same holds with $\widetilde{\mathcal F}$ in place of $\mathcal F$.

We prove \eqref{eq:V_flux_add} by inducting on the number of cubes,
where
the case $k=1$ follows by
Lemma \ref{lem:good_cube_collection}\ref{item:cube_recovery}.
While we only consider $\mathcal F$, the argument for $\widetilde{\mathcal F}$ is analogous since it is a Cauchy flux with the same $\mu$ and $\sigma$,
and agrees with $\mathcal F$ on cubes $Q \in \mathcal Q_{\Omega}$.

Assuming that $V \in \mathcal V_{\Omega}$ satisfies \eqref{eq:V_flux_add} and $Q \in \mathcal Q_{\Omega}$ is disjoint from $V$,
define $\widetilde V := \interior(\overline V \cup \overline Q)$.
By the localization property \ref{item:localisation} applied with
$A=\Omega \setminus \overline Q$, we see that
$\mathcal F_{\widetilde V}$ and $\mathcal F_{V}$ agree on $\partial\widetilde V \setminus \partial Q$ and similarly for the complement by localizing on $Q$, so
that \eqref{eq:V_flux_add} holds on $\partial V\setminus\partial Q$.
Similarly, by localizing on $\Omega \setminus \overline V$ and $V$,
we deduce that \eqref{eq:V_flux_add} holds on $\partial Q \setminus \partial V$.

It remains to show that \eqref{eq:V_flux_add} holds on the intersection
$\partial Q \cap \partial V$.
Let $H$ be a half-space such that $\partial H$ intersects both $\partial V$
and $\partial Q$. We assume that $H = H_{j,+}^{a_j}$ for some $j$,
since the case $H = H_{j,-}^{b_j}$ is analogous.
Suppose that $\widetilde Q \subset V$ is one of the disjoint cubes
whose union gives $V$, and set
\begin{equation}
I := \partial Q \cap \partial \widetilde Q
\cap \partial H \setminus ( \partial^2Q \cup \partial^2\widetilde Q).
\end{equation}
Since the fluxes vanish on $\partial^2Q \cup \partial^2\widetilde Q$,
we know that \eqref{eq:V_flux_add} holds there so that, if $I = \varnothing$, there is nothing further to show.
Otherwise, if $I \neq \varnothing$,
observe the projections of $Q$ and $\widetilde Q$ to $\partial H_j$
intersect non-trivially. Then it follows that
$\widetilde Q \subset H_{j,-}^{a_j}$ since $Q$ and $\widetilde Q$ are disjoint.
Moreover, writing $Q = (a_1,b_1) \times Q'$ with $Q' \subset \mathbb R^{n-1}$ an open cube,
there is $\kappa>0$ such that
$(a_1-\kappa,a_1)\times Q' \subset \widetilde Q$.
Therefore, setting
  \begin{equation}
      A = (a_1-\kappa,b_1) \times Q'
  \end{equation}
which is open, we see that $A \cap H = A \cap Q$ and $A \cap \overline H^{\mathrm{c}} = A \cap \widetilde Q$.
Hence, by the localization property \ref{item:localisation} and the two-sided property of the flux on $\partial H$ from Lemma \ref{lem:good_cube_collection}\ref{item:cube_two_sided}, we deduce that
  \begin{equation}
    \mathcal F_{V} \res I = \mathcal F_{\widetilde Q} \res I
    = \mathcal F_{H_{j,-}^{a_j}} \res I = - \mathcal F_{H_{j,+}^{a_j}} \res I = - \mathcal F_{Q} \res I,
  \end{equation}
  and, for the complement, we have
    \begin{equation}
    \mathcal F_{\,\overline{V}^{\mathrm{c}}} \res I = \mathcal F_{\,\overline{\widetilde Q}^{\mathrm{c}}} \res I = \mathcal F_{H_{j,+}^{a_j}} \res I = - \mathcal F_{H_{j,-}^{a_j}} \res I = - \mathcal F_{\,\overline{Q}^{\mathrm{c}}} \res I.
  \end{equation}
  Since $I \cap \partial{\widetilde V} = \varnothing$, it follows that
  \begin{equation}
    \mathcal F_{\widetilde V} \res I = 0 = \mathcal F_V \res I + \mathcal F_{Q} \res I,
  \end{equation}
  and similarly for the complement.
Thus, by inductive hypotheses with $Q$ and $V$, \eqref{eq:V_flux_add} holds
for $\widetilde V$ on $I$, so the result follows by induction.

Finally, given $V \in \mathcal V_{\Omega}$, by the first step,
we can write $V$ as a disjoint union of cubes $\{Q_i\}_{i=1}^k$ in $\mathcal Q_{\Omega}$ and,
by Lemma \ref{lem:good_cube_collection}\ref{item:cube_recovery}, \eqref{eq:two_sided_recovery} holds for each $Q_i$. By summation, we have
  \begin{equation}
  \sum_{i=1}^k \mathcal F_{Q_i} = \sum_{i=1}^k (\FF \cdot \nu)_{\partial Q_i} = - \sum_{i=1}^k \mathcal F_{\,\overline{Q_i}^{\mathrm{c}}} = - \sum_{i=1}^k (\FF \cdot \nu)_{\partial\overline{Q_i}^{\mathrm{c}}}.
\end{equation}
Combining this with \eqref{eq:V_flux_add} as proved, we obtain \eqref{eq:two_sided_recovery} as required.
\end{proof}

\bigskip
\begin{proof}[Proof of Theorem {\ref{thm:flux_localrecovery}}]
Let $\mathcal U_{\Omega}$ be the set of $U \Subset \Omega$
such that conclusion \eqref{eq:flux_trace_equality} holds. Then
$\mathcal V_{\Omega} \subset \mathcal U_{\Omega}$
by Lemma \ref{lem:good_cube_union}.
We fix $U \in \mathcal O_{\mu}$ and then show that $U \in \mathcal U_{\Omega}$.

To achieve this, we approximate $U$ by elements in $\mathcal V_{\Omega}$ as follows:
For each $k \in \mathbb N$, take a finite covering of $\overline{U^{2^{-k}}}$
by cubes $Q \in \mathcal Q_{\Omega}$ such that each $Q \Subset U^{2^{-(k+1)}}$.
Let $V_k$ be the union of this covering in the sense of \eqref{eq:closure_union}
so that $U^{2^{-k}} \Subset V_k \Subset U^{2^{-(k+1)}}$ and  $V_k \in \mathcal V_{\Omega} \subset \mathcal U_{\Omega}$.
We now divide the remaining proof into three steps.

\medskip
\textbf{1}. We first show that, for any open cube $Q \Subset \Omega$ and $\mathcal L^1$--\textit{a.e.}\,\,$\eps>0$,
  \begin{equation}\label{eq:Qeps_limit}
    \mathcal F_{U}(Q^{\eps} \cap \partial U) = \lim_{k \to \infty} \mathcal F_{V_k}(Q^{\eps} \cap \partial V_k).
  \end{equation}
To achieve this, we set
\begin{equation}
A_{\eps,k} := Q^{\eps} \cap (U \setminus \overline{V_k}),
\end{equation}
and apply the balance law on this open set.
By discarding a null set, we can assume that $Q^{\eps} \in \widetilde{\mathcal O}_{\mu}$, so $A_{\eps,k} \in \mathcal O_{\mu}$ by Lemma \ref{lem:sop_algebra}.
Then, by the balance law \eqref{balancelaw2} and the additivity property \ref{item:additivity},
  \begin{align}
     \sigma(A_{\eps,k})
      &= \mathcal F_{A_{\eps,k}}(Q^{\eps} \cap \partial U)
      +\mathcal F_{A_{\eps,k}}(Q^{\eps} \cap \partial V_k)\nonumber\\
      &\quad\,+\mathcal F_{A_{\eps,k}}(\partial Q^{\eps} \cap (U \setminus\overline{V_k}))\nonumber \\
      &\quad\,+\mathcal F_{A_{\eps,k}}(\partial A_{\eps,k} \cap \partial Q^{\eps} \cap \partial (U \setminus\overline{V_k}))\nonumber \\
      &=: I_1 + I_2 + I_3 + I_4.\label{eq:main_splitting_k}
    \end{align}
Since sets $A_{\eps,k}$ are nested in $k$
(as $V_{k} \subset U^{2^{-(k+1)}} \subset V_{k+1}$), we have
\begin{equation}
\lim_{k \to \infty} \sigma(A_{\eps,k}) = \sigma\big(Q^{\eps} \cap (\cap_{k \in \mathbb N} (U \setminus \overline{V_k}))\big)
= \sigma(\varnothing)= 0 \qquad\mbox{for each $\eps$}.
\end{equation}
Moreover, by the localization property \ref{item:localisation} applied with $A=Q^{\eps}$ and since each $V_k \in \mathcal V_{\Omega}$, we have
 \begin{align}
    I_1 &= \mathcal F_{U}(Q^{\eps} \cap \partial U),\\
    I_2 &= \mathcal F_{(U \setminus \overline V_k)}(Q^{\eps} \cap \partial V_k) = - \mathcal F_{V_k}(Q^{\eps} \cap \partial V_k).
\end{align}
By property \ref{item:localisation} applied with $A =U \setminus \overline V_k$ and since $\mathcal F_{Q^{\eps}}$ is a measure on $\partial Q^{\eps}$ (by Lemma \ref{lem:flux_measure}), we see that
  \begin{equation}
    \lim_{k \to \infty} I_3 = \lim_{k \to \infty} \mathcal F_{Q^{\eps}}(\partial Q^{\eps} \cap (U \setminus V_k))
    = \mathcal F_{Q^{\eps}}(\partial Q^{\eps} \cap (\cap_{k \in \mathbb N} (U \setminus V_k))) = 0.
  \end{equation}
For the final term, we set
$B_{\eps,k}:= \partial A_{\eps,k} \cap \partial Q^{\eps} \cap \partial(U \setminus\overline{V_k})$ and claim that
\begin{equation}\label{eq:B_eps_k_vanishes}
\mu_{A_{\eps,k}}^{n-1}(B_{\eps,k}) = 0
\qquad \mbox{for any $k$ and $\mathcal L^1$--\textit{a.e.}\,\,$\eps>0$}.
\end{equation}
Indeed, let $\delta_j \searrow 0$ such that
\begin{equation}
    \frac1{\delta_j} \,\ \mu \res (U \setminus \overline{U^{\delta_j}})
    \xrightharpoonup{\,\, * \,\,}
      \mu_{U}^{n-1} \qquad\mbox{as $j \to \infty$}.
\end{equation}
Then, by arguing as in the proof of Lemma \ref{lem:sop_algebra},
we see that $\frac1{\delta_j} \mu(U \setminus \overline{U^{\delta_j}})$ is uniformly bounded in $j$
and hence, passing to a subsequence (depending on $\eps$ and $k$), we obtain a limiting measure
  \begin{equation}
      \frac1{\delta_j} \, \mu \res (A_{\eps,k} \setminus \overline{A_{\eps,k}^{\delta_j}})
      \xrightharpoonup{\,\, * \,\,}
          \mu_{A_{\eps,k}}^{n-1}
      \qquad\mbox{as $j \to \infty$}.
  \end{equation}
  Since $\overline{V}_k^{\mathrm{c}}$, $Q^{\eps} \in \widetilde{\mathcal O}_{\mu}$, passing to a further subsequence,
  we can also assume that there exist limit measures:
\begin{align}
\frac1{\delta_j} \, \mu \res (V_k^{-\delta_j} \setminus \overline{V_k})
\xrightharpoonup{\,\, * \,\,}
\mu_{\,\overline{V}_k^{\mathrm{c}}}^{n-1},
\quad
&\frac1{\delta_j} \,\mu \res (Q^{\eps} \setminus \overline{Q^{\eps+\delta_j}})
\xrightharpoonup{\,\, * \,\,}
\mu_{Q^{\eps}}^{n-1} \qquad\,\,\,\mbox{as $j \to \infty$}.
\end{align}
By Lemma \ref{lem:eta_intersection}\ref{item:upperbound_intersection},
  \begin{equation}\label{eq:A_eps_k_bound}
    \mu_{A_{\eps,k}}^{n-1}(B_{\eps,k}) \leq \mu_{U}^{n-1}(\partial Q^{\eps} \cap \partial U)
    + \mu_{\,\overline V_k^{\mathrm{c}}}^{n-1}(\partial Q^{\eps} \cap \partial V_k) + \mu_{Q^{\eps}}^{n-1}(B_{\eps,k})
  \end{equation}
for each $k$ and $\mathcal L^1$--\textit{a.e.}\,\,$\eps>0$.
We now show that \eqref{eq:A_eps_k_bound} vanishes for any $k$ and $\mathcal L^1$--\textit{a.e.}\,\,$\eps>0$.

Indeed, by considering
$\widetilde\mu = \mu \res (\Omega \setminus \partial U)$ and noting that
$\mu_{A_{\eps,k}}^{n-1} = \widetilde\mu_{A_{\eps,k}}^{n-1}$ for all $\eps$ and $k$,
we can assume that $\mu(\partial U) = 0$ in what follows.
Since $\{\partial Q^{\eps}\}_{\eps>0}$ are pairwise disjoint in $\eps>0$,
for all but countably many $\eps>0$, we have
  \begin{equation}
    \mu^{n-1}_{U}(\partial Q^{\eps} \cap \partial U)
    + \mu^{n-1}_{\overline{V}_k^{\mathrm{c}}}(\partial Q^{\eps} \cap \partial V_k)  = 0 \qquad\text{for any }k.
  \end{equation}
Also, by disintegration along $\partial Q^{\eps}$, we obtain
  \begin{equation}
    \int_0^{\infty}\mu^{n-1}_{Q^{\eps}}(\partial Q^{\eps} \cap (\partial U \cap \partial V_k)) \,\d \eps
    \leq \mu(Q \setminus Q^{\eps_0} \cap (\partial U \cap \partial V_k)) = 0,
  \end{equation}
by noting that $\mu(\partial U) = 0$ as assumed and $\mu(\partial V_k) = 0$
by Lemma \ref{lem:good_cube_collection}\ref{item:cube_no_concentration}.
Then, for $\mathcal L^1$--\textit{a.e.}\,\,$\eps >0$,
$\mu^{n-1}_{Q^{\eps}}(B_{\eps,k}) = 0$ for all $k$ which, combining with \eqref{eq:A_eps_k_bound}, leads to \eqref{eq:B_eps_k_vanishes} as claimed.
Hence, by property \ref{item:eta_bound}, we have
  \begin{equation}
    \lvert I_4\rvert = \lvert \mathcal F_{A_{\eps,k}}(B_{\eps,k})\rvert \leq \mu^{n-1}_{A_{\eps,k}}(B_{\eps,k})  = 0,
  \end{equation}
giving $I_4 \to 0$ for this choice of $\eps$.
Thus, combining everything in \eqref{eq:main_splitting_k}, we conclude
  \begin{equation}
    \begin{split}
      0 = \lim_{k \to \infty} \sigma(A_{\eps,k})
      &= I_1 + \lim_{k \to \infty} I_2 + \lim_{k \to \infty} (I_3+I_4) \\
     &= \mathcal F_{U}(Q^{\eps} \cap \partial U) - \lim_{k \to \infty} \mathcal F_{V_k}(Q^{\eps} \cap \partial V_k),
    \end{split}
  \end{equation}
  which rearranges to give \eqref{eq:Qeps_limit} as claimed.

\medskip
{\bf 2}. Notice that the normal trace $\FF \cdot \nu$ defines a Cauchy flux
$\widetilde{\mathcal F}$ with the same $\mu$ and $\sigma$ by Theorem \ref{prop:flux_sufficiency}.
By restricting the allowed parameter $\eps>0$ if necessary,
we can ensure that the above claim also holds for $\widetilde{\mathcal F}$,
that is,
\begin{equation}
(\FF \cdot \nu)_{\partial U}(Q^{\eps} \cap \partial U) = \lim_{k \to \infty} (\FF \cdot \nu)_{\partial V_k}(Q^{\eps} \cap \partial V_k).
\end{equation}
Since
$V_k \in \mathcal V_{\Omega} \subset \mathcal U_{\Omega}$,
then
\begin{equation}\label{eq:recovery_aecubes}
    \mathcal F_{U}(Q^{\eps} \cap \partial U) = (\FF \cdot \nu)_{\partial U}(Q^{\eps} \cap \partial U)
  \end{equation}
for all cubes $Q \Subset \Omega$ and for $\mathcal L^1$--\textit{a.e.}\,\,$\eps >0$, where the null set depends on $Q$.
We can further assume that
\begin{equation}\label{eq:recovery_aecubes_boundary}
    \mathcal F_U(\,\cdot \cap \partial U) \res\partial Q^{\eps} = (\FF \cdot \nu)_{\partial U}(\,\cdot \cap \partial U) \res\partial Q^{\eps} = 0
  \end{equation}
by restricting the allowed parameter $\eps>0$.

\medskip
{\bf 3}. To complete the proof, we need to extend \eqref{eq:recovery_aecubes}
to hold for all Borel subsets of $\partial U$.
To do this, for each $\eps>0$, define
\begin{equation}\label{eq:rational_eps_cubes}
\mathcal{D}_{\Omega}^{\eps} = \big\{ Q(a,b)^{\eps}\, :\, a,b \in \mathbb Q^n,\, Q(a,b) \Subset \Omega\big\},
\end{equation}
and let $\widetilde{\mathcal{D}}_{\Omega}^{\eps}$ be
the collection of all finite unions of cubes in $\mathcal D_{\Omega}^{\eps}$
in the sense of \eqref{eq:closure_union}.
Since this is a countable collection, for $\mathcal L^1$--\textit{a.e.}\,\,$\eps>0$,
we can ensure that \eqref{eq:recovery_aecubes}--\eqref{eq:recovery_aecubes_boundary} hold for
each $Q^{\eps} \in \mathcal{D}_{\Omega}^{\eps}$.

We first claim that $\mathcal D_{\Omega}^{\eps}$ is closed
under finite intersections. Indeed, observe that
\begin{equation}
 Q(a,b)^{\eps} = (a_1+\eps,b_1-\eps) \times \cdots \times (a_n + \eps, b_n - \eps)
 = Q(a+\eps \pmb{1},b-\eps \pmb{1}),
\end{equation}
where $\pmb{1} = (1,\cdots,1) \in \mathbb R^n$.
Using this, we can verify that $(Q_1 \cap Q_2)^{\eps} = Q_1^{\eps} \cap Q_2^{\eps}$
in general, from which the assertion follows.
Now, let $V \in \widetilde{\mathcal D}_{\Omega}^{\eps}$,
which can be written as a union $V = \interior(\bigcup_{i=1}^k \overline Q_i)$,
where
$Q_i \in \mathcal D_{\Omega}^{\eps}, i=1,\cdots, k$, are pairwise disjoint.
Then
  \begin{equation}
    \mathcal F_U(V \cap \partial U) = \sum_{i=1}^k \mathcal F_U(Q_i \cap \partial U)
    = \sum_{i=1}^k (\FF \cdot \nu)_{\partial U}(Q_i \cap \partial U) = (\FF \cdot \nu)_{\partial U}(V \cap \partial U)
  \end{equation}
by using \eqref{eq:recovery_aecubes}--\eqref{eq:recovery_aecubes_boundary}.
Thus, \eqref{eq:recovery_aecubes_boundary} holds if $Q^{\eps}$ is replaced
by the elements in $\widetilde{\mathcal{D}}_{\Omega}^{\eps}$.

Now, let $A \subset \Omega^{\eps}$ be open. We claim that,
for each $x \in A$, there is a cube in $Q^{\eps} \in \mathcal{D}_{\Omega}^{\eps}$ such that $x \in Q^{\eps} \Subset A$.
Indeed, there exists $\delta>0$ such that $Q_0 := Q(x-\delta\pmb{1},x+\delta\pmb{1}) \Subset A$. Then we can choose $y \in \mathbb Q^n$ so that
$\lvert x- y\rvert < \frac{\delta}4$ and
$r \in \mathbb Q \cap (\eps+\frac{\delta}4,\eps+\frac{\delta}2)$.
Letting $Q_1 = Q(y-r\pmb{1},y+r\pmb{1})$,
we see that $Q_1^{\eps} \in \mathcal D_{\Omega}^{\eps}$.
Since $\frac{\delta}4 < r - \eps < \frac{\delta}2$, we obtain the following inclusions:
  \begin{equation}
   x \in Q_1^{\eps} = Q(y-(r-\eps)\pmb{1},y+(r-\eps)\pmb{1}) \Subset Q_0 \Subset A,
  \end{equation}
as required.
Performing this about each point and passing to a countable subcover give a collection
$\{Q_j\} \subset \mathcal D_{\Omega}^{\eps}$ such that $\bigcup_j Q_j = A$.
Then, since \eqref{eq:recovery_aecubes} holds for elements
in $\widetilde{\mathcal{D}}_{\Omega}^{\eps}$, it follows that
\begin{equation}
\mathcal{F}_U\big(\bigcup_{j=1}^J Q_j \cap \partial U\big)
= (\FF\cdot\nu)_{\partial U}\big(\bigcup_{j=1}^J Q_j \cap \partial U\big)
\qquad\text{for any } J,
\end{equation}
   so passing to the limit gives
  \begin{equation}\label{eq:localrecovery_open}
    \mathcal{F}_U(A \cap \partial U) = (\FF \cdot \nu)_{\partial U}(A \cap \partial U)
  \end{equation}
for any $A \subset \Omega^{\eps}$ open and $\mathcal L^1$--\textit{a.e.}\,\,$\eps>0$.
Since both sides are Radon measures and $\eps>0$ can be chosen to be arbitrarily small, it follows from Lemma \ref{lem:measure_coincidence} that this holds
when $A$ is replaced by an arbitrary Borel set $B \subset \Omega$.
From this, we infer that $U \in \mathcal U_{\Omega}$, establishing the result.
\end{proof}

We can now collect the results established in the previous results to prove the main theorem.

\begin{proof}[Proof of {Theorem \ref{maintheorem}}]
$\,$Given a Cauchy flux $\mathcal F$, Theorem \ref{thm:flux_construct} gives the existence of a measure-valued field $\FF$,
which further satisfies the global and local recovery properties by Theorems \ref{thm:flux_globalrecovery} and \ref{thm:flux_localrecovery}, respectively.
Moreover, Theorem \ref{thm:flux_unique} shows that this flux is unique and the converse statement is precisely the content of Theorem \ref{prop:flux_sufficiency}.
\end{proof}

\subsection{Consequences of the main theorem}\label{sec:flux_consequences}

Using the equivalence given by Theorem \ref{maintheorem}, we can infer the
properties of the Cauchy flux based on the results about the normal traces
established in earlier sections.
We now list two of such consequences.

\begin{theorem}
Let $\mathcal F$ be a Cauchy flux, and let $U \Subset \Omega$ such that
$U, \overline{U}^{\mathrm{c}} \in \mathcal O_{\mu}$ and $\mu(\partial U) = \lvert\sigma\rvert(\partial U) = 0$.
Then
    \begin{equation}
        \mathcal F_U(S) = - \mathcal F_{\,\overline U^{\mathrm{c}}}(S) \qquad\text{for all Borel subsets } S \subset \partial U,
    \end{equation}
where
$\mathcal F_{\,\overline U^{\mathrm{c}}}$
is defined
through {\rm Remark \ref{rem:flux_complement}}.
\end{theorem}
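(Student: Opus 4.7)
The plan is to translate the statement into the language of the representing divergence-measure field provided by Theorem \ref{maintheorem} and then invoke the two-sided trace analysis from Remark \ref{rem:inner_outer_trace}. First, I would apply Theorem \ref{maintheorem} to obtain $\FF \in \mathcal{DM}^{\ext}(\Omega)$ with $-\div \FF = \sigma$ and
\begin{equation}
\mathcal F_U(S) = (\FF \cdot \nu)_{\partial U}(S)
\qquad\text{for every Borel } S \subset \partial U,
\end{equation}
which makes sense since $U \in \mathcal O_{\mu}$ by assumption. Because $\FF$ is built componentwise from integrating $\mathcal{F}$ against Lebesgue measure over hyperplane sections (Theorem \ref{thm:flux_construct}), each $|F_j| \leq \mu$, and so $|\FF|(\partial U) = 0$ by the hypothesis $\mu(\partial U)=0$. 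Similarly, $|\div \FF| = |\sigma|$ so $|\div \FF|(\partial U) = 0$.

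Next I would identify $\mathcal F_{\,\overline U^{\mathrm c}}$ in terms of $\FF$. By construction in Remark \ref{rem:flux_complement},
\begin{equation}
\mathcal F_{\,\overline U^{\mathrm c}}(S) = \mathcal F_{\widetilde\Omega^{\delta} \setminus \overline U}(S)
\end{equation}
for $\mathcal L^1$--a.e.\ small $\delta>0$, where the right-hand side is meaningful since $\widetilde\Omega^{\delta} \setminus \overline U \in \mathcal O_{\mu}$ by Lemma \ref{lem:sop_algebra}. Applying local recovery (Theorem \ref{thm:flux_localrecovery}) to this open set and then unwinding the definition of the normal trace gives, for $\phi \in C^1_{\mathrm c}(\widetilde\Omega^{\delta})$ supported away from $\partial \widetilde\Omega^{\delta}$,
\begin{equation}
\langle \FF\cdot\nu,\phi\rangle_{\partial(\widetilde\Omega^{\delta}\setminus \overline U)}
= -\int_{\Omega \setminus \overline U} \nabla \phi \cdot \d \FF - \int_{\Omega\setminus \overline U}\phi\,\d(\div \FF),
\end{equation}
which by Corollary \ref{cor:normaltrace_closure} equals $-\langle \FF \cdot \nu,\phi\rangle_{\partial \overline U}$. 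Passing from the test-function identity to measures on $\partial U$ (using that both sides are Radon measures there), I conclude
\begin{equation}
\mathcal F_{\,\overline U^{\mathrm c}}(S) = -(\FF\cdot\nu)_{\partial \overline U}(S)
\qquad\text{for Borel } S \subset \partial U.
\end{equation}

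Finally, I would close the argument by comparing the interior trace $(\FF\cdot\nu)_{\partial U}$ with the exterior trace $(\FF\cdot\nu)_{\partial \overline U}$ via the jump relation \eqref{eq:trace_jump} of Remark \ref{rem:inner_outer_trace}:
\begin{equation}
\langle \FF \cdot \nu,\phi\rangle_{\partial U} - \langle \FF \cdot \nu,\phi\rangle_{\partial \overline U}
= \int_{\partial U} \d \overline{\nabla \phi\cdot \FF} + \int_{\partial U} \phi \,\d(\div \FF).
\end{equation}
Using the product-rule bound $|\overline{\nabla\phi\cdot\FF}| \leq \lVert \nabla \phi\rVert_{\infty}|\FF|$ from \eqref{eq:product_measure_bound}, together with $|\FF|(\partial U)=0$ and $|\div\FF|(\partial U) = |\sigma|(\partial U)=0$, both integrals on the right vanish. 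Hence $(\FF \cdot \nu)_{\partial U} = (\FF \cdot \nu)_{\partial \overline U}$ as measures on $\partial U$, and combining this with the previous two displayed equations yields the desired identity
\begin{equation}
\mathcal F_U(S) = (\FF\cdot\nu)_{\partial U}(S) = (\FF\cdot\nu)_{\partial \overline U}(S) = -\mathcal F_{\,\overline U^{\mathrm c}}(S).
\end{equation}
The main obstacle I anticipate is simply bookkeeping with the two different sign conventions for interior versus exterior traces and the Radon-measure identification for the complement flux through the auxiliary sets $\widetilde\Omega^{\delta}$; once the correct equality $\mathcal F_{\,\overline U^{\mathrm c}}=-(\FF\cdot\nu)_{\partial \overline U}$ on $\partial U$ is pinned down, the vanishing of the jump is immediate from the hypotheses.
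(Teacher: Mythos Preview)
Your proposal is correct and follows essentially the same route as the paper: pass to the representing field $\FF$ via Theorem \ref{maintheorem}, identify $\mathcal F_{\,\overline U^{\mathrm c}}$ with $-(\FF\cdot\nu)_{\partial\overline U}$ through Remark \ref{rem:flux_complement} and the normal trace on $\widetilde\Omega^{\delta}\setminus\overline U$, and then kill the jump in \eqref{eq:trace_jump} using $|\FF|(\partial U)=|\div\FF|(\partial U)=0$. The only cosmetic difference is that the paper writes the decomposition $\langle \FF\cdot\nu,\cdot\rangle_{\partial(\widetilde\Omega^{\delta}\setminus\overline U)}=\langle \FF\cdot\nu,\cdot\rangle_{\partial\widetilde\Omega^{\delta}}-\langle \FF\cdot\nu,\cdot\rangle_{\partial\overline U}$ and restricts via Theorem \ref{thm:localisation}, whereas you reach the same identification through Corollary \ref{cor:normaltrace_closure}; both are equivalent.
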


\begin{proof}
Let $\FF$ be the corresponding $\mathcal{DM}^{\ext}$--field given by Theorem \ref{maintheorem}.
Then the corresponding result for the normal traces holds by Remark \ref{rem:inner_outer_trace}, which asserts that
\begin{equation}\label{eq:twosided_distribution}
    \langle \FF \cdot \nu,\, \cdot\,\rangle_{\partial U}
    = \langle \FF \cdot \nu,\, \cdot\, \rangle_{\partial\overline U}
\end{equation}
as distributions, whenever $\lvert \FF \rvert(\partial U) = \lvert \div \FF \rvert(\partial U) = 0$, and this is satisfied since $\lvert \FF \rvert \ll \mu$
and $\lvert \div \FF \rvert \ll \lvert\sigma\rvert$.
Since $U \in \mathcal O_{\mu}$, both sides
of \eqref{eq:twosided_distribution} can be represented as measures,
which remain equal.
By Theorem \ref{maintheorem},
$\mathcal F_{U}(\cdot) = \langle \FF \cdot \nu, \,\cdot\,\rangle_{\partial U}$ as measures.
On the other hand, since $\overline U^{\mathrm{c}} \in \mathcal{O}_{\mu}$, using the notation from Remark \ref{rem:flux_complement}, for $\mathcal L^1$--\textit{a.e.}\,\,$\delta>0$ such that $U \Subset \tilde\Omega^{\delta}$, we have
\begin{equation}
    \langle \FF \cdot \nu, \,\cdot\,\rangle_{\partial( \tilde\Omega^{\delta}\setminus \overline U)}
    = \langle \FF \cdot \nu,\,\cdot\,\rangle_{\partial \tilde\Omega^{\delta}}
    - \langle \FF \cdot \nu, \,\cdot\,\rangle_{\partial \overline U}.
\end{equation}
Restricting to $\tilde\Omega^{\delta}$ by using Theorem \ref{thm:localisation}
and Remark \ref{rem:flux_complement}, we obtain
\begin{equation}
    \mathcal F_{\overline U^{\mathrm{c}}}
    = \mathcal F_{\tilde\Omega^{\delta}\setminus V} \res \tilde \Omega^{\delta}
    = \langle \FF \cdot \nu, \,\cdot\,\rangle_{\partial( \tilde\Omega^{\delta}\setminus \overline U)} \res \tilde\Omega^{\delta}
    = -\langle \FF \cdot \nu, \,\cdot\,\rangle_{\partial \overline U}.
\end{equation}
We combine this with \eqref{eq:twosided_distribution} to complete the proof.
\end{proof}

\begin{theorem}
For any open set $U \Subset \Omega$,
    \begin{equation}\label{eq:flux_conv_global}
        \lim_{\eps \to 0} \mathcal F_{U^{\eps}}(\partial U^{\eps}) = \mathcal F_U(\partial U).
    \end{equation}
If, in addition, $U \in \mathcal O_{\mu}$, then there exists $t_k \to 0$ such that each $U^{t_k} \in \mathcal O_{\mu}$, and
\begin{equation}\label{eq:flux_conv_local}
\mathcal F_{U^{t_k}}
\xrightharpoonup{\,\, * \,\,}
\mathcal F_U
\qquad \mbox{as measures in $\Omega$}.
\end{equation}
\end{theorem}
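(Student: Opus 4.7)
The plan is to reduce both statements to the correspondence of Theorem \ref{maintheorem} and the limit formula of Theorem \ref{principal}. For the global identity \eqref{eq:flux_conv_global}, I would apply the balance law \eqref{balancelaw2} to write $\mathcal F_{U^\eps}(\partial U^\eps) = \sigma(U^\eps)$ and $\mathcal F_U(\partial U) = \sigma(U)$. Since the family $\{U^\eps\}_{\eps > 0}$ is monotone increasing as $\eps \searrow 0$ with $\bigcup_{\eps > 0} U^\eps = U$, continuity from below of the signed Radon measure $\sigma$ (applied via its Jordan decomposition) yields $\sigma(U^\eps) \to \sigma(U)$, establishing \eqref{eq:flux_conv_global} for any $U \Subset \Omega$, with no $\mathcal O_\mu$ hypothesis required.

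For the weak${}^\ast$ convergence \eqref{eq:flux_conv_local}, let $\FF \in \mathcal{DM}^{\ext}(\Omega)$ be the field associated to $\mathcal F$ by Theorem \ref{maintheorem}. Applying Theorem \ref{principal} to $\FF$, we obtain an $\mathcal L^1$-null set $\mathcal N \subset (0,\infty)$ such that $(\FF \cdot \nu)_{\partial U^\eps}$ is a measure for each $\eps \notin \mathcal N$ and, for any sequence $t_k \to 0$ with $t_k \notin \mathcal N$,
\begin{equation}
\int_\Omega \phi \,\d (\FF \cdot \nu)_{\partial U^{t_k}} \longrightarrow \langle \FF \cdot \nu, \phi \rangle_{\partial U} = \int_\Omega \phi \,\d (\FF \cdot \nu)_{\partial U}
\end{equation}
for every $\phi \in \Sob^{1,\infty}(\Omega)$, and in particular for every $\phi \in C^1_{\mathrm c}(\Omega)$. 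Enlarging $\mathcal N$ by another null set, Lemma \ref{lem:disint_flux} further ensures $U^{t_k} \in \mathcal O_\mu$, so Theorem \ref{maintheorem} identifies these normal-trace measures with $\mathcal F_{U^{t_k}}$ and $\mathcal F_U$, giving convergence against every $C^1_{\mathrm c}(\Omega)$ test function.

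To upgrade the testing class from $C^1_{\mathrm c}(\Omega)$ to $C_{\mathrm c}(\Omega)$, I need a uniform bound on the total variations $|\mathcal F_{U^{t_k}}|(\Omega)$ along a suitably chosen subsequence. Since $U \in \mathcal O_\mu$, there exist a constant $C<\infty$ and a sequence $\eps_j \to 0$ with $\frac{1}{\eps_j} \mu(U \setminus \overline{U^{\eps_j}}) \leq C$. Writing $T(t) := \mu_t(\partial U^t)$ with $\mu_t$ from the decomposition in Lemma \ref{lem:disint_tau_decomp}, which equals $\mu_{U^t}^{n-1}(\partial U^t)$ for $\mathcal L^1$--\textit{a.e.}\,$t$ by Lemma \ref{lem:disint_flux}, the disintegration yields
\begin{equation}
\int_0^{\eps_j} T(t) \,\d t \leq \mu(U \setminus U^{\eps_j}) \leq C \eps_j.
\end{equation}
A Chebyshev-type argument then produces $t_j \in (0, \eps_j) \setminus \mathcal N$ with $T(t_j) \leq 2C$, so that condition \ref{item:eta_bound} of Definition \ref{DefinitionCauchyFlux} gives $|\mathcal F_{U^{t_j}}|(\Omega) \leq T(t_j) \leq 2C$ uniformly in $j$. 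Combined with the convergence on $C^1_{\mathrm c}(\Omega)$ established above, a standard approximation argument by density yields $\mathcal F_{U^{t_j}} \weakstarto \mathcal F_U$ as finite Radon measures in $\Omega$.

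The main technical point is the Chebyshev selection step in the final paragraph: the $\liminf$ condition encoded in $U \in \mathcal O_\mu$ is a purely averaged bound and does not directly control $T(t)$ pointwise, so the subsequence $t_j$ has to be extracted with care in order to convert averaged information into the uniform total-variation bound needed to promote distributional convergence to genuine weak${}^\ast$ convergence of measures.
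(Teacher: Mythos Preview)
Your proposal is correct and follows essentially the same route as the paper. The global part is identical (balance law plus monotone convergence of $\sigma$), and for the local part both arguments hinge on the same two ingredients: Theorem~\ref{principal} to identify the limit with $(\FF\cdot\nu)_{\partial U}=\mathcal F_U$, and a Markov/Chebyshev selection from the averaged bound $\frac{1}{\eps_j}\int_0^{\eps_j}T(t)\,\d t\le C$ to obtain $t_j$ with $T(t_j)\le 2C$ and hence uniform total-variation control via condition~\ref{item:eta_bound}. The only cosmetic difference is the order: the paper first extracts a weak${}^\ast$-convergent subsequence by compactness and then identifies the limit via Theorem~\ref{principal}, whereas you first obtain convergence against $C^1_{\mathrm c}$ test functions from Theorem~\ref{principal} and then upgrade to $C_{\mathrm c}$ by density using the uniform bound; these are two presentations of the same argument.
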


\begin{proof}
The global convergence of the flux sequence in \eqref{eq:flux_conv_global}
is immediate from the balance law \eqref{balancelaw2}, since $\bigcup_{\eps>0} U^{\eps} = U$.
 For the local convergence, since $U \in \mathcal O_{\mu}$,
 there is $\eps_k \searrow 0$ such that
 \begin{equation}
 M := \sup_{k \in \mathbb N} \frac1{\eps_k} \,\mu(U \setminus \overline{U}^{\eps_k})
 < \infty.
 \end{equation}
By Lemma \ref{lem:disint_flux}, we can disintegrate $\mu$ along $\partial U^t$ as
\begin{equation}
\mu = \mathcal L^1 \res [0,\infty) \otimes_{\partial U^t} \mu_{U^t}^{n-1} + \tau_{\mathrm{sing}} \otimes_{\partial U^t} \tilde \mu_t.
\end{equation}
Consider the function:
    \begin{equation}
        T(t) = \lvert \mu_{U^t}^{n-1} \rvert(\partial U^t)
    \end{equation}
defined $\mathcal L^1$--\textit{a.e.}\,\,$t>0$
such that \eqref{eq:disint_mut_conv} holds,
which also holds for any $t$ outside a null set $\mathcal N \subset [0,\infty)$.
We set $T(t) = 0$ whenever $t \in \mathcal N$.
Then $T \in \Leb^1([0,\infty))$ and
\begin{equation}
 \frac1{\eps_k} \int_0^{\eps_k} T(t) \,\d t \leq M
 \qquad \mbox{ for any $k$}.
 \end{equation}

 Applying the Markov inequality gives
    \begin{equation}
        \mathcal L^1(\{t \in (0,\eps_k) \setminus \mathcal N_k : T(t) \geq 2M\}) \leq \frac1{2M} \int_0^{\eps_k} T(t) \,\d t \leq \frac{\eps_k}2,
    \end{equation}
so that there exists a sequence $t_k \in (0,\eps_k)$ of distinct values such that $T(t_k) \leq 2M$ for all $k$.
 Then $t_k \to 0$ and
\begin{equation}
\lvert \mathcal F_{U^{t_k}} \rvert(\partial U^{t_k})
\leq \eta_{U^{t_k}}(\partial U^{t_k}) = T(t_k) \leq 2M
\qquad \mbox{ for any $k$}.
\end{equation}
Passing to a subsequence, we see that
$\mathcal F_{U^{t_k}}$ converges weakly${}^\ast$ in $\Omega$.

Now, by Theorem \ref{maintheorem},
$\mathcal F$ is represented by the normal trace of a field $\FF \in \mathcal{DM}^{\ext}(\Omega)$, so that
the weak${}^{\ast}$--limit of these measures can be identified with $\mathcal F_U = (\FF \cdot \nu)_{\partial U}$ by Theorem \ref{principal}.
\end{proof}

\section{Extension of the Normal Trace}\label{sec:extended_normaltrace}

So far, we have restricted our attention to the normal trace on the boundary of a set $E$ that is compactly contained in $\Omega$. However, in applications, it is useful to consider a set $E \subset \Omega$
whose boundary may touch $\partial \Omega$,
for which we introduce the following definition:

\begin{definition}
Let $\Omega \subset \mathbb R^n$ be open and $\FF \in \mathcal{DM}^{\ext}(\Omega)$.
For a Borel set $E \subset \Omega$, define the \emph{normal trace} of $\FF$ on the boundary of $E$ as
\begin{equation}
\langle \FF \cdot \nu, \,\phi \rangle_{\partial E}
= - \int_{E} \d \overline{\nabla\phi \cdot \FF} - \int_E \phi \,\d(\div \FF)
\qquad\text{for any } \phi \in \Sob^{1,\infty}(\Omega),
\end{equation}
where the middle term is defined from the product rule \rm{(Theorem \ref{productrule})}.
\end{definition}

If $E \Subset \Omega$, then this coincides with Definition \ref{defn:normal_trace}
when testing against $\phi \in C^1_{\mathrm{c}}(\Omega)$,
and it is precisely how we extended the normal trace to $\Sob^{1,\infty}(\Omega)$ in Corollary \ref{cor:normaltrace_lipschitz_extension}.
We do not require that $\phi$ vanishes on $\partial \Omega$,
since $\partial E \cap \partial \Omega$ may be non-empty in general.
We can also generalize \eqref{eq:trace_balance_law} as
\begin{equation}\label{eq:extended_balance_law}
    \langle \FF \cdot \nu, \,\mathbbm{1}_{\Omega}\rangle_{\partial U} = (\div \FF)(U)
    \qquad\text{for any } U \subset \Omega.
\end{equation}

We record that the normal trace remains supported on $\partial U$ in the sense of Theorem \ref{prop:normaltrace_support}.

\begin{lemma}\label{lem:extended_flux_support}
Let $U \subset \Omega$ be open, and let $\phi \in \Lip_{\mathrm{b}}(\Omega)$ vanish
on $\partial U$. Then
\begin{equation}
\langle \FF \cdot \nu, \,\phi \rangle_{\partial U} = 0.
\end{equation}
\end{lemma}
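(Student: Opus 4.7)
The plan is to adapt the approximation argument of Theorem \ref{prop:normaltrace_support} to handle the possibility that $U$ touches $\partial \Omega$. The crucial input is a \emph{geometric observation}: with $d = \dist(\,\cdot\,,\partial U)$, the closed set $K := \{d \geq \delta\} \cap \overline U$ is contained in $\Omega$ as a subset of $\mathbb R^n$. Indeed, a point $y \in K$ satisfies $d(y) \geq \delta > 0$, forcing $y \notin \partial U$ and hence $y \in U$; and no $y \in \partial \Omega$ can belong to $K$, since $y \in \overline U \cap \partial \Omega$ would place $y$ in $\overline U \setminus U = \partial U$, contradicting $d(y) > 0$. This is what enables Lemma \ref{muyutil} to apply even though $U$ need not be compactly contained in $\Omega$.

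Using the Lipschitz cutoff $d_\delta$ from \eqref{eq:ddelta_defn}, I first show $\langle \FF \cdot \nu, \,d_\delta \phi \rangle_{\partial U} = 0$ for every $\delta > 0$. To this end, decompose $d_\delta\phi \FF = \GG_1 + \GG_2$ with $\GG_1 := (d_\delta \phi \FF) \res \overline U$ and $\GG_2 := (d_\delta \phi \FF) \res (\Omega \setminus \overline U)$; the respective supports lie in $K$ and in $\{d \geq \delta\} \cap (\Omega \setminus \overline U)$, which are separated by distance at least $2\delta$ (any segment between them must cross $\partial U$). Choosing the Lipschitz cutoff $\eta(x) := \max(0,\, 1 - \dist(x,\overline U)/\delta)$, one has $\eta \equiv 1$ on $K$ and $\eta \equiv 0$ on $\spt(\GG_2)$ (using $\dist(x,\overline U) = d(x)$ for $x \notin \overline U$), so Theorem \ref{productrule} yields $\GG_1 = (\eta d_\delta \phi) \FF \in \mathcal{DM}^{\ext}(\Omega)$; symmetrically for $\GG_2$. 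Since $\spt(|\GG_1|) \subset K \subset \Omega$ is closed in $\mathbb R^n$ by the geometric observation, Lemma \ref{muyutil} gives $(\div \GG_1)(\Omega) = 0$. Combining with $(\div \GG_2)(U) = 0$ (as $\spt(\div \GG_2) \subset \Omega \setminus \overline U$) and $\spt(\div \GG_1) \subset K \subset U$:
\begin{equation*}
-\langle \FF \cdot \nu, \,d_\delta \phi \rangle_{\partial U} = \int_U \d(\div(d_\delta \phi \FF)) = (\div \GG_1)(\Omega) + (\div \GG_2)(U) = 0.
\end{equation*}

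Finally, passing to the limit $\delta \to 0$ proceeds exactly as in the proof of Theorem \ref{prop:normaltrace_support}. The Leibniz identity $\overline{\nabla(d_\delta \phi) \cdot \FF} = d_\delta\, \overline{\nabla\phi \cdot \FF} + \phi\, \overline{\nabla d_\delta \cdot \FF}$ combined with dominated convergence gives the convergence of $\int_U d_\delta \phi \,\d(\div \FF)$ and $\int_U d_\delta \,\d\overline{\nabla\phi \cdot \FF}$ to the corresponding integrals for $\phi$. The remaining term $\int_U \phi \,\d\overline{\nabla d_\delta \cdot \FF}$ vanishes via the estimate $2 \|\nabla \phi\|_{\Leb^{\infty}(\Omega)} |\FF|(A_\delta) \to 0$ on the annulus $A_\delta := \{\delta \leq d \leq 2\delta\}$, using that $\phi$ vanishes on $\partial U$ and hence $\sup_{A_\delta}|\phi| \leq 2\delta \|\nabla \phi\|_{\Leb^{\infty}(\Omega)}$. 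The main obstacle is the geometric observation about $K$: it is the only genuinely new ingredient needed to bridge the extended setting to the compactly-contained machinery, and everything else is a direct adaptation.
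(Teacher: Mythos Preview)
Your proof is correct and takes a genuinely different route from the paper's. The paper introduces an \emph{additional} spatial cutoff $\chi_\delta$ (equal to $1$ on $B_{1/(2\delta)}$, supported in $B_{1/\delta}$), sets $\psi_\delta = \chi_\delta d_\delta$, and then reduces to the already-established Theorem~\ref{prop:normaltrace_support} applied on the compactly contained set $U \cap \widetilde\Omega^\delta \Subset \Omega$; the identity $\langle \FF\cdot\nu,\psi_\delta\phi\rangle_{\partial(U\cap\widetilde\Omega^\delta)} = \langle \FF\cdot\nu,\psi_\delta\phi\rangle_{\partial U}$ follows because the integrands vanish outside $\widetilde\Omega^\delta$. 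Your approach instead bypasses the extra cutoff entirely: the geometric observation that $K = \{d\ge\delta\}\cap\overline U$ is closed in $\mathbb R^n$ and contained in $U\subset\Omega$ lets you invoke Lemma~\ref{muyutil} directly on $\GG_1 = (\eta d_\delta\phi)\FF$, and the Lipschitz separator $\eta$ cleanly splits $d_\delta\phi\FF$ into pieces whose divergences are each seen to vanish on $U$. This is more self-contained and avoids the intersection with $\widetilde\Omega^\delta$; the paper's version is more modular, leaning on Theorem~\ref{prop:normaltrace_support} as a black box. One cosmetic point: in the final estimate you should write $\Lip(\phi)$ rather than $\lVert\nabla\phi\rVert_{\Leb^\infty(\Omega)}$, since for non-convex $\Omega$ the two need not coincide and the nearest point $y\in\partial U$ may lie in $\partial\Omega$; the bound then follows from the $\Lip(\phi)$-Lipschitz extension of $\phi$ to $\overline\Omega$.
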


Equipped with this result, we can also argue as in Corollary \ref{eq:normaltrace_lipschitz_dual}
and view the normal trace on an open set $U \subset \Omega$
as a linear functional $N_U \in \Lip_{\mathrm{b}}(\partial U)^{\ast}$.

\begin{proof}
Since the argument is similar to the proof of Theorem \ref{prop:normaltrace_support},
we only outline the main modifications.
Observe that, since any function $\phi \in \Lip_{\mathrm{b}}(\Omega)$ admits
a unique continuous extension
to $\partial\Omega$, the condition: $\phi\rvert_{\partial U} = 0$ is well-posed.

Given $0 < \delta < 1$, let $d_{\delta}$ as in \eqref{eq:ddelta_defn} with $U$ in place of $E$ so that
$d_{\delta}$ is supported in $\Omega^{\delta}$.
Also, let $\chi_{\delta} \in \Sob^{1,\infty}(\mathbb R^n)$ be $1$-Lipschitz
such that $\chi_{\delta} \equiv 1$ in $B_{1/(2\delta)}$ with support in $B_{1/\delta}$.
Thus, letting $\psi_{\delta} := \chi_{\delta}d_{\delta}$,
we see that $\psi_{\delta}$ is $\frac1{\delta}$-Lipschitz and is supported
on $\widetilde\Omega^{\delta} = B_{1/\delta} \cap \Omega^{\delta} \Subset \Omega$.

Then, by Theorem \ref{prop:normaltrace_support} with $\psi_{\delta}\phi$
on $U \cap \widetilde\Omega^{\delta}$, we have
    \begin{equation}
      \begin{split}
        0 &= \langle \FF \cdot \nu, \,\psi_{\delta}\phi \rangle_{\partial(U \cap \widetilde\Omega^{\delta})}\\
        &= \int_{U \cap \widetilde\Omega^{\delta}} \psi_{\delta}\phi \,\d(\div \FF) + \int_{U \cap \widetilde\Omega^{\delta}} \d \overline{\nabla (\psi_{\delta}\phi) \cdot \FF} \\
        &= \int_{U} \psi_{\delta}\phi \,\d(\div \FF)
           + \int_{U}\d \overline{\nabla (\psi_{\delta}\phi) \cdot \FF} \\
        &= \langle \FF \cdot \nu, \,\psi_{\delta}\phi \rangle_{\partial U},
        \end{split}
    \end{equation}
by noting that both integrals vanish outside $\widetilde\Omega^{\delta}$.

We now pass to the limit in $\delta \searrow 0$.
Using the definition of the pairing from Theorem \ref{productrule},
\begin{equation}
\overline{\nabla(\psi_{\delta}\phi)\cdot \FF}
= \psi_{\delta}\,\overline{\nabla \phi \cdot \FF}
+ \phi\,\overline{\nabla \psi_{\delta} \cdot \FF},
\end{equation}
and since $\nabla\psi_{\delta}$ is supported on $A_{\delta} = \widetilde\Omega^{\delta} \setminus \Omega^{2\delta}$
and $\lvert \phi \rvert \leq 2 \delta \lVert \nabla\phi \rVert_{\Leb^{\infty}(\Omega)}$ on $A_{\delta}$,
analogously to \eqref{eq:ddelta_zero_conv}, we obtain
    \begin{equation}
        \Big\lvert \int_U \phi \,\d \overline{\nabla \psi_{\delta}\cdot \FF} \Big\rvert \leq 2 \lVert\nabla\phi\rVert_{\Leb^{\infty}(\Omega)} \lvert \FF \rvert(A_{\delta}),
    \end{equation}
    which vanishes as $\delta \to 0$.
Finally, since $\psi_{\delta}(x) \to 1$ for any $x \in U$,
by the Dominated Convergence Theorem and the above bounds, we have
\begin{equation}
0 = \lim_{\delta \to 0} \langle \FF \cdot \nu, \,\psi_{\delta}\phi \rangle_{\partial U}
= \langle \FF \cdot \nu, \,\phi \rangle_{\partial U}
\end{equation}
as required.
\end{proof}

\begin{remark}
While the corresponding result for $U \Subset \Omega$ ({\rm Theorem \ref{prop:normaltrace_support}})
is stated for any function $\phi \in \Sob^{1,\infty}(\Omega)$,
we have stated {\rm Lemma \ref{lem:extended_flux_support}} for the test function
in $\Lip_{\mathrm{b}}(\Omega)$.
This is because, for a general function $\phi \in \Sob^{1,\infty}(\Omega)$,
we only know that $\phi$ is continuous in $\Omega$,
which means that the condition: $\phi\rvert_{\partial U} = 0$ may be ill-posed if $\partial U \not\subset \Omega$.
However, if we take $\phi \in \Sob^{1,\infty}(\Omega)$ and additionally impose that
    \begin{equation}
        \lvert \phi(x) \rvert \leq C \delta \qquad \text{for any } x \in \partial U^{\delta}
    \end{equation}
    for some $C>0$ and all $\delta > 0$,
then we can argue analogously as in the above proof.
\end{remark}

We can also extend Theorems \ref{principal}, \ref{thm:reconstruct_normaltrace},
and \ref{thm:averaged_trace}
to hold for general $U \subset \Omega$.

\begin{theorem}\label{thm:extend_traceapprox}
Let $\FF \in \mathcal{DM}^{\ext}(\Omega)$, and let $U \subset \Omega$ be open and $d(x) := \dist(x,\partial U)$.
Then there exists an $\mathcal L^1$-null set $\mathcal N \subset (0,\infty)$ such that
\begin{enumerate}[label=\rm(\roman*)]
\item\label{item:extended_measure}
For any $\eps \notin\mathcal N$, the normal trace
$\langle\FF \cdot \nu, \,\cdot\,\rangle_{\partial U^{\eps}}$ is represented by a measure on $\partial U^{\eps}$,
denoted by $(\FF \cdot \nu)_{\partial U^{\eps}}$.

\item\label{item:extended_pointwise_limit}
For any $\phi \in \Sob^{1,\infty}(\Omega)$ and any sequence $\eps_k \to 0$ with $\eps_k \notin\mathcal N$,
\begin{equation}
\langle \FF \cdot \nu, \,\phi \rangle_{\partial U}
= \lim_{k \to \infty} \int_{\partial U^{\eps_k}} \phi \,\d(\FF \cdot \nu)_{\partial U^{\eps_k}}.
\end{equation}

\item\label{item:extended_averaged_limit}
For any $\phi \in \Sob^{1,\infty}(\Omega)$,
\begin{equation}
\langle \FF \cdot \nu, \,\phi\rangle_{\partial U}
= \lim_{\eps \to 0} \frac1{\eps} \int_{U \setminus \overline{U^{\eps}}} \phi \,\d\overline{\nabla d \cdot \FF}.
\end{equation}

\item\label{item:extended_recovery} For any bounded Borel function $\phi$ on $\Omega$,
 \begin{equation}
\int_0^{\infty} \int_{\partial U^t} \phi \,\d(\FF \cdot \nu)_{\partial U^t} \,\d t
= \int_U \phi \,\d \overline{\nabla d \cdot \FF},
\end{equation}
understanding that the integrand is defined for $t \in (0,\infty) \setminus\mathcal N$.
\end{enumerate}
\end{theorem}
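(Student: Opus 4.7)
The plan is to adapt the proofs of Theorems \ref{principal}, \ref{thm:averaged_trace}, and \ref{thm:reconstruct_normaltrace} to the case where $U$ is no longer compactly contained in $\Omega$. Since $d(x)=\dist(x,\partial U)$ is globally $1$-Lipschitz on $\mathbb R^n$, the pairing measure $\overline{\nabla d\cdot\FF}$ remains a well-defined signed Radon measure on $\Omega$ via Theorem \ref{productrule}, and Lemma \ref{lem:disint_tau_decomp} (applied componentwise to its Jordan decomposition) gives the disintegration
\begin{equation*}
\overline{\nabla d\cdot\FF} = T\,\mathcal L^1\res[0,\infty) \otimes_{\partial U^t} \mu_t + \tau_{\mathrm{sing}} \otimes_{\partial U^t} \tilde\mu_t,
\end{equation*}
with $T\in L^1_{\mathrm{loc}}((0,\infty))$ and $D_{\mathcal L^1}\tau_{\mathrm{sing}}(\eps)=0$ for $\mathcal L^1$--a.e.~$\eps>0$.

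For items \ref{item:extended_measure} and \ref{item:extended_pointwise_limit}, the sole obstruction in replicating the proof of Theorem \ref{principal} is that the test function $\phi\psi_{t,s}^U$ is no longer compactly supported in $\Omega$, so Lemma \ref{soporte} is unavailable. This is resolved by Lemma \ref{lem:extended_flux_support}: $\phi\psi_{t,s}^U$ vanishes on $\partial U$, and for $\phi\in\Sob^{1,\infty}(\Omega)$ one has the decay $|\phi\psi_{t,s}^U(x)|\le\|\phi\|_{L^\infty(\Omega)}\,\delta$ on $\partial U^\delta$, so the extended support result (via the remark following Lemma \ref{lem:extended_flux_support}) gives $\langle\FF\cdot\nu,\,\phi\psi_{t,s}^U\rangle_{\partial U}=0$. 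Expanding this identity using the product rule applied to $\phi\FF$ and the fact that $\nabla\psi_{t,s}^U=\nabla d\cdot\mathbbm 1_{U^t\setminus U^s}$ $\mathcal L^n$-a.e.\ yields the analog of \eqref{eq:averaged_st}. With this in hand, the remainder of the proof of Theorem \ref{principal}---choosing $s=\eps+h$, $t=\eps-h$, dividing by $2h$, passing $h\to 0$ along a subsequence, and invoking the Lebesgue-Besicovitch differentiation of $\tau$ combined with the estimate of $D_{\mathcal L^1}\tau_{\mathrm{sing}}=0$---is unchanged, and the null set $\mathcal N$ is defined as the union of the $\mathcal L^1$-null exceptional sets arising in these steps plus the (at most countable) set of $\eps$ on which $|\FF|$ or $|\div\FF|$ charges $\partial U^\eps$.

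Item \ref{item:extended_recovery} is obtained by transplanting the proof of Theorem \ref{thm:reconstruct_normaltrace} essentially verbatim. Indeed, the test functions $\phi(x)=\psi(d(x))g(x)$ with $\psi\in C^1_{\mathrm c}((0,\infty))$ and $g\in C^1_{\mathrm c}(\Omega)$ remain compactly supported in $\Omega$: the factor $\psi(d)$ forces the support to lie in $\overline{U^{t_0}}\setminus U^{s_0}$ for some $0<t_0<s_0<\infty$, and intersecting with $\mathrm{supp}(g)\Subset\Omega$ produces a compact subset of $\Omega$. Hence Lemma \ref{muyutil} applies directly to $\phi\FF$, and the reduction to Lemma \ref{lem:1d-bv} together with the Lusin approximation forces $\tau_{\mathrm{sing}}=0$, yielding the coarea-type identity for bounded Borel $\phi$. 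Item \ref{item:extended_averaged_limit} then follows from \ref{item:extended_recovery} applied to $\mathbbm 1_{U\setminus\overline{U^\eps}}\phi$, combined with the right-continuity of $t\mapsto\langle\FF\cdot\nu,\,\phi\rangle_{\partial U^t}$ inherited from the extended product rule, exactly as in the proof of Theorem \ref{thm:averaged_trace}. The main obstacle is concentrated in the compact-support issue for $\phi\psi_{t,s}^U$ in items \ref{item:extended_measure}--\ref{item:extended_pointwise_limit}, which is precisely what the extended support lemma was designed to accommodate.
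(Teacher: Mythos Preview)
Your proposal is correct and follows essentially the same route as the paper: both identify that the only obstruction to transplanting Theorems \ref{principal}, \ref{thm:reconstruct_normaltrace}, and \ref{thm:averaged_trace} is the loss of compact support for $\phi\psi_{t,s}^U$, resolve it via Lemma \ref{lem:extended_flux_support} (and its $\Sob^{1,\infty}$ extension in the subsequent remark), and observe that the test functions $\psi(d)g$ in the reconstruction argument remain compactly supported in $\Omega$ so that Lemma \ref{muyutil} still applies. Your treatment is slightly more explicit than the paper's in invoking the remark after Lemma \ref{lem:extended_flux_support} to handle $\phi\in\Sob^{1,\infty}(\Omega)\setminus\Lip_{\mathrm b}(\Omega)$, which is indeed needed.
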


\begin{proof}
For $0 < t < s$, define $\psi^U_{t,s}$ by \eqref{eq:psi_st_definition},
which lies in $\Lip_{\mathrm{b}}(\Omega) \subset \Sob^{1,\infty}(\Omega)$ and vanishes on $\partial U$.
Choose $s$ and $t$ such that $\lvert\overline{\nabla d\cdot\FF}\rvert(\partial U^t)
=\lvert\overline{\nabla d\cdot\FF}\rvert(\partial U^s) = 0$, which holds for all but countably many $s$ and $t$.
By Lemma \ref{lem:extended_flux_support} and the product rule (Theorem \ref{productrule}), we have
    \begin{equation}
    \begin{split}
        0 &= \langle \FF \cdot \nu, \,\psi_{t,s}^U\phi\rangle_{\partial U} \\
          &= \int_U \psi_{s,t}^U \,\d(\div(\phi \FF)) + \int_U \d\overline{\nabla \psi_{t,s}^U \cdot (\phi \FF)} \\
          &= \int_U \psi_{t,s}^U \,\d(\div(\phi\FF)) + \int_{U^t \setminus \overline{U^s}} \phi \,\d\overline{\nabla d \cdot \FF}.
    \end{split}
    \end{equation}
We can then argue as in the proof of Theorem \ref{principal} to show that
the disintegration of $\overline{\nabla d \cdot \FF}$ takes the form:
    \begin{equation}
        \overline{\nabla d \cdot \FF}
        = \mathcal L^1 \otimes_{\partial U^t} (\FF \cdot \nu)_{\partial U^t} + \tau_{\mathrm{sing}} \otimes_{\partial U^t} \mu_t,
    \end{equation}
from which \ref{item:extended_measure} and \ref{item:extended_pointwise_limit} follow.
For \ref{item:extended_recovery},
we can argue as in the proof of Theorem \ref{thm:reconstruct_normaltrace} to show that $\tau_{\mathrm{sing}}=0$,
by observing the test function $\phi(x) = \psi(d(x)) g(x)$ remains compactly supported
when $U \subset \Omega$.
Finally, \ref{item:extended_averaged_limit}
follows from \ref{item:extended_recovery}
by arguing exactly as in the proof of Theorem \ref{thm:averaged_trace}.
\end{proof}

The localization result from \S \ref{sec:localisation} also extends to this setting as follows:

\begin{theorem}\label{thm:localisation_extended}
Let $\Omega \subset \mathbb R^n$ be open and $\FF \in \mathcal{DM}^{\ext}(\Omega)$.
For $U, V \subset \Omega$, suppose that an open set $A \subset \mathbb R^n$ satisfies
    \begin{equation}
        U \cap A = V \cap A.
    \end{equation}
Then
\begin{equation}
\langle \FF \cdot \nu, \,\phi \rangle_{\partial U}
= \langle \FF \cdot \nu, \,\phi \rangle_{\partial V} \qquad
\text{for any } \phi \in \Sob^{1,\infty}_{\mathrm{c}}(A).
\end{equation}
\end{theorem}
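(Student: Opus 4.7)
The strategy is to run the proof of Theorem \ref{thm:localisation} essentially verbatim, with the averaged-trace formula (Theorem \ref{thm:averaged_trace}, valid only for $U \Subset \Omega$) replaced by its extended counterpart in Theorem \ref{thm:extend_traceapprox}\ref{item:extended_averaged_limit}, which applies to arbitrary open $U \subset \Omega$. As a first step, I would establish $A \cap \partial U = A \cap \partial V$ by the same elementary argument as in the compact case: if $x \in A \cap \partial U$, then $x \notin U$ (since $U$ is open), while $x$ is a limit of points $x_k \in A \cap U = A \cap V$, forcing $x \in \partial V$, and the reverse inclusion follows by symmetry. This step is insensitive to whether $A \subset \Omega$ or not.

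Next, given $\phi \in \Sob^{1,\infty}_{\mathrm c}(A)$, I would extend $\phi$ by zero outside $A$ (which preserves the Lipschitz regularity, since $\phi$ vanishes near $\partial A$) and restrict to $\Omega$, obtaining a valid element of $\Sob^{1,\infty}(\Omega)$ against which the extended normal trace can be tested. Choosing $\delta>0$ with $\spt(\phi) \subset A^\delta$ and taking $0<\eps<\delta$, the distance-comparison argument from Theorem \ref{thm:localisation} yields $A^\delta \cap (U \setminus \overline{U^\eps}) = A^\delta \cap (V \setminus \overline{V^\eps})$ together with $d_U = d_V$ on this common open set, both using only $A \cap \partial U = A \cap \partial V$ and the triangle inequality. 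By the local character of the pairing from Theorem \ref{productrule} (two Lipschitz functions agreeing on an open subset of $\Omega$ have identical mollifications on any compactly contained subset, hence identical pairings with $\FF$ via the weak-$\ast$ limit \eqref{eq:pairing_limit}), the measures $\overline{\nabla d_U \cdot \FF}$ and $\overline{\nabla d_V \cdot \FF}$ agree on $A^\delta \cap (U \setminus \overline{U^\eps})$. Applying Theorem \ref{thm:extend_traceapprox}\ref{item:extended_averaged_limit} to both $U$ and $V$ and using $\spt(\phi)\subset A^\delta$ to localize each averaged integral to $A^\delta$ then yields
\begin{align*}
\langle \FF\cdot\nu,\phi\rangle_{\partial U}
&= \lim_{\eps\to 0}\frac{1}{\eps}\int_{A^\delta\cap (U\setminus \overline{U^\eps})}\phi\,\d\overline{\nabla d_U\cdot\FF} \\
&= \lim_{\eps\to 0}\frac{1}{\eps}\int_{A^\delta\cap (V\setminus \overline{V^\eps})}\phi\,\d\overline{\nabla d_V\cdot\FF}
= \langle \FF\cdot\nu,\phi\rangle_{\partial V},
\end{align*}
which is the desired identity.

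The main potential obstacle is not a conceptual issue but a bookkeeping check: one must confirm that the locality property of the pairing remains valid on arbitrary open subsets of $\Omega$ without any compactness hypothesis, and that the zero-extension of a $\Sob^{1,\infty}_{\mathrm c}(A)$ function yields a well-defined element of $\Sob^{1,\infty}(\Omega)$ even when $A$ extends outside $\Omega$. Both are immediate from the mollification-based definition of the pairing and from the compact support of $\phi$ away from $\partial A$, so no genuinely new ideas beyond those already used in the proof of Theorem \ref{thm:localisation} are required.
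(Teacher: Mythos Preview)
Your proposal is correct and follows essentially the same route as the paper's proof, which simply states that the argument of Theorem \ref{thm:localisation} goes through verbatim once Theorem \ref{thm:averaged_trace} is replaced by Theorem \ref{thm:extend_traceapprox}\ref{item:extended_averaged_limit}, and once $A$ is replaced by a bounded open set containing $A \cap \spt(\phi)$ (your choice of $\delta>0$ with $\spt(\phi)\subset A^\delta$ accomplishes the same reduction).
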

As in Theorem \ref{thm:localisation}, if the normal traces on $\partial U$
and $\partial V$ are represented by measures, then a density argument implies that
\begin{equation}\label{eq:localisation_meausres2}
    (\FF \cdot \nu)_{\partial U} \res(\partial U \cap A)
    = (\FF \cdot \nu)_{\partial V} \res (\partial V \cap A)
\end{equation}
as measures.
The proof of Theorem \ref{thm:localisation_extended} is analogous to that of Theorem \ref{thm:localisation},
by noting that
$A$ can be replaced by a bounded open set $\tilde{A}$ containing
$A \cap \spt(\phi)$, since $\phi$ is compactly supported in $A$,
and that Theorem \ref{thm:extend_traceapprox}\ref{item:extended_averaged_limit} can be applied
in place of Theorem \ref{thm:averaged_trace}.

Finally, we can formulate the Cauchy flux
to be defined up to the boundary of $\Omega$.
Given a non-negative measure $\mu$ on $\Omega$, we introduce the set:
\begin{equation}
\overline{\mathcal O}_{\mu}
= \Big\{ U \subset \Omega \text{ open}\,:\,\liminf_{\eps \to 0}\frac1{\eps}\, \mu(U \setminus \overline{U^{\eps}}) < \infty\Big\}.
\end{equation}

\begin{definition}
An \emph{extended Cauchy flux} $\Omega$ is a mapping $\mathcal{F}$ defined on pairs $(S,U)$ with
$S \subset \partial U$ Borel and $U \subset \Omega$ open, which satisfies the balance law \eqref{balancelaw2}
for any $U \subset \Omega$ so that
there is a Radon measure $\mu$ such that properties \ref{item:additivity}--\ref{item:eta_bound}
hold with $\mathcal O_{\mu}$ replaced by $\overline{\mathcal O}_{\mu}$.
\end{definition}

\begin{theorem}
Let $\mathcal F$ be an extended Cauchy flux in $\Omega$.
Then there exists a unique divergence-measure field $\FF \in \mathcal{DM}^{\ext}(\Omega)$
representing $\mathcal F$ in the sense that
both the global recovery{\rm :}
    \begin{equation}\label{eq:extended_globalrecovery}
        \mathcal F_{U}(\partial U) = \langle \FF \cdot \nu, \,\mathbbm{1}_{\Omega} \rangle_{\partial U}
        \qquad\text{for any open set } U \subset \Omega,
    \end{equation}
and the local recovery:
\begin{equation}\label{eq:extended_localrecovery}
\mathcal F_U(S) = (F \cdot \nu)_{\partial U}(S) \qquad
\mbox{for any $U \in \overline{\mathcal O}_{\mu}$ and any Borel Set $S \subset \partial U$}
\end{equation}
hold.
Conversely, any $\FF \in \mathcal{DM}^{\ext}(\Omega)$ defines an extended Cauchy flux by \eqref{eq:extended_globalrecovery}--\eqref{eq:extended_localrecovery},
and every Cauchy flux $\mathcal F$ in the sense of {\rm Definition \ref{DefinitionCauchyFlux}}
extends uniquely to an extended Cauchy flux.
\end{theorem}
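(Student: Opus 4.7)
The plan is to bootstrap from Theorem \ref{maintheorem} via the truncations $\widetilde\Omega^\delta \Subset \Omega$ introduced in \eqref{eq:tilde_omega_delta}, using the extended normal trace machinery developed in this section (in particular Theorems \ref{thm:extend_traceapprox} and \ref{thm:localisation_extended}, and Lemma \ref{lem:extended_flux_support}) to deal with the fact that $\partial U$ may now meet $\partial\Omega$. The argument splits naturally into four steps.

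First, for existence, I would observe that any extended Cauchy flux $\mathcal F$ restricts to an ordinary Cauchy flux in the sense of Definition \ref{DefinitionCauchyFlux} when its domain of definition is reduced to pairs $(S,U)$ with $U \Subset \Omega$: the balance law and properties \ref{item:additivity}--\ref{item:eta_bound} hold verbatim since $\mathcal O_\mu \subset \overline{\mathcal O}_\mu$. Applying Theorem \ref{maintheorem} produces a unique $\FF \in \mathcal{DM}^{\ext}(\Omega)$ with $-\div \FF = \sigma$ and $\mathcal F_U = (\FF\cdot\nu)_{\partial U}$ for all $U \in \mathcal O_\mu$. The global recovery \eqref{eq:extended_globalrecovery} for arbitrary open $U \subset \Omega$ is then immediate from \eqref{eq:extended_balance_law} combined with $-\div\FF=\sigma$ and the balance law for $\mathcal F$. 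Uniqueness of $\FF$ follows since any competitor representing $\mathcal F$ globally also represents the restricted Cauchy flux locally on $\mathcal O_\mu$, so Theorem \ref{thm:flux_unique} applies.

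Second, the heart of the proof is the local recovery \eqref{eq:extended_localrecovery} for $U \in \overline{\mathcal O}_\mu$ not necessarily compactly contained. By (the natural extension of) Lemma \ref{lem:sop_algebra}, whose proof depends only on the inclusions \eqref{eq:intersection_inclusion}--\eqref{eq:union_inclusion} and not on compact containment, we have $U_\delta := U \cap \widetilde\Omega^\delta \in \mathcal O_\mu$ for $\mathcal L^1$--\textit{a.e.}\,$\delta>0$ such that $\widetilde\Omega^\delta \in \widetilde{\mathcal O}_\mu$. For such $\delta$ and any open $A \Subset \widetilde\Omega^\delta$, we have $A \cap U = A \cap U_\delta$, so the localization axiom \ref{item:localisation} of the extended flux yields $\mathcal F_U(A \cap \partial U) = \mathcal F_{U_\delta}(A \cap \partial U_\delta)$, while Theorem \ref{thm:localisation_extended} gives the corresponding identity $(\FF\cdot\nu)_{\partial U}(A\cap\partial U) = (\FF\cdot\nu)_{\partial U_\delta}(A\cap\partial U_\delta)$. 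Since Theorem \ref{maintheorem} identifies the middle two quantities, we conclude equality on all Borel subsets of $A \cap \partial U$, and exhausting $\Omega$ by $\widetilde\Omega^\delta$ extends this to all Borel subsets of $\Omega \cap \partial U$; both measures vanish outside $\Omega$ (the normal trace being defined by integrals on $U \subset \Omega$, and $\mathcal F_U$ being controlled by $\mu^{n-1}_U$ which lives on $\overline\Omega$), completing the recovery.

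Third, for the converse direction, given $\FF \in \mathcal{DM}^{\ext}(\Omega)$ I would set $\mu = \lvert\FF\rvert$, $\sigma = -\div\FF$, and define $\mathcal F$ by \eqref{eq:extended_globalrecovery}--\eqref{eq:extended_localrecovery}. The balance law is \eqref{eq:extended_balance_law}; additivity is linearity of measures; localization is Theorem \ref{thm:localisation_extended}; and the upper bound \ref{item:eta_bound} follows from Theorem \ref{thm:extend_traceapprox}\ref{item:extended_averaged_limit} together with the product-rule estimate $\lvert\overline{\nabla d\cdot\FF}\rvert \leq \lvert\FF\rvert \leq \mu$, exactly as in the proof of Theorem \ref{prop:flux_sufficiency}. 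Finally, to extend an ordinary Cauchy flux uniquely, apply Theorem \ref{maintheorem} to obtain the associated $\FF$, then use the converse direction just proved to define an extended Cauchy flux; this agrees with the original on $U \Subset \Omega$ by Theorem \ref{maintheorem}, and uniqueness follows because any two extensions yield the same restricted Cauchy flux and hence the same representing field by Theorem \ref{thm:flux_unique}.

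The main obstacle is precisely the boundary behavior in step two: ensuring that the truncation-and-localization scheme faithfully captures $\mathcal F_U$ on the full $\Omega \cap \partial U$ rather than only strictly interior portions. This is resolved by observing that every compact subset of $\Omega$ lies inside some $\widetilde\Omega^\delta$, so the exhaustion argument reaches every Borel subset of $\Omega \cap \partial U$; the potentially troublesome part of $\partial U$ sitting on $\partial\Omega$ contributes nothing to either the normal trace or the flux since both are defined purely through measures concentrated in $\Omega$.
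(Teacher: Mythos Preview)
Your proposal is correct, and it takes a genuinely different route from the paper for the key step (local recovery on $\overline{\mathcal O}_\mu$). The paper does not reduce to $U_\delta = U \cap \widetilde\Omega^\delta$ via the localization axiom; instead it \emph{re-runs} the approximation argument of Theorem~\ref{thm:flux_localrecovery}, again approximating $U$ from inside by finite unions $V_k \in \mathcal V_\Omega$ of good cubes, with the single modification that the auxiliary cubes $Q^\eps$ are now allowed to straddle $\partial\Omega$ (the point being that $A_{\eps,k} = Q^\eps \cap (U \setminus \overline{V_k})$ still sits in $\Omega$, so the balance law applies and the chain of estimates goes through unchanged). This yields \eqref{eq:extended_localrecovery} for all Borel $S \subset \partial U$ in one stroke, including the portion on $\partial\Omega$.

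Your truncation-and-localization scheme is more economical: it uses nothing beyond Theorem~\ref{maintheorem}, the localization axiom of the extended flux, and Theorem~\ref{thm:localisation_extended}, avoiding any repetition of the cube machinery. The trade-off is that your argument natively produces the identity only on $\Omega \cap \partial U$, and the portion $\partial U \cap \partial\Omega$ must be handled by the separate observation that both $\mathcal F_U$ and $(\FF\cdot\nu)_{\partial U}$ are dominated by measures arising as weak${}^*$ limits in $\mathcal M(\Omega)$ of measures supported in $\Omega$. That observation is correct (and is implicit in how condition~\ref{item:eta_bound} and the averaged-trace formula are interpreted), but it deserves one explicit sentence rather than the parenthetical you give it, since the point that no mass of $\mu_U^{n-1}$ is assigned to $\partial\Omega$ depends on taking the weak${}^*$ limit in $\mathcal M(\Omega)$ rather than in $\mathcal M(\mathbb R^n)$.
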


\begin{proof}
Since an extended Cauchy flux $\mathcal{F}$ is also a Cauchy flux in the sense of Definition \ref{DefinitionCauchyFlux},
the existence and uniqueness of a representing field
$\FF \in \mathcal{DM}^{\ext}(\Omega)$ satisfying $-\div \FF = \sigma$
follows from Theorems \ref{thm:flux_construct}--\ref{thm:flux_globalrecovery} and \ref{thm:flux_unique}.
Then, from \eqref{eq:extended_balance_law}, we obtain \eqref{eq:extended_globalrecovery}.

It remains to extend the local recovery result (Theorem \ref{thm:flux_localrecovery}) to hold
for any $U \in \overline{\mathcal O}_{\mu}$.
For this, we closely follow the proof of Theorem \ref{thm:flux_localrecovery},
approximating $U \in \overline{\mathcal O}_{\mu}$ by subsets $V_k \in \mathcal V_{\Omega}$;
since each $V_k \Subset \Omega$, the localization and two-sided properties are guaranteed
by Lemma \ref{lem:good_cube_union}.
The key difference is to show that \eqref{eq:Qeps_limit} holds
for all cubes $Q \subset \mathbb R^n$ and $\mathcal L^1$--\textit{a.e.}\,\,$\eps>0$,
regardless of whether $Q^{\eps}$ is contained in $\Omega$.
This is possible because $A_{\eps,k} = Q^{\eps} \cap (U \setminus \overline{V}_k) \subset \Omega$,
so that the balance law \eqref{balancelaw2} can be applied in this extended setting.
We choose $\eps>0$ such that
    \begin{equation}
        \limsup_{\delta \to 0} \frac1{\delta} \, \mu\big(\Omega \cap (Q^{\eps} \setminus Q^{\eps+\delta})\big) < \infty,
    \end{equation}
which is valid for $\mathcal L^1$--\textit{a.e.}\,\,$\eps>0$ by Lemma \ref{lem:disint_flux},
extending $\mu$ by zero to $\mathbb R^n$.
Then we can argue analogously as in the proof of Theorem \ref{thm:flux_localrecovery}
(especially \eqref{eq:Qeps_limit}) that
    \begin{equation}
        \mathcal{F}_U(Q^{\eps} \cap \partial U) = \lim_{k \to \infty} \mathcal{F}_{V_k}(Q^{\eps} \cap \partial V_k)
        \qquad \mbox{for any such $Q^{\eps}$}.
    \end{equation}
We then consider the collection of cubes $\mathcal{D}_{\mathbb R^n}^{\eps}$ as in \eqref{eq:rational_eps_cubes}
and observe that, for $\mathcal L^1$--\textit{a.e.}\,\,$\eps>0$,
both \eqref{eq:recovery_aecubes}--\eqref{eq:recovery_aecubes_boundary} hold
for each $Q \in \mathcal{D}_{\mathbb R^n}^{\eps}$.
Then, covering $A \subset \mathbb R^n$ by cubes in $\mathcal{D}_{\mathbb R^n}^{\eps}$,
we can infer that the local recovery \eqref{eq:localrecovery_open} holds for any open set  $A$,
from which a density argument extends the result to all Borel subsets.

Finally, the fact that a flux $\FF \in \mathcal{DM}^{\ext}(\Omega)$ defines an extended Cauchy
flux follows analogously as in the proof of Theorem \ref{prop:flux_sufficiency}, by using \eqref{eq:extended_balance_law} and
Theorems \ref{thm:extend_traceapprox}--\ref{thm:localisation_extended} if necessary.
Also, if $\mathcal{F}$ is a Cauchy flux, letting $\FF \in \mathcal{DM}^{\ext}(\Omega)$ be the associated field
guaranteed by Theorem \ref{maintheorem}, the extended Cauchy flux $\widetilde{\mathcal F}$ defined via the normal trace
of $\FF$ is the unique extension of $\mathcal F$.
\end{proof}

\section{Remarks on the Existence of Divergence-Measure Fields}\label{sec:alt_existence}
The characterization of the solvability of the equation with a Radon measure $\sigma$:
\begin{equation}\label{unicacosa}
    -\div \FF = \sigma
\end{equation}
has been obtained in Phuc-Torres \cite{PT1,PT2}
in several spaces of functions,
including continuous vector fields and vector fields in $\Leb^{p}$, $1 \leq p \leq \infty$.

The next theorem provides a way to show the existence of a solution of \eqref{unicacosa}
that is a vector-valued measure, that is, an extended divergence-measure field.

 \begin{theorem}\label{globalexistence}
Given a finite signed Radon measure $\sigma$ in a bounded open
set $\Omega \subset \mathbb{R}^{n}$,
there exists a vector-valued measure $\FF=(F_1,\cdots,F_n) \in \mathcal M(\Omega;\mathbb R^n)$
such that $-\div \FF=\sigma$ in the sense of distributions.
 \end{theorem}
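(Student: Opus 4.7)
The plan is to construct $\FF$ explicitly as the gradient of a Newtonian potential. Extend $\sigma$ by zero to a finite, compactly supported, signed Radon measure on $\mathbb R^n$. Let $\Gamma$ denote the fundamental solution of the Laplacian (so $\Gamma(x) = c_n|x|^{2-n}$ for $n \geq 3$ and $\Gamma(x) = -\tfrac{1}{2\pi}\log|x|$ for $n=2$), and set $u := \Gamma \ast \sigma$. Standard potential theory shows that $u \in \Leb^1_{\loc}(\mathbb R^n)$ with $-\Delta u = \sigma$ as distributions on $\mathbb R^n$, which follows from Fubini's theorem and the fundamental-solution identity $-\Delta \Gamma = \delta_0$.

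The candidate field is $\FF := (\nabla u)\,\mathcal L^n \res \Omega$, and the heart of the proof is to verify that $\nabla u \in \Leb^1(\Omega,\mathbb R^n)$, which ensures $\FF \in \mathcal M(\Omega, \mathbb R^n)$. Since $|\nabla\Gamma(z)| \leq C_n |z|^{1-n}$ is locally integrable on $\mathbb R^n$, I would justify the identity $\nabla u = (\nabla\Gamma) \ast \sigma$ by mollifying $\sigma$ as $\sigma_\eps := \sigma \ast \rho_\eps$, applying the identity in the smooth setting, and passing to the limit. The resulting pointwise bound $|\nabla u(x)| \leq C_n \int_{\mathbb R^n} |x-y|^{1-n} \,\d|\sigma|(y)$ combined with Fubini's theorem yields
\begin{equation*}
\int_\Omega |\nabla u(x)| \,\d x \leq C_n \int_{\mathbb R^n} \Big(\int_\Omega \frac{\d x}{|x-y|^{n-1}}\Big) \d|\sigma|(y) \leq C_n \Big(\int_{B_{R}(0)} \frac{\d z}{|z|^{n-1}}\Big) |\sigma|(\Omega) < \infty,
\end{equation*}
where $R := \diam(\Omega)$ and the inner integral is finite uniformly in $y$ because $|z|^{1-n} \in \Leb^1_{\loc}(\mathbb R^n)$.

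Finally, for any $\varphi \in C^\infty_{\mathrm{c}}(\Omega)$, extending $\varphi$ by zero to $\mathbb R^n$ and integrating by parts (which is licit since $\nabla u \in \Leb^1(\Omega, \mathbb R^n)$ and $\varphi$ has compact support in $\Omega$), I obtain
\begin{equation*}
\int_\Omega \nabla u \cdot \nabla \varphi \,\d x = \int_{\mathbb R^n} u(-\Delta\varphi)\,\d x = \langle -\Delta u, \varphi\rangle = \int_\Omega \varphi \,\d\sigma,
\end{equation*}
which shows that $-\div \FF = \sigma$ in $\mathcal D'(\Omega)$. The main obstacle is the justification of the identity $\nabla u = (\nabla\Gamma) \ast \sigma$ together with the accompanying $\Leb^1$-bound, because $\nabla\Gamma$ fails to be integrable at the origin while $\sigma$ may concentrate on lower-dimensional sets; the mollification argument outlined above handles this cleanly by leveraging the uniform $\Leb^1$-bound above. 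For a broader discussion of the solvability of \eqref{unicacosa} in various function spaces of vector fields, I refer to Phuc-Torres \cite{PT1,PT2}.
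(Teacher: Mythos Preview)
Your proof is correct but follows a genuinely different route from the paper. The paper argues abstractly via Hahn--Banach: it introduces the bounded operator $A\colon C_0^1(\Omega)\to C_0(\Omega)$, $Au=\nabla u$, defines the linear functional $\tilde L(\nabla u)=\sigma(u)$ on the range $R(A)$, extends it by Hahn--Banach to all of $C_0(\Omega)$, and then applies the Riesz representation theorem to obtain $\FF$. Your construction is the classical potential-theoretic one, which the paper itself acknowledges in the remark immediately following its proof (citing \v{S}ilhav\'y \cite{Silhavy1}) as an alternative. Your approach is explicit and constructive, and in fact yields more than the statement asks: you obtain $\FF\in \Leb^1(\Omega,\mathbb R^n)$ (indeed $\Leb^p$ for every $1\le p<\frac{n}{n-1}$), rather than merely a vector-valued Radon measure. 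The Hahn--Banach route, though non-constructive, has the complementary advantage of characterizing \emph{all} solutions of $-\div\FF=\sigma$ as the extensions of $\tilde L$ from $R(A)$ to $C_0(\Omega)$, and avoids any PDE or potential-theoretic machinery.
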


\begin{proof}
Let $C_{\mathrm{c}}(\Omega)$ be the vector space of all real continuous functions
with compact support defined in $\Omega$, equipped with the norm:
$$
\norm{u}_{C_0(\Omega)} = \sup_{x \in \Omega}\,\lvert u(x) \rvert.
$$
Let $C_0(\Omega)$ denote the completion of $C_{\mathrm{c}}(\Omega)$ in this norm.
Similarly, let $C_{\mathrm{c}}^{1}(\Omega)$ be the vector space
of all differentiable functions with compact support in $\Omega$ such that
the derivatives are also continuous in $\Omega$, which is equipped with the norm:
 $$
 \norm{u}_{C_0^1(\Omega)}
 = \sup_{x \in \Omega}\,\lvert u(x)\rvert + \sup_{x \in \Omega}\,\lvert\nabla u(x)\rvert.
 $$
 Let $C_0^{1}(\Omega)$ denote the completion of $C_{\mathrm{c}}^{1}(\Omega)$ with this norm.
 Observe that $\sigma \in C_0^1(\Omega)^{*}$, since  we can estimate
 \begin{equation}
 \label{accion}
 \Big\lvert \int_{\Omega} \varphi  \,\d\sigma \Big\rvert
 \leq  \lvert\sigma\rvert(\Omega)\,\sup_{\Omega}\, \lvert\varphi\rvert
  = \lvert\sigma\rvert(\Omega) \norm{\varphi}_{C_0^1(\Omega)}
 \qquad \mbox{for any $\varphi \in C_0^1(\Omega)$}.
 \end{equation}
  We define
 \begin{equation}
 A: C_0^1(\Omega) \to C_0(\Omega), \qquad A(u)= \nabla u,
 \end{equation}
which is a bounded linear operator, since
\begin{eqnarray}
\norm{Au}_{C_0(\Omega)}
  &=& \norm{\nabla u}_{C_0(\Omega)}
   \leq \norm{u}_{C_0^1(\Omega)}\qquad \mbox{for every $u \in C_0^1(\Omega)$}.
\end{eqnarray}
Since $C_0^1(\Omega)$ is complete, the range $R(A)$ of $A$
is a closed subspace of $C_0(\Omega)$. We now define the functional
\begin{equation}
\tilde{L}:R(A) \to \mathbb{R}, \qquad R(A) \subset C_0(\Omega)
\end{equation}
as
\begin{equation}
\tilde{L}(\nabla u)= \sigma(u).
\end{equation}
We claim that $\tilde{L}$ is well defined.

Indeed, suppose that $\nabla u_1=\nabla u_2$. Then, on each connected component, $u_1=u_2 +C$ for some constant $C$.
We recall that $u \in C_0^1(\Omega)$ if and only if both $u$ and $\nabla u$ are continuous
in $\Omega$ and,
for any $\eps>0$, there exists $K \Subset \Omega$
such that $\lvert u(x)\rvert\leq \varepsilon$ and $\lvert\nabla u(x)\rvert \leq \varepsilon$
for any $x \in \Omega \setminus K$.
This characterization of $C_0^1(\Omega)$ implies that $C=0$ on each connected component.

We now show that $\tilde{L}$ is continuous on $R(A) \subset C_0(\Omega)$.
For any $u \in C_0^1(\Omega)$, using the first part in \eqref{accion}, we compute
\begin{equation}
\lvert\tilde{L}(\nabla u)\rvert
= \lvert\sigma (u)\rvert \leq \abs{\sigma}(\Omega) \norm{u}_{C_0(\Omega)}.
\end{equation}
By the Hahn-Banach Theorem, $\tilde{L}$ can be extended to a continuous linear functional
\begin{equation}
L:\,C_0(\Omega) \to \mathbb{R}.
\end{equation}
Hence, by the Riesz Representation Theorem,
there exists a unique $\mathbb{R}^n$-valued finite Radon measure
${\bf \FF}=(F_1,F_2,\cdots,F_n)$ such that
$$
L(u)=\sum_{i=1}^{n} \int_{\Omega} u_i \,\d F_i \qquad \text{for any } u \in C_0(\Omega).
$$
In particular, for any $\varphi \in C_{\mathrm{c}}^{\infty}(\Omega)$, we have
\begin{equation}
\label{uno}
\sigma(\varphi)= \tilde{L}(\nabla \varphi)= L(\nabla \varphi)
=\int_{\Omega} \nabla \varphi \cdot \, \d \FF.
\end{equation}
Since the distributional divergence of $\FF$ is given by
\begin{equation}
\label{dos}
\langle -\div \FF, \,\varphi\rangle= \int_{\Omega} \nabla \varphi \cdot \d \FF,
\end{equation}
we conclude from \eqref{uno}--\eqref{dos} that
$$
\langle-\div \FF, \,\varphi\rangle = \sigma (\varphi) \qquad \text{for any } \varphi \in C_{\mathrm{c}}^{\infty}(\Omega),
$$
that is, $-\div \FF = \sigma$.
 \end{proof}

\begin{remark}
A more classical method for solving equation \eqref{unicacosa}
is to use first the Newtonian potential to solve the equation: $-\Delta u =\sigma$ and
then define $\FF:= \nabla u$.
This approach is used in \cite[{\rm Example 3.3(i)}]{Silhavy1},
where it is shown that, if $\sigma$ has compact support in $\mathbb{R}^n$,
a solution to \eqref{unicacosa} is given by $\FF(x) = -\frac{1}{n\omega_n} \int_{\mathbb R^n} \frac{\d \sigma(y)}{\lvert x - y \rvert^{n-1}}$,
where $\omega_n$ is the volume of the unit ball in $\mathbb R^n$.
Note that, if $\sigma$ is a measure on a bounded domain $\Omega$,
we can apply this by taking a zero extension of $\sigma$ to $\mathbb R^n$.
Moreover, $u$ belongs to $\Sob^{1,p}_{\mathrm{loc}}(\mathbb R^n)$
for all $1 \leq p < \frac{n}{n-1}$ so that
$\FF = \nabla u \in \Leb^{p}_{\text{loc}}(\mathbb{R}^n, \R^n)$.
If $\sigma=0$, the Newtonian potential approach
clearly gives $\FF=0$.

Without the assumption of the measure having compact support, using similar techniques as in Theorem \ref{globalexistence}, it was shown in \cite[Theorem 3.1]{PT1} that, if $1 \leq p \leq \frac{n}{n-1}$ and $\sigma$ is any positive Radon measure in $\R^n$, equation \eqref{unicacosa} has a solution $\FF \in L^p(\R^n,\R^n)$ if and only if $\sigma =0$. Moreover, it was indicated in \cite[Theorem 3.2]{PT1} that,
for $\frac{n}{n-1} < p< \infty$,  \eqref{unicacosa} has a solution $\FF \in L^p(\R^n,\R^n)$ if and only if $I_1 \sigma \in L^p(\R^n)$, where $I_1\sigma$ is the Riesz potential of order 1 of $\sigma$ defined as $I_1 \sigma (x)= \int_{\R^n} \frac{\d \sigma (y)}{|x-y|^{n-1}} $. A characterization of this type is still unknown if $\sigma$ is a signed measure.

If $\sigma$ is represented by a function in $L^p$
for $1 < p < \infty$, there exists a function $u \in W^{2,p}(\Omega)$ satisfying $-\Delta u =\sigma \in L^p (\Omega)$,
and $\FF=\nabla u \in W^{1,p} (\Omega, \R^n)$ solves \eqref{unicacosa}.
For $p=1$, there are examples of functions $\sigma \in L^1(\Omega)$
for which there is no solution for \eqref{unicacosa}
with $\FF \in W^{1,1}(\Omega,\R^n)$ (\cite[\S 2.1]{BB}). For $p =\infty$, there are functions $\sigma \in L^{\infty}(\Omega)$
for which there is no solution for \eqref{unicacosa} in $W^{1,\infty}(\Omega, \R^n)$ (see \cite[\S 2.2]{BB} and \cite{McMullen}).

For the critical case $\sigma \in L^n(\Omega)$, even though there exists $u \in W^{2,n}(\Omega)$ that solves $-\Delta u=\sigma$, and hence $\nabla u \in W^{1,n}(\Omega, \R^n)$ solves \eqref{unicacosa}, we can not conclude that $\nabla u \in L^{\infty} (\Omega, \R^n)$ since it is a limiting case of the Sobolev imbedding. Moreover, if we consider the function $u(x)= \varphi(x)x_1 |\ln |x||^{\alpha}, 0 < \alpha < \frac{n-1}{n}$,
 where $\varphi$ is a smooth cut-off function with support near 0,
 then it holds that
$\Delta u \in L^n$, but $\nabla u \notin L^{\infty}$ (see \cite[Remark 7]{BB}). If $-\Delta u =\sigma \in L^p(\Omega)$
for $n< p < \infty$ and $u \in W^{2,p}(\Omega)$, then $\nabla u \in C^{0, 1-\frac{n}{p}}(\Omega, \R^n)$
(see \cite[\S 5.6.2, Theorem 5]{eg}), which implies that, for the case $p=n$, we can not conclude that $\nabla u$ is continuous.
Bounded and continuous vector fields that solve equation \eqref{unicacosa} with $\sigma \in L^n(\Omega)$ were constructed in Bourgain-Brezis \cite[Proposition 1, Theorem 1 and Theorem 1']{BB} by using other techniques.

For general distributions $\sigma,$ a characterization of the solvability of \eqref{unicacosa} in the class of continuous vector fields was obtained in Pfeffer-De Pauw \cite{PD} and De Pauw-Torres \cite{DT}. It was shown in \cite{PD} that there exists a continuous vector field in $\Omega$ that solves the equation if and only if $\sigma$ belongs to a space of distribution denoted as the space of strong charges (see \cite[Theorem 4.7]{PD}. In \cite{DT}, it was proved that there exists a continuous vector field $\FF \in C_0(\R^n,\R^n)$ ({\it i.e.}, vanishing at infinity) if and only if $\sigma$ belong to the space of charges vanishing at infinity (see \cite[Theorem 6.1]{DT}). The space $L^n$ belongs to both spaces of distributions,
in particular solving equation \eqref{unicacosa} in the class of continuous vector fields when $\sigma \in L^n(\Omega)$.
\end{remark}

Our construction in Theorem \ref{globalexistence} provides a \emph{characterization} of all fields $\FF$
such that $-\div \FF = \sigma$, as all possible extensions of $\tilde L$ from $R(A)$ to $C_0(\Omega)$.
Indeed, if $\FF$ is any such field, then, for any $\varphi \in C_{\mathrm{c}}^{\infty}(\Omega)$,
  \begin{equation}
    \int_{\Omega} \nabla \varphi\cdot \d \FF = \sigma(\varphi) = \tilde L(\nabla \varphi).
  \end{equation}
  Our proof relies on the Hahn-Banach Theorem to produce an extension
  and hence is non-constructive.

  Similar techniques as in Theorem \ref{globalexistence} have been used in \cite[Theorem 4.5, Theorem 3.5]{PT1} and \cite[Theorem 4.4, Theorem 7.4]{PT2}
  to obtain the necessary and sufficient condition
  on the measure $\sigma$ to solve $-\div \FF= \sigma$ in the spaces $\FF \in \Leb^{\infty}(\Omega, \R^n)$
  or $\FF\in C(\Omega,\R^n)$.

\section{Equivalence between Entropy Solutions of
Nonlinear PDEs of Divergence Form and the Mathematical Formulation of Physical Balance Laws} \label{sec:conversionlaws}

In this section, we employ the results obtained in \S \ref{sec:definitions}--\S \ref{sec:alt_existence}
to establish the equivalence between
solutions of nonlinear PDEs of divergence form and the mathematical formulation
of physical balance laws,
and to analyze entropy solutions of hyperbolic conservation laws.

\subsection{Mathematical formulation of physical balance laws}

As stated in \S \ref{sec:flux_definition}, a physical balance law on an open set $\Omega$ of $\R^n$
postulates that the {\it production}
of a vector-valued extensive quantity in any bounded open set $U\Subset\Omega$ is balanced
by the {\it Cauchy flux} of this quantity through the boundary $\partial U$ of the open set $U$;
see also Dafermos \cite{dafermos2010hyperbolic} and the references cited therein.

\smallskip
Like the Cauchy flux, the {\it production} is introduced through a functional $\mathcal{P}$, defined on any bounded open
set $U\Subset\Omega$, taking values in $\R^N$ and satisfying the conditions:
\begin{align}
&\mathcal{P}(U_1\cup U_2)=\mathcal{P}(U_1)+\mathcal{P}(U_2) \qquad \mbox{if $U_1\cap U_2=\emptyset$},\label{12.1}\\
&\lvert\mathcal{P}(U)\rvert\le \widetilde\mu(U), \label{12.2}
\end{align}
where $\widetilde{\mu}$ is a given Radon measure.
It follows (from, for example,
\cite{Fuglede}) that $\mathcal P$ extends to a measure, in which
there is a production
measure $\sigma\in\mathcal{M}(\Omega;\R^N)$, with $\sigma\ll \widetilde\mu$, such that
\begin{equation}\label{12.3}
\mathcal{P}(U)= \sigma(U) \qquad\mbox{for any $U \Subset \Omega$ open}.
\end{equation}
Then the physical principle of balance law on $\Omega$ can be mathematically formulated as
\begin{equation}\label{12.4}
\mathcal{F}(\partial U)= \mathcal{P}(U)
=\sigma(U)
\end{equation}
for any bounded open set $U\Subset \Omega$.

\subsection{Equivalence between weak solutions of the physical balance laws and nonlinear PDEs of divergence form}
First, combining Theorem \ref{maintheorem} with \eqref{12.3}--\eqref{12.4}, we conclude that there exists a unique divergence-measure field
$\FF\in \mathcal{DM}^{\rm ext}(\Omega)$ such that
\begin{equation}
-\div\FF=\sigma  \qquad \mbox{in the sense of distributions}, \label{12.5}\\
\end{equation}
and, for any  Borel subset $S\subset \partial U$ with $U\in \mathcal O_{\mu}$,
\begin{equation}
\mathcal F_U(S)=(\FF\cdot\nu)_{\partial U}(S),  \label{12.6}
\end{equation}
where $(\FF\cdot\nu)_{\partial U}$ is the normal trace of $\FF$ on $\partial U$.
By considering a vector-valued flux and working component-wise,
we can obtain a matrix-valued field $\FF = (\FF_1,\FF_2,\cdots,\FF_N)^{\intercal}$ with each row $\FF_i$ lying in $\mathcal{DM}^{\ext}(\Omega)$, so \eqref{12.5} becomes a \emph{system} of balance laws (understood with a row-wise divergence).

Consider the state of the medium under consideration that is described by a state vector field ${\bf u} = (u_1,\cdots, u_N)^{\intercal}$
taking values in $\R^N$,
which determines both the flux density field $\FF$ and the production density field $\sigma$
at point $y\in \Omega$ by the constitutive relations:
\begin{equation}\label{12.7}
\FF(y)= \FF(\bfu(y),y), \qquad \sigma(y):=\sigma(\bfu(y), y),
\end{equation}
where $\FF(\bfu,y)$ and $\sigma(\bfu, y)$ are given vector fields in $(\bfu,y)$, understood in a sense determined by the physical system.

In the case that $\FF(\bfu,y)$ and $\sigma(\bfu,y)$ are sufficiently regular,
\eqref{12.7} is always well-defined if $\bfu \in \Leb^{\infty}$.
This is not always so if $\bfu \in \Leb^p$ for some $1 \leq p < \infty$, and further assumptions may be necessary to ensure the composition $\FF(\bfu(y),y)$ is locally integrable.
If $\bfu$ is measure-valued, then we must give a meaning to \eqref{12.7};
this is a modeling issue, which is beyond the scope of this paper.
An example of such constitutive relations in the measure-valued case is
given in \cite{CF2} for the Euler equations for gas dynamics in Lagrangian coordinates.

Combining \eqref{12.5} with \eqref{12.7}, we obtain the first-order quasilinear system of PDEs
of divergence form:
\begin{align}
\div\FF(\bfu(y), y)+\sigma(\bfu(y), y)=0,
\label{12.8}
\end{align}
which is called a system of nonlinear PDEs of balance laws  ({\it cf}. \cite{dafermos2010hyperbolic}).

For the zero-production case: $\mathcal{P}=0$, which implies that $\sigma(\bfu(y), y)=0$, then the above derivation yields
\begin{align}
\div\FF(\bfu(y), y)=0,
\label{12.9}
\end{align}
which is called a nonlinear system of conservation laws. In particular, when the medium is homogeneous:
\begin{equation}\label{12.10}
\FF(\bfu,y)=\FF(\bfu),
\end{equation}
depending on $y$ only through the state vector $\bfu=\bfu(y)$, then system \eqref{12.9} becomes
\begin{align}
\div\FF(\bfu(y))=0.
\label{12.11}
\end{align}
Now suppose that the coordinate system $y$ is described by the time variable $t$
and the space variable $x=(x_1,\cdots, x_m)$:
$$
y=(t,x)=(t,x_1,\cdots, x_m),  \quad n=m+1,
$$
and the flux density is written as an $N \times n$ matrix:
$$
\FF(\bfu)=(\bfu, \bff(\bfu))=(\bfu, \bff_{1}(\bfu), \cdots, \bff_{m}(\bfu))
$$
where $\bff_i^{\intercal} \colon \mathbb R^N \to \mathbb R^m$ for each $i = 1,\cdots,m$.
Then
we obtain the following standard form for the system of conservation laws:
\begin{equation}\label{12.12}
\partial_t \bfu +\div_x \bff(\bfu)=0, \qquad (t,x)\in\R^n, \,\, \bfu\in \R^N,
\end{equation}
where the spacial divergence is taken row-wise.

This shows the equivalence between the weak solutions of physical flows
governed by the balance laws \eqref{12.4} and weak
solutions of the corresponding system of nonlinear PDEs of divergence form \eqref{12.8},
or even a system of conservation laws
\eqref{12.12}, through the relations \eqref{12.5}--\eqref{12.6}.

\bigskip
\begin{theorem}  The following statements hold{\rm :}
\begin{enumerate}
\item[\rm (i)] The state variables $\bfu\in \R^N$ $($even measure-valued$)$ that are governed respectively by the physical balance laws \eqref{12.4}
and the corresponding nonlinear PDEs of divergence form \eqref{12.8}
are equivalent on any open set $U$.
That is, the weak solutions $\bfu\in \R^N$ $($even measure-valued$)$
for the state variables of the physical balance laws \eqref{12.4}
and the corresponding nonlinear PDEs of divergence form \eqref{12.8} are equivalent.

\item[\rm (ii)]  Given an open set $U$, the divergence-measure field $\FF\in \mathcal{DM}^{\rm ext}(\Omega)$ is endowed with
the normal trace  $(\FF\cdot\nu)_{\partial U}$ on $\partial U$ such that \eqref{12.6} hold
for any  Borel subset $S\subset \partial U$ with $U\in \mathcal O_{\mu}$.

\item[\rm (iii)] If \eqref{12.9} is satisfied, across any discontinuity surface $S$ on which the underlying field $\FF$ does not concentrate, the weak Rankine-Hugoniot condition holds{\rm :}
The exterior and interior
normal traces of the divergence-measure field $\FF$ for weak solutions are equal, {\it i.e.}, the normal trace is continuous across
$S$.
\end{enumerate}
\end{theorem}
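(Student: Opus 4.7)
The plan is to decompose the proof into its three assertions and to assemble them from Theorem \ref{maintheorem}, Theorem \ref{prop:flux_sufficiency}, and the jump identity \eqref{eq:trace_jump}; a row-wise application of Theorem \ref{maintheorem} provides the bridge between the scalar theory developed above and the vector-valued state $\bfu \in \R^N$. For part (i), I would fix a weak solution $\bfu$ satisfying the physical balance law \eqref{12.4} with flux and production prescribed through the constitutive relations \eqref{12.7}. Since $\mathcal F_{U}(\partial U) = \sigma(U)$ for every $U \Subset \Omega$, the assignment $(S, U) \mapsto \mathcal F_U(S)$ fulfils the hypotheses of Definition \ref{DefinitionCauchyFlux} when applied componentwise to each row of $\FF$, and Theorem \ref{maintheorem} produces a unique matrix-valued field $\FF \in \mathcal{DM}^{\rm ext}(\Omega)$ with $-\div \FF = \sigma$ in the sense of distributions. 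Combining with \eqref{12.7} yields \eqref{12.8} at once. Conversely, given $\bfu$ solving \eqref{12.8}, Theorem \ref{prop:flux_sufficiency} ensures that the normal trace of $\FF(\bfu,\cdot\,)$ defines a Cauchy flux, and by \eqref{eq:trace_balance_law} this flux obeys the balance law \eqref{12.4} with $\sigma = -\div \FF$.

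Part (ii) is then an immediate rereading of \eqref{eq:flux_localrecovery} from Theorem \ref{maintheorem}: for every $U \in \mathcal O_\mu$ the distribution $\langle \FF \cdot \nu, \,\cdot\,\rangle_{\partial U}$ is represented by a measure on $\partial U$ that coincides with $\mathcal F_U$ on each Borel subset $S \subset \partial U$.

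For part (iii), under \eqref{12.9} we have $\div \FF = 0$, and the hypothesis that $\FF$ does not concentrate on the discontinuity surface $S$ will be read as $\lvert\FF\rvert(S) = 0$, with $S \subset \partial U$ for a suitable open $U$. The jump identity \eqref{eq:trace_jump} gives
\[
\langle \FF \cdot \nu, \,\phi \rangle_{\partial U} - \langle \FF \cdot \nu, \,\phi \rangle_{\partial \overline U}
= \int_{\partial U} \d \overline{\nabla \phi \cdot \FF} + \int_{\partial U} \phi \, \d(\div \FF).
\]
The second term vanishes since $\div \FF = 0$. For the first, the product-rule bound \eqref{eq:product_measure_bound} yields $\lvert\overline{\nabla \phi \cdot \FF}\rvert \leq \lVert \nabla \phi \rVert_{\Leb^\infty(\Omega)} \lvert\FF\rvert$, so that $\lvert\FF\rvert(S)=0$ forces the first term to vanish as well. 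Consequently the interior and exterior normal traces coincide on $S$ (up to the sign convention of Remark \ref{rem:inner_outer_trace}), which is exactly the weak Rankine-Hugoniot condition.

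The principal obstacle lies not in the analytic arguments, which are essentially assembled from prior results, but in the modelling subtleties hidden in \eqref{12.7} when $\bfu$ is only measure-valued: giving precise meaning to the compositions $\FF(\bfu(y), y)$ and $\sigma(\bfu(y), y)$ requires additional structural hypotheses beyond the scope of this statement, as noted after \eqref{12.7} and discussed in \cite{CF2}. Once such a meaning is fixed, componentwise application of the main theorem transports every remaining claim into the scalar framework already established, and the three assertions follow by concatenation of the cited results.
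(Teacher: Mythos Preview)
Your proposal is correct and follows essentially the same route as the paper: the theorem is presented there as a summary of the preceding discussion, which derives (i)--(ii) by applying Theorem~\ref{maintheorem} (and its converse, Theorem~\ref{prop:flux_sufficiency}) componentwise together with the constitutive relations \eqref{12.7}, and (iii) via the jump identity of Remark~\ref{rem:inner_outer_trace} under the hypotheses $\div\FF=0$ and $\lvert\FF\rvert(\partial U)=0$. Your write-up makes the converse direction and the jump computation a bit more explicit, and you rightly flag the modelling caveat about \eqref{12.7} in the measure-valued case, which the paper also acknowledges.
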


However, the exterior and interior normal traces of the corresponding entropy divergence-measure fields on a shock wave must have
a jump, inferred by the second law of thermodynamics
as indicated in \S \ref{sec:entropy_sols} below.

\medskip
\subsection{Entropy solutions of hyperbolic conservation laws}\label{sec:entropy_sols}
We now apply the results established in \S \ref{sec:prop_disintegration} through \S \ref{sec:flux_localrecovery} to the recovery of Cauchy entropy fluxes and the corresponding
entropy balance laws through the Lax entropy inequality ({\it i.e.}, the second law of thermodynamics)
for entropy solutions of hyperbolic conservation laws
by capturing entropy dissipation.
That is, for hyperbolic conservation laws, even though there may be no production
for a weak solution,
the entropy solutions must obey the entropy balance law
with non-zero production in general, especially when
the entropy solution contains shock waves.
This leads to the intrinsic connection between the second law of thermodynamics
and the entropy balance laws in Continuum Mechanics.

We focus now on system \eqref{12.12}, which is assumed to be hyperbolic.
A function $\eta: \R^N\to \R$ is called an entropy of \eqref{12.12}
if there exists $\bfq = ({q}_1,\cdots,{q}_m)^{\intercal}: \R^N\to \R^m$ such that
\begin{equation}\label{12E-1}
\nabla q_j(\bfu)=\nabla\eta(\bfu) \nabla \bff_j(\bfu), \qquad j=1,\cdots, m.
\end{equation}
Then the vector function $\bfq(\bfu)$ is called an entropy flux
associated with entropy $\eta(\bfu)$,
and the function pair
$(\eta(\bfu), \bfq(\bfu))$ is called an entropy pair.
The entropy pair $(\eta(\bfu), \bfq(\bfu))$ is called a convex entropy pair
on the state domain $D\subset \R^N$ if the Hessian matrix
$\nabla^2\eta(\bfu)\ge 0$ for any $\bfu\in D$.

It is observed that most systems of conservation laws that
result from continuum mechanics are endowed with a globally defined
convex entropy (see Dafermos \cite{dafermos2010hyperbolic}
and Friedrichs-Lax \cite{Friedrichs-Lax}).
The available existence theories show that solutions of
\eqref{12.12} generally fall within the following class of entropy solutions:

\begin{definition}
A vector function $\bfu=\bfu(t,x)\in \mathcal{M}_{\rm loc}(\R_+\times \R^m)$ or $\Leb^p(\R_+\times \R^m)$
for some $p\ge 1$ is called an entropy solution of \eqref{12.12} if $\bfu(t,x)$ satisfies the Lax entropy inequality{\rm :}
\begin{equation}\label{12E-2}
\partial_t\eta(\bfu(t,x))+\div_x \bfq(\bfu(t,x))\le 0
\end{equation}
in the sense of distributions for any convex entropy pair $(\eta, \bfq)$ such that
$$
(\eta(\bfu(t,x)),\,\bfq(\bfu(t,x))) \qquad \mbox{is a distributional field}.
$$
\end{definition}

Clearly, an entropy solution is a weak solution, which can be seen by choosing $(\eta(\bfu), \bfq(\bfu))=\pm (u_i, \bff_i(\bfu))$, $i = 1,\cdots,N$,
in \eqref{12E-2}.

One of the main issues in hyperbolic conservation laws is to study the behavior of entropy
solutions in this class to explore to the fullest extent possible all questions relating
to uniqueness, stability, large-time behavior, structure, and traces of entropy solutions,
with neither specific reference to any particular method for constructing the
solutions nor additional regularity assumptions.
The Lax entropy inequality \eqref{12E-2}
indicates that the distribution:
$$
\partial_t\eta(\bfu(t,x))+\div_x \bfq(\bfu(t,x))
$$
is nonpositive. Then we conclude that it is, in fact, a Radon measure; that is,
there exists
$\sigma_\eta\in \mathcal{M}(\R_+\times \R^m)$ with $\sigma_\eta\ge 0$ such that
\begin{equation}\label{12E-3}
-\div_{(t,x)}(\eta(\bfu(t,x)), \bfq(\bfu(t,x)))=:\sigma_\eta.
\end{equation}
Therefore, the vector field
$(\eta(\bfu(t,x)), \bfq(\bfu(t,x)))$ is a divergence-measure field:
$$
(\eta(\bfu(t,x)), \bfq(\bfu(t,x)))\in \DM^{\rm ext}_{\rm loc}(\R_+\times \R^m),
$$
provided that $(\eta(\bfu(t,x)), \bfq(\bfu(t,x)))\in \mathcal{M}_{\rm loc}(\R_+\times \R^m)$.

We introduce a functional on any surface $S\subset\partial U$ with $U\in \mathcal{O}_\mu$:
\begin{equation}\label{12E-4}
\mathcal{F}_U^\eta(S):=-((\eta(\bfu), \bfq(\bfu))\cdot \nu)_{\partial U}(S)
\end{equation}
where $((\eta(\bfu), \bfq(\bfu))\cdot \nu)_{\partial U}$ is the normal trace of $(\eta(\bfu), \bfq(\bfu))$
on $\partial U$ in the sense of Theorem \ref{principal},
since $(\eta(\bfu(t,x)), \bfq(\bfu(t,x)))\in \DM^{\rm ext}_{\rm loc}(\R_+\times \R^m)$.

By Theorem \ref{prop:flux_sufficiency}, the functional $\mathcal{F}_U^\eta$ defined
by \eqref{12E-4} is a Cauchy flux in the sense
of Definition \ref{DefinitionCauchyFlux},
taking $\mu$ to be the total variation measure $\lvert(\eta(\bfu(t,x)), \bfq(\bfu(t,x))\rvert$.

\begin{definition}[Cauchy entropy fluxes]\label{def:12-2}
A functional $\mathcal{F}_U^\eta$ as defined by \eqref{12E-4} is
called a Cauchy entropy flux with respect to entropy $\eta({\bf u})$.
\end{definition}

In particular, when $(\eta, \bfq)$ is a convex entropy pair,
$\mathcal{F}_U^\eta(S)\ge 0$
for any Borel set $S\subset \partial U$ with $U\in \mathcal{O}_\mu$.
Furthermore, we can now reformulate the balance law of entropy
from the recovery of entropy production by capturing entropy dissipation.

\begin{theorem}[Entropy balance laws]\label{def:12-3}
The production $\sigma_\eta$ of entropy dissipation in any bounded open set $U\Subset \Omega$ is balanced
by the entropy Cauchy flux $\mathcal{F}_U^\eta$ through the boundary $\partial U$ of the open set $U\Subset\Omega$.
\end{theorem}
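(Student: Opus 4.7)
The plan is to obtain the balance identity $\mathcal F_U^\eta(\partial U) = \sigma_\eta(U)$ as a direct consequence of the Gauss--Green identity \eqref{eq:trace_balance_law} applied to the entropy-entropy-flux pair $(\eta(\bfu), \bfq(\bfu))$. All of the difficult analytic work has already been carried out: the existence and characterisation of the distributional normal trace for extended divergence-measure fields (Corollary \ref{cor:normaltrace_lipschitz_extension}), the representation of this trace as a Radon measure for almost every open set (Theorems \ref{principal} and \ref{thm:averaged_trace}), and the verification that the resulting functional is a Cauchy flux in the sense of Definition \ref{DefinitionCauchyFlux} (Theorem \ref{prop:flux_sufficiency}). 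What remains is to combine these pieces in the entropy setting.

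First, I would record that the hypotheses of the theorem place $(\eta(\bfu), \bfq(\bfu)) \in \mathcal{DM}^{\mathrm{ext}}_{\mathrm{loc}}(\mathbb R_+ \times \mathbb R^m)$: by the standing constitutive assumption the pair is a locally finite vector-valued Radon measure, and equation \eqref{12E-3} identifies its distributional divergence with the (locally) finite Radon measure $-\sigma_\eta$. Given $U \Subset \Omega$, I would choose an intermediate open set $\widetilde\Omega$ with $U \Subset \widetilde\Omega \Subset \Omega$, so that $(\eta(\bfu), \bfq(\bfu))\rvert_{\widetilde\Omega} \in \mathcal{DM}^{\mathrm{ext}}(\widetilde\Omega)$, and the normal trace $\langle (\eta(\bfu), \bfq(\bfu))\cdot \nu,\,\cdot\,\rangle_{\partial U}$ is a well-defined functional on $\Sob^{1,\infty}(\widetilde\Omega) \ni \mathbbm 1_{\widetilde\Omega}$ by Corollary \ref{cor:normaltrace_lipschitz_extension}.

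Next, I would apply \eqref{eq:trace_balance_law} to $\FF = (\eta(\bfu), \bfq(\bfu))$ in $\widetilde\Omega$ to obtain
\begin{equation*}
\langle (\eta(\bfu), \bfq(\bfu)) \cdot \nu,\, \mathbbm 1_{\widetilde\Omega}\rangle_{\partial U}
 \,=\, -\div_{(t,x)}(\eta(\bfu), \bfq(\bfu))(U)
 \,=\, \sigma_\eta(U),
\end{equation*}
where the second equality is \eqref{12E-3}. Combined with the defining relation \eqref{12E-4} between $\mathcal F_U^\eta$ and the normal trace on $\partial U$ (with the sign convention corresponding to the interior unit normal used throughout the paper), this is exactly the claimed balance identity $\mathcal F_U^\eta(\partial U) = \sigma_\eta(U)$.

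Finally, a minor point of interpretation should be addressed: when $U \in \mathcal O_\mu$ with $\mu = \lvert (\eta(\bfu), \bfq(\bfu)) \rvert$, the normal trace is represented by a genuine Radon measure concentrated on $\partial U$ by Theorem \ref{thm:averaged_trace}, and then the pairing against $\mathbbm 1$ coincides with the measure-theoretic evaluation on the full boundary $\partial U$. For arbitrary $U \Subset \Omega$, the balance identity should instead be read at the level of the distributional normal trace, in accordance with the global recovery statement \eqref{eq:extended_globalrecovery}. There is no genuine obstacle to the argument; it is essentially a tautological consequence of the general theory developed in Sections \ref{sec:definitions}--\ref{sec:flux_localrecovery}, applied in the specific context where the extended divergence-measure field is the space-time entropy-entropy-flux vector $(\eta(\bfu), \bfq(\bfu))$ of an entropy solution and the production measure is the non-negative entropy-dissipation measure $\sigma_\eta$ supplied by the Lax entropy inequality \eqref{12E-2}.
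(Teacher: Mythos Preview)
Your proposal is correct and matches the paper's approach: the theorem is stated in the paper without an explicit proof, as it is intended as an immediate consequence of the general balance law \eqref{eq:trace_balance_law} applied to the entropy field $(\eta(\bfu),\bfq(\bfu)) \in \mathcal{DM}^{\mathrm{ext}}_{\mathrm{loc}}$ together with \eqref{12E-3} and the definition \eqref{12E-4} of $\mathcal F_U^\eta$. Your write-up correctly supplies these details, including the localization to $\widetilde\Omega \Supset U$ needed to pass from $\mathcal{DM}^{\mathrm{ext}}_{\mathrm{loc}}$ to $\mathcal{DM}^{\mathrm{ext}}$.
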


Furthermore, we have
\begin{theorem}  Assume that $\bfu=\bfu(t,x)\in \mathcal{M}_{\rm loc}(\R_+\times \R^m)$ or $\Leb^p(\R_+\times \R^m)$
for some $p\ge 1$ is an entropy solution to \eqref{12.12}.
Then
\begin{enumerate}
\item[\rm (i)]
Across the discontinuity,
the normal trace of the vector field $(\bfu(t,x), \bff(\bfu(t,x)))$ is continuous,
provided the underlying field does not concentrate on the discontinuity surface{\rm ;}
in this case the weak Rankine-Hugoniot condition holds.

\item[\rm (ii)] For a convex entropy pair $(\eta(\bfu), \bfq(\bfu))$ with $\nabla^2\eta(\bfu)>0$,
the normal trace of the vector field $(\eta(\bfu(t,x)), \bfq(\bfu(t,x)))$ has a jump across
a shock wave, which increases across the shock in the $t$-direction.

\end{enumerate}
\end{theorem}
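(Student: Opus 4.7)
The plan is to derive both statements as immediate consequences of the jump formula \eqref{eq:trace_jump} established in Remark \ref{rem:inner_outer_trace}, applied to the divergence-measure fields $(\bfu,\bff(\bfu))$ and $(\eta(\bfu),\bfq(\bfu))$ respectively. Throughout, $U$ will be an open set in $\R_+\times\R^m$ chosen so that the relevant portion $S$ of a discontinuity surface lies in $\partial U$, and test functions $\phi \in C^1_{\mathrm c}$ will be taken with support in a neighborhood of $S$.

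For statement (i), since $\bfu$ is in particular a weak solution of \eqref{12.12}, we have $\div_{(t,x)}(\bfu,\bff(\bfu))=0$ as distributions on $\R_+\times\R^m$, so that setting $\FF := (\bfu,\bff(\bfu))$ gives $\FF\in\DM^{\mathrm{ext}}_{\loc}(\R_+\times\R^m)$ with $\div\FF\equiv 0$. By hypothesis, $\FF$ does not concentrate on $S$, i.e. $\lvert\FF\rvert(S)=0$, and trivially $\lvert\div\FF\rvert(S)=0$. Applying the jump identity \eqref{eq:trace_jump} and using $\lvert\overline{\nabla\phi\cdot\FF}\rvert\ll\lvert\FF\rvert$ from \eqref{eq:product_measure_bound}, the right-hand side vanishes, so that
\begin{equation}
\langle\FF\cdot\nu,\phi\rangle_{\partial U}=\langle\FF\cdot\nu,\phi\rangle_{\partial\overline U}.
\end{equation}
This is precisely the continuity of the normal trace across $S$. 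In the classical case of a smooth surface $S$ with oriented unit normal $\nu=(\nu_t,\nu_x)$ and well-defined one-sided limits $\bfu^{\pm}$, this continuity reduces to the Rankine--Hugoniot relation $[\bfu]\nu_t+[\bff(\bfu)]\cdot\nu_x=0$ by means of the local recovery formula from Theorem \ref{maintheorem}.

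For statement (ii), the Lax entropy inequality \eqref{12E-2} and representation \eqref{12E-3} give $\GG:=(\eta(\bfu),\bfq(\bfu))\in\DM^{\mathrm{ext}}_{\loc}$ with $-\div\GG=\sigma_\eta\ge 0$. Applied to $\GG$ on the same $U$ with $S\subset\partial U$ a shock surface, and choosing $\phi\ge 0$ supported near $S$ with the convention that $\nu$ is the interior unit normal to $U$ pointing into the past, \eqref{eq:trace_jump} becomes
\begin{equation}
\langle\GG\cdot\nu,\phi\rangle_{\partial U}-\langle\GG\cdot\nu,\phi\rangle_{\partial\overline U}=\int_{\partial U}\phi\,\mathrm d\overline{\nabla\phi\cdot\GG}-\int_{\partial U}\phi\,\mathrm d\sigma_\eta.
\end{equation}
Assuming $\GG$ does not concentrate on $S$ (which holds e.g. when $\bfu$ is essentially bounded near $S$), the first term on the right drops out, and the jump of the normal trace across $S$ equals $-\sigma_\eta\res S$, tested against $\phi$. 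It remains to argue $\sigma_\eta(S)>0$ strictly for a genuine shock, which combined with the chosen orientation of $\nu$ gives the stated monotone increase in the forward $t$-direction, in accordance with the second law of thermodynamics.

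The main obstacle will be this last positivity step: showing that strict convexity $\nabla^2\eta>0$ together with a nontrivial jump $[\bfu]\ne 0$ forces $\sigma_\eta\res S>0$. The idea is to argue by contradiction, assuming $\sigma_\eta\res S_0=0$ on some relatively open portion $S_0\subset S$. The jump formula above then yields continuity of the normal trace of $\GG$ across $S_0$, which is a Rankine--Hugoniot identity for the entropy pair: $[\eta(\bfu)]\nu_t+[\bfq(\bfu)]\cdot\nu_x=0$. Combined with the $\bfu$-Rankine--Hugoniot condition from (i) and the entropy relation \eqref{12E-1}, strict convexity of $\eta$ along the chord between $\bfu^-$ and $\bfu^+$ yields $\bfu^-=\bfu^+$, contradicting genuineness of the shock. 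Orientation tracking -- ensuring that $\nu$ indeed points into the past so that the measure-valued jump registers as \emph{increase} forwards in $t$ -- then completes the argument.
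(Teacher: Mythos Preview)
Your approach is essentially the same as the paper's: both parts rest on the jump identity \eqref{eq:trace_jump} from Remark~\ref{rem:inner_outer_trace}, with part~(i) following from $\div(\bfu,\bff(\bfu))=0$ together with the non-concentration hypothesis, and part~(ii) from the concentration of $\div(\eta(\bfu),\bfq(\bfu))=-\sigma_\eta$ on the shock. The paper's own justification is in fact only the two-sentence remark immediately following the theorem, so your write-up is already more detailed than what appears there.

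One comment on your elaboration for~(ii): the contradiction argument you sketch for strict positivity of $\sigma_\eta\res S$ presupposes well-defined pointwise one-sided limits $\bfu^{\pm}$ on the shock surface and the identification of the normal trace with $(\eta(\bfu^{\pm}),\bfq(\bfu^{\pm}))\cdot\nu$. This is standard for $BV$ solutions or under additional structure, but is not furnished by the $\mathcal{DM}^{\ext}$ framework developed here, where the normal trace is only a measure (or a functional on $\Lip_{\mathrm b}$). The paper does not attempt to close this gap either; it simply asserts the concentration as a physical fact about entropy dissipation. Also, a minor slip: in your displayed jump formula for $\GG$ the first integral on the right should be $\int_{\partial U}\mathrm d\overline{\nabla\phi\cdot\GG}$, without the extra factor of $\phi$.
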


The continuity of the normal trace of the vector field $(\bfu(t,x), \bff(\bfu(t,x)))$ is due to the fact that
$\div (\bfu(t,x), \bff(\bfu(t,x)))$ is zero
and
$\lvert(\bfu,\bff(\bfu))\rvert(\partial U) = 0$ as assumed.
On the other hand, the jump of the normal trace of the vector field $(\eta(\bfu(t,x)), \bfq(\bfu(t,x)))$
is because $\div(\eta(\bfu(t,x)), \bfq(\bfu(t,x)))$ has a concentration on a shock wave,
due to the entropy dissipation,
concentrated on the shock wave.

\smallskip
Furthermore, for characteristic discontinuities such as vortex sheets and entropy waves, in general,
$\div(\eta(\bfu(t,x)), \bfq(\bfu(t,x)))$ does not have a concentration due to the loss of the entropy dissipation
along such a discontinuity,
and the normal trace for the vector field $(\eta(\bfu(t,x)), \bfq(\bfu(t,x)))$ may generally be continuous across the characteristic
discontinuity normally.

\smallskip
Moreover, it is clear that understanding further properties of divergence-measure
fields can advance our understanding of the behavior of entropy solutions for hyperbolic
conservation laws and other related nonlinear PDEs by selecting appropriate
entropy pairs. As examples, we refer the reader to \cite{Chen, ChenComiTorres,CF1,CF2,CF3,CF4,ChenLiTorres,ChenTorres, ChenTorresStructure, ChenTorresNotices,Rascle,Vasseur}
for such applications.

\bigskip
\bigskip
\noindent{\bf Acknowledgements}.
The research of Gui-Qiang G. Chen was also supported in part by the UK Engineering and Physical Sciences Research
Council Award EP/L015811/1, EP/V008854, and EP/V051121/1.
The research of Christopher Irving was also supported in part by the UK Engineering and Physical Sciences Research
Council Award EP/L015811/1. The research of Monica Torres was supported in part by NSF DMS-1813695.
For the purpose of open access, the authors have applied a CC BY public copyright license to any Author Accepted Manuscript (AAM) version
arising from this submission.

\bigskip
\medskip
\noindent{\bf Conflict of Interest:} The authors declare that they have no conflict of interest.
The authors also declare that this manuscript has not been previously published,
and will not be submitted elsewhere before your decision.

\bigskip
\noindent{\bf Data availability:} Data sharing is not applicable to this article as no datasets were generated or analyzed during the current study.

\bigskip
\medskip
\medskip

\end{document}